\newcommand{\A}{\mathbb{A}}
 \newcommand{\C}{\mathbb{C}}
\newcommand{\N}{\mathbb{N}}
\renewcommand{\P}{\mathbb{P}}
 \newcommand{\Q}{\mathbb{Q}}
 \newcommand{\R}{\mathbb{R}}
 \newcommand{\Z}{\mathbb{Z}}
\newcommand{\fM}{\mathfrak{M}}
\renewcommand{\hom}{\mathrm{hom}}
\newcommand{\hD}{\widehat{\Delta}}
\newcommand{\cE}{\mathcal{E}}
\newcommand{\cH}{\mathcal{H}}
\newcommand{\cL}{\mathcal{L}}
\newcommand{\cM}{\mathcal{M}}
\newcommand{\cN}{\mathcal{N}}
\newcommand{\cT}{\mathcal{T}}
\newcommand{\cX}{\mathcal{X}}
\newcommand{\tcL}{\widetilde{\mathcal{L}}}
\newcommand{\tcX}{\widetilde{\mathcal{X}}}
\newcommand{\hto}{\hookrightarrow}
\renewcommand{\a}{\alpha}
\renewcommand{\b}{\beta}
\renewcommand{\d}{\delta}
\newcommand{\e}{\varepsilon}
\newcommand{\f}{\varphi}
\newcommand{\unipar}{\varpi}
\newcommand{\la}{\lambda}
\newcommand{\om}{\omega}
\newcommand{\p}{\psi}
\newcommand{\inter}{\cdot\ldots\cdot}
\newcommand{\eg}{{\rm e.g.\ }} 
\newcommand{\ie}{{\rm i.e.\ }}
\newcommand{\Ka}{\mathrm{K}}
\newcommand{\ld}{\mathrm{A}}
\newcommand{\inte}{\mathrm{int}}
\newcommand{\an}{\mathrm{an}}
\DeclareMathOperator{\en}{E}
\DeclareMathOperator{\mab}{M}
\DeclareMathOperator{\elle}{L}
\DeclareMathOperator{\ding}{D}
\DeclareMathOperator{\ii}{I}
\DeclareMathOperator{\jj}{J}
\DeclareMathOperator{\Cz}{C^0}
\DeclareMathOperator{\Num}{N^1}
\DeclareMathOperator{\Aut}{Aut}
\DeclareMathOperator{\tee}{T}
\DeclareMathOperator{\DF}{DF}
\DeclareMathOperator{\Ent}{Ent}
\DeclareMathOperator{\IN}{IN}
\DeclareMathOperator{\Hnot}{H^0}
\DeclareMathOperator{\MA}{MA}
\DeclareMathOperator{\Spec}{Spec}
\DeclareMathOperator{\supp}{supp}
\DeclareMathOperator{\vol}{vol}
\DeclareMathOperator{\tr}{tr}
\DeclareMathOperator{\dd}{{d}}
\DeclareMathOperator{\Amp}{Amp}
\DeclareMathOperator{\Pic}{Pic}
\DeclareMathOperator{\id}{id}
\DeclareMathOperator{\ord}{ord}
\DeclareMathOperator{\Nef}{Nef}
\DeclareMathOperator{\Psef}{Psef}
\DeclareMathOperator{\PSH}{PSH}
\DeclareMathOperator{\redu}{red}
\DeclareMathOperator{\te}{T}
\DeclareMathOperator{\esse}{S}
\DeclareMathOperator{\PL}{PL}
\DeclareMathOperator{\ddc}{dd^c}
\newcommand{\n}{\chi}
\renewcommand{\div}{\mathrm{div}}
\newcommand{\triv}{\mathrm{triv}}
\newcommand{\val}{\mathrm{val}}
\newcommand{\lin}{\mathrm{lin}}
\newcommand{\fld}{\mathrm{fld}}
\newcommand{\NA}{\mathrm{NA}}
\newcommand{\D}{\Delta}
\newcommand{\simto}{\overset\sim\to}
\numberwithin{equation}{section}       
\newtheorem{prop} {Proposition} [section]
\newtheorem{thm}[prop] {Theorem} 
\newtheorem{defi}[prop] {Definition}
\newtheorem{lem}[prop] {Lemma}
\newtheorem{cor}[prop]{Corollary}
\newtheorem{prop-def}[prop]{Proposition-Definition}
\newtheorem*{mainthm}{Main Theorem} 
\newtheorem*{thmA}{Theorem A} 
\newtheorem*{thmB}{Theorem B} 
\newtheorem*{thmD}{Theorem D}
\newtheorem*{corC}{Corollary C}
\newtheorem*{conjO}{Conjecture}
\newtheorem{exam}[prop]{Example}
\newtheorem{rmk}[prop]{Remark}
\newtheorem{conj}[prop]{Conjecture}
\theoremstyle{remark}
\newtheorem*{ackn}{Acknowledgment}
\title[A non-Archimedean approach to K-stability, II]{A non-Archimedean approach to K-stability, II:\\
divisorial stability and openness}
\date{\today}
\author{S{\'e}bastien Boucksom \and Mattias Jonsson}
\address{Sorbonne Universit\'e and Universit\'e Paris Cit\'e\\
CNRS\\
IMJ-PRG\\
F-75005 Paris\\
France}
\email{sebastien.boucksom@imj-prg.fr}
\address{Dept of Mathematics\\
  University of Michigan\\
  Ann Arbor, MI 48109-1043\\
  USA}
\email{mattiasj@umich.edu}
\begin{document}

\begin{abstract} 
To any projective pair $(X,B)$ equipped with an ample $\Q$-line bundle $L$ (or even any ample numerical class), we attach a new invariant $\b(\mu)\in\R$, defined on convex combinations $\mu$ of divisorial valuations on $X$, viewed as point masses on the Berkovich analytification of $X$. The construction is based on non-Archimedean pluripotential theory, and extends the Dervan--Legendre invariant for a single valuation---itself specializing to Li and Fujita's valuative invariant in the Fano case, which detects K-stability. Using our $\b$-invariant, we define divisorial (semi)stability, and show that divisorial semistability implies $(X,B)$ is sublc (\ie its log discrepancy function is non-negative), and that divisorial stability is an open condition with respect to the polarization $L$. We also show that divisorial stability implies uniform K-stability in the usual sense of (ample) test configurations, and that it is equivalent to uniform K-stability with respect to all norms/filtrations on the section ring of $(X,L)$, as considered by Chi Li. 
\end{abstract}

\dedicatory{To the memory of Jean-Pierre Demailly, with admiration}

\maketitle

\setcounter{tocdepth}{1}
\tableofcontents

\section*{Introduction}
Consider, for the moment, a complex projective manifold $X$, equipped with an ample $\Q$-line bundle $L$. The notion of K-stability of the polarized manifold $(X,L)$ was originally phrased in~\cite{Dontoric} (building on~\cite{Tian97}) in terms of certain equivariant one-parameter degenerations of $(X,L)$ known as \emph{test configurations}. The `uniform' version\footnote{More general versions, involving  in particular the action of a reductive group of automorphisms, will not be considered in this paper.} of the Yau--Tian--Donaldson (YTD) conjecture states that the cohomology class $c_1(L)$ contains a unique constant scalar curvature K\"ahler (cscK) metric iff $(X,L)$ is \emph{uniformly K-stable} (see~\cite{BHJ1,Der}). By~\cite{BDL20,CC}, the former condition is known to be equivalent to the coercivity of the Mabuchi K-energy functional, which implies in turn uniform K-stability~\cite{BHJ2}. 

As originally pointed out in~\cite{WN12}, (ample) test configurations for $(X,L)$ can be understood as filtrations of finite type on the section ring of $(X,L)$; following the work of G.~Sz\'ekelyhidi~\cite{Sze} and the authors' preprints~\cite{trivvalold,nakstabold}, C.~Li~\cite{Li22} studied K-stability for general filtrations (see Definition~\ref{defi:Kstabfilt}), and proved the remarkable result that uniform K-stability for filtrations implies coercivity of the Mabuchi K-energy functional---and hence the existence of a unique cscK metric. To sum up, for any polarized manifold $(X,L)$ we have 
\begin{equation}\label{equ:ytdimpl}
\text{uniform K-stability for filtrations}\Rightarrow\text{unique cscK metric }\Rightarrow\text{uniform K-stability}, 
\end{equation}
and the missing part of the (uniform) YTD conjecture thus consists in proving the (purely algebro-geometric) statement that uniform K-stability for filtrations already follows from its version for test configurations, viewed as filtrations of finite type. 

Our primary goal in this paper is to provide a valuative characterization of uniform K-stability for  filtrations, as used by Li in~\cite{Li22}. Using this, we will prove:
\begin{mainthm}
  Uniform K-stability for filtrations is an open condition on the polarization.
\end{mainthm}
More precisely, the condition of $(X,L)$ being uniformly K-stable for filtrations only depends on the numerical class of $L$, and is an open condition on the ample cone.
This result holds also when $X$ is singular, see Theorem~\ref{thm:stabopen}. 
In the smooth case, the openness property ties in well with the corresponding result for cscK metrics~\cite{LS}. 

\smallskip
To explain the proof of the main theorem, let us first
consider the Fano case $L=-K_X$. In this situation, the YTD conjecture was first established in full generality in~\cite{CDS}, and the missing implication in~\eqref{equ:ytdimpl} is thus known. It can alternatively be obtained by purely algebro-geometric means, combining techniques from the Minimal Model Program (MMP), which allows us to convert uniform K-stability into uniform Ding-stability~\cite{LX14,YTDold,Fujval}, together with the non-Archimedean version of Berman's `thermodynamical formalism', which yields the equivalence between uniform Ding-stability and uniform K-stability for filtrations (see Corollary~\ref{cor:normding}). 

Crucially, the MMP techniques of~\cite{LX14} further show that K-stability of a Fano manifold (or, more generally, any log Fano pair) can be tested using only `special' test configurations, which correspond to so-called `dreamy' divisorial valuations~\cite{Fujval}; this leads to a purely valuative characterization of K-stability~\cite{LiEquivariant,Fujval}, which has been instrumental in the recent spectacular progress towards the deeper understanding of K-stability of log Fano varieties and the construction of moduli spaces thereof; see~\cite{BLXZ,LXZ}, to name just a few. 

Motivated by this, R.~Dervan and E.~Legendre initiated in~\cite{DL} the study of a valuative criterion in the case of a general polarization, by providing a purely valuative expression $\b(v)\in\R$ for the Donaldson--Futaki invariant of a special test configuration in terms of the corresponding `dreamy' divisorial valuation $v$. Their invariant $\b(v)$ in fact makes sense for any divisorial valuation $v$ (see~\eqref{equ:betaval} below), and gives rise to a notion of (uniform) \emph{valuative stability}, which was recently proved in~\cite{LiuYa} to be an open condition with respect to the polarization. 

\smallskip

In contrast with the log Fano case, valuative stability is not expected to imply K-stability for a general polarization, and the MMP will likely play a less important role in that case as well. Using our previous works~\cite{trivval,nakstab1}, we will extend the $\b$-invariant to \emph{convex combinations} of divisorial valuations, viewed as atomic measures on the Berkovich analytification of $X$, and show that the corresponding stability notion, which we call \emph{divisorial stability} is equivalent to (uniform) K-stability for filtrations.

%
\subsection*{Valuative stability}
As in the main body of the paper, we consider from now an arbitrary \emph{polarized pair} $(X,B;\om)$, where $X$ is a normal projective variety over an algebraically closed field $k$ of characteristic $0$, $B$ is a (not necessarily effective) $\Q$-Weil divisor on $X$ such that $K_{X,B}:=K_X+B$ is $\Q$-Cartier, and $\om\in\Amp(X)\subset\Num(X)$ is a (possibly irrational) ample numerical class, of volume $V_\om=(\om^n)$ with $n:=\dim X$. 

Before describing our notion of divisorial stability, let us first briefly revisit the definitions of~\cite{DL,LiuYa} in the present setting. Denote by $X^\div$ the set of \emph{divisorial valuations} $v\colon k(X)^\times\to\R$, of the form $v=s\ord_F$ where $F$ is a prime divisor on a smooth birational model $\pi\colon Y\to X$ and $s\in\Q_{\ge 0}$ (the case $s=0$ corresponding to the \emph{trivial valuation} $v_\triv$). The volume function $\vol\colon\Num(Y)\to\R_{\ge0}$ is continuous, and positive precisely on the big cone. Set
\begin{equation}\label{equ:normval}
\|v\|_\om:=s V_\om^{-1}\int_0^{+\infty}\vol(\pi^\star\om-\la F)\,d\la\in\R_{\ge 0}. 
\end{equation}
This quantity, which appeared in~\cite{MR15,LiEquivariant,Fujval} under various normalization and notation, coincides with the \emph{expected vanishing order} $\esse_L(v)$ when $\om=c_1(L)$ with $L\in\Pic(X)_\Q$ (see~\cite{BlJ}). The function $\|\cdot\|_\om\colon X^\div\to\R_{\ge 0}$ so defined is homogeneous with respect to the scaling action of $\Q_{>0}$, and vanishes precisely on the trivial valuation $v_\triv$. 

Using the differentiability of the volume function on the big cone~\cite{diskant,LM09}, one checks that $\|v\|_\om$ is a differentiable function of $\om\in\Amp(X)$, see~\S\ref{sec:endiv}. This allows us to define
\begin{equation}\label{equ:betaval}
\b(v)=\b_{X,B;\om}(v):=\ld_{X,B}(v)+\frac{d}{dt}\bigg|_{t=0}\|v\|_{\om+t K_{X,B}}\in\R, 
\end{equation}
where $\ld_{X,B}(v)=s\,\ld_{X,B}(F)\in\Q$ is the \emph{log discrepancy} of $v=s\ord_F$. Differentiating under the integral sign in~\eqref{equ:normval} yields an expression of the invariant $\b(v)$ that coincides with the one in~\cite{DL,LiuYa} (up to a factor $V_\om$), see~\S\ref{sec:bfunc} for details. 

The $\b$-invariant defines a $\Q_{>0}$-homogeneous function $\b\colon X^\div\to\R$. Following~\cite{DL,LiuYa}, we say that $(X,B;\om)$ is \emph{valuatively semistable} if $\b\ge 0$ on $X^\div$, and \emph{valuatively stable}\footnote{For simplicity, we drop `uniformly' from the terminology.} if $\b\ge\e\|\cdot\|_\om$ for some $\e>0$. 
%
\subsection*{Divisorial stability}
Our stability notion involves the larger space $\cM^\div$ of \emph{divisorial measures} on $X$. This is the set of probability measures on $X^\div$ of the form 
\begin{equation*}
  \mu=\sum_{i=1}^rm_i\d_{v_i},
\end{equation*}
where $r\ge1$, $m_i\in\R_{>0}$, $v_i\in X^\div$, and $\sum_im_i=1$. If $L$ is an ample $\Q$-line bundle, then any ample test configuration $(\cX,\cL)$ for $(X,L)$ defines a divisorial measure, whose support is exactly the set of divisorial valuations associated with the irreducible components of the central fiber of the normalization of $\cX$, see~\cite{BHJ1}. However, not every divisorial measure is of this form. For example, a Dirac mass $\d_v$ with $v\in X^\div$  is of this form iff $v$ is dreamy with respect to $L$ (\ie the corresponding filtration of the section ring is of finite type). 

There is an obvious embedding $X^\div\hto\cM^\div$ given by $v\mapsto\d_v$, and we can extend the functional $\b$ from $X^\div$ to $\cM^\div$ as follows. The first term in~\eqref{equ:betaval} is extended by linearity: following~\cite{BHJ1,BHJ2} we define the (non-Archimedean) \emph{entropy} of a measure $\mu=\sum_i m_i\d_{v_i}$ as
\[
  \Ent_{X,B}(\mu):=\sum_im_i \ld_{X,B}(v_i)=\int\ld_{X,B}\,\mu; 
\]
this does not depend on the class $\om$. To extend the second term in~\eqref{equ:betaval}, we interpret the invariant~\eqref{equ:normval} as the \emph{energy} of the Dirac mass $\d_v$ in the sense of non-Archimedean pluripotential theory~\cite{trivval}. 

To explain this, recall first that the space $X^\div$ admits a natural compactification $X^\an$, the \emph{Berkovich analytification} of $X$ (with respect to the trivial absolute value on $k$), whose points can be viewed as semivaluations on $X$, \ie valuations $v\colon k(Y)^\times\to\R$ for some subvariety $Y$ of $X$. We can therefore embed $\cM^\div$ into the space $\cM$ of Radon probability measures on the compact Hausdorff space $X^\an$, and define the energy of any such measure $\mu\in\cM$, as follows. Assume first that $\om\in\Amp(X)$ is rational, \ie $\om=c_1(L)$ with $L\in\Pic(X)_\Q$. Any function $\f\in\Cz(X^\an)$ induces a filtration on the vector space $R_m:=\Hnot(X,mL)$ for $m$ sufficiently divisible, given by
$$
F^\la R_m:=\{s\in R_m\mid v(s)+m \f(v)\ge\la\text{ for all }v\in X^\an\},\quad\la\in\R,
$$
where $X^\an$ can be replaced with the dense subset $X^\div$, by continuity. Suitably normalized, the volumes of these filtrations (\ie the average of their jumping numbers) converge to a number $\vol_L(\f)\in\R$, see~\cite{BC}.  It further follows from~\cite{trivval} that $\vol_L(\f)=\vol_\om(\f)$ only depends on $\om=c_1(L)$, and that $\om\mapsto\vol_\om(\f)$ uniquely extends by continuity to the whole ample cone $\Amp(X)\subset\Num(X)$. 

Each $\om\in\Amp(X)$ thus defines a concave functional $\vol_\om\colon\Cz(X^\an)\to\R$, and the \emph{energy functional}\footnote{This corresponds to $\en^\vee$ in~\cite{trivval}; the change of notation is intended to make the formalism of the present paper easier to digest.}  $\|\cdot\|_\om\colon\cM\to [0,+\infty]$ can be described as its Legendre transform, \ie 
\begin{equation*}
 \|\mu\|_\om=\sup_{\f\in\Cz(X^\an)}\left\{\vol_\om(\f)-\int\f\,\mu\right\}
\end{equation*}
for $\mu\in\cM$. The energy is convex, lower semicontinuous in the weak topology of $\cM$, and vanishes precisely at the \emph{trivial measure} $\mu_\triv:=\d_{v_\triv}$; it is further homogeneous with respect to the natural scaling action of $\R_{>0}$.

As mentioned before, the energy $\|\d_v\|_\om$ of the Dirac mass associated to a divisorial valuation $v\in X^\div$ turns out to coincide with~\eqref{equ:normval}; in particular, it is finite, and the energy functional therefore restricts to a finite-valued, convex functional $\|\cdot\|_\om\colon\cM^\div\to\R_{\ge 0}$.

 When $\om=c_1(L)$ with $L\in\Pic(X)_\Q$ and $\mu\in\cM^\div$ is associated to a (semi)ample test configuration $(\cX,\cL)$ for $(X,L)$, $\|\mu\|_L$ equals the \emph{minimum norm} of $(\cX,\cL)$ in the sense of Dervan~\cite{Der}; this corresponds to $\ii^\NA-\jj^\NA$ in the notation\footnote{Generally speaking, the `non-Archimedean' functionals of~\cite{BHJ1,BHJ2} were denoted by adding a superfix `$\NA$' to the corresponding functional in K\"ahler geometry; in later works~\cite{YTD,trivval}, this superfix was dropped when no ambiguity can arise.} of~\cite{BHJ1}, and can be expressed as a certain linear combination of the intersection numbers $(\bar\cL^{n+1})$ and $(L_{\bar\cX}\cdot\bar\cL^n)$, with $(\bar\cX,\bar\cL)\to\P^1$ the canonical compactification of $(\cX,\cL)\to\A^1$, and $L_{\bar\cX}$ the pull-back of $L$. In fact, this fully characterizes the energy functional, as any $\mu\in\cM^\div$ can be written as the weak limit of measures $(\mu_j)$ associated to test configurations in such a way that $\|\mu_j\|_L\to\|\mu\|_L$. 

Our first main result is as follows: 
\begin{thmA}
  For any measure $\mu\in\cM^\div$, the function $\om\mapsto\|\mu\|_\om$ is of class $C^1$ on $\Amp(X)$.
\end{thmA}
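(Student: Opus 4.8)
The plan is to reduce the computation to a fixed birational model, on which the energy becomes the Legendre transform—in a finite-dimensional variable—of a function built from the volume function, and then to differentiate in $\om$ by an envelope argument. Since $\mu=\sum_{i=1}^r m_i\d_{v_i}$ has finite support, I would first fix a smooth birational model $\pi\colon Y\to X$ on which each $v_i=s_i\ord_{F_i}$ is realized by a prime divisor $F_i\subset Y$. All quantities entering the energy can then be computed on $Y$, and the potentials $\f$ relevant to the variational problem may be taken to be model functions determined by $Y$, hence parametrized by finitely many real coefficients $\la=(\la_i)$.

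I would then use the description $\|\mu\|_\om=\sup_{\f}\{\vol_\om(\f)-\int\f\,\mu\}$ together with the fact, established in \cite{trivval}, that for an atomic measure the supremum is attained at a potential $\f_\om$ solving the non-Archimedean Monge--Amp\`ere equation $\MA_\om(\f_\om)=\mu$. By the reduction above this is a finite-dimensional concave maximization whose objective is assembled, as in the single-valuation formula of \S\ref{sec:endiv}, from the volume $\vol_Y(\pi^\star\om-\sum_i\la_i F_i)$---a function that is $C^1$ on the big cone of $Y$, with derivative given by the (continuous) positive intersection product \cite{diskant,LM09}. In particular, for fixed $\f$ the map $\om\mapsto\vol_\om(\f)$ is $C^1$, and adding a constant to $\f$ shifts it by that same constant, hence does not affect its $\om$-derivative.

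Granting that $\f_\om$ is unique modulo additive constants, differentiability of $\om\mapsto\|\mu\|_\om$ would follow from the envelope (Danskin) theorem: the objective $\vol_\om(\f)-\int\f\,\mu$ is differentiable in $\om$ with $\om$-derivative continuous in $\f$, and the supremum is attained on a compact sublevel set (by coercivity), so the supremum is differentiable with $\partial_\om\|\mu\|_\om=\partial_\om\vol_\om(\f_\om)$. To upgrade this to $C^1$, I would combine the joint continuity of $(\om,\f)\mapsto\partial_\om\vol_\om(\f)$---a consequence of the continuity of the positive intersection product---with the continuous dependence $\om\mapsto\f_\om$, which follows from Berge's maximum theorem once uniqueness of the maximizer is known.

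The crux of the argument is thus the behavior of the optimizer $\f_\om$ as $\om$ varies, i.e. the stability of the equation $\MA_\om(\f_\om)=\mu$. On $Y$ this is a finite-dimensional problem: existence amounts to coercivity (properness of the energy), and uniqueness modulo constants to strict concavity of the objective transverse to the constant function, which ultimately rests on Hodge-index/Khovanskii--Teissier positivity for the volume function. The genuinely delicate point, and the main obstacle I anticipate, is obtaining uniform control as $\om$ ranges over the whole ample cone---in particular for irrational $\om$, where $\f_\om$ need no longer be a genuine model function and one must pass to the limit using the continuous extension of $\om\mapsto\vol_\om$ supplied by \cite{trivval}.
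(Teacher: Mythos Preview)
Your envelope-theorem strategy is natural, but the reduction to a finite-dimensional problem on a fixed model $Y$ does not go through as stated. The optimizer $\f_\om$ solving $\MA_\om(\f_\om)=\mu$ is \emph{not} a function of the form $\f_D$ with $D=\sum_i\la_iF_i$ a vertical divisor on $Y$ (or its deformation to the normal cone): such model functions form a finite-dimensional family, but the Monge--Amp\`ere equation forces $\f_\om$ to be a genuine $\om$-psh envelope, which in general is only the limit of PL functions determined on increasingly high test configurations. Relatedly, there is no closed formula for $\|\mu\|_\om$ in terms of $\vol_Y(\pi^\star\om-\sum_i\la_iF_i)$ analogous to the single-valuation integral of \S\ref{sec:endiv}; the energy is convex rather than linear in $\mu$, and the multi-point case does not decouple into one-parameter volume integrals. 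A further issue is that existence of the maximizer $\f_\om$ requires the envelope property for $\om$, which the paper does not assume (it holds when $X$ is smooth, but not in general); without it, $\MA_\om$ need not be surjective onto $\cM^1$.

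The paper bypasses all of this by working abstractly with the energy pairing rather than volumes on a model. It first \emph{defines} a candidate derivative $\nabla_\theta\|\mu\|_\om$ as the unique strongly continuous extension of $\f\mapsto\nabla_\theta\en_\om(\f)$ from the dense image of $\MA_\om$ to all of $\cM^1$, using H\"older estimates for mixed energy pairings that ultimately come from the Hodge index theorem (Proposition--Definition~\ref{prop:effehold}). It then proves the actual differentiability by a direct perturbation: for $\mu=\MA_\om(\f)$, one inserts the competitor $(1-\|\theta\|_\om)\f\in\cE^1(\om+\theta)$ into the variational definition of $\|\mu\|_{\om+\theta}$ and expands the energy pairing to second order (Lemma~\ref{lem:diffen}), obtaining $\|\mu\|_{\om+\theta}\ge\|\mu\|_\om+\nabla_\theta\|\mu\|_\om-C\|\theta\|_\om^2$; the reverse inequality at $\om+\theta$ and H\"older continuity of $\om\mapsto\nabla_\theta\|\mu\|_\om$ (Theorem~\ref{thm:twistedhold}) then close the argument. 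This gives $C^{1,\a}$ regularity and works uniformly over $\cM^1$, without ever needing a finite-dimensional reduction or surjectivity of $\MA_\om$.
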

The result actually holds for all measures of finite energy, \ie $\|\mu\|_\om<\infty$ (a condition that is independent of $\om$). The proof, which relies on rather sophisticated estimates for Monge--Amp\`ere integrals from~\cite{trivval}, ultimately deriving from the Hodge index theorem, further yields H\"older estimates for the derivative of the energy, a key ingredient in the proof of Theorem~B below. 

Given $\om\in\Amp(X)$, $\theta\in\Num(X)$, and $\mu\in\cM^\div$, we write
\[
  \nabla_\theta\|\mu\|_\om:=\frac{d}{dt}\bigg|_{t=0}\|\mu\|_{\om+t\theta}
\]
for the directional derivative of the energy with respect to $\om$. By approximation, this is again fully characterized by the case where $\om=c_1(L)$ and $\mu$ is associated to a (semi)ample test configuration $(\cX,\cL)$ for $(X,L)$, for which $\nabla_\theta\|\mu\|_\om$ can be expressed as a certain linear combination of the intersection numbers $(\bar\cL^{n+1})$ and $(\theta_{\bar\cX}\cdot\bar\cL^n)$ (see~\eqref{equ:nablaE}, \eqref{equ:nablavee}). In particular, $\nabla_\om\|\mu\|_\om=\|\mu\|_\om$. Note, however, that Theorem~A does not directly follow from the description of $\|\mu\|_\om$ in terms of intersection numbers, which cannot be used anymore for the perturbed energy $\|\mu\|_{\om+t\theta}$.

For any polarized pair $(X,B;\om)$, we may now define the desired extension $\b\colon\cM^\div\to\R$ of~\eqref{equ:betaval} (with respect to the embedding $X^\div\hto\cM^\div$) 
by setting 
\begin{equation*}
  \b(\mu)=\b_{X,B;\om}(\mu):=\Ent_{X,B}(\mu)+\nabla_{K_{X,B}}\|\mu\|_\om. 
\end{equation*}
We say that $(X,B;\om)$ is \emph{divisorially semistable} if $\b\ge 0$ on $\cM^\div$, and \emph{divisorially stable} if $\b\ge\e\|\cdot\|_\om$ for some $\e>0$. By considering Dirac masses, it follows immediately that divisorial (semi)stability implies valuative (semi)stability, and we expect the converse to fail in general; see~\cite[Example~2.28]{DL}.
  
When $\om=c_1(L)$ with $L\in\Pic(X)_\Q$ and $\mu$ is associated to an ample test configuration $(\cX,\cL)$, it follows from the description of $\nabla_{K_{X,B}}\|\mu\|_\om$ in terms of intersection numbers mentioned above that $\b(\mu)$ coincides with the \emph{(non-Archimedean) Mabuchi K-energy} of $(\cX,\cL)$, \ie its Donaldson--Futaki invariant up to a simple error term that disappears after base change (see~\cite{BHJ1}). As a result, divisorial semistability implies K-semistability, and divisorial stability implies uniform K-stability; here we conjecture that the converse does hold:  more on this below. 
%
%
\subsection*{Divisorial stability threshold and openness}
As in~\cite{LiuYa}, it is convenient to introduce the \emph{divisorial stability threshold} of the polarized pair $(X,B;\om)$, defined as 
\begin{equation}\label{equ:sigint}
\sigma_\div(X,B;\om):=\inf_{\mu\in\cM^\div\setminus\{\mu_\triv\}}\frac{\b_{X,B;\om}(\mu)}{\|\mu\|_\om}\in\R\cup\{-\infty\}. 
\end{equation}
Thus $(X,B;\om)$ is divisorially semistable (resp.~stable) iff $\sigma_\div(X,B;\om)\ge 0$ (resp.~$>0$). 
\begin{thmB} For any polarized pair $(X,B;\om)$, the following holds: 
\begin{itemize}
\item[(i)] $\sigma_\div(X,B;\om)>-\infty$ iff $(X,B)$ is sublc; 
\item[(ii)] $\sigma_\div(X,B;\om)$ depends continuously on $\om\in\Amp(X)$. 
\end{itemize}
\end{thmB}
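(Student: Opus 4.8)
The plan is to treat the two assertions together, reducing both to two structural facts about the energy: that $\Ent_{X,B}\ge0$ precisely when $(X,B)$ is sublc, and that the directional derivative of the energy is controlled by the energy itself, uniformly in $\mu$. For the latter I would first record the Lipschitz-type bound
\[
|\nabla_\theta\|\mu\|_\om|\le C_{\om,\theta}\,\|\mu\|_\om\qquad(\mu\in\cM^\div),
\]
valid for $\theta\in\Num(X)$ and locally uniformly in $\om\in\Amp(X)$. This should follow by writing $\theta$ as a difference of ample classes, comparing each with suitable multiples of $\om$ in the ample cone, and using the homogeneity relation $\nabla_\om\|\mu\|_\om=\|\mu\|_\om$ together with monotonicity of the energy in $\om$; alternatively it is read off from the intersection-theoretic description of $\nabla_\theta\|\mu\|_\om$ on the dense set of test-configuration measures and passed to the limit. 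I also record that $g_\mu(\om):=\b_{X,B;\om}(\mu)/\|\mu\|_\om$ is invariant under the scaling action $\mu\mapsto(s_t)_\star\mu$, since $\Ent_{X,B}$, $\|\cdot\|_\om$ and $\nabla_{K_{X,B}}\|\cdot\|_\om$ are all homogeneous of degree one; hence in every estimate I may normalize $\|\mu\|_\om=1$.

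For (i), the implication from sublcness is immediate: if $\ld_{X,B}\ge0$ then $\Ent_{X,B}(\mu)=\int\ld_{X,B}\,\mu\ge0$, so the displayed bound gives $\b_{X,B;\om}(\mu)\ge\nabla_{K_{X,B}}\|\mu\|_\om\ge-C\|\mu\|_\om$, whence $\sigma_\div(X,B;\om)\ge-C>-\infty$. For the converse I would argue contrapositively: assuming $(X,B)$ is not sublc, I show the weaker valuative threshold $\sigma_\val:=\inf_{v\in X^\div}\b(\d_v)/\|\d_v\|_\om$ is already $-\infty$, which suffices since $\{\d_v\}\subset\cM^\div$ forces $\sigma_\div\le\sigma_\val$. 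As $g_{\d_v}$ is scale-invariant and $|\nabla_{K_{X,B}}\|\d_v\|_\om|\le C\|\d_v\|_\om$, it is enough to produce divisorial valuations $v_k$ with $\ld_{X,B}(v_k)\to-\infty$ while $\|\d_{v_k}\|_\om$ stays bounded above and below. Here I use that a non-sublc pair has $\mathrm{mld}=-\infty$: starting from a prime divisor $F$ with $\ld_{X,B}(F)<0$ and successively blowing up the intersection of the newest exceptional divisor with the strict transform of $F$, one obtains divisors $F_k$ with $\ld_{X,B}(F_k)\to-\infty$, all centered at a single point, whose expected vanishing orders $\|\d_{\ord_{F_k}}\|_\om=\esse_\om(\ord_{F_k})$ remain bounded. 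This yields $\ld_{X,B}(v_k)/\|\d_{v_k}\|_\om\to-\infty$, so $\sigma_\div=-\infty$. The main obstacle in (i) is exactly the uniform control of these expected vanishing orders, which I expect to obtain from $\esse_\om(\ord_F)\le\tau_\om(F)$ (pseudoeffective threshold) together with the local nature of the tower.

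For (ii) I first note that, by (i), sublcness is independent of $\om$, so on the non-sublc locus $\sigma_\div(X,B;\cdot)\equiv-\infty$ and continuity is trivial; I may thus assume $(X,B)$ sublc, so $\Ent_{X,B}\ge0$ and $\sigma_\div$ is finite. Upper semicontinuity is automatic, $\sigma_\div=\inf_\mu g_\mu$ being an infimum of functions continuous in $\om$ by Theorem~A (note $\|\mu\|_\om>0$ for $\mu\ne\mu_\triv$). For lower semicontinuity I fix a compact $K\subset\Amp(X)$ and normalize $\|\mu\|_\om=1$; comparability of energies gives $\Lambda^{-1}\le\|\mu\|_{\om'}\le\Lambda$ for $\om'\in K$, uniformly in $\mu$. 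I then split on entropy: if $\Ent_{X,B}(\mu)$ exceeds a threshold $M=M(K,\sigma_\div(\om))$, then $g_\mu(\om')\ge\Ent_{X,B}(\mu)/\|\mu\|_{\om'}-C\ge M/\Lambda-C>\sigma_\div(\om)$, so such $\mu$ are harmless; whereas for $\Ent_{X,B}(\mu)\le M$ the quantities $\|\mu\|_{\om'}$ and $\nabla_{K_{X,B}}\|\mu\|_{\om'}$ are purely energetic, so the Hölder estimate on the derivative of the energy from Theorem~A furnishes a modulus of continuity $\eta(|\om-\om'|)\to0$ for $g_\mu$ that is uniform over this normalized, bounded-entropy family. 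In either case $g_\mu(\om')\ge\sigma_\div(\om)-\eta(|\om-\om'|)$, and taking the infimum over $\mu$ gives $\sigma_\div(\om')\ge\sigma_\div(\om)-\eta(|\om-\om'|)$; combined with upper semicontinuity this yields continuity.

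The decisive point making (ii) work is that, thanks to sublcness, the competitors relevant for the infimum have uniformly bounded entropy after normalization, reducing the problem to equicontinuity in $\om$ of the purely energetic quantities $\|\mu\|_\om$ and $\nabla_{K_{X,B}}\|\mu\|_\om$—precisely the uniform Hölder estimate underlying Theorem~A. I expect the hardest step overall to lie in (i): showing that the expected vanishing order stays bounded along a tower of blow-ups realizing $\mathrm{mld}=-\infty$, since this is the only place where delicate birational and volume input beyond the energy formalism is required.
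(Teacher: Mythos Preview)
Your approach to (ii) is essentially correct and close to the paper's, though the entropy splitting is unnecessary: since $\Ent_{X,B}$ is independent of $\om$, one can directly compare $\b_{\om'}(\mu)=\Ent_{X,B}(\mu)+\nabla_{K_{X,B}}\|\mu\|_{\om'}$ with $\b_\om(\mu)$ using the uniform H\"older estimate $|\nabla_\theta\|\mu\|_\om-\nabla_\theta\|\mu\|_{\om'}|\lesssim\d^\a\|\mu\|_\om\|\theta\|_\om$ (Theorem~\ref{thm:twistedhold}) together with $\|\cdot\|_{\om'}\ge e^{-C_n\d}\|\cdot\|_\om$, and then symmetrize. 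No case analysis on $\Ent_{X,B}(\mu)$ is needed.

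For (i), however, there is a genuine gap. Your strategy reduces to showing that the \emph{valuative} threshold $\sigma_\val=\inf_{v\in X^\div}\b(\d_v)/\|\d_v\|_\om$ equals $-\infty$ when $(X,B)$ is not sublc, equivalently that $\inf_v\ld_{X,B}(v)/\|v\|_\om=-\infty$. This is false: as noted in Remark~\ref{rmk:deltaval}, that infimum is \emph{finite for every polarized pair}, sublc or not. Concretely, along your tower of blow-ups the expected vanishing orders do \emph{not} stay bounded; they grow linearly at the same rate as $\ld_{X,B}(\ord_{F_k})\to-\infty$. For instance, on $\P^2$ with $B=cH$, $c>1$, the divisors $E_k$ obtained by repeatedly blowing up the intersection of the newest exceptional with the strict transform of $H$ satisfy $\ld_{X,B}(\ord_{E_k})=k+1-kc$ while $\te_\om(\ord_{E_k})=k$, so the ratio tends to the finite value $1-c$. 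The step you flagged as ``the main obstacle'' is in fact an obstruction: Dirac masses alone cannot detect failure of sublcness.

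The paper's argument for (i) is of a completely different nature and exploits that $\sigma_\div$ is defined on the full convex set $\cM^\div$, not just its extreme points. The key input is Lemma~\ref{lem:distaff}: for $\mu_t:=(1-t)\mu_\triv+t\d_v$ one has $\|\mu_t\|_\om\lesssim t^{1+\e}\|v\|_\om$ with $\e>0$. If $\sigma_\div>-\infty$, then $t\,\ld_{X,B}(v)=\Ent_{X,B}(\mu_t)=\b(\mu_t)-\nabla_{K_{X,B}}\|\mu_t\|_\om\ge(\sigma_\div-C)\|\mu_t\|_\om=O(t^{1+\e})$, forcing $\ld_{X,B}(v)\ge 0$. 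The superlinear energy bound on the affine segment is precisely what bridges the gap between $\sigma_\div$ and $\sigma_\val$; there is no way to run the argument within $X^\div\hookrightarrow\cM^\div$.
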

Recall that the pair $(X,B)$ is \emph{sublc} (a short-hand for \emph{sub-log canonical}) if its log discrepancy function $\ld_{X,B}\colon X^\div\to\Q$ is non-negative; the pair $(X,B)$ is then lc in the usual sense iff $B$ is further effective. As an immediate consequence of Theorem~B, we get:

\begin{corC} If a polarized pair $(X,B;\om)$ is divisorially semistable, then $(X,B)$ is necessarily sublc. Furthermore, divisorial stability of $(X,B;\om)$ is an open condition on $\om\in\Amp(X)$.
\end{corC}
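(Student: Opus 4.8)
The plan is to derive Corollary C as a direct logical consequence of Theorem~B, so the main work is really just to unpack the two parts of that statement. First I would observe that the definition of the divisorial stability threshold $\sigma_\div(X,B;\om)$ as an infimum of $\b/\|\cdot\|_\om$ immediately translates the stability conditions into sign conditions on $\sigma_\div$: divisorial semistability is the condition $\sigma_\div(X,B;\om)\ge 0$, and divisorial stability is $\sigma_\div(X,B;\om)>0$. This dictionary is exactly what the sentence following~\eqref{equ:sigint} records, so I can invoke it freely.

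For the first assertion, I would argue by contraposition using Theorem~B(i). Suppose $(X,B;\om)$ is divisorially semistable. Then $\sigma_\div(X,B;\om)\ge 0$, and in particular $\sigma_\div(X,B;\om)>-\infty$. By Theorem~B(i), the latter is equivalent to $(X,B)$ being sublc, so $(X,B)$ is sublc, as claimed. Note that semistability is not even needed in full strength here: any polarized pair whose threshold is finite is sublc, and semistability just happens to force finiteness via nonnegativity.

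For the openness statement, I would fix a polarized pair $(X,B;\om_0)$ that is divisorially stable, so that $\sigma_\div(X,B;\om_0)>0$. By Theorem~B(ii), the map $\om\mapsto\sigma_\div(X,B;\om)$ is continuous on the ample cone $\Amp(X)$. The preimage of the open ray $(0,+\infty)\subset\R\cup\{-\infty\}$ under a continuous map is open, so the set
\[
  \{\om\in\Amp(X)\mid\sigma_\div(X,B;\om)>0\}
\]
is an open subset of $\Amp(X)$ containing $\om_0$. Since this set is precisely the locus of $\om$ for which $(X,B;\om)$ is divisorially stable, divisorial stability is an open condition on $\om$, which is the second claim.

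I do not expect any genuine obstacle here, since both halves are formal deductions from Theorem~B once the threshold dictionary is in place; the only mild subtlety is that $\sigma_\div$ takes values in $\R\cup\{-\infty\}$ rather than $\R$, so for the continuity argument one should be slightly careful that continuity is asserted as a map into this extended target and that the set $\{\sigma_\div>0\}$ is open regardless. The real mathematical content—establishing finiteness of the threshold exactly for sublc pairs, and its continuity in $\om$—lives entirely in Theorem~B, whose proof in turn rests on the $C^1$-regularity and Hölder estimates of Theorem~A; Corollary~C itself requires none of that machinery beyond its statement.
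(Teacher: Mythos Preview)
Your proof is correct and matches the paper's own approach: Corollary~C is stated there as an immediate consequence of Theorem~B, with the first part following from the implication ``$\sigma_\div\ge 0\Rightarrow\sigma_\div>-\infty\Rightarrow(X,B)$ sublc'' (Theorem~B(i)) and the second from the continuity of $\om\mapsto\sigma_\div(X,B;\om)$ (Theorem~B(ii)). Your remark about the extended target $\R\cup\{-\infty\}$ is harmless here, since by the first part the threshold is finite-valued on the (open) sublc locus, which contains any $\om_0$ at which divisorial stability holds.
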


Note that the last part of Corollary~C implies the Main Theorem above. 
The first part can be viewed as a version in our context of a celebrated result of Y.~Odaka~\cite{Oda} (see also~\cite{BHJ1}), to the effect that any polarized pair $(X,B;L)$ with $L\in\Pic(X)_\Q$ ample and $B$ effective that is K-semistable is necessarily lc. While the proof of the latter result relies on the MMP through the existence of log canonical blowups, the proof of Theorem~B~(i) is of an entirely different nature, and rests on an estimate for the energy on certain affine segments in $\cM^\div$.

Part~(ii) of Theorem~B is a consequence of a refined version of Theorem~A, involving H\"older estimates for the directional derivative $\om\mapsto\nabla_\theta\|\mu\|_\om$ of the energy (which actually show that $\om\mapsto\sigma_\div(X,B;\om)$ is locally H\"older continuous). While delicate, these estimates derive in a rather formal way from the Hodge index theorem; this is studied in~\cite{synthetic}, where versions of Theorem~A and Theorem~B~(ii) are shown to hold over an arbitrary valued (possibly Archimedean) field.
Theorem~B~(ii) should also be compared with the work~\cite{LiuYa} of Yaxiong Liu, who considers a similar threshold of \emph{valuative} stability, defined using only Dirac masses $\mu=\d_v$, $v\in X^\div$, and shows that this threshold is a continuous function on the ample cone. In fact, given any subset $M\subset\cM^\div$, one could consider the threshold defined by restricting the infimum in~\eqref{equ:sigint} to $M$, and our proof yields the continuity with respect to $\om\in\Amp(X)$, which recovers Liu's result. However, we cannot prove that uniform K-stability is an open condition\footnote{Note that openness of uniform K-stability was stated in~\cite{SD20}, but the article has been withdrawn due to a gap in the proof. See~\cite{Fujopen} for a partial result.} in this way, since the subset of $\cM^\div$ used to test the K-stability of $(X,L)$ depends on $L$. 

\medskip

In the log Fano case, the divisorial stability threshold essentially reduces to the \emph{$\d$-invariant} defined in~\cite{FO18,BlJ}. In fact, for any sublc polarized pair $(X,B;\om)$ such that $-K_{X,B}\equiv\la\om$ with $\la\in\R$, the identity $\nabla_\om\|\cdot\|_\om=\|\cdot\|_\om$ mentioned above yields the simpler expression
$$
\b_{X,B;\om}=\Ent_{X,B}+\nabla_{K_{X,B}}\|\cdot\|_\om=\Ent_{X,B}-\la\|\cdot\|_\om. 
$$
By linearity of the entropy and convexity of the energy, this implies 
$$
\sigma_\div(X,B;\om)=\d(X,B;\om)-\la
$$
where 
$$
\d(X,B;\om)=\inf_{v\in X^\div\setminus\{v_\triv\}}\frac{\ld_{X,B}(v)}{\|v\|_\om}
$$
coincides with the usual $\d$-invariant. As a consequence, divisorial stability, (uniform) valuative stability, and uniform K-stability are all equivalent in the log Fano case (see Corollary~\ref{cor:divding}). 
%
%
\subsection*{K-stability for filtrations}
Next we discuss the relation between divisorial stability and the stability notions by C.~Li~\cite{Li22}. We note that Li assumed that $X$ is smooth, as he relied on the preprint~\cite{nakstabold}, and specifically continuity of envelopes, which is not yet known in the singular case. Using~\cite{nakstab1}, we are able to bypass this obstacle.

Consider thus a polarized pair $(X,B;\om)$, and assume that $(X,B)$ is \emph{subklt}, \ie its log discrepancy function $\ld_{X,B}\colon X^\div\to\Q$ is positive outside $\{v_\triv\}$. This function then admits a maximal lsc extension $\ld_{X,B}\colon X^\an\to [0,+\infty]$, which coincides with the one constructed in~\cite{JM,BdFFU} on the subspace $X^\val\subset X^\an$ of valuations on $k(X)$, and is infinite outside it (see~Appendix~\ref{sec:logdisc}). We may thus define an lsc extension $\Ent_{X,B}\colon\cM\to[0,+\infty]$ of the entropy functional by setting $\Ent_{X,B}(\mu):=\int\ld_{X,B}\,\mu$. This yields in turn an extension $\b\colon\cM^1\to\R\cup\{+\infty\}$ of the $\b$-functional to the space $\cM^1\subset\cM$ of measures $\mu$ of \emph{finite energy}, \ie $\|\mu\|_\om<+\infty$, which we characterize as the maximal lsc extension of $\b\colon\cM^\div\to\R$ with respect to the natural (strong) topology of $\cM^1$. In particular, the divisorial stability threshold can be computed using all measures in $\cM^1$, \ie 
\begin{equation}\label{equ:sigM1int}
\sigma_\div(X,B;\om)=\inf_{\mu\in\cM^1\setminus\{\mu_\triv\}}\frac{\b(\mu)}{\|\mu\|_\om}=\inf_{\mu\in\cM^1,\,\|\mu\|_\om=1}\b(\mu),
\end{equation}
where the last equality holds by homogeneity with respect to the scaling action of $\R_{>0}$. 

Assume further $\om=c_1(L)$ with $L\in\Pic(X)_\Q$ ample. As in~\cite{nakstab1}, denote by $\cN_\R$ the set of \emph{norms} $\chi\colon R(X,dL)\to\R\cup\{+\infty\}$ on the section ring of $(X,dL)$, with $d\ge 1$ sufficiently divisible (depending on $\n$). Such norms are in 1--1 correspondence with the more commonly used (multiplicative, graded, linearly bounded) filtrations, via the inverse maps
$$
F^\la R_m:=\{s\in R_m\mid \n(s)\ge \la\},\quad\n(s):=\max\{\la\in\R\mid s\in F^\la R_m\}
$$
where $R_m:=\Hnot(X,mL)$ for $m$ sufficiently divisible. The space $\cN_\R$ comes with a translation action $(c,\n)\mapsto\n+c$ of $\R$, such that $(\n+c)(s):=\n(s)+mc$ for $s\in R_m$.  

The Rees construction yields an identification of the subset $\cT_\Z\subset\cN_\R$ of $\Z$-valued norms of finite type $\n$ with the set of ample test configurations for $(X,L)$. Each $\n\in\cT_\Z$ thus defines a divisorial measure $\MA(\n)\in\cM^\div$, called the \emph{Monge--Amp\`ere measure} of $\n$. By~\cite{nakstab1}, the space $\cN_\R$ is equipped with a natural pseudometric $\dd_1$, with respect to which $\cT_\Z$ is dense, and the Monge--Amp\`ere operator admits a unique $\dd_1$-continuous extension $\MA\colon\cN_\R\to\cM^1$, which is invariant under the translation action of $\R$. We may now define the \emph{Mabuchi K-energy} and the \emph{minimum norm} of any $\n\in\cN_\R$ by 
$$
\mab(\n):=\b(\MA(\n)),\quad\|\n\|:=\|\MA(\n)\|_L.
$$
\begin{thmD} The divisorial stability threshold of any polarized subklt pair $(X,B;L)$ satisfies
$$
\sigma_\div(X,B;L)=\inf_{\n\in\cN_\R,\,\|\n\|>0}\frac{\mab(\n)}{\|\n\|}.
$$
\end{thmD}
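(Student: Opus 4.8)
The plan is to unwind the definitions of $\mab(\n)$ and $\|\n\|$ and reduce the asserted identity to a comparison of the infimum of the ratio $\b/\|\cdot\|_L$ over three nested sets of measures. By definition $\mab(\n)=\b(\MA(\n))$ and $\|\n\|=\|\MA(\n)\|_L$, and since the energy $\|\mu\|_L$ vanishes exactly at $\mu=\mu_\triv$, the constraint $\|\n\|>0$ is equivalent to $\MA(\n)\neq\mu_\triv$. Hence the right-hand side of Theorem~D is nothing but
\[
\inf_{\mu\in\MA(\cN_\R),\,\|\mu\|_L>0}\frac{\b(\mu)}{\|\mu\|_L},
\]
the infimum of $\b/\|\cdot\|_L$ over the image of the Monge--Amp\`ere operator. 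So the whole statement amounts to showing that this image is large enough to compute $\sigma_\div$, yet small enough to stay inside $\cM^1$.

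First I would establish the chain of inclusions $\cM^\div\subseteq\MA(\cN_\R)\subseteq\cM^1$. The right inclusion is immediate from the construction recalled above, namely that $\MA$ takes values in the finite-energy space $\cM^1$. For the left inclusion one must show that every divisorial measure is the Monge--Amp\`ere measure of some norm; this is the crux of the proof. It rests on the pluripotential theory of~\cite{nakstab1}: solvability of the non-Archimedean Monge--Amp\`ere equation produces, for divisorial $\mu$, a finite-energy potential with $\MA=\mu$, and the regularity of this potential for piecewise-linear (model) data identifies it with the Fubini--Study metric $\FS(\n)$ of a norm $\n\in\cN_\R$. It would suffice, and may be cleaner, to invoke outright the surjectivity of $\MA\colon\cN_\R\to\cM^1$, which gives both inclusions at once.

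With the inclusions in hand, the theorem follows formally. Taking the infimum of the fixed functional $\b/\|\cdot\|_L$ over the three nested sets (each with the trivial measure removed) and using monotonicity of the infimum under inclusion gives
\[
\inf_{\mu\in\cM^1\setminus\{\mu_\triv\}}\frac{\b(\mu)}{\|\mu\|_L}
\ \le\ \inf_{\mu\in\MA(\cN_\R),\,\|\mu\|_L>0}\frac{\b(\mu)}{\|\mu\|_L}
\ \le\ \inf_{\mu\in\cM^\div\setminus\{\mu_\triv\}}\frac{\b(\mu)}{\|\mu\|_L}.
\]
The outer two quantities both equal $\sigma_\div(X,B;L)$: the right one by the defining formula~\eqref{equ:sigint}, the left one by the reformulation~\eqref{equ:sigM1int} valid in the subklt case, where $\b$ denotes the maximal lsc extension to $\cM^1$. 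The middle quantity is exactly the right-hand side of Theorem~D, so all three coincide. Here it is essential that the extended $\b$ restricts to the original $\b$ on $\cM^\div$, so that evaluating $\mab(\n)$ at a divisorial $\MA(\n)$ is unambiguous.

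I expect the main obstacle to be the inclusion $\cM^\div\subseteq\MA(\cN_\R)$, that is, realizing an arbitrary divisorial measure as the Monge--Amp\`ere measure of a norm. Establishing surjectivity of $\MA$ onto finite-energy measures, or at least onto $\cM^\div$, is where the genuine analytic input from~\cite{nakstab1,trivval} is needed; once it is available, the remaining argument is a purely formal monotonicity of infima combined with the two characterizations of $\sigma_\div$.
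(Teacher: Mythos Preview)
Your proposal is correct and takes essentially the same route as the paper: both sandwich the norm-infimum between $\inf_{\cM^1}$ and $\inf_{\cM^\div}$, using the bijection $\MA\colon\cN^\div_\R/\R\simto\cM^\div$ from~\cite{nakstab1} for the inclusion $\cM^\div\subseteq\MA(\cN_\R)$, and the subklt identity $\sigma_\div=\inf_{\cM^1}\b/\|\cdot\|$ (Remark~\ref{rmk:sigM1}, via the entropy approximation Theorem~\ref{thm:entapprox}) for the other side. Note that the full surjectivity of $\MA\colon\cN_\R\to\cM^1$ you float as a shortcut is not claimed in the paper and is not needed---only surjectivity onto $\cM^\div$ is used.
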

In particular, $(X,B;L)$ is divisorially semistable (resp.~stable) iff $\mab\ge 0$ on $\cN_\R$ (resp.~$\mab\ge\e\|\cdot\|$ for some $\e>0$), which respectively correspond to K-semistability and uniform K-stability for filtrations, as considered in~\cite{Li22}. Note that this notion of K-stability for filtrations a priori differs from the one in~\cite{Sze} and relies on working on the full Berkovich space $X^{\an}$ rather than just $X^{\div}$. 

In view of~\eqref{equ:sigM1int}, the main step in the proof of Theorem D consists in showing that the image of $\MA\colon\cN_\R\to\cM^1$ contains the set $\cM^\div$ of divisorial measures. This follows from~\cite{nakstab1}, where it is proved that the Monge--Amp\`ere operator induces a 1--1 map 
$$
\MA\colon\cN^\div_\R/\R\simto\cM^\div,
$$
where $\cN^\div_\R\subset\cN_\R$ denotes the set of \emph{divisorial norms}, of the form $\n=\min_i\{\n_{v_i}+c_i\}$ for a finite set $(v_i)$ in $X^\div$ and $c_i\in\R$. We then have $\MA(\n)=\sum_i m_i\d_{v_i}$ for the same set of valuations $(v_i)$, where $m_i=m_i(c)$ is a certain (non-linear) function of $c=(c_i)$. 

As a consequence, the infimum in Theorem D can be computed on the space $\cN^\div_\R$ of divisorial norms; we show that it can be further restricted to the subspace $\cN^\div_\Q$ of \emph{rational divisorial norms}, whose coefficients $c_i$ above can be chosen rational. By~\cite{nakstab1}, such norms arise from arbitrary (\ie not necessarily ample) test configuration for $(X,L)$, called \emph{models} in~\cite{Li22}, and it follows that divisorial stability is also equivalent to \emph{uniform K-stability for models} in the sense of C.~Li; see~\S\ref{sec:uKstabmod}.

A fortiori, divisorial stability implies uniform K-stability, as we already noted above. The two notions are equivalent if a certain \emph{entropy regularization conjecture} holds, as first formulated in~\cite{nakstabold}. A stronger, and more precise conjecture, goes as follows:
\begin{conjO}
  Let $\n\in\cN^\div_\Q$ be a rational divisorial norm, and let $(\n_d)$ be its sequence of canonical approximants, where $\n_d$ is generated in degree $1$ by the restriction of $\chi$ to $\Hnot(X,dL)$ for $d$ sufficiently divisible. Then $\lim_d\Ent_{X,B}(\MA(\n_d))=\Ent_{X,B}(\MA(\n))$. 
\end{conjO}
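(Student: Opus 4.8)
The plan is to establish the two inequalities $\liminf_d\Ent_{X,B}(\MA(\n_d))\ge\Ent_{X,B}(\MA(\n))$ and $\limsup_d\Ent_{X,B}(\MA(\n_d))\le\Ent_{X,B}(\MA(\n))$ separately, the first being soft and the second carrying all the content. First I would record the basic convergence of the canonical approximants. Since $\n\in\cN^\div_\Q$, the approximants $(\n_d)$ converge to $\n$ in the pseudometric $\dd_1$ (and increase to it along sufficiently divisible $d$), as part of the theory of \cite{nakstab1}; by $\dd_1$-continuity of the Monge--Amp\`ere operator this gives $\MA(\n_d)\to\MA(\n)$ strongly in $\cM^1$, in particular weakly as measures and with $\|\MA(\n_d)\|_L\to\|\MA(\n)\|_L$. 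By the continuity properties of the energy and its directional derivatives underlying Theorems~A and~B~(ii), the energy-derivative term should also converge, $\nabla_{K_{X,B}}\|\MA(\n_d)\|_L\to\nabla_{K_{X,B}}\|\MA(\n)\|_L$; hence the conjecture is \emph{equivalent} to the continuity of the Mabuchi K-energy along canonical approximants, $\mab(\n_d)\to\mab(\n)$, which shows that the entire difficulty is concentrated in the entropy.

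For the easy inequality, the subklt hypothesis makes $\ld_{X,B}\colon X^\an\to[0,+\infty]$ a nonnegative lsc function, so $\Ent_{X,B}(\mu)=\int\ld_{X,B}\,\mu$ is lsc for the weak topology on $\cM$; weak convergence $\MA(\n_d)\to\MA(\n)$ then yields $\liminf_d\Ent_{X,B}(\MA(\n_d))\ge\Ent_{X,B}(\MA(\n))$ at once.

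The hard inequality is the upper bound $\limsup_d\Ent_{X,B}(\MA(\n_d))\le\Ent_{X,B}(\MA(\n))$, i.e.\ the absence of upward jumps of entropy in the limit. Writing $\MA(\n)=\sum_i m_i\d_{v_i}$ and $\MA(\n_d)=\sum_j m_{j,d}\d_{w_{j,d}}$, I would split the support of $\MA(\n_d)$ into a \emph{good} part, consisting of valuations $w_{j,d}$ clustering near the finite set $\{v_i\}$, and a \emph{spurious} part carrying the remaining mass. For the good part one must upgrade the lower semicontinuity bound $\liminf_d\ld_{X,B}(w_{j,d})\ge\ld_{X,B}(v_i)$ to genuine convergence of the weighted log discrepancies; this ought to follow because $\n$ is divisorial on a fixed model $Y$ carrying all the $v_i$, so the $w_{j,d}$ arise from an explicit quasi-monomial construction on refinements of $Y$, along which both masses and log discrepancies can be tracked. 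The genuine obstacle is the spurious part: weak convergence forces $\sum_{\mathrm{sp}}m_{j,d}\to0$, but to conclude one needs $\sum_{\mathrm{sp}}m_{j,d}\,\ld_{X,B}(w_{j,d})\to0$, which requires preventing the log discrepancies of the spurious central-fibre valuations from growing faster than their masses decay.

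The main obstacle is therefore a \emph{uniform sub-log-canonicity} statement along the canonical approximation: an upper bound on $\sum_j m_{j,d}\,\ld_{X,B}(w_{j,d})$, uniform in $d$, whose spurious contribution tends to $0$. I see two routes. The first, which I would try first precisely because it avoids any finite-generation or boundedness input, is to dualize the energy estimates on affine segments of $\cM^\div$ used in the proof of Theorem~B~(i): there such estimates bound the entropy from \emph{below} by the energy, and a companion estimate, combined with $\|\MA(\n_d)\|_L\to\|\MA(\n)\|_L$, would bound the entropy of the approximants from above. The second route is to control the birational models $Y_d$ supporting the $w_{j,d}$ by MMP/boundedness arguments, confining the spurious valuations to a bounded family on which $\ld_{X,B}$ stays bounded; this is closer to the Fano-case techniques but is exactly the input one hopes to dispense with for a general polarization. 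Making either estimate quantitative enough to kill the spurious entropy is the crux, and is the reason the statement remains conjectural here.
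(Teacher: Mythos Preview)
The statement you are attempting to prove is a \emph{conjecture} in the paper, not a theorem: the paper gives no proof and explicitly leaves it open, only remarking that it holds trivially when $X$ is a curve or in the toric setting (where $\n$ is automatically of finite type, so $\n_d=\n$ for $d$ divisible enough). There is therefore no ``paper's own proof'' to compare against.

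That said, your reduction is correct and matches exactly what the paper records around Conjecture~\ref{conj:entapp}: the convergence $\dd_1(\n_d,\n)\to 0$ (from~\cite[Theorem~3.18]{nakstab1}) gives $\MA(\n_d)\to\MA(\n)$ strongly, hence $\nabla_{K_{X,B}}\|\MA(\n_d)\|_L\to\nabla_{K_{X,B}}\|\MA(\n)\|_L$ by strong continuity of the twisted energy, so the conjecture is equivalent to $\mab(\n_d)\to\mab(\n)$ and the entire content sits in the entropy term. Your easy inequality $\liminf_d\Ent_{X,B}(\MA(\n_d))\ge\Ent_{X,B}(\MA(\n))$ via lower semicontinuity of $\mu\mapsto\int\ld_{X,B}\,\mu$ is indeed straightforward and valid.

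However, your proposal does not actually prove the hard inequality, and you acknowledge as much in your final sentence. Neither of your two suggested routes is carried out: the first (``dualizing'' the affine-segment energy estimate from Theorem~B~(i)) is vague---that estimate bounds $\|\mu_t\|$ from \emph{above} by $t^{1+\e}\|\mu\|$, and there is no evident mechanism by which it yields an \emph{upper} bound on entropy---while the second (MMP/boundedness control on the models $Y_d$) is precisely the kind of input the paper hopes to avoid for general polarizations, and in any case you do not supply it. The obstruction you correctly isolate---spurious central-fibre valuations with small mass but potentially unbounded log discrepancy---is genuine and is the reason the statement remains conjectural. What you have written is a sound research outline, not a proof.
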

Granted this conjecture, divisorial stability is the same as uniform K-stability---and the uniform YTD conjecture thus holds for any polarized complex manifold, as discussed in the beginning of the introduction. See also~\cite{Li23}. 

In the Fano case, divisorial stability and valuative stability are equivalent. In the general case, we do not expect this to be true, but one can ask whether it is enough to consider divisorial measures with a fixed bound on the cardinality of their supports. For example,~\cite[Proposition~5.3.1]{Dontoric} shows that on a toric surface, it suffices to consider measures supported on a set of cardinality at most two.
%
%
\subsection*{Structure of the paper}
The article is organized as follows. 

\begin{itemize}
\item Section~\ref{sec:prelim} recalls some aspects of our previous work~\cite{trivval,nakstab1} of relevance for the present paper. 
\item Section~\ref{sec:diffen} is devoted to the proof of Theorem~A (cf.~Theorem~\ref{thm:diffen}), along with some key estimates that will lead to the proof of Theorem~B~(ii). 
\item Section~\ref{sec:entropy} studies the entropy functional of a pair, the associated $\d$-invariant, and its relation to Ding-stability. Theorem~\ref{thm:delta} is the main ingredient in the proof of Theorem~B~(i). 
\item Section~\ref{sec:Divstab} introduces the main concepts of this paper, the $\b$-invariant of a divisorial measure, and the associated notion of divisorial stability. It completes the proof of Theorems~A and B. 
\item Section~\ref{sec:DivKstab} compares divisorial stability and K-stability. Theorem~D is proved, and the entropy regularization conjecture is discussed. Along the way, we prove the Main Theorem above.
\item Finally, Appendix~\ref{sec:logdisc} reviews the properties of the log discrepancy function of a pair, and its extension to the Berkovich space in the subklt case. 
 \end{itemize}

%
%
\begin{ackn}
The authors would like to thank R.~Berman, H.~Blum, R.~Dervan, E.~di Nezza, A.~Ducros, V.~Guedj, M.~Hattori, E.~Inoue, E.~Legendre, C.~Li, Yaxiong Liu, Y.~Odaka, R.~Reboulet, L.~Sektnan, Z.~Sj\"ostr\"om Dyrefelt, A.~Zeriahi, and K.~Zhang for fruitful discussions and useful comments. The first author was partially supported by the ANR grant GRACK.\@ The second author was partially supported by NSF grants DMS-1600011, DMS-1900025, DMS-2154380, and the United States---Israel Binational Science Foundation.
\end{ackn}

%
%
%
\section{Preliminaries}\label{sec:prelim}
The main purpose of this preliminary section is to recall results from non-Archimedean pluripotential theory, as developed in~\cite{trivval}, which form the building blocks of our approach. 
%
%
%
\subsection{Notation and conventions}\label{sec:notation}

\begin{itemize}

\item We say that a function $f\colon Z\to\R$ defined on a set $Z$ endowed with an action of $\R_{>0}$ (or a subgroup thereof) is \emph{homogeneous} if it satisfies the equivariance property $f(t\cdot x)=tf(x)$ for $t\in\R_{>0}$ and $x\in Z$. 

\item A \emph{net} in a set $Z$ is a family $(x_i)_{i\in I}$ of elements of $Z$ indexed by a directed set, \ie a partially preordered set in which any two elements are dominated by a third one.   

\item If $Z$ is a Hausdorff topological space, and $\f\colon Z\to\R\cup\{\pm\infty\}$ is any function, then the \emph{usc regularization} $\f^\star$ of $\f$ is the smallest usc function with $\f^\star\ge\f$. Concretely, $\f^\star(x)=\limsup_{y\to x}\f(y)$. The \emph{lsc regularization} is defined by $\f_\star=-(-\f)^\star$. 
   
\item For $x,y\in\R_+$, $x\lesssim y$ means $x\le C_n  y$ for a constant $C_n>0$ only depending on $n$, and $x\approx y$ if $x\lesssim y$ and $y\lesssim x$. Here $n$ will be the dimension of a fixed variety $X$ over $k$.

\end{itemize}
%
\subsection{Quasi-metric spaces}

A \emph{quasi-metric} on a set $Z$ is a function $d\colon Z\times Z\to\R_{\ge 0}$ that is symmetric, separates points, and satisfies the \emph{quasi-triangle inequality}
$$
\e\,d(x,y)\le d(x,z)+d(z,y)
$$
for some constant $\e>0$. A quasi-metric space $(Z,d)$ comes with a Hausdorff topology, and even a uniform structure. In particular, Cauchy sequences and completeness make sense for $(Z,d)$. Such uniform structures have a countable basis of entourages, and are thus metrizable, by general theory. 

A continuous function $f\colon Z\to\R$ on a quasi-metric space is \emph{uniformly continuous} if, for each $\e>0$, there exists $\d>0$ such that $d(x,y)\le\d\Longrightarrow |f(x)-f(y)|\le\e$ for any $x,y\in Z$. The following standard result will be used several times in the paper:
\begin{lem}\label{lem:unifext} Let $(Z,d)$ be a quasi-metric space, $D\subset Z$ a dense subset, and $f\colon D\to\R$ a uniformly continuous function. Then $f$ admits a unique uniformly continuous extension $f\colon Z\to\R$. 
\end{lem}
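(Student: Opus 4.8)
The plan is to run the standard ``extension by completeness'' argument, the only new feature being the bookkeeping forced by the quasi-triangle inequality. Since the uniform structure attached to $d$ is metrizable, it suffices to work with sequences; and since the target $\R$ is complete, the extension can be built pointwise by taking limits along approximating sequences. (Alternatively, one could simply invoke the general extension theorem for uniformly continuous maps into a complete separated uniform space, but I will spell out the construction.)

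First I would fix $x\in Z$ and, using density of $D$, a sequence $(x_n)$ in $D$ with $x_n\to x$. The first step is to check that $(x_n)$ is Cauchy for $d$: given $\delta>0$, choose $N$ with $d(x_n,x)<\e\delta/2$ for $n\ge N$, so that the quasi-triangle inequality $\e\,d(x_n,x_m)\le d(x_n,x)+d(x,x_m)$ gives $d(x_n,x_m)<\delta$. By uniform continuity, $f$ sends Cauchy sequences in $D$ to Cauchy sequences in $\R$: for $\eta>0$ pick $\delta>0$ with $d(u,v)\le\delta\Rightarrow|f(u)-f(v)|\le\eta$ for $u,v\in D$, and apply the Cauchy property of $(x_n)$. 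As $\R$ is complete, $\lim_n f(x_n)$ exists, and I would set $f(x)$ equal to this limit. Independence of the chosen sequence follows by interleaving two approximating sequences (or directly from the estimate below), and taking the constant sequence shows that the new $f$ genuinely extends the old one on $D$.

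It remains to prove that the extension is uniformly continuous, and this is where the quasi-triangle constant must be tracked. Given $x,y\in Z$, pick $x_n\to x$ and $y_n\to y$ in $D$; applying the quasi-triangle inequality twice (once to $\e\,d(x_n,y_n)\le d(x_n,x)+d(x,y_n)$, once to $\e\,d(x,y_n)\le d(x,y)+d(y,y_n)$) yields
$$
\e^2 d(x_n,y_n)\le \e\,d(x_n,x)+d(x,y)+d(y,y_n),
$$
so that $\limsup_n d(x_n,y_n)\le\e^{-2}d(x,y)$. Thus, given $\eta>0$, if $\delta_0>0$ is a modulus of uniform continuity of $f$ on $D$ at level $\eta$, then $d(x,y)\le\tfrac12\e^2\delta_0$ forces $d(x_n,y_n)\le\delta_0$ for $n$ large, whence $|f(x_n)-f(y_n)|\le\eta$; letting $n\to\infty$ gives $|f(x)-f(y)|\le\eta$. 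Uniqueness is then immediate, since any two continuous extensions agree on the dense set $D$, hence everywhere as $Z$ is Hausdorff.

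The argument is essentially textbook, with the completeness of $\R$ doing the essential work; the one point I would treat most carefully, and which is the only genuine deviation from the metric case, is the propagation of the constant $\e$ through the iterated quasi-triangle inequality in the last paragraph.
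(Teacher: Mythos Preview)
Your proof is correct and follows essentially the same approach as the paper's: define the extension via limits of approximating sequences in $D$, using the quasi-triangle inequality to verify the Cauchy property and uniform continuity to push this through $f$. The paper is terser (it simply asserts that independence of the approximating sequence and uniform continuity of the extension are ``easy to check''), whereas you spell out the iterated quasi-triangle bookkeeping explicitly.
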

\begin{proof} Uniqueness is clear, by density of $D$. Pick $x\in Z$, and a sequence $(x_i)$ in $D$ such that $x_i\to x$. Then $\e d(x_i,x_j)\le d(x_i,x)+d(x,x_j)$ tends to $0$ as $i,j\to\infty$. By uniform continuity of $f$, it follows that $(f(x_i))$ is a Cauchy sequence, which thus admits a limit $f(x)\in\R$. Using again uniform continuity, it is further easy to check that the limit is independent of the choice of $(x_i)$, and that the extension constructed this way is uniformly continuous. 
\end{proof}

%
\subsection{Positive numerical classes}\label{sec:voltr}
In the entire paper, we work over an algebraically closed field $k$ of characteristic $0$, and $X$ denotes an irreducible projective variety over $k$, \ie an integral projective $k$-scheme (not necessarily normal for now). We set $n:=\dim X$. 

Denote by $\Num(X)$ the finite dimensional $\R$-vector space of numerical classes of $\R$-Cartier divisors on $X$. Ample classes form a nonempty open convex cone $\Amp(X)\subset\Num(X)$. We generally denote by $\theta$ an element of $\Num(X)$, and by $\om$ an element of $\Amp(X)$. The closure $\Nef(X)\subset\Num(X)$ of $\Amp(X)$ is the closed convex cone of nef classes. We denote by $\ge$ the corresponding partial order on $\Num(X)$, \ie
\begin{equation}\label{equ:neforder}
\theta\ge\theta'\Longleftrightarrow\theta-\theta'\in\Nef(X).
\end{equation}
Each $\om\in\Amp(X)$ induces a norm $\|\cdot\|_\om$ on $\Num(X)$, defined by 
\begin{equation}\label{equ:normom}
\|\theta\|_\om:=\sup\{s\ge 0\mid -s\om\le\theta\le s\om\}.
\end{equation}
We will occasionally use the \emph{Thompson metric} of the open convex cone $\Amp(X)$, defined by
\begin{equation}\label{equ:Thompson}
\d(\om,\om'):=\max\{\d\ge 0\mid e^{-\d}\om\le\om'\le e^\d\om\}.
\end{equation}
It is locally equivalent to the metric on $\Amp(X)$ induced by any norm on $\Num(X)$ (see~\cite[Lemma~3]{Tho}). 

 The \emph{volume} of $\om\in\Amp(X)$ is defined as
\begin{equation}\label{equ:vol}
V_\om=\vol(\om)=(\om^n). 
\end{equation}
We define the \emph{trace} of $\theta\in\Num(X)$ with respect to $\om\in\Amp(X)$ as 
\begin{equation}\label{equ:traceom}
\tr_\om(\theta):=n\frac{(\om^{n-1}\cdot\theta)}{(\om^n)}.
\end{equation}
Note that $\tr_\om(\theta)$ is linear with respect to $\theta$, with
\begin{equation}\label{equ:trbound}
|\tr_\om(\theta)|\le n\|\theta\|_\om.
\end{equation}
The trace computes the logarithmic derivative of the volume, \ie 
\begin{equation}\label{equ:logvol}
V_\om^{-1}V_{\om+t\theta}=1+\tr_\om(\theta)t+O(t^2),\quad t\to 0. 
\end{equation}

\begin{exam} When $k=\C$ and $X$ is smooth, $\tr_\om(c_1(X))=-\tr_\om(K_X)$ computes the mean value $\bar S$ of the scalar curvature  of any K\"ahler form representing $\om$. 
\end{exam}

The \emph{volume function} 
$$
\vol=\vol_X\colon\Num(X)\to\R_{\ge 0}
$$ 
is continuous, positive precisely on the open convex cone $\mathrm{Big}(X)\subset\Num(X)$ of big classes, and coincides with~\eqref{equ:vol} on the ample cone. It is further homogeneous, log concave on the pseudoeffective cone $\Psef(X)$, and of class $C^1$ on $\mathrm{Big}(X)$, with derivative at $\a\in\mathrm{Big}(X)$ given by
\begin{equation}\label{equ:vol'}
\nabla_\theta\vol(\a):=\frac{d}{dt}\bigg|_{t=0}\vol(\a+t\theta)=n\langle\a^{n-1}\rangle\cdot\theta
\end{equation}
for all $\theta\in\Num(X)$, where $\langle\a^{n-1}\rangle\in\mathrm{N}_1(X)$ is a positive intersection product (see~\cite{diskant}). The latter is in fact defined for any $\a\in\Psef(X)$, with $\langle\a^{n-1}\rangle=\lim_{\e\to 0_+}\langle(\a+\e\om)^{n-1}\rangle$, and we set $\langle\a^{n-1}\rangle=0$ for $\a\in\Num(X)\setminus\Psef(X)$. This makes sense of $\nabla_\theta\vol(\a):=n\langle\a^{n-1}\rangle\cdot\theta$ for any $\a\in\Num(X)$. Note, however, that this might not compute the directional derivatives of $\vol$ when $\a$ lies on the boundary of the big cone. 
%
%
%
\subsection{Berkovich analytification and psh functions} 
The \emph{Berkovich analytification} $X^\an$ of $X$ (with respect to the trivial absolute value on $k$) is a compact Hausdorff topological space, whose points are \emph{semivaluations} on $X$, \ie valuations $v\colon k(Y)^\times\to\R$ for a subvariety $Y\subset X$. It contains as a dense subset the space $X^\val$ of actual valuations on $X$, endowed with the topology of pointwise convergence as maps $k(X)^\times\to\R$. 

The space $X^\an$ comes with a continuous scaling action $\R_{>0}\times X^\an\to X^\an$ $(t,v)\mapsto tv$, which fixes the \emph{trivial valuation} $v_\triv\in X^\val$, defined by $v_\triv\equiv 0$ on $k(X)^\times$. 

The subspace $X^\div\subset X^\val$ of \emph{divisorial valuations} is already dense in $X^\an$. Each $v\in X^\div$ is of the form $v=t\ord_F$ for a prime divisor $F$ on a smooth birational model $Y\to X$ and $t\in\Q_{\ge 0}$ (the case $t=0$ corresponding to $v_\triv$). 

We denote by $\Cz(X)$ the Banach space of continuous functions $\f\colon X^\an\to\R$, endowed with the supnorm, and by $\Cz(X)^\vee$ its topological dual, \ie the space of (signed) Radon measures on $X^\an$. It contains the subspace $\cM=\cM(X)\subset\Cz(X)^\vee$ of Radon probability measures, which is convex and compact for the weak topology. The scaling action of $\R_{>0}$ on $X^\an$ induces an action $(t,\mu)\mapsto t_\star\mu$ on $\Cz(X)^\vee$ preserving $\cM$. 

The space $\Cz(X)$ contains a dense subspace $\PL(X)$ of \emph{piecewise linear (PL) functions}, see~\cite[\S2]{trivval}. Among various possible descriptions, each such function is of the form $\f_D\in\PL(X)$ for a vertical $\Q$-Cartier divisor $D$ on some test configuration $\cX\to\A^1$ for $X$, where $\f_D(v)=\sigma(v)(D)$ for $v\in X^\an$, with $\sigma\colon X^\an\to\cX^\an$ denoting Gauss extension. This construction is invariant under pullback to a higher test configuration, and one can thus always assume that $\cX$ dominates the trivial test configuration $X\times\A^1$.

To each $\om\in\Amp(X)$, one associates a class $\PSH(\om)$ of \emph{$\om$-psh functions} $\f\colon X^\an\to\R\cup\{-\infty\}$, $\f\not\equiv-\infty$, defined in~\cite[\S4]{trivval}, and characterized as follows:
\begin{itemize}
\item a PL function $\f$, written $\f=\f_D$ as above, is $\om$-psh iff $\om_\cX+D$ is relatively nef on $\cX$, with $\om_\cX\in\Num(\cX/\A^1)$ the pullback of $\om$ to $\cX$;
\item each $\f\in\PSH(\om)$ can be written as the pointwise limit of a decreasing net\footnote{In fact, sequences turn out to be enough, but this will not be used in this paper.} $(\f_i)$ in $\PL(X)\cap\PSH(\om)$. 
\end{itemize}
Each $\f\in\PSH(\om)$ is usc, and hence bounded above. Further, $\sup\f=\f(v_\triv)$. Define $\tee=\tee_\om\colon X^\an\to [0,+\infty]$ by 
$$
\tee(v):=\sup_{\f\in\PSH(\om)}\{\sup\f-\f(v)\}. 
$$
A simple approximation argument yields $\te(v)=\sup_{\f\in\PL\cap\PSH(\om)}\{\sup\f-\f(v)\}$, which shows that $\te$ is lsc; further, 
$$
\te(v)=0\Leftrightarrow v=v_\triv,\text{ and  }\te(tv)=t\te(v),\, t\in\R_{>0}.
$$
The (Borel) set
\begin{equation}\label{equ:Xlin}
X^\lin:=\{v\in X^\an\mid\te(v)<\infty\}
\end{equation}
is independent of $\om$. It is contained in $X^\val$, and a point $v\in X^\an$ lies in $X^\lin$ iff $\f(v)>-\infty$ for all $\f\in\PSH(\om)$, \ie iff $\{v\}$ is \emph{nonpluripolar}.  

\begin{exam} If $\om=c_1(L)$ with $L\in\Pic(X)_\Q$ ample, then
\begin{equation}\label{equ:Tsec}
\te(v)=\sup\left\{m^{-1}v(s)\mid s\in\Hnot(X,mL)\setminus\{0\}\right\}=\sup\{v(D)\mid D\in |L|_\Q\},
\end{equation}
and hence $v\in X^\lin$ iff $v$ is a \emph{valuation of linear growth} in the sense of~\cite{BKMS}. 
\end{exam}

\begin{exam} If $v\in X^\div$ is written $v=t\ord_F$ for a prime divisor $F$ on a smooth model $\pi\colon Y\to X$, then  
\begin{equation}\label{equ:Tthresh}
\te(v)=t\sup\left\{\la\in\R_{>0}\mid \pi^\star\om-\la F\in\Psef(Y)\right\}<\infty. 
\end{equation}
\end{exam}
In particular, every $\om$-psh function is finite-valued on $X^\div\subset X^\lin$; it is further determined by its restriction to $X^\div$, and we equip the space $\PSH(\om)$ with the (Hausdorff) topology of pointwise convergence on $X^\div$. The scaling action of $\R_{>0}$ on $X^\an$ induces an action on the topological space $\PSH(\om)$, denoted by $(t,\f)\mapsto t\cdot\f$ where
\begin{equation}\label{equ:scalefunc}
(t\cdot\f)(v):=t\f(t^{-1}v). 
\end{equation}

%
\subsection{Energy pairing and functions of finite energy}\label{sec:enpairing}
The \emph{energy pairing} is first defined as a symmetric $(n+1)$-linear map on $\Num(X)\times\PL(X)$, that takes a tuple $(\theta_i,\f_i)\in\Num(X)\times\PL(X)$, $i=0,\dots,n$, to 
\begin{equation}\label{equ:pairing}
(\theta_0,\f_0)\inter(\theta_n,\f_n):=(\theta_{0,\bar\cX}+D_0)\inter(\theta_{n,\bar\cX}+D_n)\in\R,
\end{equation}
where $\cX\to\A^1$ is a high enough test configuration such that $\f_i=\f_{D_i}$ for some vertical $\Q$-Cartier divisor $D_i$ on $\cX$, $\theta_{i,\bar\cX}\in\Num(\bar\cX)$ is the pull-back of $\theta_i$ to the canonical compactification $\bar\cX\to\P^1$, and the right-hand side is an intersection number computed on $\bar\cX$. See~\cite[\S3.2]{trivval}. The energy pairing (which is simply an extension to numerical classes of~\cite[Definition~6.6]{BHJ1}) satisfies
\begin{equation}\label{equ:pairing0}
(\theta_0,0)\inter(\theta_n,0)=0,
\end{equation}
\begin{equation}\label{equ:pairingmass}
(0,1)\cdot(\theta_1,\f_1)\inter(\theta_n,\f_n)=(\theta_1\inter\theta_n).
\end{equation}
Further, if $\f_i,\p_i\in\PL\cap\PSH(\om_i)$ with $\om_i\in\Amp(X)$, $i=0,\dots,n$, then 
\begin{equation}\label{equ:pairingmono}
\f_i\le\p_i\ \text{for all }i\Longrightarrow(\om_0,\f_0)\inter(\om_n,\f_n)\le(\om_0,\p_0)\inter(\om_n,\p_n).
\end{equation}
For any $\om\in\Amp(X)$, the \emph{Monge--Amp\`ere energy}\footnote{This should not be confused with the \emph{extended} Monge--Amp\`ere energy of $\f$ as in~\cite[\S 8]{trivval}.} of $\f\in\PL(X)$ with respect to $\om$ is defined as
\begin{equation}\label{equ:EMA}
\en_\om(\f):=\frac{(\om,\f)^{n+1}}{(n+1)(\om^n)}. 
\end{equation}

This normalization guarantees the equivariance property 
\begin{equation}\label{equ:MAenequiv}
\en_\om(\f+c)=\en_\om(\f)+c
\end{equation}
for $c\in\Q$ (see~\eqref{equ:pairingmass}). By~\eqref{equ:pairingmono}, the restriction $\en_\om\colon\PL\cap\PSH(\om)\to\R$ is increasing; it admits a unique usc, increasing extension $\en_\om\colon\PSH(\om)\to\R\cup\{-\infty\}$, given by  
$$
\en_\om(\f)=\inf\{\en_\om(\p)\mid\p\in\PL\cap\PSH(\om),\,\p\ge\f\}. 
$$
The space of \emph{$\om$-psh functions of finite energy} is defined as 
$$
\cE^1(\om)=\{\f\in\PSH(\om)\mid\en_\om(\f)>-\infty\},
$$
and the \emph{strong topology} of $\cE^1(\om)$ is defined as the coarsest refinement of the subset topology from $\PSH(\om)$ (\ie the topology of pointwise convergence on $X^\div$) in which $\en_\om\colon\cE^1(\om)\to\R$ becomes continuous. 

The vector space $\vec\cE^1$ generated by $\cE^1(\om)$ (interpreted as functions $X^\lin\to\R$) turns out to be independent of $\om\in\Amp(X)$. It contains $\PL(X)$, and we have $\cE^1(\om)=\vec\cE^1\cap\PSH(\om)$ for any $\om\in\Amp(X)$. See~\cite[\S7]{trivval} for details on this and the remainder of~\S\ref{sec:enpairing}. 

\begin{thm}\label{thm:enpairing} The energy pairing $(\theta_0,\f_0)\inter(\theta_n,\f_n)$, previously defined as a multilinear function of tuples $(\theta_i,\f_i)\in\Num(X)\times\PL(X)$, admits a unique extension to a symmetric, $(n+1)$-linear function of tuples $(\theta_i,\f_i)\in\Num(X)\times\vec\cE^1$ such that, for any $\om\in\Amp(X)$, its restriction to tuples in $\Num(X)\times\cE^1(\om)$ is continuous with respect to the strong topology of $\cE^1(\om)$. 
\end{thm}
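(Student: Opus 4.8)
The plan is a density-and-uniform-continuity argument in which all the genuine content is packaged into a single quantitative estimate for the pairing. Note first that the energy pairing is already defined, and multilinear, on all of $\Num(X)^{n+1}\times\PL(X)^{n+1}$, with the numerical slots allowed to be arbitrary; the only factor to be extended is $\PL(X)\leadsto\vec\cE^1$. The relevant density fact is that $\PL\cap\PSH(\om)$ is strongly dense in $\cE^1(\om)$: for $\f\in\cE^1(\om)$ one takes a decreasing net $\f_i\in\PL\cap\PSH(\om)$ with $\f_i\downarrow\f$ and $\en_\om(\f_i)\downarrow\en_\om(\f)$, which exists by the very definition of $\en_\om$ on $\cE^1(\om)$ as an infimum, so that $\f_i\to\f$ strongly. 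Uniqueness of the extension follows at once: strong continuity forces the values on each $\cE^1(\om)$ once they are known on the dense subset $\PL\cap\PSH(\om)$, and since $\vec\cE^1$ is generated by $\bigcup_\om\cE^1(\om)$, multilinearity then determines the pairing on all of $\vec\cE^1$.

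For existence, the key is a uniform continuity estimate controlling
\[
\bigl|(\theta_0,\f_0)\inter(\theta_n,\f_n)-(\theta_0,\p_0)\inter(\theta_n,\p_n)\bigr|
\]
for $\f_j,\p_j\in\cE^1(\om)$ by a quantity that goes to $0$ as each $\f_j\to\p_j$ strongly, uniformly over strongly bounded sets of data. By multilinearity I would write this difference as a telescoping sum of $n+1$ pairings, each carrying a single ``difference slot'' $(\theta_j,\f_j-\p_j)$, thereby reducing to the problem of bounding one pairing in which exactly one function-argument is strongly small, in terms of the energies $\en_\om(\f_j),\en_\om(\p_j)$ and the norms $\|\theta_j\|_\om$ of~\eqref{equ:normom}. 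The monotonicity~\eqref{equ:pairingmono}, together with the bound~\eqref{equ:trbound} on the trace, provides the qualitative sign and size information, but the quantitative control is exactly where real pluripotential theory is needed.

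I expect this estimate to be the main obstacle. The difficulty is that the strong topology is much weaker than uniform convergence, so the naive bound by $\sup_j|\f_j-\p_j|$ coming directly from~\eqref{equ:pairingmono} is useless; one must instead invoke the mixed Monge--Amp\`ere and Cauchy--Schwarz-type inequalities for the energy pairing---ultimately consequences of the Hodge index theorem---as developed in~\cite[\S7]{trivval}. Granting such an estimate, the pairing is seen to be uniformly continuous on the dense subset $\prod_j(\PL\cap\PSH(\om))\subset\prod_j\cE^1(\om)$, and the multivariable form of Lemma~\ref{lem:unifext} yields a continuous extension to $\prod_j\cE^1(\om)$; passing to strong limits preserves symmetry and multilinearity, and a final decomposition of $\vec\cE^1$ into the convex pieces $\cE^1(\om)$ produces the desired symmetric $(n+1)$-linear extension on $\Num(X)\times\vec\cE^1$.
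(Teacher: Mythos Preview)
The paper does not actually prove Theorem~\ref{thm:enpairing}; it is quoted from~\cite[\S7]{trivval} (see the sentence immediately preceding the theorem). Your outline is correct and is essentially the strategy carried out there: density of $\PL\cap\PSH(\om)$ in $\cE^1(\om)$ for the strong topology, a telescoping reduction to a single ``small'' slot, and a H\"older-type estimate on mixed energy pairings deriving from the Hodge index theorem (these are precisely the estimates of~\cite[Theorem~7.34]{trivval} that the present paper invokes elsewhere, e.g.\ in the proof of Lemma~\ref{lem:nablaholder}). You correctly identify that this estimate is the entire content and that the rest is formal; the one point you might make more explicit is the well-definedness of the multilinear extension to $\vec\cE^1$ once it is fixed on each $\cE^1(\om)$, i.e.\ independence of the choice of decomposition $\f=\f_+-\f_-$ with $\f_\pm\in\cE^1(\om_\pm)$, which follows by comparing two such decompositions inside a common larger $\cE^1(\om')$.
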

The energy pairing is further homogeneous with respect to the scaling action~\eqref{equ:scalefunc}, \ie
\begin{equation}\label{equ:enpairinghomog}
(\theta_0,t\cdot\f_0)\inter(\theta_n,t\cdot\f_n)=t(\theta_0,\f_0)\inter(\theta_n,\f_n)
\end{equation}
for $t\in\R_{>0}$. 

Since $\PL(X)$ is a dense subspace of $\Cz(X)$, we can now associate to any tuple $(\theta_i,\f_i)\in\Num(X)\times\vec\cE^1$, $i=0,\dots,n$, a \emph{mixed Monge--Amp\`ere measure} 
\begin{equation}\label{equ:mixedMA}
(\theta_1+\ddc\f_1)\wedge\dots\wedge(\theta_n+\ddc\f_n)\in\Cz(X)^\vee
\end{equation}
by requiring that 
$$
\int\f_0\,(\theta_1+\ddc\f_1)\wedge\dots\wedge(\theta_n+\ddc\f_n)=(0,\f_0)\cdot(\theta_1,\f_1)\inter(\theta_n,\f_n)
$$
for all $\f_0\in\vec\cE^1$. The Radon measure~\eqref{equ:mixedMA} has total mass $(\theta_1\inter\theta_n)$ (see~\eqref{equ:pairingmass}), and it is positive when $\theta_i$ is ample and $\f_i$ is $\theta_i$-psh. 

Fix now $\om\in\Amp(X)$, with volume $V_\om=(\om^n)$. By Theorem~\ref{thm:enpairing}, the Monge--Amp\`ere energy~\eqref{equ:EMA} makes sense for any $\f\in\vec\cE^1$, and~\eqref{equ:pairing0} yields the more standard expression
\begin{equation}\label{equ:Emixed}
\en_\om(\f)=\frac{1}{n+1}\sum_{j=0}^n V_\om^{-1}\int\f (\om+\ddc\f)^j\wedge\om^{n-j}. 
\end{equation}
By~\eqref{equ:enpairinghomog}, we further have 
\begin{equation}\label{equ:enhomog}
\en_\om(t\cdot\f)=\en_{t\om}(t\f)=t\en_\om(\f)
\end{equation}
for $t\in\R_{>0}$. 

We also get a \emph{Monge--Amp\`ere operator} $\MA_\om\colon\vec\cE^1\to\Cz(X)^\vee$, that takes $\f\in\vec\cE^1$ to the Radon measure 
$$
\MA_\om(\f):=V_\om^{-1}(\om+\ddc\f)^n.
$$
If $\f\in\cE^1(\om)=\vec\cE^1\cap\PSH(\om)$, then $\MA_\om(\f)$ is a probability measure, and the induced map $\MA_\om\colon\cE^1(\om)\to\cM$ is further strongly continuous. The energy $\en_\om\colon\vec\cE^1\to\R$ can be understood as the anti-derivative of the Monge--Amp\`ere operator; indeed, a simple computation yields
\begin{equation}\label{equ:Eder}
\frac{d}{dt}\bigg|_{t=0}\en_\om(\f+t\p)=\int\p\,\MA_\om(\f)
\end{equation}
for all $\f,\p\in\vec\cE^1$. The scaling action~\eqref{equ:scalefunc} preserves $\vec\cE^1$, and 
\begin{equation}\label{equ:MAhomog}
\MA_\om(t\cdot\f)=t_\star\MA_\om(\f)
\end{equation}
for any $\f\in\vec\cE^1$ and $t\in\R_{>0}$.

%
\subsection{Measures of finite energy}
Fix $\om\in\Amp(X)$, and set for simplicity $\cE^1:=\cE^1(\om)$, $\en=\en_\om$, $\MA=\MA_\om$. The \emph{energy}\footnote{This corresponds to $\en^\vee(\mu)$ in~\cite{trivval}; the change of notation is intended to make the formalism of the present paper easier to digest.} of a Radon probability measure $\mu\in\cM$ is defined as 
\begin{equation}\label{equ:envee}
\|\mu\|=\|\mu\|_\om:=\sup_{\f\in\cE^1}\left\{\en(\f)-\int\f\,\mu\right\}\in[0,+\infty]. 
\end{equation}
A \emph{maximizing net} for $\mu$ is defined as a net $(\f_i)$ in $\cE^1$ that computes $\|\mu\|$, \ie $\jj_\mu(\f_i)\to 0$, where
\begin{equation}\label{equ:Jmu}
\jj_\mu(\f):=\|\mu\|-\en(\f)+\int\f\,\mu\ge 0
\end{equation} 
for any $\f\in\cE^1$. See~\cite[\S\S9-10]{trivval} for details on this and on what follows. 

The energy functional $\|\cdot\|\colon\cM\to[0,+\infty]$ is convex; it is also lsc for the weak topology of $\cM$, as a simple regularization argument shows that $\cE^1$ can be replaced by $\PL\cap\PSH(\om)$ in the right-hand side of~\eqref{equ:envee}. Despite the chosen notation, the energy is not actually a norm; however, it satisfies
\begin{equation}\label{equ:homogenvee}
\|\mu\|=0\Leftrightarrow\mu=\mu_\triv,\quad\|t_\star\mu\|=t\|\mu\|,\quad\|\mu\|_{t\om}=t\|\mu\|_\om,\quad t\in\R_{>0}. 
\end{equation}
The space $\cM^1\subset\cM$ of \emph{measures of finite energy} is defined as the domain 
$$
\cM^1=\{\mu\in\cM\mid \|\mu\|<+\infty\}. 
$$
of the energy functional. By definition, any $\mu\in\cM^1$ satisfies $\cE^1\subset L^1(\mu)$, and the converse holds as well. 

In contrast with $\cE^1=\cE^1(\om)$, the space $\cM^1$ turns out to be independent of $\om\in\Amp(X)$, a property that plays a key role in the present paper. More precisely, for any $\om'\in\Amp(X)$ and $s\ge 1$ such that $s^{-1}\om\le\om'\le s\om$, we have 
\begin{equation}\label{equ:Eveelip}
s^{-C_n}\|\mu\|_\om\le\|\mu\|_{\om'}\le s^{C_n}\|\mu\|_\om
\end{equation}
for all $\mu\in\cM$, where $C_n:=2n^2+1$. See~\cite[Theorem~9.24]{trivval}.

\begin{exam}\label{exam:enMA} For any $\f\in\cE^1$, the Monge--Amp\`ere measure $\MA(\f)$ lies in $\cM^1$, and $\f$ computes its energy~\eqref{equ:envee}, \ie
\begin{equation}\label{equ:enMA}
\|\MA(\f)\|=\en(\f)-\int\f\,\MA(\f). 
\end{equation}
\end{exam}
When $n=1$, this can be rewritten as $\|\MA(\f)\|=\tfrac 12\int(-\f)\ddc\f$, whose analogue in the Archimedean case coincides with the classical Dirichlet functional. 

\medskip

As in the case of $\cE^1$, the \emph{strong topology} of $\cM^1$ is defined as the coarsest refinement of the (weak) subspace topology induced by $\cM$ such that the energy functional $\|\cdot\|\colon\cM^1\to\R$ becomes continuous. The strong topology of $\cM^1$ is also independent of $\om\in\Amp(X)$. 

By Example~\ref{exam:enMA}, the Monge--Amp\`ere operator defines a map $\MA\colon\cE^1\to\cM^1$. By~\cite[Theorem~A]{trivval}, this induces a topological embedding with dense image 
$$
\MA\colon\cE^1/\R\hto\cM^1,
$$ 
with respect to the strong topology on both sides. Further, a net $(\f_i)$ in $\cE^1$ and $\mu\in\cM^1$ satisfy $\MA(\f_i)\to\mu$ strongly in $\cM^1$ iff $(\f_i)$ is a maximizing net for $\mu$, \ie $\jj_\mu(\f_i)\to 0$. 

The Monge--Amp\`ere operator maps $\cE^1$ onto $\cM^1$ (and hence induces a homeomorphism $\cE^1/\R\simto\cM^1$) iff the \emph{envelope property} (aka \emph{continuity of envelopes}) holds for $\om$, which is the case when $X$ is smooth. 

We define the \emph{normalized potential} of a measure $\mu$ in the image of $\MA$ as the unique function $\f_\mu\in\cE^1$ such that 
\begin{equation}\label{equ:normpot}
\left\{
\begin{array}{ll}
\mu=\MA(\f_\mu) \\
\int\f_\mu\,\mu=0. 
\end{array}
\right. 
\end{equation}
By~\eqref{equ:enMA}, we have 
\begin{equation}\label{equ:ennormpot}
\|\mu\|=\en(\f_\mu), 
\end{equation}
and $\f_\mu$ is characterized as the unique function that achieves
$$
\|\mu\|=\sup\left\{\en(\f)\mid\f\in\cE^1,\,\int\f\,\mu=0\right\}.
$$
For any $\p\in\cE^1$, the normalized potential of $\MA(\p)\in\cM^1$ is given by
\begin{equation}\label{equ:normpotMA}
\f_{\MA(\p)}=\p-\int\p\,\MA(\p).
\end{equation}

\begin{exam}\label{exam:endirac} For any $v\in X^\an$, set 
$$
\|v\|:=\|v\|_\om:=\|\d_v\|\in[0,+\infty].
$$
Then $\|v\|\approx\te(v)$ (see~\cite[Theorem~11.1]{trivval}), and hence $\|v\|<\infty\Leftrightarrow v\in X^\lin$, see~\eqref{equ:Xlin}. By~\cite[Theorem~7.22]{nakstab1}, if $\om=c_1(L)$ with $L\in\Pic(X)_\Q$ ample, then $\|v\|=\|v\|_L$ coincides with the expected vanishing order $\esse(v)=\esse_L(v)$~\cite{FO18,BlJ}. Thus
$$
\|v\|=\lim_m \max\{v(D)\mid D\in |L|_\Q\text{ of $m$-basis type}\}
$$
where $D\in|L|_\Q$ is of \emph{$m$-basis type} if $D=\tfrac 1m\sum_i\div(s_i)$ for a basis $(s_i)$ of $\Hnot(X,mL)$ (see~\cite[Definition~0.1]{FO18}). For a divisorial valuation $v\in X^\div$, written $v=t\ord_F$ for a prime divisor $F$ on a smooth model $\pi\colon Y\to X$, we further have
\begin{equation}\label{equ:envdiv}
\|v\|=tV_\om^{-1}\int_0^{\te(v)}\vol(\pi^\star\om-\la F)\,d\la=tV_\om^{-1}\int_0^{+\infty}\vol(\pi^\star\om-\la F)\,d\la,
\end{equation}
see~\eqref{equ:Tthresh}. 
\end{exam}

In~\cite{trivval} and~\cite{nakstab1}, it was respectively shown that the strong topology of $\cM^1$ can be defined by a certain quasi-metric $\ii^\vee=\ii_\om^\vee$, and also by a (Darvas-type) metric $\dd_1=\dd_{1,\om}$, these two (quasi-)metrics being further H\"older equivalent on bounded sets. The metric $\dd_1$ on $\cM^1$ will not be used in this paper; as to the quasi-metric $\ii^\vee$, it will be advantageously replaced by the following equivalent, more natural quasi-metric, directly induced by Aubin's $\ii$-functional on $\cE^1$. 

\begin{thm}\label{thm:I} There exists a unique strongly continuous functional $\ii=\ii_\om\colon\cM^1\times\cM^1\to\R_{\ge 0}$ such that 
\begin{equation}\label{equ:IMA}
\ii(\MA(\f),\MA(\f))=\ii(\f,\p):=\int(\f-\p)(\MA(\p)-\MA(\f))
\end{equation}
for all $\f,\p\in\cE^1$. For all $\mu_1,\mu_2,\mu_3,\mu\in\cM^1$ and $\f\in\cE^1$, we further have: 
\begin{itemize}
\item[(i)] $\ii(\mu_1,\mu_2)=\ii(\mu_2,\mu_1)$, and $\ii(\mu_1,\mu_2)=0\Leftrightarrow\mu_1=\mu_2$; 
\item[(ii)] $\ii(\mu_1,\mu_2)\lesssim \ii(\mu_1,\mu_3)+\ii(\mu_3,\mu_2)$;  
\item[(iii)] $\ii$ is a quasi-metric on $\cM^1$ that defines the strong topology, and the quasi-metric space $(\cM^1,\ii)$ is complete;
\item[(iv)] $\ii(\mu,\MA(\f))\approx\jj_\mu(\f)$ (see~\eqref{equ:Jmu}); in particular, $\ii(\mu):=\ii(\mu,\mu_\triv)\approx\|\mu\|$.
\end{itemize}
Finally, for all $\f,\p\in\cE^1$ and $\mu,\nu\in\cM^1$ we have 
\begin{equation}\label{equ:BBGZ}
\left|\int(\f-\f')(\mu-\mu')\right|\lesssim\ii(\f,\f')^\a\ii(\mu,\mu')^{\frac 12} \max\{\ii(\f),\ii(\f'),\|\mu\|,\|\mu'\|\}^{\frac 12-\a}
\end{equation}
with $\a:=2^{-n}$.
\end{thm}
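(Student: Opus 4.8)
The plan is to define $\ii$ first on the strongly dense subset $\MA(\cE^1)\subset\cM^1$ and then extend it by uniform continuity. Setting $\ii(\MA(\f),\MA(\p)):=\int(\f-\p)(\MA(\p)-\MA(\f))$ as in~\eqref{equ:IMA}, I would first check that this descends through the embedding $\MA\colon\cE^1/\R\hookrightarrow\cM^1$: since $\MA(\f+c)=\MA(\f)$ and $\MA(\p)-\MA(\f)$ has total mass $0$, the right-hand side is unaffected by adding constants to $\f$ and $\p$, so it depends only on the measures $\MA(\f),\MA(\p)$. Symmetry is built in, and $\ii(\f,\p)\ge0$ with equality iff $\MA(\f)=\MA(\p)$ follows from positivity of the associated Dirichlet energy (equivalently the Hodge index theorem), as in~\cite{trivval}. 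I would also record the identity $\ii(\f,\p)=\jj_{\MA(\p)}(\f)+\jj_{\MA(\f)}(\p)$, obtained by expanding both $\jj$-terms through~\eqref{equ:enMA}; this links Aubin's functional to the functionals $\jj_\mu$ of~\eqref{equ:Jmu} and is the bridge to~(iv).

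The crux is the estimate~\eqref{equ:BBGZ}, which I would establish first for $\mu=\MA(g)$, $\mu'=\MA(g')$ with $g,g'\in\cE^1$, so that every quantity is the already-defined Aubin functional on $\cE^1$. Writing $\mu-\mu'=V_\om^{-1}\big[(\om+\ddc g)^n-(\om+\ddc g')^n\big]$ and telescoping the difference of $n$-th powers into mixed terms each carrying a single factor $\ddc(g-g')$, integration by parts turns $\int(\f-\f')(\mu-\mu')$ into a sum of mixed pairings. Each is controlled by the Cauchy--Schwarz inequality attached to the positive semidefinite pairing $(a,b)\mapsto\int a\,\ddc b\wedge T$ furnished by the Hodge index theorem; iterating this $n$ times yields the exponents $\a=2^{-n}$ and $\tfrac12$, the energy factors absorbing the remaining mixed terms. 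This is the technical heart of the statement.

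Granted~\eqref{equ:BBGZ} on potentials, I would extend $\ii$ by continuity. Any strongly convergent net $\MA(\f_i)\to\mu$ is bounded in energy (since $\en(\f_i)\to$ the energy of $\mu$), so~\eqref{equ:BBGZ}, together with strong continuity of $\en$, shows that $\ii(\f_i,\p_j)$ is Cauchy and that its limit is independent of the chosen maximizing nets for $\mu$ and $\nu$; this defines $\ii(\mu,\nu)$ and makes it strongly continuous, the abstract principle being Lemma~\ref{lem:unifext} applied on bounded sets. Uniqueness of the extension is immediate from density of $\MA(\cE^1)$. Passing to the strong limit then upgrades~\eqref{equ:BBGZ} to all $\mu,\mu'\in\cM^1$.

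The properties~(i)--(iv) follow by continuity from the dense subset: symmetry and the quasi-triangle inequality~(ii) hold for Aubin's functional on $\cE^1$ (the latter again from the Hodge-index estimates) and survive the limit. For~(iv), the identity $\ii(\MA(\p),\MA(\f))=\jj_{\MA(\f)}(\p)+\jj_{\MA(\p)}(\f)$ gives $\ii(\mu,\MA(\f))\approx\jj_\mu(\f)$ once the two $\jj$-terms are seen to be comparable; taking $\f=0$, for which $\MA(0)=\mu_\triv$ and $\jj_\mu(0)=\|\mu\|$, yields $\ii(\mu)\approx\|\mu\|$ and in particular nondegeneracy. For~(iii), combining~(iv) with the maximizing-net characterization ($\MA(\f_i)\to\mu$ strongly iff $\jj_\mu(\f_i)\to0$) shows that $\ii$ induces the strong topology, while completeness is inherited from that of $(\cM^1,\ii^\vee)$ established in~\cite{trivval}, after noting via~(iv) that $\ii$-Cauchy sequences are energy-bounded. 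The main obstacle throughout is the Hodge-index/Cauchy--Schwarz iteration producing~\eqref{equ:BBGZ}; the remainder is a soft density-and-extension argument.
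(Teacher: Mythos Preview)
Your proposal is correct and follows a sound strategy, but it differs in emphasis from the paper's own argument. The paper does not redo the Hodge-index/Cauchy--Schwarz iteration; instead it treats the quasi-metric $\ii^\vee$ constructed in~\cite[\S10]{trivval} as a complete black box, for which all of (i)--(iv) and the estimate~\eqref{equ:BBGZ} are already known. The only new work is then to show that $\ii$ extends continuously from $\MA(\cE^1)$ to $\cM^1$: the paper writes $\ii(\mu,\nu)=\int\f_\mu\,\nu+\int\f_\nu\,\mu$ via normalized potentials, decomposes $\ii(\mu,\nu)-\ii(\mu',\nu')$ into four integrals, and bounds each with the $\ii^\vee$-version of~\eqref{equ:BBGZ}, yielding uniform continuity and hence a unique extension by Lemma~\ref{lem:unifext}. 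The comparison $\ii^\vee(\MA(\f),\MA(\p))\approx\ii(\f,\p)$ then gives $\ii\approx\ii^\vee$ on all of $\cM^1$, and every property transfers at once.

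Your route---prove~\eqref{equ:BBGZ} on potentials from scratch, then extend---is essentially the argument carried out in~\cite{trivval} to build $\ii^\vee$ in the first place, so you are proposing to reprove that material rather than quote it. This is legitimate and more self-contained, but note two places where your sketch leans on~\cite{trivval} implicitly: the comparability $\jj_{\MA(\f)}(\p)\approx\jj_{\MA(\p)}(\f)$ needed for~(iv), and the quasi-triangle inequality~(ii) for Aubin's functional, are both nontrivial consequences of the same Hodge-index machinery. Also, the Cauchy-ness of $\ii(\f_i,\p_j)$ does not follow from~\eqref{equ:BBGZ} alone without a decomposition of $\ii(\mu,\nu)-\ii(\mu',\nu')$ like the one the paper writes down; you should make that step explicit.
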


\begin{proof}[Proof of Theorem~\ref{thm:I}] By~\cite[\S10]{trivval}, Theorem~\ref{thm:I} is valid for a certain quasi-metric $\ii^\vee$ on $\cM^1$ in place of $\ii$, except for~\eqref{equ:IMA}, which is replaced by
\begin{equation}\label{equ:IveeMA}
\ii^\vee(\MA(\f),\MA(\p))\approx\ii(\f,\p).
\end{equation} 
Next, note that uniqueness is clear, by density of the image of $\MA$, and that~\eqref{equ:IMA} is equivalent to 
$$
\ii(\mu,\nu)=\ii(\f_\mu,\f_\nu)=\int\f_\mu\,\nu+\int\f_\nu\,\mu
$$
for all $\mu,\nu\in\cM^1$ in the image of $\MA$, by~\eqref{equ:normpotMA}. Pick $\mu,\mu',\nu,\nu'$ in the image of $\MA$, and write
$$
\ii(\mu,\nu)-\ii(\mu',\nu')=\int\f_\mu\,\nu+\int\f_\nu\,\mu-\int\f_{\mu'}\,\nu'-\int\f_{\nu'}\,\mu'
$$
$$
=\int(\f_\mu-\f_{\mu'})\,\nu+\int\f_{\mu'}(\nu-\nu')+\int(\f_\nu-\f_{\nu'})\,\mu+\int\f_{\nu'}(\mu-\mu'). 
$$
Using $\ii(\f_\mu,\f_{\mu'})\approx\ii^\vee(\mu,\mu')$ and $\ii(\f_\nu,\f_{\nu'})\approx\ii^\vee(\nu,\nu')$, the `$\ii^\vee$-version' of~\eqref{equ:BBGZ} yields a H\"older estimate
\begin{equation}\label{equ:Ihold}
\left|\ii(\mu,\nu)-\ii(\mu',\nu')\right|\lesssim\ii^\vee(\mu,\mu')^\a M^{1-\a}+\max\{\ii^\vee(\nu,\nu'),\ii^\vee(\mu,\mu')\}^{\frac 12}M^{\frac 12}
\end{equation}
with $M:=\max\{\|\mu\|,\|\mu'\|,\|\nu\|,\|\nu'\|\}$. This shows that $\ii$ is uniformly continuous on a dense subspace of the quasi-metric space $\cM^1\times\cM^1$, and hence that it admits a unique (uniformly) continuous extension $\cM^1\times\cM^1\to\R_{\ge 0}$ (see Lemma~\ref{lem:unifext}). Finally, \eqref{equ:IMA} and~\eqref{equ:IveeMA} imply $\ii(\mu,\nu)\approx\ii^\vee(\mu,\nu)$ for all $\mu,\nu\in\cM^1$, by continuity, and it is now immediate to see that (i)--(iv) and~\eqref{equ:BBGZ} follow from their version for $\ii^\vee$. 
\end{proof}

%
%
%
\section{Differentiability of the energy of a measure}\label{sec:diffen}
The main purpose of this section is to establish the differentiability of the energy of a measure with respect to the ample class (Theorem~\ref{thm:diffen} below, which corresponds to Theorem~A in the introduction), along with H\"older estimates for the derivative (Theorem~\ref{thm:twistedhold}) that will be the main ingredient for the continuity of the divisorial stability threshold (Theorem~B~(ii) in the introduction). 

As before, $X$ denotes an arbitrary (possibly non normal) projective variety, of dimension $n$, defined over an algebraically closed field $k$ (whose characteristic can in fact be arbitrary in this section). 

%
%
\subsection{The twisted Monge--Amp\`ere energy}\label{sec:twisted}
Fix for the moment $\om\in\Amp(X)$, with volume $V_\om=(\om^n)$, and pick $\theta\in\Num(X)$. 

\begin{defi}\label{defi:twisteden} We define the \emph{$\theta$-twisted Monge--Amp\`ere energy} $\en^\theta_\om\colon\vec\cE^1\to\R$ by setting 
$$
\en_\om^\theta(\f):=V_\om^{-1}(\om,\f)^n\cdot(\theta,0). 
$$
\end{defi} 
The right-hand side involves the energy pairing, see Theorem~\ref{thm:enpairing}. By~\eqref{equ:pairingmass}, we have
\begin{equation}\label{equ:twistedtrans}
\en_\om^\theta(\f+c)=\en_\om^\theta(\f)+c\tr_\om(\theta)\ \text{for }c\in\R,
\end{equation}
where $\tr_\om(\theta)=nV_\om^{-1}(\om^{n-1}\cdot\theta)$ (see~\S\ref{sec:voltr}). Further, $\en_\om^\theta(\f)$ is a linear function of $\theta\in\Num(X)$, and 
\begin{equation}\label{equ:twEhomog}
\en_\om^\theta(t\cdot\f)=t\en_\om^\theta(\f),\quad\en_{t\om}^\theta(t\f)=\en_\om^\theta(\f)
\end{equation}
for $t\in\R_{>0}$. By Theorem~\ref{thm:enpairing}, the restriction $\en_\om^\theta\colon \cE^1\to\R$ is continuous in the strong topology. 

As with the Monge--Amp\`ere energy $\en_\om$ (see~\eqref{equ:Emixed}, \eqref{equ:Eder}), straightforward computations yield a more standard description of the twisted Monge--Amp\`ere energy $\en_\om^\theta$ in terms of mixed Monge--Amp\`ere integrals, and also as the anti-derivative of $\f\mapsto(\om+\ddc\f)^{n-1}\wedge\theta$ (compare~\cite[\S3]{Che}\footnote{Note that $\en$ and $\en^\theta$ are respectively denoted by $\ii$ and $\jj$ in Chen's paper, and in much of the ensuing literature on the J-equation and the J-flow.} and~\cite[\S1]{BDL17} in the usual K\"ahler setting). 

\begin{prop}\label{prop:twisteden} For all $\f\in\vec\cE^1$ we have 
\begin{equation}\label{equ:twisteden} 
\en_\om^\theta(\f)=\sum_{j=0}^{n-1}V_\om^{-1}\int\f\,(\om+\ddc\f)^j\wedge\om^{n-1-j}\wedge\theta, 
\end{equation}
and
\begin{equation}\label{equ:twistedder}
\frac{d}{dt}\bigg|_{t=0}\en_\om^\theta(\f+t\p)=n V_\om^{-1}\int\p\,(\om+\ddc\f)^{n-1}\wedge\theta
\end{equation}
for all $\p\in\vec\cE^1$. 
\end{prop}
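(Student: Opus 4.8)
The plan is to establish both formulas by the same two-step strategy: prove them first on the dense subspace $\PL(X)$ by direct intersection-theoretic computation, and then extend to all of $\vec\cE^1$ by the strong continuity provided by Theorem~\ref{thm:enpairing}. The key observation making this work is that $\en^\theta_\om$ is defined via the energy pairing, which has already been extended continuously to $\Num(X)\times\vec\cE^1$, so both sides of~\eqref{equ:twisteden} and~\eqref{equ:twistedder} are strongly continuous functionals on $\cE^1(\om)$ (and multilinear in the relevant arguments), and an identity that holds on a strongly dense subset holds everywhere.

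For the PL case of~\eqref{equ:twisteden}, I would start from the definition $\en_\om^\theta(\f)=V_\om^{-1}(\om,\f)^n\cdot(\theta,0)$ and expand the energy pairing using~\eqref{equ:pairing0}, exactly as one passes from~\eqref{equ:EMA} to~\eqref{equ:Emixed} for the untwisted energy. Writing $\f=\f_D$ on a suitable test configuration and using the telescoping trick $(\om,\f)^{n}-(\om,0)^{n}=\sum_{j=0}^{n-1}(\om,\f)^{j}\cdot\bigl((\om,\f)-(\om,0)\bigr)\cdot(\om,0)^{n-1-j}$ paired against $(\theta,0)$, together with the defining property~\eqref{equ:mixedMA} of the mixed Monge--Amp\`ere measures, should collapse the intersection numbers on $\bar\cX$ into the sum of integrals $\int\f\,(\om+\ddc\f)^j\wedge\om^{n-1-j}\wedge\theta$ appearing on the right. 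This is the standard "summation by parts" in the energy pairing and is essentially bookkeeping once the pairing is linear in each slot.

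For the derivative formula~\eqref{equ:twistedder}, the cleanest route is to differentiate the definition directly rather than the integral expression. Since $t\mapsto(\om,\f+t\p)^n\cdot(\theta,0)$ is a polynomial in $t$ (the energy pairing being multilinear), its $t$-derivative at $0$ is $n\,(\om,\f)^{n-1}\cdot(0,\p)\cdot(\theta,0)$, and by the characterization~\eqref{equ:mixedMA} of the mixed Monge--Amp\`ere measure this equals $n\int\p\,(\om+\ddc\f)^{n-1}\wedge\theta$ after dividing by $V_\om$. This mirrors the computation~\eqref{equ:Eder} for $\en_\om$ and again reduces to multilinearity of the pairing plus the definition of the mixed measure.

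I expect the only genuine subtlety, rather than a true obstacle, to be the passage from $\PL(X)$ to $\vec\cE^1$ in the derivative formula~\eqref{equ:twistedder}: one must ensure that differentiation in $t$ commutes with the strong-continuity extension, \ie that $\frac{d}{dt}|_{t=0}\en^\theta_\om(\f+t\p)$ is genuinely computed by the limiting mixed measure. Because $\en^\theta_\om(\f+t\p)$ is, for fixed $\f,\p\in\vec\cE^1$, a \emph{polynomial} in $t$ of degree $\le n$ with coefficients given by energy pairings (hence well-defined by Theorem~\ref{thm:enpairing}), the derivative is an honest algebraic operation and no analytic interchange is needed; the integral on the right is then identified via~\eqref{equ:mixedMA}. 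Thus the main work is simply verifying the multilinear expansion carefully, and confirming that the right-hand integrals match the intended mixed Monge--Amp\`ere measures under the sign and normalization conventions fixed in~\S\ref{sec:enpairing}.
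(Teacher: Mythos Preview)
Your proposal is correct and matches the paper's indicated approach: the paper does not spell out a proof, saying only that ``straightforward computations yield'' the result by analogy with~\eqref{equ:Emixed} and~\eqref{equ:Eder}, and your telescoping expansion of $(\om,\f)^n\cdot(\theta,0)$ together with the polynomial-in-$t$ argument for the derivative are precisely those computations. One minor simplification: since the energy pairing is already multilinear on $\Num(X)\times\vec\cE^1$ by Theorem~\ref{thm:enpairing}, the telescoping identity and the identification via~\eqref{equ:mixedMA} hold directly for all $\f\in\vec\cE^1$, so the preliminary reduction to $\PL(X)$ and subsequent density argument are unnecessary (as you yourself observe for~\eqref{equ:twistedder}).
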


As a special case of~\cite[Theorem 3.25]{trivval}, we also have: 

\begin{lem}\label{lem:twistedconcave} If $\theta$ is nef, then the restriction of $\en_\om^\theta$ to $\cE^1(\om)=\vec\cE^1\cap\PSH(\om)$ is concave. 
\end{lem}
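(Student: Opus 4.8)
The plan is to prove concavity along affine segments, reduce to piecewise linear potentials, and then extract the sign from a Hodge-index-type inequality for intersection numbers on a compactified test configuration.

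First I would fix $\f_0,\f_1\in\cE^1(\om)$, set $\psi:=\f_1-\f_0$, and consider the segment $\f_t:=\f_0+t\psi=(1-t)\f_0+t\f_1$ for $t\in[0,1]$. Since $\om+\ddc\f_t$ is the corresponding convex combination of the positive currents $\om+\ddc\f_i$, the segment remains in $\cE^1(\om)$, so the assertion amounts to concavity of $t\mapsto\en_\om^\theta(\f_t)$ on $[0,1]$. Because $\en_\om^\theta$ is strongly continuous on $\cE^1(\om)$ (Theorem~\ref{thm:enpairing}) and each $\f_i\in\cE^1(\om)$ is a decreasing limit of functions in $\PL\cap\PSH(\om)$, along which the associated segments $t\mapsto\f_t$ converge strongly, it suffices to treat the case $\f_0,\f_1\in\PL\cap\PSH(\om)$, where $\psi\in\PL(X)$: concavity then passes to the strong limit.

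For PL potentials $\en_\om^\theta(\f_t)=V_\om^{-1}(\om,\f_t)^n\cdot(\theta,0)$ is a genuine polynomial in $t$. Writing $(\om,\f_t)=(\om,\f_0)+t(0,\psi)$ and expanding using the multilinearity and symmetry of the energy pairing gives $\frac{d^2}{dt^2}\en_\om^\theta(\f_t)=n(n-1)V_\om^{-1}(\om,\f_t)^{n-2}\cdot(0,\psi)^2\cdot(\theta,0)$, so concavity is equivalent to the inequality $(\om,\f_t)^{n-2}\cdot(0,\psi)^2\cdot(\theta,0)\le0$ for all $t\in[0,1]$. Unwinding the definition~\eqref{equ:pairing}, this is an intersection number on the compactified test configuration $\bar\cX\to\P^1$: with $L_t:=\om_\cX+D_{\f_t}$ relatively nef (as $\f_t$ is $\om$-psh), $\theta_{\bar\cX}$ nef (being the pull-back of the nef class $\theta$), and $D_\psi:=(0,\psi)$ a vertical divisor, the inequality reads $L_t^{\,n-2}\cdot D_\psi^2\cdot\theta_{\bar\cX}\le0$.

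This last inequality is the crux, and is exactly the special case of the non-Archimedean Hodge index theorem recorded in \cite[Theorem~3.25]{trivval}. For a self-contained argument, the plan would be to cut down by the $n-1$ nef classes making up $L_t^{\,n-2}\cdot\theta_{\bar\cX}$ (after the harmless normalization of $\f_t$ by a constant needed to make $L_t$ nef rather than merely relatively nef, which does not affect concavity in $t$) so as to reduce to a surface fibered over a curve, and there apply the classical Hodge index theorem, whereby a divisor vertical over the base has negative semidefinite self-intersection. The main obstacle is precisely this positivity input: the differentiation and the reduction to PL data are purely formal, while the sign of $L_t^{\,n-2}\cdot D_\psi^2\cdot\theta_{\bar\cX}$ is where all the geometric content—the Hodge index theorem—resides.
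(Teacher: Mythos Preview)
Your argument is correct and follows essentially the same approach as the paper, which simply invokes \cite[Theorem~3.25]{trivval} directly; you have spelled out the second-derivative computation and the reduction to the Hodge-index-type inequality that this theorem encodes, which is exactly the content the paper is appealing to.
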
 

The twisted energy is closely related to the differential of $\en_\om$ with respect to $\om$: 

\begin{lem}\label{lem:deren} For each $\f\in\vec\cE^1$, $\en_{\om}(\f)$ is a smooth function of $\om\in\Amp(X)$, with directional derivative
\begin{equation}\label{equ:twEder}
\nabla_\theta\en_\om(\f):=\frac{d}{dt}\bigg|_{t=0}\en_{\om+t\theta}(\f)
\end{equation}
given by 
\begin{equation}\label{equ:nablaE}
\nabla_\theta\en_\om(\f)=\en_\om^\theta(\f)-\tr_\om(\theta)\en_\om(\f).
\end{equation}
\end{lem}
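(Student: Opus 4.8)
The plan is to establish the identity $\nabla_\theta\en_\om(\f)=\en_\om^\theta(\f)-\tr_\om(\theta)\en_\om(\f)$ by first proving it for $\f\in\PL(X)$, where everything reduces to intersection theory on a compactified test configuration, and then extending to all of $\vec\cE^1$ by strong continuity together with smoothness in $\om$. For the PL case, I would fix a high enough test configuration $\cX\to\A^1$ with $\f=\f_D$ for a vertical $\Q$-Cartier divisor $D$, and compute directly. By definition~\eqref{equ:EMA}, $\en_{\om+t\theta}(\f)=\frac{(\om+t\theta,\f)^{n+1}}{(n+1)V_{\om+t\theta}}$, so I would differentiate this quotient in $t$ at $t=0$ using the quotient rule.

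The numerator $(\om+t\theta,\f)^{n+1}$ is a polynomial in $t$ whose derivative at $t=0$ is, by multilinearity and symmetry of the energy pairing, equal to $(n+1)(\theta,0)\cdot(\om,\f)^n$; dividing by $(n+1)V_\om$ gives exactly $\en_\om^\theta(\f)$ after recognizing $V_\om^{-1}(\om,\f)^n\cdot(\theta,0)$ as the twisted energy of Definition~\ref{defi:twisteden}. The denominator contributes the term $-\en_\om(\f)\cdot\frac{d}{dt}\big|_{t=0}\log V_{\om+t\theta}$, and by~\eqref{equ:logvol} this logarithmic derivative is precisely $\tr_\om(\theta)$. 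Assembling the two pieces yields~\eqref{equ:nablaE} for $\f\in\PL(X)$, and the same computation shows $\en_\om(\f)$ is a smooth (indeed rational, with nonvanishing denominator $V_\om$) function of $\om$ on the ample cone.

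To pass from $\PL(X)$ to $\vec\cE^1$, I would fix $\f\in\vec\cE^1$ and approximate it strongly by PL functions; for a fixed $\theta$, I would like to apply the dominated-convergence-type continuity provided by Theorem~\ref{thm:enpairing}, which guarantees that both $\en_{\om'}(\f)$ and $\en_{\om'}^\theta(\f)$ are strongly continuous in $\f$ for each fixed $\om'\in\Amp(X)$. The subtle point is interchanging the $t$-derivative with the strong limit in $\f$: one cannot simply differentiate a pointwise limit. The cleanest route is to note that both sides of~\eqref{equ:nablaE} are continuous functions of $\f\in\cE^1(\om)$ and that the map $(t,\f)\mapsto\en_{\om+t\theta}(\f)$ is jointly controlled — for $t$ in a small interval, $\om+t\theta$ stays ample and the family of energies is equi-Lipschitz in $t$ by the multilinear expansion, so the derivative passes to the limit uniformly.

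The main obstacle is precisely this uniform control of the $t$-derivative under strong approximation: while each identity holds on the dense PL subspace and each side extends continuously, legitimizing $\frac{d}{dt}\big|_{t=0}$ of the extension requires that the difference quotients $\tfrac1t(\en_{\om+t\theta}(\f)-\en_\om(\f))$ converge uniformly as the PL approximants converge strongly to $\f$. I expect this to follow from the explicit polynomial (in $t$) structure of $\en_{\om+t\theta}(\f)$ coming from the energy pairing: the first-order coefficient is itself a strongly continuous energy-pairing expression (namely $\en_\om^\theta(\f)-\tr_\om(\theta)\en_\om(\f)$), and the higher-order terms in $t$ are likewise continuous, so the remainder is $O(t^2)$ with a constant uniform over a strongly convergent sequence. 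This reduces the differentiability claim to the joint continuity already packaged in Theorem~\ref{thm:enpairing}, and the identity~\eqref{equ:nablaE} then holds on all of $\vec\cE^1$ by continuity.
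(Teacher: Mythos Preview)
Your proposal is correct, but it takes an unnecessarily roundabout route. The paper's proof is much shorter because it applies your computation directly to any $\f\in\vec\cE^1$, with no PL approximation step at all. The key observation you eventually reach in your last paragraph---that $t\mapsto(\om+t\theta,\f)^{n+1}$ is a polynomial in $t$ thanks to the multilinearity of the energy pairing---is already available for arbitrary $\f\in\vec\cE^1$ by Theorem~\ref{thm:enpairing}, not just for $\f\in\PL(X)$. Since $V_\om=(\om^n)$ and $(n+1)V_\om\en_\om(\f)=(\om,\f)^{n+1}$ are both polynomial functions of $\om$ (the latter because the extended energy pairing is multilinear in the $\Num(X)$ components), $\en_\om(\f)$ is smooth in $\om$ for every fixed $\f$, and your quotient-rule computation goes through verbatim. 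The paper simply writes
\[
(\om+t\theta,\f)^{n+1}=(\om,\f)^{n+1}+(n+1)t\,(\om,\f)^n\cdot(\theta,0)+O(t^2),
\]
multiplies by $V_{\om+t\theta}^{-1}/(n+1)=(1-t\,\tr_\om(\theta)+O(t^2))V_\om^{-1}/(n+1)$ from~\eqref{equ:logvol}, and reads off~\eqref{equ:nablaE}. So your concern about interchanging the $t$-derivative with a strong limit in $\f$ never arises: there is no limit to interchange.
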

Note that~\eqref{equ:nablaE} is translation invariant as a function of $\f$ (see~\eqref{equ:twistedtrans} and~\eqref{equ:MAenequiv}). 

\begin{proof} Smoothness follows from the fact that $V_\om=(\om^n)$ and $V_\om\en_\om(\f)=(\om,\f)^n\cdot(\theta,0)
$ are both polynomial functions of $\om$. Next, pick $\theta\in\Num(X)$. Then 
$$
(\om+t\theta,\f)^{n+1}=(\om,\f)^{n+1}+(n+1)t (\om,\f)^n\cdot(\theta,0)+O(t^2). 
$$
Combined with~\eqref{equ:logvol}, this yields
\begin{align*}
\en_{\om+t\theta}(\f) & =\left(1-t\tr_\om(\theta)+O(t^2)\right)\left(\en_\om(\f)+t \en_\om^\theta(\f)+O(t^2)\right)\\
& =\en_\om(\f)+t \left(\en_\om^\theta(\f)-\tr_\om(\theta)\en_\om(\f)\right)+O(t^2),
\end{align*}
which proves~\eqref{equ:nablaE}. 
\end{proof}
By~\eqref{equ:twEhomog} (or by differentiating~\eqref{equ:enhomog} with respect to $\om$), we have 
\begin{equation}\label{equ:nablaEhomog}
\nabla_\theta\en_\om(t\cdot\f)=t\nabla_\theta\en_\om(\f)\,\,\ \ \ \text{and   }\,\,\nabla_\theta\en_{t\om}(t\f)=\nabla_\theta\en_\om(\f)
\end{equation}
for $t\in\R_{>0}$. We also note:
\begin{lem}\label{lem:nablaenvee} For any $\f\in\cE^1$ we have 
$$
\nabla_\om\en_\om(\f)=\en_\om(\f)-\int\f\,\MA_\om(\f)=\|\MA_\om(\f)\|_\om. 
$$
\end{lem}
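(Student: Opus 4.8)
The plan is to specialize the general formula~\eqref{equ:nablaE} for $\nabla_\theta\en_\om(\f)$ to the case $\theta=\om$ and then simplify using the defining properties of the twisted energy. Setting $\theta=\om$ in~\eqref{equ:nablaE} gives
$$
\nabla_\om\en_\om(\f)=\en_\om^\om(\f)-\tr_\om(\om)\en_\om(\f),
$$
so the first task is to identify the two pieces on the right-hand side. For the trace term, I would compute $\tr_\om(\om)=nV_\om^{-1}(\om^{n-1}\cdot\om)=nV_\om^{-1}(\om^n)=n$ directly from the definition~\eqref{equ:traceom}. The remaining work is therefore to show that $\en_\om^\om(\f)=(n+1)\en_\om(\f)-\int\f\,\MA_\om(\f)$, or some equivalent combination, so that after subtracting $n\,\en_\om(\f)$ we land on the claimed expression $\en_\om(\f)-\int\f\,\MA_\om(\f)$.

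The key step is the evaluation of the self-twisted energy $\en_\om^\om(\f)$. By Definition~\ref{defi:twisteden}, $\en_\om^\om(\f)=V_\om^{-1}(\om,\f)^n\cdot(\om,0)$, whereas the Monge--Amp\`ere energy is $\en_\om(\f)=\frac{(\om,\f)^{n+1}}{(n+1)V_\om}$. The natural route is to use multilinearity of the energy pairing to decompose $(\om,\f)^{n+1}=(\om,\f)^n\cdot(\om,\f)$, splitting the last factor as $(\om,0)+(0,\f)$. This yields
$$
(n+1)V_\om\en_\om(\f)=(\om,\f)^n\cdot(\om,0)+(\om,\f)^n\cdot(0,\f)=V_\om\en_\om^\om(\f)+(\om,\f)^n\cdot(0,\f).
$$
By the defining property of the mixed Monge--Amp\`ere measure, $(0,\f)\cdot(\om,\f)^n=\int\f\,(\om+\ddc\f)^n=V_\om\int\f\,\MA_\om(\f)$. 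Dividing by $V_\om$ then gives $(n+1)\en_\om(\f)=\en_\om^\om(\f)+\int\f\,\MA_\om(\f)$, i.e.~$\en_\om^\om(\f)=(n+1)\en_\om(\f)-\int\f\,\MA_\om(\f)$. Substituting back produces $\nabla_\om\en_\om(\f)=(n+1)\en_\om(\f)-\int\f\,\MA_\om(\f)-n\,\en_\om(\f)=\en_\om(\f)-\int\f\,\MA_\om(\f)$, which is the first claimed equality.

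For the second equality, I would invoke Example~\ref{exam:enMA}: the identity~\eqref{equ:enMA} states precisely that $\|\MA_\om(\f)\|_\om=\en_\om(\f)-\int\f\,\MA_\om(\f)$ for $\f\in\cE^1$, so the two right-hand sides coincide by definition. I expect no serious obstacle here; the only point requiring care is that the energy pairing and the mixed Monge--Amp\`ere formalism are being applied to $\f\in\cE^1$ rather than merely $\f\in\PL(X)$, but this is legitimate since Theorem~\ref{thm:enpairing} extends the pairing multilinearly and strongly continuously to $\vec\cE^1$, and the mixed measure~\eqref{equ:mixedMA} is defined on all of $\vec\cE^1$. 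Thus the main content is the clean algebraic manipulation of the energy pairing in the middle step, and everything else follows formally from the definitions and from~\eqref{equ:enMA}.
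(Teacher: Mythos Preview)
Your proof is correct, but it takes a different route from the paper's. You specialize~\eqref{equ:nablaE} to $\theta=\om$ and then compute $\en_\om^\om(\f)$ by expanding the energy pairing $(\om,\f)^{n+1}$ via multilinearity. The paper instead differentiates the homogeneity relation $\en_{t\om}(t\f)=t\en_\om(\f)$ from~\eqref{equ:enhomog} at $t=1$: by the chain rule, the left-hand side contributes $\nabla_\om\en_\om(\f)$ (variation in the class, via~\eqref{equ:twEder}) plus $\int\f\,\MA_\om(\f)$ (variation in the argument, via~\eqref{equ:Eder}), while the right-hand side gives $\en_\om(\f)$. This is essentially an Euler identity, and is quicker since it bypasses any explicit manipulation of the energy pairing. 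Your approach, on the other hand, is more self-contained in that it does not rely on the scaling identity~\eqref{equ:enhomog} but works directly from the definitions; it also makes transparent the algebraic relation $\en_\om^\om(\f)=(n+1)\en_\om(\f)-\int\f\,\MA_\om(\f)$, which is a useful identity in its own right. Both arguments invoke Example~\ref{exam:enMA} for the second equality.
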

\begin{proof} For each $t>0$ we have $\en_{t\om}(t\f)=t\en_\om(\f)$, see~\eqref{equ:enhomog}. Differentiating this at $t=1$ yields 
$$
\nabla_\om\en_\om(\f)+\int\f\,\MA_\om(\f)=\en_\om(\f),
$$
see~\eqref{equ:Eder} and~\eqref{equ:twEder}. The rest follows from Example~\ref{exam:enMA}. 
\end{proof}
%
%
\subsection{The twisted energy of a measure}
Fix $\om\in\Amp(X)$, and recall that the Monge--Amp\`ere operator induces a topological embedding $\MA_\om\colon\cE^1(\om)/\R\hto\cM^1$ with dense image. 

\begin{prop-def}\label{prop:effehold} For any $\theta\in\Num(X)$, there exists a unique strongly continuous functional 
$\nabla_\theta\|\cdot\|_\om\colon\cM^1\to\R$ such that 
\begin{equation}\label{equ:nablavee}
\nabla_\theta\|\MA_\om(\f)\|_\om=\nabla_\theta\en_\om(\f)
\end{equation}
for all $\f\in\cE^1(\om)$. Further, 
\begin{equation}\label{equ:twisteddualholder}
\left|\nabla_\theta\|\mu\|_\om-\nabla_{\theta}\|\nu\|_\om\right|\lesssim\ii_\om(\mu,\nu)^\a \max\{\|\mu\|_\om,\|\nu\|_\om\}^{1-\a}\|\theta\|_\om
\end{equation}
for all $\mu,\nu\in\cM^1$, where $\a=\a_n\in (0,1)$ only depends on $n$. In particular,
\begin{equation}\label{equ:twisteddualhold2}
\left|\nabla_\theta\|\mu\|_\om\right|\lesssim\|\mu\|_\om\|\theta\|_\om. 
\end{equation} 
\end{prop-def}
By uniqueness, $\nabla_\theta\|\mu\|_\om$ is a linear function of $\theta\in\Num(X)$. The choice of notation will be justified by Theorem~\ref{thm:diffen} below. 

 \begin{rmk}\label{rmk:contenv} When $\om$ has the envelope property (\eg when $X$ is smooth), the Monge--Amp\`ere operator induces a homeomorphism $\MA_\om\colon\cE^1(\om)/\R\simto\cM^1$ (see~\cite{nama,trivval}), and $\nabla_{\theta}\|\cdot\|_\om$ is then simply obtained by transporting the translation invariant functional $\nabla_\theta\en_\om$. Equivalently, for any $\mu\in\cM^1$ with normalized potential $\f_\mu\in\cE^1(\om)$ (with respect to $\om$), we have $\nabla_\theta\|\mu\|_\om=\nabla_\theta\en_\om(\f_\mu)$. 
\end{rmk}

\begin{lem}\label{lem:nablaholder} The exists $\a=\a_n\in(0,1)$ such that 
\begin{equation}\label{equ:nablaholder}
|\nabla_\theta\en_\om(\f)-\nabla_\theta\en_\om(\p)|\lesssim\ii_\om(\f,\p)^{\a}\max\{\ii_\om(\f),\ii_\om(\p)\}^{1-\a}\|\theta\|_\om
\end{equation}
for all $\f,\p\in\cE^1(\om)$. 
\end{lem}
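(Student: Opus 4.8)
The starting point is the identity~\eqref{equ:nablaE}, which expresses $\nabla_\theta\en_\om=\en_\om^\theta-\tr_\om(\theta)\en_\om$. A naive attempt to bound the two summands separately is doomed: neither $\en_\om^\theta$ nor $\tr_\om(\theta)\en_\om$ is invariant under adding a constant to the potential, whereas the functional $\ii_\om$ is, so each summand can be arbitrarily large at fixed $\ii_\om(\f,\p)$. Only the combination $\nabla_\theta\en_\om$ is translation invariant (as noted after Lemma~\ref{lem:deren}), and this is precisely what must be exploited. I would first reduce to the case where $\theta$ is nef: since $\nabla_\theta\en_\om$ is linear in $\theta$ (Lemma~\ref{lem:deren}), we may write $\theta=(\theta+\|\theta\|_\om\om)-\|\theta\|_\om\om$, where both classes are nef and have $\|\cdot\|_\om\lesssim\|\theta\|_\om$, so the general estimate follows from the nef case up to a harmless constant.

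Next I would turn the difference into the integral of $\f-\p$ against a signed measure of total mass zero. Using the multilinearity of the energy pairing (Theorem~\ref{thm:enpairing}) together with $(\om,\f)-(\om,\p)=(0,\f-\p)$, one telescopes~\eqref{equ:twisteden} and~\eqref{equ:Emixed} into the cocycle formula
\[
\en_\om^\theta(\f)-\en_\om^\theta(\p)=V_\om^{-1}\sum_{j=0}^{n-1}\int(\f-\p)\,(\om+\ddc\f)^j\wedge(\om+\ddc\p)^{n-1-j}\wedge\theta,
\]
and, analogously (with denominator $n+1$ and a global factor $\tr_\om(\theta)$), an expression for $\tr_\om(\theta)(\en_\om(\f)-\en_\om(\p))$. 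Subtracting, $\nabla_\theta\en_\om(\f)-\nabla_\theta\en_\om(\p)=\int(\f-\p)\,d\Sigma$, where $\Sigma$ is the resulting combination of mixed Monge--Amp\`ere measures. A direct mass count — the $\theta$-twisted terms carrying mass $\tr_\om(\theta)/n$ each, the pure terms mass $1$ each — shows that $\Sigma$ has total mass zero. This is the integrated form of translation invariance, and it is what allows the bound to depend on $\ii_\om(\f,\p)$ through differences of potentials rather than on $L^1$-norms that would reintroduce additive constants.

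Finally I would estimate $\int(\f-\p)\,d\Sigma$ using the Hölder inequality~\eqref{equ:BBGZ} of Theorem~\ref{thm:I}, together with its mixed-integral refinement from~\cite{trivval}. Concretely, regroup $\Sigma$ into a bounded number of pieces of the form $\mu-\mu'$, with $\mu,\mu'$ genuine Monge--Amp\`ere measures built from convex combinations of $\f$ and $\p$, so that $\ii_\om(\mu,\mu')\lesssim\ii_\om(\f,\p)$ while $\|\mu\|_\om,\|\mu'\|_\om\lesssim\max\{\ii_\om(\f),\ii_\om(\p)\}$ by Theorem~\ref{thm:I}(iv); for the terms twisted by the nef class $\theta$, bound the mixed measure by $\|\theta\|_\om$ times a pure one via $-\|\theta\|_\om\om\le\theta\le\|\theta\|_\om\om$ and the monotonicity~\eqref{equ:pairingmono}. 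Applying~\eqref{equ:BBGZ} to each piece then yields a bound of the shape $\ii_\om(\f,\p)^\a\max\{\ii_\om(\f),\ii_\om(\p)\}^{1-\a}\|\theta\|_\om$ with some $\a=\a_n\in(0,1)$. The main obstacle is the passage from the pure Monge--Amp\`ere form of~\eqref{equ:BBGZ} to the genuinely mixed, $\theta$-twisted integrals: because $\theta$ is only nef and not a wedge power of a single potential, one cannot directly read off the measures $\mu,\mu'$, and must instead invoke the Hodge-index-theorem estimates of~\cite{trivval} for mixed integrals, taking care that the Hölder exponent and the energy bookkeeping combine into a single $\a_n$ independent of $\theta,\f,\p$.
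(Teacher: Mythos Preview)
Your approach is viable in spirit but takes a detour compared to the paper's proof, and your opening diagnosis is mistaken. You assert that bounding $\en_\om^\theta$ and $\tr_\om(\theta)\en_\om$ separately is ``doomed'' because neither is translation invariant; in fact the paper does precisely this, after first \emph{using} translation invariance of $\nabla_\theta\en_\om$ to normalize $\sup\f=\sup\p=0$. With that normalization in place, each of $|\en_\om^\theta(\f)-\en_\om^\theta(\p)|$ and $|\en_\om(\f)-\en_\om(\p)|$ admits a H\"older bound of the required form directly from \cite[Theorem~7.34 and~(7.29)]{trivval} (after reducing, as you also do, to $\theta$ bounded between multiples of $\om$), and~\eqref{equ:nablaE} finishes the proof in a few lines.

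Your cocycle/zero-mass route ultimately requires the same mixed-integral H\"older estimates from~\cite{trivval} that the paper cites as a black box, as you yourself acknowledge in the final paragraph. The intermediate work---regrouping $\Sigma$ into differences of genuine Monge--Amp\`ere measures while handling the $\theta$-twisted factors---is exactly what those estimates already package, so unpacking and then re-invoking them buys nothing. There is also a minor gap in your reduction: writing $\theta=(\theta+\|\theta\|_\om\om)-\|\theta\|_\om\om$ yields pieces that are merely nef, whereas the paper shifts further to obtain $\om\le\theta_\pm\le 3\om$; this two-sided bound by multiples of $\om$ is what the cited estimates in~\cite{trivval} actually require, and it also gives the uniform control $|\tr_\om(\theta)|\le 3$ needed when recombining via~\eqref{equ:nablaE}.
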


\begin{proof} By homogeneity, we may first assume $\|\theta\|_\om=1$, and hence $-\om\le\theta\le\om$. Set $\theta_+:=\theta+2\om$, $\theta_-:=2\om$. Then $\theta=\theta_+-\theta_-$ and $\om\le\theta_\pm\le 3\om$. By linearity, we may thus assume without loss that $\om\le\theta\le 3\om$, and hence $|\tr_\om(\theta)|\le 3$.  By translation invariance of $\nabla_\theta\en_\om$, we may further normalize $\f,\p$ by $\sup\f=\sup\p=0$. By Theorem~7.34 and~(7.29) of~\cite{trivval}, we then have 
$$
\left|\en_\om^\theta(\f)-\en_\om^\theta(\p)\right|=V_\om^{-1}\left|(\theta,0)\cdot(\om,\f)^n-(\theta,0)\cdot(\om,\p)^n\right|\lesssim\ii_\om(\f,\p)^\a\max\{\ii_\om(\f),\ii_\om(\p)\}^{1-\a}
$$
and
\begin{equation}\label{equ:enholder}
|\en_\om(\f)-\en_\om(\p)|=((n+1)V_\om)^{-1}\left|(\om,\f)^{n+1}-(\om,\p)^{n+1}\right|\lesssim\ii_\om(\f,\p)^\a\max\{\ii_\om(\f),\ii_\om(\p)\}^{1-\a}. 
\end{equation}
with $\a=\a_n\in(0,1)$. The result follows, by~\eqref{equ:nablaE}. 
\end{proof}

\begin{proof}[Proof of Proposition~\ref{prop:effehold}] Pick $\mu,\nu\in\cM^1$ in the image of $\MA_\om$, and denote by $\f_\mu,\f_\nu\in\cE^1(\om)$ their normalized potentials (with respect to $\om$), cf.~\eqref{equ:normpot}. By~\eqref{equ:nablavee} and~\eqref{equ:nablaholder}, we have 
$$
\left|\nabla_\theta\|\mu\|_\om-\nabla_\theta\|\nu\|_\om\right|=\left|\nabla_\theta\en_\om(\f_\mu)-\nabla_\theta\en_\om(\f_\nu)\right|
$$
$$
\lesssim\ii_\om(\f_\mu,\f_\nu)^\a\max\{\ii_\om(\f_\mu),\ii_\om(\f_\nu)\}^{1-\a}\approx\ii_\om(\mu,\nu)^\a\max\{\|\mu\|_\om,\|\nu\|_\om\}^{1-\a},
$$
see Theorem~\ref{thm:I}. This proves~\eqref{equ:twisteddualholder} when $\mu,\nu$ lie in the image of $\MA_\om$. As a result, $\nabla_{\theta}\|\cdot\|_\om$ is uniformly continuous on a dense subspace of the quasi-metric space $(\cM^1,\ii_\om)$. By Lemma~\ref{lem:unifext}, it thus admits a unique continuous extension $\cM^1\to\R$, which clearly still satisfies~\eqref{equ:twisteddualholder}. 
\end{proof}

\begin{prop}\label{prop:homogeffe} 
For all $\mu\in\cM^1$ and $t\in\R_{>0}$, we have 
\begin{equation}\label{equ:homogeffe}
\nabla_\theta\|\mu\|_{t\om}=\nabla_\theta\|\mu\|_\om,\quad\nabla_\theta\|t_\star\mu\|_\om=t\nabla_\theta\|\mu\|_\om. 
\end{equation}
\end{prop}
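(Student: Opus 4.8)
The plan is to establish both homogeneity relations by reducing to the dense subset $\MA_\om(\cE^1(\om))\subset\cM^1$ and then invoking strong continuity of $\nabla_\theta\|\cdot\|_\om$ (Proposition-Definition~\ref{prop:effehold}) to propagate the identities to all of $\cM^1$. The key point is that on this dense subset everything can be computed via the defining relation~\eqref{equ:nablavee}, reducing each claim to a corresponding statement about the twisted energy $\nabla_\theta\en_\om$ on $\vec\cE^1$, where the scaling behaviour is already recorded in~\eqref{equ:nablaEhomog}.

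First I would prove the scaling identity $\nabla_\theta\|t_\star\mu\|_\om=t\,\nabla_\theta\|\mu\|_\om$. For $\mu=\MA_\om(\f)$ with $\f\in\cE^1(\om)$, the homogeneity~\eqref{equ:MAhomog} of the Monge--Amp\`ere operator gives $t_\star\mu=t_\star\MA_\om(\f)=\MA_\om(t\cdot\f)$, so by~\eqref{equ:nablavee} we get $\nabla_\theta\|t_\star\mu\|_\om=\nabla_\theta\en_\om(t\cdot\f)=t\,\nabla_\theta\en_\om(\f)=t\,\nabla_\theta\|\mu\|_\om$, where the middle equality is the first relation in~\eqref{equ:nablaEhomog}. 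This verifies the identity on the image of $\MA_\om$. Since both sides are strongly continuous in $\mu$ (the left side by strong continuity of $\nabla_\theta\|\cdot\|_\om$ precomposed with the strongly continuous scaling action $\mu\mapsto t_\star\mu$ on $\cM^1$, the right side directly), and the image of $\MA_\om$ is strongly dense in $\cM^1$, the identity extends to all $\mu\in\cM^1$.

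For the first relation $\nabla_\theta\|\mu\|_{t\om}=\nabla_\theta\|\mu\|_\om$ I would argue analogously, but now with the $\om$-dependence made explicit. Note that scaling $\om$ to $t\om$ changes the relevant energy functionals, so the cleanest route is to write $\mu=\MA_\om(\f)=\MA_{t\om}(t\f)$; indeed $\MA_{t\om}(t\f)=V_{t\om}^{-1}(t\om+\ddc(t\f))^n=V_\om^{-1}(\om+\ddc\f)^n=\MA_\om(\f)$ since $V_{t\om}=t^nV_\om$ and each factor scales by $t$. Then the defining relation~\eqref{equ:nablavee} applied at the class $t\om$ gives $\nabla_\theta\|\mu\|_{t\om}=\nabla_\theta\en_{t\om}(t\f)$, and the second identity in~\eqref{equ:nablaEhomog}, namely $\nabla_\theta\en_{t\om}(t\f)=\nabla_\theta\en_\om(\f)$, together with~\eqref{equ:nablavee} at the class $\om$, yields $\nabla_\theta\en_\om(\f)=\nabla_\theta\|\mu\|_\om$. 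This proves the identity on the image of $\MA_\om$, and strong density plus strong continuity (the strong topology of $\cM^1$ being independent of the reference class) extend it to all of $\cM^1$.

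The main obstacle I anticipate is bookkeeping the dependence on the reference class when it changes from $\om$ to $t\om$: the functional $\nabla_\theta\|\cdot\|_\om$ is defined relative to a fixed $\om$ through $\MA_\om$, so one must be careful that the normalized potentials and the strong topology are being compared correctly across different reference classes. The saving grace is that $\cM^1$ and its strong topology are independent of the ample class (as recorded after~\eqref{equ:Eveelip}), so the continuity argument is legitimate; the only real content is the algebraic identity $\MA_{t\om}(t\f)=\MA_\om(\f)$ and the matching scaling behaviour of the twisted energy from~\eqref{equ:nablaEhomog}, both of which are elementary.
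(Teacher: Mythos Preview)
Your proposal is correct and follows essentially the same approach as the paper: reduce to the dense image of $\MA_\om$ by strong continuity, use $\mu=\MA_{t\om}(t\f)$ and $t_\star\mu=\MA_\om(t\cdot\f)$ to rewrite the measure, and then apply~\eqref{equ:nablavee} together with the scaling identities~\eqref{equ:nablaEhomog}. The paper's proof is more terse but otherwise identical in content.
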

\begin{proof} By continuity, we may assume that $\mu=\MA_\om(\f)=V_\om^{-1}(\om+\ddc\f)^n$ with $\f\in\cE^1(\om)$. Then $\mu=\MA_{t\om}(t\f)$, $t_\star\mu=\MA_\om(t\cdot\f)$ (see~\eqref{equ:MAhomog}), and hence 
$$
\nabla_\theta\|\mu\|_{t\om}=\nabla_\theta\en_{t\om}(t\f)=\nabla_\theta\en(\f)=\nabla_\theta\|\mu\|_\om,
$$
$$
\nabla_\theta\|t_\star\mu\|_\om=\nabla_\theta\en_\om(t\cdot\f)=t\nabla_\theta\en_\om(\f)=t\nabla_\theta\|\mu\|_\om,
$$
see~\eqref{equ:nablaEhomog}. 
\end{proof}

Note also that
\begin{equation}\label{equ:Euler}
\nabla_\om\|\mu\|_\om=\|\mu\|_\om.
\end{equation}
for each $\mu\in\cM^1$,  by Lemma~\ref{lem:nablaenvee} and continuity. 

\begin{rmk} If we take for granted Theorem~\ref{thm:diffen}, then~\eqref{equ:homogeffe} can alternatively be obtained by differentiating~\eqref{equ:homogenvee}, while~\eqref{equ:Euler} becomes the Euler equation reflecting the homogeneity property $\|\mu\|_{t\om}=t\|\mu\|_\om$, $t\in\R_{>0}$. 
\end{rmk}

For later use, we also note:
\begin{lem}\label{lem:boundentwisted}
If $\mu$ lies in the image of $\MA_\om$, with normalized potential $\f_\mu$, then $|\en_\om^\theta(\f_\mu)|\lesssim\|\theta\|_\om\|\mu\|_\om$. 
\end{lem}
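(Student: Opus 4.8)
The plan is to avoid any direct manipulation of mixed Monge--Amp\`ere integrals and instead assemble the identities and estimates already established for the twisted energy and its relation to $\nabla_\theta\|\cdot\|_\om$. The starting point is Lemma~\ref{lem:deren}, which expresses the twisted energy in terms of the directional derivative of the ordinary energy:
$$
\en_\om^\theta(\f_\mu)=\nabla_\theta\en_\om(\f_\mu)+\tr_\om(\theta)\,\en_\om(\f_\mu).
$$
Thus the quantity to be bounded decomposes into two pieces, each of which is controlled by an estimate proved earlier.

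For the first piece, I would invoke the defining relation~\eqref{equ:nablavee} of Proposition--Definition~\ref{prop:effehold}: since $\mu=\MA_\om(\f_\mu)$ by hypothesis, we have $\nabla_\theta\en_\om(\f_\mu)=\nabla_\theta\|\mu\|_\om$, and this is controlled by~\eqref{equ:twisteddualhold2}, giving $|\nabla_\theta\|\mu\|_\om|\lesssim\|\mu\|_\om\|\theta\|_\om$. For the second piece, the normalization of $\f_\mu$ enters through~\eqref{equ:ennormpot}, which states $\en_\om(\f_\mu)=\|\mu\|_\om$; combined with the trace bound~\eqref{equ:trbound}, namely $|\tr_\om(\theta)|\le n\|\theta\|_\om$, this yields $|\tr_\om(\theta)\,\en_\om(\f_\mu)|\le n\|\theta\|_\om\|\mu\|_\om$.

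Adding the two estimates via the triangle inequality then gives
$$
|\en_\om^\theta(\f_\mu)|\le|\nabla_\theta\|\mu\|_\om|+|\tr_\om(\theta)|\,\en_\om(\f_\mu)\lesssim\|\theta\|_\om\|\mu\|_\om,
$$
which is the claim. There is essentially no genuine obstacle here: the content of the lemma is already packaged into~\eqref{equ:nablavee}, \eqref{equ:twisteddualhold2}, \eqref{equ:trbound}, and~\eqref{equ:ennormpot}, and the only point requiring a moment's care is to make sure one uses the identification $\nabla_\theta\en_\om(\f_\mu)=\nabla_\theta\|\mu\|_\om$ rather than attempting to estimate the twisted energy directly on potentials (where the lack of translation invariance, cf.~\eqref{equ:twistedtrans}, would otherwise be a nuisance). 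The hidden constant depends only on $n$, consistent with the meaning of $\lesssim$ in~\S\ref{sec:notation}.
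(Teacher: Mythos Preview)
The proposal is correct and follows essentially the same route as the paper's proof: both rewrite $\en_\om^\theta(\f_\mu)=\nabla_\theta\|\mu\|_\om+\tr_\om(\theta)\|\mu\|_\om$ using~\eqref{equ:nablaE}, \eqref{equ:nablavee} and~\eqref{equ:ennormpot}, and then bound each term via~\eqref{equ:twisteddualhold2} and~\eqref{equ:trbound}.
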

\begin{proof} By~\eqref{equ:nablaE}, \eqref{equ:nablavee} and~\eqref{equ:ennormpot}, we have 
$\en_\om^\theta(\f_\mu)=\nabla_\theta\|\mu\|_\om+\tr_\om(\theta)\|\mu\|_\om$. The desired estimate now follows from~\eqref{equ:twisteddualhold2} and~\eqref{equ:trbound}.
\end{proof}

%
%
\subsection{H\"older continuity of the energy of a measure}\label{sec:holdertwist}

The next result will be the main ingredient leading to the continuity of the divisorial stability threshold (see Theorem~\ref{thm:stabopen}). 

\begin{thm}\label{thm:twistedhold} Suppose $\om,\om'\in\Amp(X)$ and $\d\in [0,1]$ satisfy $e^{-\d}\om\le\om'\le e^\d\om$. Then
\begin{equation}\label{equ:twistedlip}
|\nabla_\theta\|\mu\|_\om-\nabla_\theta\|\mu\|_{\om'}|\lesssim\d^\a\|\mu\|_\om\|\theta\|_\om
\end{equation}
for all $\theta\in\Num(X)$ and $\mu\in\cM^1$, where $\a\in(0,1)$ only depends on $n$. 
\end{thm}
Combined with~\eqref{equ:Eveelip}, this implies that $\om\mapsto\nabla_\theta\|\mu\|_\om$ is locally H\"older continuous on $\Amp(X)$.

\begin{lem}\label{lem:Lipen} Suppose $\om,\om'\in\Amp(X)$ satisfy $\om\le\om'\le e^{\d}\om$ with $\d\in [0,2]$. Pick $\f\in\cE^1(\om)\subset\cE^1(\om')$ and $\theta\in\Num(X)$, and set $\mu:=\MA_\om(\f)$, $\mu':=\MA_{\om'}(\f)$. Then:
\begin{equation}\label{equ:Lipen}
|\en_\om(\f)-\en_{\om'}(\f)|\lesssim \d\|\mu\|_\om,
\end{equation}
\begin{equation}\label{equ:Lipentwist}
|\en_\om^\theta(\f)-\en_{\om'}^\theta(\f)|\lesssim \d\|\theta\|_\om\|\mu\|_\om, 
\end{equation}
\begin{equation}\label{equ:Lipennabla}
|\nabla_\theta\en_\om(\f)-\nabla_\theta\en_{\om'}(\f)|\lesssim \d\|\theta\|_\om\|\mu\|_\om, 
\end{equation}
\begin{equation}\label{equ:enprime}
\max\{\|\mu'\|_{\om},\|\mu\|_{\om'},\|\mu'\|_{\om'}\}\lesssim\|\mu\|_\om,
\end{equation}
\begin{equation}\label{equ:disthold} 
\ii_{\om'}(\mu,\mu')\lesssim\d\|\mu\|_\om. 
\end{equation}
\end{lem}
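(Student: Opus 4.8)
The plan is to prove the five estimates in the order \eqref{equ:enprime}, then \eqref{equ:Lipen}, then \eqref{equ:disthold}, and finally \eqref{equ:Lipentwist}--\eqref{equ:Lipennabla}, since each step feeds the next. Throughout I set $\theta_0:=\om'-\om$, which is nef with $0\le\theta_0\le(e^\d-1)\om$; as $\d\le 2$ this gives $\|\theta_0\|_\om\le e^\d-1\lesssim\d$. Along the segment $\om_t:=\om+t\theta_0=(1-t)\om+t\om'$, $t\in[0,1]$, one has $\om\le\om_t\le e^\d\om$, whence $\|\eta\|_{\om_t}\le\|\eta\|_\om$ for all $\eta\in\Num(X)$ and $\|\nu\|_{\om_t}\approx\|\nu\|_\om$ for $\nu\in\cM^1$ by \eqref{equ:Eveelip}. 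Since \eqref{equ:enprime}, \eqref{equ:Lipen}, \eqref{equ:disthold}, \eqref{equ:Lipennabla} are unchanged under $\f\mapsto\f+c$, I normalize $\sup\f=\f(v_\triv)=0$ (the natural convention also making \eqref{equ:Lipentwist} meaningful); then $\en_\om(\f)\le 0$, and combining \eqref{equ:enholder} with $\ii_\om(\f,0)\approx\|\mu\|_\om$ (Theorem~\ref{thm:I}(iv)) yields $|\en_\om(\f)|\lesssim\|\mu\|_\om$ and $\int(-\f)\,\MA_\om(\f)=\|\mu\|_\om-\en_\om(\f)\approx\|\mu\|_\om$.

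For \eqref{equ:enprime}, the bound $\|\mu\|_{\om'}\lesssim\|\mu\|_\om$ is immediate from \eqref{equ:Eveelip} with $s=e^\d$, and $\|\mu'\|_\om\lesssim\|\mu'\|_{\om'}$ likewise; the crux is $\|\mu'\|_{\om'}\lesssim\|\mu\|_\om$. Using $\en_{\om'}(\f)\le 0$ and Example~\ref{exam:enMA}, $\|\mu'\|_{\om'}\le\int(-\f)\,\MA_{\om'}(\f)=V_{\om'}^{-1}\int(-\f)(\om'+\ddc\f)^n$. Writing $\om'+\ddc\f=(\om+\ddc\f)+\theta_0$ and expanding, I would use positivity and monotonicity of mixed Monge--Amp\`ere measures (valid for the nef class $\theta_0$ after approximating it by the ample $\theta_0+\e\om$) together with $\theta_0\le(e^\d-1)\om$ and $V_{\om'}\ge V_\om$ to bound this by $\sum_{k=0}^n\binom nk(e^\d-1)^k V_\om^{-1}\int(-\f)(\om+\ddc\f)^{n-k}\wedge\om^k$. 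Each summand is nonnegative and bounded by the full sum $\sum_{j=0}^n V_\om^{-1}\int(-\f)(\om+\ddc\f)^j\wedge\om^{n-j}=(n+1)(-\en_\om(\f))\lesssim\|\mu\|_\om$ by \eqref{equ:Emixed}; as the $(e^\d-1)^k$ are bounded, \eqref{equ:enprime} follows.

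For \eqref{equ:Lipen} I integrate along $\om_t$: by \eqref{equ:nablavee} and \eqref{equ:twisteddualhold2}, $\tfrac{d}{dt}\en_{\om_t}(\f)=\nabla_{\theta_0}\en_{\om_t}(\f)=\nabla_{\theta_0}\|\MA_{\om_t}(\f)\|_{\om_t}$ has modulus $\lesssim\|\MA_{\om_t}(\f)\|_{\om_t}\|\theta_0\|_{\om_t}\lesssim\d\|\mu\|_\om$, using \eqref{equ:enprime} with $\om_t$ in place of $\om'$ and $\|\theta_0\|_{\om_t}\lesssim\d$; integrating over $[0,1]$ gives \eqref{equ:Lipen}. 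For \eqref{equ:disthold}, Theorem~\ref{thm:I}(iv) relative to $\om'$ gives $\ii_{\om'}(\mu,\mu')=\ii_{\om'}(\mu,\MA_{\om'}(\f))\approx\jj^{\om'}_\mu(\f)$, where $\jj^{\om'}_\mu(\f):=\|\mu\|_{\om'}-\en_{\om'}(\f)+\int\f\,\mu$ is \eqref{equ:Jmu} relative to $\om'$; substituting $\int\f\,\mu=\en_\om(\f)-\|\mu\|_\om$ (from Example~\ref{exam:enMA}) yields $\jj^{\om'}_\mu(\f)=(\|\mu\|_{\om'}-\|\mu\|_\om)+(\en_\om(\f)-\en_{\om'}(\f))$, and both brackets are $\lesssim\d\|\mu\|_\om$ by \eqref{equ:Eveelip} and \eqref{equ:Lipen} respectively.

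The estimates \eqref{equ:Lipentwist} and \eqref{equ:Lipennabla} are where the real work lies, and I expect the main obstacle to be the control of the second order (``doubly twisted'') term. Differentiating $\en^\theta_{\om_t}(\f)=V_{\om_t}^{-1}(\om_t,\f)^n\cdot(\theta,0)$ in $t$ produces $-\tr_{\om_t}(\theta_0)\en^\theta_{\om_t}(\f)+nV_{\om_t}^{-1}(\om_t,\f)^{n-1}\cdot(\theta_0,0)\cdot(\theta,0)$. The first summand is $\lesssim\d\|\theta\|_\om\|\mu\|_\om$ once one has the single-twist bound $|\en^\theta_{\om_t}(\f)|\lesssim\|\theta\|_\om\|\mu\|_\om$, obtained (as in Lemma~\ref{lem:boundentwisted} and Lemma~\ref{lem:nablaholder}) by subtracting the value at $\f=0$, which vanishes by \eqref{equ:pairing0}, and applying the mixed H\"older estimate \cite[Theorem~7.34]{trivval}. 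The key term is the doubly twisted one: again subtracting its (vanishing) value at $\f=0$ and invoking \cite[Theorem~7.34]{trivval} for the two no-potential slots $(\theta_0,0),(\theta,0)$ should give $V_{\om_t}^{-1}|(\om_t,\f)^{n-1}\cdot(\theta_0,0)\cdot(\theta,0)|\lesssim\|\theta_0\|_\om\|\theta\|_\om\,\ii_{\om_t}(\f,0)\lesssim\d\|\theta\|_\om\|\mu\|_\om$, and integrating over $t$ yields \eqref{equ:Lipentwist}. Finally \eqref{equ:Lipennabla} follows from \eqref{equ:nablaE}, which gives $\nabla_\theta\en_\om(\f)-\nabla_\theta\en_{\om'}(\f)=(\en^\theta_\om(\f)-\en^\theta_{\om'}(\f))-(\tr_\om(\theta)\en_\om(\f)-\tr_{\om'}(\theta)\en_{\om'}(\f))$: the first difference is \eqref{equ:Lipentwist}, while the second is handled by \eqref{equ:Lipen}, the bound $|\en_{\om'}(\f)|\lesssim\|\mu\|_\om$, and the elementary estimate $|\tr_\om(\theta)-\tr_{\om'}(\theta)|\lesssim\d\|\theta\|_\om$, itself obtained by differentiating $\tr_{\om_t}(\theta)=n(\om_t^{n-1}\cdot\theta)/(\om_t^n)$ and using monotonicity of intersection numbers of nef classes.
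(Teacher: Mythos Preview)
Your argument is essentially correct, but it takes a different route from the paper. The paper first normalizes by $\sup\f=0$ and proves \eqref{equ:Lipen} and \eqref{equ:Lipentwist} directly via the monotonicity/concavity sandwich of Lemma~\ref{lem:twistedconc}: from $\om\le\om'\le e^\d\om$ and $\f\le 0$ one gets $e^{-n\d}\en_\om(\f)\ge\en_{\om'}(\f)\ge e^{n\d}\en_\om(\f)$ (and the analogous inequalities for $\en^\theta$ with $\theta$ nef), so $|\en_\om(\f)-\en_{\om'}(\f)|\lesssim\d|\en_\om(\f)|$ and similarly for $\en^\theta$; then \eqref{equ:Lipennabla} follows from \eqref{equ:nablaE} exactly as you do, and only afterwards are \eqref{equ:enprime} and \eqref{equ:disthold} deduced, now with the normalization $\int\f\,\mu=0$. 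Your approach instead proves \eqref{equ:enprime} first by a binomial expansion of $(\om'+\ddc\f)^n$, and then integrates along the segment $\om_t$, using Proposition-Definition~\ref{prop:effehold} (already established) to bound the derivative. This is perfectly valid and perhaps more conceptual, but it imports the H\"older machinery of Proposition~\ref{prop:effehold} into a statement the paper obtains by elementary monotonicity; in particular, your proof of \eqref{equ:Lipentwist} hinges on a doubly-twisted estimate $V_{\om_t}^{-1}|(\om_t,\f)^{n-1}\cdot(\theta_0,0)\cdot(\theta,0)|\lesssim\|\theta_0\|_\om\|\theta\|_\om\,\ii_{\om_t}(\f)$, which is indeed covered by the general mixed H\"older bounds in~\cite{trivval} (after splitting $\theta=\theta_+-\theta_-$ with $\om\le\theta_\pm\le 3\om$ as in Lemma~\ref{lem:nablaholder}), but is a slightly stronger input than the single-slot version the paper explicitly invokes. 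The paper's route via Lemma~\ref{lem:twistedconc} avoids this altogether.
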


The proof relies on the following variant of~\cite[Lemma~9.25]{trivval}. 
 
\begin{lem}\label{lem:twistedconc} Assume given $\om,\om'\in\Amp(X)$, $s\ge 1$ such that $\om\le\om'\le s\om$, and a nef class $\theta\in\Nef(X)$. For any $0\ge \f\in\cE^1(\om)\subset\cE^1(\om')$, we then have 
 $$
 0\ge s^{-n}\en_\om(\f)\ge\en_{\om'}(\f)\ge s^n\en_\om(\f),\quad 0\ge s^{-n}\en_{\om}^\theta(\f)\ge \en_{\om'}^\theta(\f)\ge s^{n-1}\en_\om^\theta(\f).
 $$
 \end{lem}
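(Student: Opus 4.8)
The plan is to push everything down to intersection-theoretic positivity and then run a scaling argument based on concavity. Since $\en_\om$ and $\en_\om^\theta$ are strongly continuous on $\cE^1(\om)$ (Theorem~\ref{thm:enpairing}) and $\PL\cap\PSH(\om)$ is strongly dense, and since $\om\le\om'$ forces $\cE^1(\om)\subset\cE^1(\om')$ with compatible strong convergence (via the comparison~\eqref{equ:Eveelip}), I would first assume $\f$ is PL, so that all quantities below are genuine intersection numbers on a common model and the mixed Monge--Amp\`ere measures $(\om+\ddc\f)^j\wedge\om^{n-j}$ (resp. wedged with the nef class $\theta$) are honest positive measures. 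With $\f\le 0$ this immediately gives the sign statements $\en_\om(\f)\le 0$ and $\en_\om^\theta(\f)\le 0$ via~\eqref{equ:twisteden} and the analogous expansion~\eqref{equ:Emixed} of $\en_\om$, each integrand being $\f\le 0$ against a positive measure.

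Two structural inputs remain. First, \emph{monotonicity under the nef order} of the volume-weighted energies $G(\om):=V_\om\en_\om(\f)=\tfrac1{n+1}(\om,\f)^{n+1}$ and $H(\om):=V_\om\en_\om^\theta(\f)=(\om,\f)^n\cdot(\theta,0)$: namely $\om\le\om'\Rightarrow G(\om')\le G(\om)$ and $H(\om')\le H(\om)$. For PL $\f$ this follows termwise from $(\om'+\ddc\f)^j\wedge\om'^{n-j}\ge(\om+\ddc\f)^j\wedge\om^{n-j}$ (monotonicity of mixed products of positive classes) together with $\f\le 0$. Second, the volume comparison $V_\om\le V_{\om'}\le s^n V_\om$, immediate from $\om\le\om'\le s\om$.

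The heart of the argument is a scaling inequality obtained from concavity. Using the homogeneity relations~\eqref{equ:enhomog} and~\eqref{equ:twEhomog} I would write $G(t\om)=t^{n+1}V_\om\,\en_\om(\f/t)$ and $H(t\om)=t^{n}V_\om\,\en_\om^\theta(\f/t)$ for $t>0$. The segment $r\mapsto r\f$, $r\in[0,1]$, lies in $\cE^1(\om)$ (since $\om+r\ddc\f=(1-r)\om+r(\om+\ddc\f)\ge 0$), joins $0$ to $\f$, and $\en_\om(0)=\en_\om^\theta(0)=0$ by~\eqref{equ:pairing0}. Concavity of the Monge--Amp\`ere energy $\en_\om$ and of the twisted energy $\en_\om^\theta$ (for $\theta$ nef, Lemma~\ref{lem:twistedconcave}) then yields $\en_\om(r\f)\ge r\en_\om(\f)$ and $\en_\om^\theta(r\f)\ge r\en_\om^\theta(\f)$; taking $r=1/s$ gives $G(s\om)\ge s^n G(\om)$ and $H(s\om)\ge s^{n-1}H(\om)$, the one-lower exponent in the twisted case reflecting the single $(\theta,0)$ slot.

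Assembling: for the upper bounds I combine monotonicity ($G(\om')\le G(\om)$, $H(\om')\le H(\om)$) with $V_{\om'}\le s^nV_\om$ to get $\en_{\om'}(\f)=G(\om')/V_{\om'}\le s^{-n}\en_\om(\f)$ and likewise for $\en^\theta$; for the lower bounds I chain monotonicity with the scaling inequality, $G(\om')\ge G(s\om)\ge s^nG(\om)$ (resp. $H(\om')\ge H(s\om)\ge s^{n-1}H(\om)$), and divide by $V_{\om'}\ge V_\om$, obtaining $\en_{\om'}(\f)\ge s^n\en_\om(\f)$ (resp. $\ge s^{n-1}\en_\om^\theta(\f)$). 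I expect the main obstacle to be exactly the bookkeeping in this last step: because $G,H\le 0$, every inequality flips direction upon division by a volume, and it is the asymmetry between invoking $V_{\om'}\le s^nV_\om$ (upper) versus $V_{\om'}\ge V_\om$ (lower), together with the sharp concavity exponents, that produces the asymmetric powers $s^{-n}$ against $s^n$ (resp. $s^{n-1}$). Finally, the $n=1$ twisted chain degenerates, as $\en_\om^\theta$ is then linear in $\f$ and $H$ is independent of $\om$, so it is immediate; the approximation from PL to general $\f\in\cE^1(\om)$ is routine by strong continuity.
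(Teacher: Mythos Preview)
Your argument is correct and follows essentially the same route as the paper: monotonicity of the energy pairing in the nef order (the paper cites \cite[Proposition~7.3]{trivval} directly rather than reducing to PL), the scaling identity $(s\om,\f)=s(\om,s^{-1}\f)$, concavity of $\en_\om$ and $\en_\om^\theta$ along the segment $r\mapsto r\f$, and finally division by $V_\om\le V_{\om'}\le s^n V_\om$. The only cosmetic difference is that the paper works throughout with the pairings $(\om,\f)^{n+1}$ and $(\om,\f)^n\cdot(\theta,0)$ rather than unpacking them into mixed Monge--Amp\`ere integrals, and it defers the untwisted estimate to \cite[Lemma~9.25]{trivval}.
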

 \begin{proof} The first estimates are already proved in~\cite[Lemma~9.25]{trivval}, and we simply repeat the argument to get the remaining ones. Since $\f\le 0$ and $\om\le\om'\le s\om$, \cite[Proposition~7.3]{trivval} implies 
 $$
0\ge (\om,\f)^n\cdot(\theta,0)\ge (\om',\f)^n\cdot(\theta,0)\ge (s\om,\f)^n\cdot(\theta,0)=s^n(\om,s^{-1}\f)^n\cdot(\theta,0).
$$
By concavity of $\en_\om^\theta$ (see Lemma~\ref{lem:twistedconcave}), we further have 
$(\om,s^{-1}\f)^n\cdot(\theta,0)\ge s^{-1}(\om,\f)^n\cdot(\theta,0)$, 
and hence 
$$
0\ge (\om,\f)^n\cdot(\theta,0)\ge (\om',\f)^n\cdot(\theta,0)\ge s^{n-1}(\om,\f)^n\cdot(\theta,0). 
$$
The result follows after dividing by $0\le V_\om\le V_{\om'}\le s^n V_\om$. 
\end{proof}

\begin{proof}[Proof of Lemma~\ref{lem:Lipen}] Note that the desired estimates are invariant under translation of $\f$. To prove~\eqref{equ:Lipen}, \eqref{equ:Lipentwist} and~\eqref{equ:Lipennabla}, we first normalize $\f$ by $\sup\f=0$. By Lemma~\ref{lem:twistedconc}, we then have $e^{-n\d}\en_\om(\f)\ge \en_{\om'}(\f)\ge e^{n\d}\en_\om(\f)$, and hence 
$$
|\en_\om(\f)-\en_{\om'}(\f)|\lesssim\d|\en_\om(\f)|,
$$
Further, $\f(v_\triv)=\sup\f=0$ yields $-\en_\om(\f)=\jj_{\om}(\f)\approx\ii_\om(\f)=\ii_\om(\mu)\approx\|\mu\|_\om$, 
see Theorem~\ref{thm:I}~(iv); this proves~\eqref{equ:Lipen}. 

Similarly, Lemma~\ref{lem:twistedconc} yields $e^{-n\d}\en_{\om}^\theta(\f)\ge \en_{\om'}^\theta(\f)\ge e^{(n-1)\d}\en_\om^\theta(\f)$, and hence
$$
|\en_\om^\theta(\f)-\en_{\om'}^\theta(\f)|\lesssim\d|\en_\om^\theta(\f)|. 
$$
By~\eqref{equ:nablaholder} and~\eqref{equ:trbound}, we further have
$$
|\nabla_\theta\en_\om(\f)|\lesssim\ii_\om(\f)\|\theta\|_\om\approx\|\mu\|_\om\|\theta\|_\om,\quad|\tr_\om(\theta)|\lesssim\|\theta\|_\om. 
$$
By~\eqref{equ:nablaE}, this yields 
$$
|\en^\theta_\om(\f)|\le|\tr_\om(\theta)||\en_\om(\f)|+|\nabla_\theta\en_\om(\f)|\lesssim\|\mu\|_\om\|\theta\|_\om,
$$
and~\eqref{equ:Lipentwist} follows. 

Finally, \eqref{equ:nablaE} implies
$$
\nabla_\theta\en_\om(\f)-\nabla_\theta\en_{\om'}(\f)=\en_\om^\theta(\f)-\en_{\om'}^\theta(\f)+\left(\tr_\om(\theta)-\tr_{\om'}(\theta)\right)\en_\om(\f)+\tr_{\om'}(\theta)(\en_{\om}(\f)-\en_{\om'}(\f)), 
$$
and~\eqref{equ:Lipennabla} thus follows from~\eqref{equ:Lipen} and~\eqref{equ:Lipentwist} together with $|\tr_\om(\theta)-\tr_{\om'}(\theta)|\lesssim\d\|\theta\|_\om$, which is readily checked. 

Next we turn to~\eqref{equ:enprime} and~\eqref{equ:disthold}, for which we now normalize $\f$ by $\int\f\,\mu=0$, so that $\f=\f_\mu$ is the normalized potential of $\mu$ with respect to $\om$. Then 
\begin{equation}\label{equ:estim}
\|\mu\|_{\om}=\en_{\om}(\f),\quad\sup\f=\ii_\om(\f)=\ii_\om(\mu)\approx\|\mu\|_\om,
\end{equation} 
see Theorem~\ref{thm:I}~(iv). By~\eqref{equ:Lipen} we further have $|\en_{\om'}(\f)|\lesssim\|\mu\|_\om$, and hence 
$$
\|\mu'\|_{\om'}\approx\jj_{\om'}(\f)=\sup\f-\en_{\om'}(\f)\lesssim\|\mu\|_\om,
$$
which yields~\eqref{equ:enprime} thanks to \eqref{equ:estim} and~\eqref{equ:Eveelip}. Finally, 
$$
\ii_{\om'}(\mu,\mu')\approx\jj_{\om',\mu}(\f)=\|\mu\|_{\om'}-\en_{\om'}(\f),
$$
see Theorem~\ref{thm:I}~(iv). By~\eqref{equ:Eveelip}, \eqref{equ:Lipen} and~\eqref{equ:estim}, this yields
$$
\ii_{\om'}(\mu,\mu')=(1+O(\d))\|\mu\|_\om-\en_{\om}(\f)+O(\d)\|\mu\|_\om=O(\d)\|\mu\|_\om,
$$
where the implicit constant in $O$ only depends on $n$. This proves~\eqref{equ:disthold}. 

\end{proof}

\begin{proof}[Proof of Theorem~\ref{thm:twistedhold}] Note first that $\om'':=e^{-\d}\om\le\om'\le e^\d\om$, and hence
$$
\om''\le\om'\le e^{2\d}\om'',\quad\om''\le\om\le e^\d\om''.
$$
Arguing successively with $\om'',\om'$, and with $\om'',\om$, and relying on~\eqref{equ:Eveelip}, it is thus enough to prove the result when $\om\le\om'\le e^\d\om$, which we assume from now on. Next, pick $\f\in\cE^1(\om)\subset\cE^1(\om')$ and set 
$$
\mu:=\MA_\om(\f),\quad\mu':=\MA_{\om'}(\f),
$$
so that 
$$
\nabla_\theta\|\mu\|_\om=\nabla_\theta\en_\om(\f),\quad\nabla_\theta\|\mu'\|_{\om'}=\nabla_\theta\en_{\om'}(\f),
$$
see~\eqref{equ:nablavee}. By~\eqref{equ:Lipennabla}, we have 
$$
|\nabla_\theta\|\mu\|_\om-\nabla_\theta\|\mu'\|_{\om'}|\lesssim\d\|\theta\|_\om\|\mu\|_\om. 
$$
On the other hand, \eqref{equ:twisteddualholder}, \eqref{equ:enprime} and~\eqref{equ:disthold} yield
$$
|\nabla_\theta\|\mu\|_{\om'}-\nabla_\theta\|\mu'\|_{\om'}|\lesssim\d^{\a}\|\mu\|_\om\|\theta\|_\om
$$
with $\a\in(0,1)$ only depending on $n$. Summing up these estimates yields~\eqref{equ:twistedlip} when $\mu$ lies in the image of $\MA_\om\colon\cE^1(\om)\to\cM^1$, and hence in general, by strong continuity of $\nabla_\theta\|\cdot\|_\om$ and $\nabla_\theta\|\cdot\|_{\om'}$ (recall that the strong topology of $\cM^1$ is independent of $\om$). 
\end{proof}

%
%
%
\subsection{Differentiability of the energy with respect to the class}

\begin{thm}\label{thm:diffen} For any $\mu\in\cM^1$, the function $\om\mapsto\|\mu\|_\om$ is locally $C^{1,\a}$ on $\Amp(X)$ for some $\a\in(0,1)$ only depending on $n$. Further, 
$$
\frac{d}{dt}\bigg|_{t=0}\|\mu\|_{\om+t\theta}=\nabla_\theta\|\mu\|_\om
$$
for all $\om\in\Amp(X)$ and $\theta\in\Num(X)$. 
\end{thm}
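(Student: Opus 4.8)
The plan is to establish the sharp first-order expansion
\[
\bigl|\,\|\mu\|_{\om'}-\|\mu\|_\om-\nabla_{\om'-\om}\|\mu\|_\om\,\bigr|\lesssim\d^{1+\a}\,\|\mu\|_\om \tag{$\star$}
\]
for all $\om,\om'\in\Amp(X)$ with $e^{-\d}\om\le\om'\le e^\d\om$, $\d\in[0,1]$, and all $\mu\in\cM^1$. Granting ($\star$), the substitution $\om'=\om+t\theta$ (so that $\d\approx|t|\,\|\theta\|_\om$ and $\om'-\om=t\theta$) shows at once that $t\mapsto\|\mu\|_{\om+t\theta}$ is differentiable at $0$ with derivative $\nabla_\theta\|\mu\|_\om$; the linearity of $\theta\mapsto\nabla_\theta\|\mu\|_\om$ (Proposition-Definition~\ref{prop:effehold}) upgrades this to full differentiability, and the local Hölder continuity of $\om\mapsto\nabla_\theta\|\mu\|_\om$ supplied by Theorem~\ref{thm:twistedhold} (together with ($\star$) itself) yields the local $C^{1,\a}$ regularity. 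By strong continuity on $\cM^1$ of the functionals $\|\cdot\|_{\om}$, $\|\cdot\|_{\om'}$, $\nabla_\theta\|\cdot\|_\om$, $\nabla_\theta\|\cdot\|_{\om'}$, combined with the uniform comparisons \eqref{equ:Eveelip} and \eqref{equ:twisteddualholder}, it suffices to prove ($\star$) for $\mu$ in the dense image of $\MA_\om$; and, exactly as in the proof of Theorem~\ref{thm:twistedhold}, interposing $\om''=e^{-\d}\om$ reduces the general two-sided case to the \emph{monotone} one $\om\le\om'\le e^\d\om$, where the increment $\theta:=\om'-\om$ is nef with $\|\theta\|_\om\lesssim\d$ and $\cE^1(\om)\subset\cE^1(\om')$.

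Fix then $\mu=\MA_\om(\f)$, with $\f=\f_\mu$ the normalized potential of $\mu$ with respect to $\om$, so that $\int\f\,\mu=0$ and $\|\mu\|_\om=\en_\om(\f)$ by \eqref{equ:normpot}, \eqref{equ:ennormpot}. The lower bound in ($\star$) is the clean side. Since $\f\in\cE^1(\om)\subset\cE^1(\om')$, the variational definition of the energy gives immediately
\[
\|\mu\|_{\om'}\ge\en_{\om'}(\f)-\int\f\,\mu=\en_{\om'}(\f).
\]
For fixed $\f$, the map $\om'\mapsto\en_{\om'}(\f)$ is a ratio of polynomials, hence smooth, with first derivative $\nabla_\theta\en_\om(\f)=\nabla_\theta\|\mu\|_\om$ (Lemma~\ref{lem:deren}, \eqref{equ:nablavee}); bounding its second derivative on the comparable segment $[\om,\om']$ by $\|\theta\|_\om^2\|\mu\|_\om$ (via Lemma~\ref{lem:boundentwisted} and the energy comparisons of Lemma~\ref{lem:Lipen}) yields $\en_{\om'}(\f)=\|\mu\|_\om+\nabla_\theta\|\mu\|_\om+O(\d^2\|\mu\|_\om)$, which is the lower half of ($\star$).

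The matching upper bound is the crux. Writing $\mu':=\MA_{\om'}(\f)$, the suboptimality of $\f$ as a competitor for $\|\mu\|_{\om'}$ is exactly
\[
\|\mu\|_{\om'}-\en_{\om'}(\f)=\jj_{\om',\mu}(\f)\approx\ii_{\om'}\bigl(\mu,\MA_{\om'}(\f)\bigr)=\ii_{\om'}(\mu,\mu'),
\]
by \eqref{equ:Jmu} and Theorem~\ref{thm:I}(iv). The essential point I would have to prove is that this gap is of \emph{second} order, $\ii_{\om'}(\mu,\mu')\lesssim\d^2\|\mu\|_\om$, thereby sharpening the first-order estimate \eqref{equ:disthold}: heuristically $\f$ is an $O(\d)$-critical point of the concave $\om'$-functional $\psi\mapsto\en_{\om'}(\psi)-\int\psi\,\mu$, whose maximizer is the $\om'$-potential of $\mu$, so its defect is quadratic. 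This is where the genuine difficulty lies. Because the class-monotonicity $\PSH(\om)\subset\PSH(\om')$ holds in only one direction, the base-class potential $\f$ cannot be used to test the energy at the \emph{smaller} class, so one cannot simply interchange the roles of $\om$ and $\om'$; moreover, crudely converting the first-order bound \eqref{equ:disthold} through the Hölder estimate of Theorem~\ref{thm:I} loses a fractional power and produces an error $\gg\d$, ruining even plain differentiability. One must instead exploit the precise intersection-theoretic, Hodge-index--type structure of $\ii_{\om'}(\MA_\om(\f),\MA_{\om'}(\f))$, in the spirit of the estimates of~\cite{trivval} underlying Theorem~\ref{thm:I}. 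Combining the quadratic gap bound with the lower half gives ($\star$) in the monotone case, and the reductions of the first paragraph then finish the proof.

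I note finally that when $\om$ has the envelope property (e.g.\ $X$ smooth), the entire argument collapses to a transparent chain rule: one has $\|\mu\|_{\om_s}=\en_{\om_s}(\f_\mu^{\om_s})$ with $\int\f_\mu^{\om_s}\,\mu=0$, and differentiating in $s$ the potential-variation term $\int\dot\f\,\MA_{\om_s}(\f_\mu^{\om_s})=\int\dot\f\,\mu=\tfrac{d}{ds}\int\f_\mu^{\om_s}\,\mu=0$ drops out (Remark~\ref{rmk:contenv}), leaving precisely $\nabla_\theta\|\mu\|_{\om_s}$. The real content of the theorem, and the reason for the delicate upper-bound estimate above, is to make this computation survive the absence of the envelope property.
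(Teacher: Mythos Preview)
Your proposal has a genuine gap at exactly the point you flag as ``the crux'': you do not prove the upper bound in~($\star$). You correctly identify the defect as $\jj_{\om',\mu}(\f)\approx\ii_{\om'}(\mu,\mu')$ and state that it should be $O(\d^2\|\mu\|_\om)$, but you offer only a heuristic (``$\f$ is an $O(\d)$-critical point, so its defect is quadratic'') and an appeal to unspecified Hodge-index-type estimates. The available estimate~\eqref{equ:disthold} gives only $\ii_{\om'}(\mu,\mu')\lesssim\d\|\mu\|_\om$, which is one order too weak; sharpening it to $O(\d^{1+\a})$ is not a routine consequence of anything in the paper, and you have not supplied an argument. (Your claimed $O(\d^2)$ may in fact be too optimistic; only $O(\d^{1+\a})$ is needed.)

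The paper sidesteps this difficulty entirely by a symmetry trick you have missed. It proves only a \emph{one-sided} lower bound (Lemma~\ref{lem:diffen})
\[
\|\mu\|_{\om+\theta}\ge\|\mu\|_\om+\nabla_\theta\|\mu\|_\om-C\|\theta\|_\om^2,
\]
valid for \emph{arbitrary} $\theta$ with $\|\theta\|_\om<1$, not just nef $\theta$. The device enabling this is to test with the scaled competitor $(1-\e)\f$, $\e=\|\theta\|_\om$: since $(1-\e)\om\le\om+\theta$ regardless of the sign of $\theta$, one has $(1-\e)\f\in\cE^1((1-\e)\om)\subset\cE^1(\om+\theta)$, and the $O(\e)$ error introduced by scaling cancels against $\int\f\,\mu=0$. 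One then applies the same lower bound at $\om+\theta$ in direction $-\theta$, obtaining $\|\mu\|_\om\ge\|\mu\|_{\om+\theta}-\nabla_\theta\|\mu\|_{\om+\theta}-C\|\theta\|_\om^2$; rearranging and invoking Theorem~\ref{thm:twistedhold} to replace $\nabla_\theta\|\mu\|_{\om+\theta}$ by $\nabla_\theta\|\mu\|_\om+O(\d^\a\|\theta\|_\om)$ gives the matching upper bound. No estimate on $\jj_{\om',\mu}(\f)$ is ever needed, and no monotone reduction is used. Your restriction to the monotone case $\om\le\om'$ actually obstructs this: it forces you to seek a direct upper bound, which is the hard direction.
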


\begin{lem}\label{lem:diffen} Pick $\om\in\Amp(X)$ and $\theta\in\Num(X)$ such that $\|\theta\|_\om<1$, and hence $\om+\theta\in\Amp(X)$. Assume also given $A>0$ and $\mu\in\cM^1$ such that 
$$
A^{-1}\le V_\om\le A,\quad \|\mu\|_\om\le A.
$$
Then 
$$
\|\mu\|_{\om+\theta}\ge \|\mu\|_\om+\nabla_\theta\|\mu\|_\om-C\|\theta\|_\om^2
$$
for a constant $C>0$ only depending on $A$ and $n$. 
\end{lem}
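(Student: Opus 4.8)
The plan is to prove this one-sided quadratic estimate by testing the energy $\|\mu\|_{\om+\theta}$ against a well-chosen potential and then Taylor-expanding the resulting intersection-theoretic quantity to second order. First I would reduce to the case where $\mu$ lies in the image of $\MA_\om$: the left-hand side $\|\mu\|_{\om+\theta}$ and each term on the right-hand side are strongly continuous functionals of $\mu\in\cM^1$ (the energy by the very definition of the strong topology, and $\nabla_\theta\|\cdot\|_\om$ by Proposition-Definition~\ref{prop:effehold}), and the strong topology is independent of the class; since $\MA_\om(\cE^1(\om))$ is strongly dense in $\cM^1$, it suffices to establish the inequality there, at the cost of enlarging $A$ slightly. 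So write $\mu=\MA_\om(\f)$ with $\f=\f_\mu$ the normalized potential, so that $\int\f\,\mu=0$, $\en_\om(\f)=\|\mu\|_\om$, and $\nabla_\theta\en_\om(\f)=\nabla_\theta\|\mu\|_\om$ by~\eqref{equ:nablavee}.

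The core of the argument is the nef case $\theta\in\Nef(X)$. Then $\om_t:=\om+t\theta\ge\om$ for $t\in[0,1]$, so $\f$ remains $\om_t$-psh and is an admissible competitor in the supremum~\eqref{equ:envee} defining $\|\mu\|_{\om+\theta}$; since $\int\f\,\mu=0$ this yields $\|\mu\|_{\om+\theta}\ge\en_{\om+\theta}(\f)=g(1)$, where $g(t):=\en_{\om_t}(\f)$. By Lemma~\ref{lem:deren}, $g$ is a smooth (rational) function of $t$ with $g(0)=\|\mu\|_\om$ and $g'(0)=\nabla_\theta\en_\om(\f)=\nabla_\theta\|\mu\|_\om$, so Taylor's formula with integral remainder gives $g(1)\ge g(0)+g'(0)-\tfrac12\sup_{[0,1]}|g''|$. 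It thus remains to bound $\sup_{[0,1]}|g''|\lesssim\|\theta\|_\om^2$ with an implied constant depending only on $A$ and $n$. Differentiating $g(t)=(\om_t,\f)^{n+1}/((n+1)(\om_t^n))$ twice, $g''$ is a universal combination of the mixed energy pairings $(\om_t,\f)^{n+1-j}\cdot(\theta,0)^j$ for $j\le 2$ and the intersection numbers $(\om_t^{n-i}\cdot\theta^i)$ for $i\le 2$, divided by powers of $V_{\om_t}$. Here the nef hypothesis is crucial twice: it guarantees $V_{\om_t}=(\om_t^n)\ge V_\om\ge A^{-1}$, so there is no volume degeneration along the path, and it keeps $\om\le\om_t\lesssim\om$, so that~\eqref{equ:Eveelip} makes $\|\MA_{\om_t}(\f)\|_{\om_t}$ comparable to $\|\mu\|_\om\le A$ and $\|\theta\|_{\om_t}$ comparable to $\|\theta\|_\om$.

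I expect the \emph{main obstacle} to be the second-order estimate $|(\om_t,\f)^{n-1}\cdot(\theta,0)^2|\lesssim V_{\om_t}\|\theta\|_{\om_t}^2\,\|\MA_{\om_t}(\f)\|_{\om_t}$, a ``doubly twisted'' analogue of the first-order bound $|\en^\theta_\om(\f)|\lesssim\|\theta\|_\om\|\mu\|_\om$ already used in the proof of Lemma~\ref{lem:Lipen}; like the latter it should derive from the Hodge-index-type inequalities of~\cite[\S7]{trivval}. Together with the trace bound~\eqref{equ:trbound} controlling the terms $(\om_t^{n-i}\cdot\theta^i)$, this furnishes the required bound on $g''$ and settles the nef case.

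Finally I would deduce the general case from the nef case by a homogeneity trick that simultaneously circumvents the failure of $\f$ to be $(\om+\theta)$-psh when $\theta$ is not nef. Set $\lambda:=\|\theta\|_\om$; it suffices to treat $\lambda\le\tfrac12$, which is all that the differentiability statement of Theorem~\ref{thm:diffen} requires. Then $\theta_+:=\theta+\lambda\om\in\Nef(X)$ and $\om+\theta=\om_0+\theta_+$ with $\om_0:=(1-\lambda)\om\in\Amp(X)$, and the volume and energy constraints still hold at $\om_0$ after adjusting $A$. Applying the nef case at the base point $\om_0$ in the nef direction $\theta_+$, and then invoking the homogeneities $\|\mu\|_{(1-\lambda)\om}=(1-\lambda)\|\mu\|_\om$ and $\nabla_{\theta_+}\|\mu\|_{(1-\lambda)\om}=\nabla_{\theta_+}\|\mu\|_\om$ of~\eqref{equ:homogenvee} and~\eqref{equ:homogeffe}, the linearity of $\theta\mapsto\nabla_\theta\|\mu\|_\om$, and the Euler identity $\nabla_\om\|\mu\|_\om=\|\mu\|_\om$ of~\eqref{equ:Euler}, the resulting lower bound recombines exactly into $\|\mu\|_\om+\nabla_\theta\|\mu\|_\om-C\|\theta_+\|_{\om_0}^2$. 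Since $\|\theta_+\|_{\om_0}=(1-\lambda)^{-1}\|\theta_+\|_\om\le(1-\lambda)^{-1}(\|\theta\|_\om+\lambda)\lesssim\|\theta\|_\om$ for $\lambda\le\tfrac12$, the error is $O(\|\theta\|_\om^2)$ with constant depending only on $A$ and $n$, which completes the proof.
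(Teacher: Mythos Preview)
Your approach is correct and complete, but it differs substantially from the paper's. The paper avoids any case split on $\theta$: writing $\e:=\|\theta\|_\om$ and $\theta=\e\tilde\theta$, it observes that $(1-\e)\om\le\om+\theta$, so $(1-\e)\f\in\cE^1((1-\e)\om)\subset\cE^1(\om+\theta)$ is an admissible competitor for \emph{every} $\theta$, not just nef ones. The key algebraic identity $(\om+\theta,(1-\e)\f)=(\om,\f)+\e(\tilde\theta,-\f)$ then allows a direct binomial expansion of $(\om+\theta,(1-\e)\f)^{n+1}$; the first-order term reduces to $(\om,\f)^n\cdot(\theta,0)$ because $(\om,\f)^n\cdot(0,\f)=V_\om\int\f\,\mu=0$, and the higher-order terms $(\om,\f)^{n+1-j}\cdot(\tilde\theta,-\f)^j$ are bounded in one stroke by~\cite[Corollary~7.35]{trivval}.

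Your route---first the nef case via Taylor expansion of $t\mapsto\en_{\om+t\theta}(\f)$, then a homogeneity reduction $\om+\theta=(1-\lambda)\om+\theta_+$ with $\theta_+:=\theta+\lambda\om$ nef---is a genuine alternative. The homogeneity trick is clean and the recombination using~\eqref{equ:homogenvee},~\eqref{equ:homogeffe},~\eqref{equ:Euler} is correct as you wrote it. Your ``main obstacle'' (the second-order pairing $(\om_t,\f)^{n-1}\cdot(\theta,0)^2$) is indeed controlled by the same Hodge-index machinery in~\cite[\S7]{trivval}, so no new ingredient is needed beyond what the paper uses. What the paper's approach buys is brevity: one expansion at a fixed base point, no path, no case analysis. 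What yours buys is a more modular structure: the nef case isolates the second-order estimate cleanly, and the reduction step makes the role of homogeneity explicit. A minor point: you restrict to $\lambda\le\tfrac12$, which suffices for Theorem~\ref{thm:diffen}; the range $\tfrac12\le\lambda<1$ of the lemma as stated is handled trivially since then $\|\theta\|_\om^2\ge\tfrac14$ while all three terms on the left are bounded in terms of $A,n$ by~\eqref{equ:Eveelip} and~\eqref{equ:twisteddualhold2}.
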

\begin{proof} We may assume that $\e:=\|\theta\|_\om>0$, and write $\theta=\e\tilde\theta$ with $\|\tilde\theta\|_\om=1$. 
Note that 
\begin{equation}\label{equ:compom}
(1-\e)\om\le\om+\theta\le(1+\e)\om\le(1-\e)^{-1}\om.
\end{equation}
By density of the image of $\MA_\om\colon\cE^1(\om)\to\cM^1$, it is enough to prove the result when $\mu=\MA_\om(\f)$ with $\f\in\cE^1(\om)$. We further normalize $\f$ by $\int\f\,\mu=0$, so that $\f=\f_\mu$ is the normalized potential of $\mu$ with respect to $\om$. Then 
\begin{equation}\label{equ:supfi}
0\le\sup\f=\ii_\om(\f)\approx\en_\om(\f)=\|\mu\|_\om\le A. 
\end{equation}
On the other hand, \eqref{equ:compom} implies
$$
(1-\e)\f\in\cE^1((1-\e)\om)\subset\cE^1(\om+\theta),
$$ 
and hence  
$\|\mu\|_{\om+\theta}\ge\en_{\om+\theta}\left((1-\e)\f\right)$, see~\eqref{equ:envee}. Now
$$
(n+1)V_{\om+\theta}\en_{\om+\theta}\left((1-\e)\f)\right)=(\om+\theta,(1-\e)\f)^{n+1}
$$
$$
=\left((\om,\f)+\e(\tilde\theta,-\f)\right)^{n+1}=(\om,\f)^{n+1}+(n+1)(\om,\f)^n\cdot(\theta,-\e\f)+\e^2 a(\e)
$$
with 
$$
a(\e):=\sum_{j=2}^{n+1}{n+1\choose j}\e^{j-2}(\om,\f)^{n+1-j}\cdot(\tilde\theta,-\f)^j
$$
On the one hand, we have $(\om,\f)^n\cdot(\theta,-\e\f)=(\om,\f)^n\cdot(\theta,0)$, since 
$$
(\om,\f)^n\cdot (0,\f)=V_\om\int\f\,\MA_\om(\f)=0.
$$
On the other hand, injecting~\eqref{equ:supfi} in~\cite[Corollary~7.35]{trivval} yields, for $j=0,\dots,n+1$
$$
\left|(\om,\f)^{n+1-j}\cdot(\tilde\theta,-\f)^j\right|\le C
$$
where $C=C(A,n)>0$ only depends on $A$ and $n$. This shows
$$
V_\om^{-1}V_{\om+\theta}\en_{\om+\theta}((1-\e)\f))=\en_\om(\f)+\en_\om^\theta(\f) +O(\e^2)
$$
where the implicit constant in $O$ only depends on $A$ and $n$. Since
$$
V_\om^{-1}V_{\om+\theta}=1+\tr_\om(\theta)+O(\e^2), 
$$
see~\eqref{equ:logvol}, we infer
$$
\|\mu\|_{\om+\theta}\ge\en_{\om+\theta}\left((1-\e)\f\right)=\left(1-\tr_\om(\theta)+O(\e^2)\right)\left(\en_\om(\f)+\en_\om^\theta(\f)+O(\e^2)\right) 
$$
$$
=\en_\om(\f)+\nabla_\theta\en_\om(\f)+O(\e^2) =\|\mu\|_\om+\nabla_\theta\|\mu\|_\om+O(\e^2) , 
$$
using~\eqref{equ:nablaE}, \eqref{equ:nablavee}, \eqref{equ:supfi} as well as $\tr_\om(\theta)=O(\e)$ and $\en_\om^\theta(\f)=O(\e)$, where the implicit constant only depends on $A$ and $n$, see~\eqref{equ:trbound} and Lemma~\ref{lem:boundentwisted}. The result follows. 

\end{proof}

\begin{proof}[Proof of Theorem~\ref{thm:diffen}] Pick $\om\in\Amp(X)$ and $\theta\in\Num(X)$ such that $\|\theta\|_\om\le 1/2$, and note that $\om+\theta\in\Amp(X)$ and $\|\theta\|_{\om+\theta}\le 2\|\theta\|_\om$. By Lemma~\ref{lem:diffen} we thus have 
$$
\|\mu\|_{\om+\theta}\ge\|\mu\|_\om+\nabla_\theta\|\mu\|_\om-C\|\theta\|_\om^2
$$
and 
$$
\|\mu\|_{\om}\ge\|\mu\|_{\om+\theta}-\nabla_\theta\|\mu\|_{\om+\theta}-C\|\theta\|_\om^2,
$$
where $C>0$ is independent of $\theta$. On the other hand, Theorem~\ref{thm:twistedhold} yields
$\nabla_\theta\|\mu\|_{\om+\theta}=\nabla_\theta\|\mu\|_\om+o(\|\theta\|)$ as $\theta\to 0$, and hence 
$$
\|\mu\|_{\om+\theta}=\|\mu\|_\om+\nabla_\theta\|\mu\|_\om+o(\|\theta\|_\om). 
$$
This proves that $\om\mapsto\|\mu\|_\om$ is differentiable, with differential equal to $\nabla_\theta\|\mu\|_\om$. By Theorem~\ref{thm:twistedhold}, the latter is locally $C^\a$ with respect to $\om$, and we conclude that $\om\mapsto\|\mu\|_\om$ is locally $C^{1,\a}$. 
\end{proof}

\begin{rmk} Assume that all ample classes on $X$ have the envelope property (\eg $X$ is smooth). Then any $\mu\in\cM^1$ lies in the image of $\MA_\om$, and hence admits a normalized potential $\f_\om=\f_{\om,\mu}$, characterized as the unique maximizer of the concave optimization problem 
\begin{equation}\label{equ:concopt}
\|\mu\|_\om=\sup\{\en_{\om}(\f)\mid\f\in\cE^1(\om),\,\int\f\,\mu=0\}.
\end{equation}
Since $\nabla_\theta\|\mu\|_\om=\nabla_\theta\en_\om(\f_\om)$, Theorem~\ref{thm:diffen} asserts that the differential of~\eqref{equ:concopt} with respect to the parameter $\om$ coincides with the differential of $\en_\om$ evaluated at the unique maximizer $\f_\om$. This is analogous, for instance, to the well-known differentiability of the distance to a closed convex subset in a Hilbert space. 
\end{rmk}

%
%
\subsection{The case of a Dirac mass}\label{sec:endiv}
In this section, we fix $\om\in\Amp(X)$ and $\theta\in\Num(X)$. For any $v\in X^\an$, recall that $v\in X^\lin\Leftrightarrow\d_v\in\cM^1$, and that we then write, for simplicity, $\|v\|_\om:=\|\d_v\|_\om$; we similarly set
\begin{equation}\label{equ:enshort}
\nabla_\theta\|v\|_\om:=\nabla_\theta\|\d_v\|_\om
\end{equation}
By~\eqref{equ:homogenvee} and~\eqref{equ:homogeffe}, we have
\begin{equation}\label{equ:homogdirac}
\|tv\|_\om=t\|v\|_\om,\quad\nabla_\theta\|tv\|_\om=t\nabla_\theta\|v\|_\om
\end{equation}
for $t\in\R_{>0}$. 

In what follows, we provide explicit formulas for these invariants when $v\in X^\div$ is a divisorial valuation, \ie $v=t\ord_F$ for a prime divisor $F\subset Y$ on a smooth birational model $\pi\colon Y\to X$ and $t\in\Q_{>0}$. By~\eqref{equ:homogdirac}, we assume $t=1$, to simplify notation.

\begin{thm}\label{thm:endiv} Pick a prime divisor $F\subset Y$ on a smooth birational model $\pi\colon Y\to X$, and set $v:= \ord_F$. Then: 
\begin{equation}\label{equ:energydirac}
\|v\|_\om=V_\om^{-1}\int_0^{+\infty}\vol(\om-\la F)\,d\la; 
\end{equation}
\begin{equation}\label{equ:DL}
\nabla_\theta\|v\|_\om=V_\om^{-1}\int_0^{+\infty}\nabla_\theta\vol(\om-\la F)\,d\la-\tr_\om(\theta)\|v\|_\om. 
\end{equation}
\end{thm}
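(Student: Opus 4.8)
The plan is as follows. Formula~\eqref{equ:energydirac} is nothing but the special case $t=1$ of~\eqref{equ:envdiv} in Example~\ref{exam:endirac} (read on $Y$, with $\om$ standing for $\pi^\star\om$), so it requires no separate argument; the content of the theorem is~\eqref{equ:DL}. First I would record that $v=\ord_F$ is divisorial, so $\te_\om(v)<\infty$ by~\eqref{equ:Tthresh} and $\d_v\in\cM^1$; hence Theorem~\ref{thm:diffen} applies and identifies the left-hand side $\nabla_\theta\|v\|_\om=\nabla_\theta\|\d_v\|_\om$ with the genuine directional derivative $\frac{d}{dt}\big|_{t=0}\|\d_v\|_{\om+t\theta}$. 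Since volumes and traces are preserved under the birational pullback $\pi$ (so that $V_{\pi^\star\om}=V_\om$ and $\tr_{\pi^\star\om}(\pi^\star\theta)=\tr_\om(\theta)$), I would work on $Y$ throughout and write $\vol=\vol_Y$.

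The core computation is to differentiate the explicit expression~\eqref{equ:energydirac}, valid at every ample class, at $\om$ in the direction $\theta$. Starting from
$$
\|\d_v\|_{\om+t\theta}=V_{\om+t\theta}^{-1}\int_0^{+\infty}\vol(\pi^\star(\om+t\theta)-\la F)\,d\la,
$$
I would apply the product rule. The prefactor contributes, via~\eqref{equ:logvol} and the identity $\int_0^{+\infty}\vol(\pi^\star\om-\la F)\,d\la=V_\om\|v\|_\om$, the term $-\tr_\om(\theta)\|v\|_\om$; differentiating the integral produces $V_\om^{-1}\int_0^{+\infty}\nabla_{\pi^\star\theta}\vol(\pi^\star\om-\la F)\,d\la$. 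Summing the two gives exactly~\eqref{equ:DL}.

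The one step requiring genuine care — and the main obstacle — is justifying differentiation under the integral sign, since $\vol$ fails to be $C^1$ on the boundary of the big cone (as noted after~\eqref{equ:vol'}). The plan is to invoke dominated convergence for the difference quotients $t^{-1}\bigl(\vol(\pi^\star\om+t\pi^\star\theta-\la F)-\vol(\pi^\star\om-\la F)\bigr)$. For $0<\la<\te_\om(v)$ the class $\pi^\star\om-\la F$ is big, so the $C^1$-regularity of $\vol$ there gives pointwise convergence to $\nabla_{\pi^\star\theta}\vol(\pi^\star\om-\la F)$; for $\la>\te_\om(v)$ both $\vol$-terms vanish for $|t|$ small (by continuity of the threshold $\om'\mapsto\te_{\om'}(v)$), matching the extended value $\nabla_{\pi^\star\theta}\vol=0$ off the pseudoeffective cone; and the single value $\la=\te_\om(v)$ is negligible. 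For the domination, I would use that $\te_{\om+t\theta}(v)$ stays below some fixed $\Lambda$ for $|t|\le t_0$, so all the difference quotients are supported in $[0,\Lambda]$, together with the local Lipschitz continuity of $\vol$ on the compact set $\{\pi^\star\om+s\pi^\star\theta-\la F:|s|\le t_0,\ \la\in[0,\Lambda]\}$, which bounds them there by a constant multiple of $\|\pi^\star\theta\|$. Dominated convergence then legitimizes the interchange, and the product-rule computation above completes the proof.
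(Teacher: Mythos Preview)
Your proposal is correct and follows the same overall strategy as the paper: invoke Theorem~\ref{thm:diffen} to interpret $\nabla_\theta\|v\|_\om$ as an honest directional derivative, then differentiate the explicit integral formula~\eqref{equ:energydirac} via the product rule, with~\eqref{equ:logvol} handling the prefactor $V_{\om+t\theta}^{-1}$.

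The one place where your argument and the paper's diverge is the justification of differentiation under the integral sign. You use dominated convergence on the difference quotients, appealing to local Lipschitz continuity of $\vol$ on the relevant compact set in $\Num(Y)$; this is true (it follows for instance from the concavity of $\vol^{1/n}$ on the big cone together with continuity and homogeneity), but it is not stated or proved in the paper, so you are importing an outside fact. The paper instead avoids this by first reducing to $\theta$ ample via linearity in $\theta$, which makes $t\mapsto\te_{\om+t\theta}(v)$ strictly increasing; it then applies ordinary differentiation under the integral on $[0,\te_\om(v)]$ (where all classes remain big for $t>0$) and shows the remaining tail $\int_{\te_\om(v)}^{\te_{\om+t\theta}(v)}\vol(\om+t\theta-\la F)\,d\la$ is $o(t)$, using only continuity of $\vol$ and the Lipschitz continuity of the threshold. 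Your route is a bit slicker once the Lipschitz property of $\vol$ is granted; the paper's is more self-contained.
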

For simplicity, we have set 
$$
\vol(\om-\la F):=\vol(\pi^\star\om-\la F),\quad\nabla_\theta\vol(\om-\la F):=\nabla_{\pi^\star\theta}\vol_Y(\pi^\star\om-\la F),
$$
see~\S\ref{sec:voltr} for the notation. Recall from~\eqref{equ:Tthresh} that $\vol(\om-\la F)>0\Longleftrightarrow\la<\tee_\om(v)$, so that the above integrals actually take place over $[0,\tee_\om(v)]$. 

Up to the factor $V_\om$, the right-hand side of~\eqref{equ:DL} is precisely the quantity that features in the Dervan--Legendre $\b$-invariant~\cite{DL}, see Lemma~\ref{lem:DL} below. 

\begin{proof}[Proof of Theorem~\ref{thm:endiv}] By~\eqref{equ:Eveelip}, the left-hand side of~\eqref{equ:energydirac} is continuous with respect to $\om$. The same holds for the right-hand side, using $\int_0^{+\infty}\vol(\om-\la F)\,d\la=\int_0^{\tee_\om(v)}\vol(\om-\la F)\,d\la$ and the continuity $\om\mapsto\tee_\om(v)$ and the volume function. 
By~\eqref{equ:envdiv}, \eqref{equ:energydirac} holds when $\om$ is rational, and the general case follows. As a consequence, for any $t\in\R$ small enough we get
$$
\|v\|_{\om+t\theta}=V_{\om+t\theta}^{-1}\int_0^{+\infty} f(t,\la)d\la
$$
with $f(t,\la):=\vol(\om+t\theta-\la F)$. By~\eqref{equ:vol'}, we further have 
$$
\frac{\partial}{\partial t}f(t,\la)=g(t,\la):=\nabla_\theta\vol(\om+t\theta-\la F)
$$
for $\la\ne\la(t):=\te_{\om+t\theta}(v)$. On the other hand, Theorem~\ref{thm:diffen} yields 
$$
\nabla_\theta\|v\|_\om=\frac{d}{dt}\bigg|_{t=0}\|v\|_{\om+t\theta},
$$
and a simple computation thus shows that~\eqref{equ:DL} is equivalent to  
$$
\frac{d}{dt}\bigg|_{t=0}\int_0^{+\infty} f(t,\la)d\la=\int_0^{+\infty} g(0,\la)d\la.
$$
While this appears to be a simple differentiation under the integral sign, the slight twist here is that $f(t,\la)$ might fail to be differentiable at $(t,\la(t))$. To circumvent this, note first that $t\mapsto\la(t)$ is locally Lipschitz continuous. Indeed, pick $C>0$ such that $-C\om\le\theta\le C\om$. For any $t,s\in\R$ we then have 
$$
\pi^\star(\om+(t+s)\theta)-\la(t+s)F\in\Psef(Y)\Longrightarrow\pi^\star((1+|s|C)\om+t\theta)-\la(t+s)F\in\Psef(Y)
$$
$$
\Longrightarrow \la(t+s)\le(1+C|s|)\la(t),
$$
which proves that $t\mapsto\la(t)$ is locally Lipschitz, since it is locally bounded. 

Next, we may assume $\theta\in\Amp(X)$, by linearity of the desired formula with respect to $\theta$. In that case, $t\mapsto\la(t)$ is further strictly increasing, and $f(t,\la)$ is thus $C^1$ on $\{(t,\la)\mid t>0,\quad\la<\la(0)\}$. By usual differentiation under the integral sign, we thus have 
$$
\frac{d}{dt}\bigg|_{t=0_+}\int_0^{\la(0)} f(t,\la)d\la=\int_0^{\la(0)}g(0,\la)d\la,
$$
and it remains to see 
$$
\int_{\la(0)}^{+\infty}f(t,\la)\,d\la=\int_{\la(0)}^{\la(t)} f(t,\la)\,d\la=o(t)
$$ 
as $t\to 0_+$. But $|\la(t)-\la(0)|\le Ct$, and $f(t,\la(t))=0$, which yields the desired estimate by (uniform) continuity of $f$, thereby finishing the proof. 
\end{proof}
%
%
%
%
\section{Entropy and the $\d$-invariant}\label{sec:entropy}
In what follows, $(X,B;\om)$ denotes a \emph{polarized pair}, \ie $X$ is a normal projective variety, $B$ is any $\Q$-Weil divisor such that $K_{X,B}:=K_X+B$ is $\Q$-Cartier, and $\om\in\Amp(X)$ is an ample numerical class. We introduce the entropy functional of $(X,B)$, defined on the space of divisorial measures, and study the associated $\d$-invariant. When $(X,B)$ is subklt, we prove that the entropy admits a natural extension to all measures of finite energy, and show that the $\d$-invariant is a threshold for Ding-stability. 

%
%
%
\subsection{Divisorial measures}

\begin{defi}\label{defi:divmes} We define a \emph{divisorial measure} on $X^\an$ as a Radon probability measure with support a finite subset of $X^\div$. We denote by $\cM^\div$ the set of such measures. 
\end{defi}
A divisorial measure $\mu\in\cM^\div$ is thus a measure of the form 
\begin{equation}\label{equ:mudiv}
\mu=\sum_i m_i\d_{v_i}
\end{equation}
for a finite set $(v_i)$ of divisorial valuations and $m_i\in\R_{\ge 0}$ such that $\sum_i m_i=1$. In other words, $\cM^\div$ is the convex hull of $X^\div\hto\cM$. 

As $X^\div\subset X^\an$ is stable under the scaling action of $\Q_{>0}$, the same holds for $\cM^\div$, \ie 
$$
\mu\in\cM^\div,\,t\in\Q_{>0}\Longrightarrow t_\star\mu\in\cM^\div.
$$
 \begin{exam}\label{exam:MAPL} For any $\f\in\PL\cap\PSH(\om)$, the Monge--Amp\`ere measure $\MA(\f)$ lies in $\cM^\div$.
\end{exam}

\begin{lem}\label{lem:divdense} The space $\cM^\div$ of divisorial measures sits as a dense subset of $\cM^1$ for the strong topology. 
\end{lem}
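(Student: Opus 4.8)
The plan is to reduce the statement to a density result for potentials via the Monge--Amp\`ere operator. The three ingredients are: (i) every divisorial measure has finite energy, so $\cM^\div\subset\cM^1$; (ii) $\MA_\om$ maps $\PL\cap\PSH(\om)$ into $\cM^\div$ (Example~\ref{exam:MAPL}); and (iii) $\MA_\om\colon\cE^1(\om)/\R\hto\cM^1$ is a topological embedding with dense image for the strong topologies, by \cite[Theorem~A]{trivval}. Granting these, it suffices to prove that $\PL\cap\PSH(\om)$ is strongly dense in $\cE^1(\om)$.

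First I would verify (i): for $v\in X^\div\subset X^\lin$ one has $\|v\|_\om<\infty$ (Example~\ref{exam:endirac}), so any $\mu=\sum_i m_i\d_{v_i}\in\cM^\div$ satisfies $\|\mu\|_\om\le\sum_i m_i\|v_i\|_\om<\infty$ by convexity of the energy; hence $\cM^\div\subset\cM^1$. Combining (ii) and (iii), if $\PL\cap\PSH(\om)$ is strongly dense in $\cE^1(\om)$, then its image $\MA_\om(\PL\cap\PSH(\om))$ is dense in the image of $\MA_\om$, hence strongly dense in $\cM^1$; since this image is contained in $\cM^\div\subset\cM^1$, the lemma follows.

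The main step, and the expected obstacle, is therefore the strong density of $\PL\cap\PSH(\om)$ in $\cE^1(\om)$. I would argue as follows. Fix $\f\in\cE^1(\om)$ and choose a decreasing net $(\f_i)$ in $\PL\cap\PSH(\om)$ converging pointwise to $\f$ on $X^\an$, as provided by the approximation property of $\om$-psh functions recalled in the preliminaries (\S\ref{sec:prelim}). Then $\f_i\to\f$ pointwise on $X^\div$, so it remains only to show $\en_\om(\f_i)\to\en_\om(\f)$, since the strong topology of $\cE^1(\om)$ is by definition the coarsest refinement of the topology of pointwise convergence on $X^\div$ making $\en_\om$ continuous. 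This convergence is precisely the continuity of the Monge--Amp\`ere energy along decreasing nets, which I would invoke from \cite{trivval}. Here the easy inequality $\liminf_i\en_\om(\f_i)\ge\en_\om(\f)$ is immediate from monotonicity of $\en_\om$ together with $\f_i\ge\f$, whereas the reverse bound---the actual content of the result---does \emph{not} follow formally from the defining formula $\en_\om(\f)=\inf\{\en_\om(\p)\mid\p\in\PL\cap\PSH(\om),\ \p\ge\f\}$, because a priori the particular net $(\f_i)$ need not compute this infimum; establishing that it does is where the real work lies.
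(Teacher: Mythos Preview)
Your proof is correct, but it takes a slightly more roundabout path than the paper's. The paper argues directly: given $\mu\in\cM^1$, one chooses a maximizing sequence $(\f_j)$ for $\mu$ in $\PL\cap\PSH(\om)$ (possible since the supremum in~\eqref{equ:envee} can be computed over $\PL\cap\PSH(\om)$), and then $\MA_\om(\f_j)\to\mu$ strongly by the stated characterization that a net is maximizing for $\mu$ iff its Monge--Amp\`ere images converge strongly to $\mu$. This bypasses both the density of $\PL\cap\PSH(\om)$ in $\cE^1(\om)$ and the topological-embedding statement, using instead only the one-directional implication ``maximizing $\Rightarrow$ strong convergence of $\MA$''. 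Your route---strong density of $\PL\cap\PSH(\om)$ in $\cE^1(\om)$, then push forward through the continuous map $\MA_\om$ with dense image---is a perfectly valid alternative, just assembled from more pieces.

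One remark on the step you flag as the ``real work'': the convergence $\en_\om(\f_i)\to\en_\om(\f)$ along a decreasing net $\f_i\searrow\f$ is in fact immediate from the properties already recalled in \S\ref{sec:enpairing}. Monotonicity gives $\en_\om(\f_i)\ge\en_\om(\f)$, while the stated upper semicontinuity of $\en_\om$ on $\PSH(\om)$ (for the topology of pointwise convergence on $X^\div$) gives $\limsup_i\en_\om(\f_i)\le\en_\om(\f)$. So the inequality you were worried about is exactly the usc property, not an additional theorem to be invoked.
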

\begin{proof} For any $v\in X^\div\subset X^\lin$, $\d_v$ lies in $\cM^1$, and hence $\cM^\div\subset\cM^1$, by convexity of $\|\cdot\|_\om\colon\cM\to [0,+\infty]$. Next, pick $\mu\in\cM^1$, and choose a maximizing sequence $(\f_j)$ in $\PL\cap\PSH(\om)$ for $\mu$. Then $\mu_j:=\MA(\f_j)$ lies in $\cM^\div$ (see Example~\ref{exam:MAPL}), and $\mu_j\to\mu$ strongly in $\cM^1$. Thus $\cM^\div$ is strongly dense in $\cM^1$.
\end{proof}

%
\subsection{The entropy functional of a pair}\label{sec:entfunc}
Following~\cite{BHJ1}, we define the \emph{entropy functional} 
$$
\Ent_{X,B}\colon\cM^\div\to\R
$$
of the pair $(X,B)$ by setting
$$
\Ent_{X,B}(\mu):=\int\ld_{X,B}\,\mu
$$
for all $\mu\in\cM^\div$. Here $\ld_{X,B}\colon X^\div\to\Q$ is the classical log discrepancy function of the pair $(X,B)$, see Appendix~\ref{sec:logdisc}. 
\begin{rmk} When $k=\C$, the above (non-Archimedean) entropy computes the `slope at infinity' of the usual (relative) entropy functional along certain rays of smooth volume forms (see~\cite[Theorem~3.6]{BHJ2}); this explains the chosen terminology. 
\end{rmk}
The entropy functional is clearly affine on the convex set $\cM^\div$, and simply given by 
$$
\Ent_{X,B}(\mu)=\sum_i m_i\ld_{X,B}(v_i)
$$
if $\mu=\sum_i m_i\d_{v_i}$ as in~\eqref{equ:mudiv}. Note also that
\begin{equation}\label{equ:homogent}
\Ent_{X,B}(t_\star\mu)=t\Ent_{X,B}(\mu) 
\end{equation}
for $\mu\in\cM^\div$ and $t\in\Q_{>0}$, by homogeneity of $\ld_{X,B}$.

%
\subsection{The $\d$-invariant}
Consider as above an arbitrary polarized pair $(X,B;\om)$. 

\begin{defi} The \emph{$\d$-invariant} of $(X,B;\om)$ is defined as 
\begin{equation}\label{equ:delta}
\d(X,B;\om):=\inf_{\mu\in\cM^\div\setminus\{\mu_\triv\}}\frac{\Ent_{X,B}(\mu)}{\|\mu\|_\om}\in\R\cup\{-\infty\}. 
\end{equation}
\end{defi}
Note that 
$$
\d(X,B;s\om)=s^{-1}\d(X,B;\om)
$$
for $s\in\R_{>0}$, by~\eqref{equ:homogenvee}.

\begin{thm}\label{thm:delta} For any polarized pair $(X,B;\om)$, the following holds: 

\begin{itemize}
\item[(i)] $\d(X,B;\om)>-\infty\Longleftrightarrow (X,B)\text{ sublc }\Longleftrightarrow\d(X,B;\om)\ge 0$; 
\item[(ii)] $\d(X,B;\om)>0\Longleftrightarrow (X,B)\text{ subklt}$; 
\item[(iii)] if $(X,B)$ is sublc, then 
\begin{equation}\label{equ:deltaval}
\d(X,B;\om)=\inf_{v\in X^\div\setminus\{v_\triv\}}\frac{\ld_{X,B}(v)}{\|v\|_\om}.
\end{equation}
\item[(iv)] if $\om'\in\Amp(X)$ and $s\ge 1$ satisfy $s^{-1}\om\le\om'\le s\om$, then 
$$
s^{-C_n}\d(X,B;\om)\le\d(X,B;\om')\le s^{C_n}\d(X,B;\om)
$$
where $C_n:=2n^2+1$. 
\end{itemize}
\end{thm}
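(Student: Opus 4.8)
The plan is to prove the four parts in the order (iii), (i), (iv), (ii): the reduction of the infimum to Dirac masses in (iii) is what makes the \emph{linear} entropy interact cleanly with the \emph{convex} energy, part (iv) is then a formal consequence of (i) together with~\eqref{equ:Eveelip}, and the positivity statement (ii) is the only point requiring external input. For (iii), assume $(X,B)$ sublc, so that $\ld_{X,B}\ge 0$ on $X^\div$ and $\d_0:=\inf_{v\ne v_\triv}\ld_{X,B}(v)/\|v\|_\om\ge 0$. The inequality $\d(X,B;\om)\le\d_0$ is immediate from the embedding $X^\div\hookrightarrow\cM^\div$. For the reverse, take $\mu=\sum_i m_i\d_{v_i}$: since $\Ent_{X,B}$ is affine, $\Ent_{X,B}(\mu)=\sum_i m_i\ld_{X,B}(v_i)\ge\d_0\sum_i m_i\|v_i\|_\om$, while convexity of $\|\cdot\|_\om$ gives $\|\mu\|_\om\le\sum_i m_i\|v_i\|_\om$; as $\d_0\ge 0$ these combine into $\Ent_{X,B}(\mu)\ge\d_0\|\mu\|_\om$, whence $\d(X,B;\om)\ge\d_0$.

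Part (i) is the heart of the matter. The chain sublc $\Rightarrow\d\ge 0\Rightarrow\d>-\infty$ is clear, since $\Ent_{X,B}\ge 0$ on $\cM^\div$ forces every quotient to be nonnegative. For the converse I argue contrapositively: if $(X,B)$ is not sublc, choose $v_0\in X^\div$ with $\ld_{X,B}(v_0)<0$ and set $\mu_t:=(1-t)\mu_\triv+t\,\d_{v_0}\in\cM^\div$, so that $\Ent_{X,B}(\mu_t)=t\,\ld_{X,B}(v_0)<0$. It then suffices to establish the \emph{affine-segment estimate} $\|\mu_t\|_\om=o(t)$ as $t\to 0^+$, for the quotient $\Ent_{X,B}(\mu_t)/\|\mu_t\|_\om$ then tends to $-\infty$. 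Normalizing by $\sup\psi=\psi(v_\triv)=0$ and using translation invariance, \eqref{equ:envee} reads $\|\mu_t\|_\om=\sup\{\en_\om(\psi)-t\,\psi(v_0)\mid\psi\in\cE^1(\om),\ \sup\psi=0\}$, where $a:=-\psi(v_0)$ satisfies $0\le a\le\tee_\om(v_0)<\infty$. Given $t_k\to 0$ and near-maximizers $\psi_k$, this bound on $a$ forces $\en_\om(\psi_k)\to 0$; since the expansion~\eqref{equ:Emixed} writes $\en_\om(\psi_k)$ as an average of the \emph{nonpositive} terms $V_\om^{-1}\int\psi_k\,(\om+\ddc\psi_k)^j\wedge\om^{n-j}$, each term, and in particular $\int\psi_k\,\MA_\om(\psi_k)$, tends to $0$; hence $\|\MA_\om(\psi_k)\|_\om=\en_\om(\psi_k)-\int\psi_k\,\MA_\om(\psi_k)\to 0$, so $\MA_\om(\psi_k)\to\mu_\triv$ strongly and $\psi_k\to 0$, giving $a_k=-\psi_k(v_0)\to 0$. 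Thus $\en_\om(\psi_k)-t_k\psi_k(v_0)=o(t_k)$, proving $\|\mu_t\|_\om=o(t)$. (For $n=1$ this is explicit: with $\f$ the potential of $\d_{v_0}$ normalized by $\f(v_0)=0$, linearity of $\MA_\om$ gives $\MA_\om(t\f)=\mu_t$ and $\|\mu_t\|_\om=\tfrac12 t^2$.)

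With (i) and (iii) available, (iv) is soft. If $(X,B)$ is not sublc, both thresholds equal $-\infty$ by (i) and the inequalities hold trivially. If it is sublc then $\Ent_{X,B}\ge 0$, so~\eqref{equ:Eveelip} gives $\Ent_{X,B}(\mu)/\|\mu\|_{\om'}\ge s^{-C_n}\Ent_{X,B}(\mu)/\|\mu\|_\om$ for every $\mu\in\cM^\div$; taking the infimum yields $\d(X,B;\om')\ge s^{-C_n}\d(X,B;\om)$, the reverse bound following by exchanging $\om$ and $\om'$.

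Finally, for (ii), the implication ``not subklt $\Rightarrow\d\not>0$'' is easy: either $(X,B)$ is not sublc and $\d=-\infty$ by (i), or it is sublc but not klt, so some $v_0\ne v_\triv$ has $\ld_{X,B}(v_0)=0$ and (iii) gives $\d(X,B;\om)\le\ld_{X,B}(v_0)/\|v_0\|_\om=0$. For the forward implication, (iii) together with the bound $\|v\|_\om\le\tee_\om(v)$ coming from~\eqref{equ:envdiv} yields $\d(X,B;\om)\ge\inf_{v\ne v_\triv}\ld_{X,B}(v)/\tee_\om(v)$, the global log canonical threshold of $(X,B;\om)$. The main obstacle here, beyond the affine-segment estimate of (i), is precisely the uniform positivity of this threshold for subklt pairs, which is not of soft compactness nature because a minimizing sequence can a priori escape towards $v_\triv$. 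I would deduce it by reducing, via (iv) and density of rational classes, to a rational class $\om=c_1(L)$, where the expression becomes the usual $\d$-invariant, and then invoking its positivity for (sub)klt pairs from~\cite{FO18,BlJ}, the subklt case being handled through the lower semicontinuity of $\ld_{X,B}$ on $X^\an$ recorded in Appendix~\ref{sec:logdisc}.
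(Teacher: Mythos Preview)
Your arguments for (iii) and (iv) match the paper's. The interesting differences are in (i) and (ii).

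\medskip

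\textbf{Part (i).} The paper proves the affine-segment estimate quantitatively: it establishes a lemma (Lemma~\ref{lem:distaff}) giving $\|(1-t)\mu_\triv+t\mu\|_\om\lesssim t^{1+\e}\|\mu\|_\om$ for an explicit $\e=\e_n>0$, using the H\"older estimate~\eqref{equ:BBGZ} (which ultimately comes from the Hodge index theorem). Your route is genuinely different and softer: you only extract $o(t)$, by showing that any near-maximizing sequence $\psi_k$ must satisfy $\en_\om(\psi_k)\to 0$, then using the decomposition~\eqref{equ:Emixed} into nonpositive terms to force $\int\psi_k\,\MA_\om(\psi_k)\to 0$, hence $\|\MA_\om(\psi_k)\|\to 0$, hence $\psi_k\to 0$ strongly via the topological embedding $\MA_\om\colon\cE^1/\R\hookrightarrow\cM^1$, hence $a_k\to 0$. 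This is correct and avoids the BBGZ machinery entirely; the paper's quantitative version is not needed for the theorem as stated. One small point of hygiene: you should either choose your near-maximizers within $t_k^2$ of the supremum, or set the argument up as a contradiction (assume $\|\mu_{t_k}\|\ge ct_k$, pick $\psi_k$ with $\en_\om(\psi_k)+t_ka_k\ge ct_k/2$, deduce $a_k\to 0$, then $c/2\le a_k$). As written, ``near-maximizers'' is too vague to close the $o(t_k)$ bound.

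\medskip

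\textbf{Part (ii).} Your detour through rational classes is unnecessary. The paper invokes Theorem~\ref{thm:ldan} (the appendix result you cite at the end) directly for an arbitrary $\om\in\Amp(X)$: subklt gives $\ld_{X,B}\ge\a\,\te_\om$ on $X^\an$ for some $\a>0$, and since $\|v\|_\om\approx\te_\om(v)$ (Example~\ref{exam:endirac}), or even just your inequality $\|v\|_\om\le\te_\om(v)$, this yields $\ld_{X,B}(v)\ge\a'\|v\|_\om$ and hence $\d(X,B;\om)\ge\a'>0$ by (iii). No reduction to $\om=c_1(L)$, and no appeal to~\cite{FO18,BlJ}, is needed --- which is just as well, since those references treat effective $B$ rather than the subklt case.
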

Note that (iv) implies in particular that $\d(X,B;\om)$ is continuous with respect to $\om\in\Amp(X)$ (see~\cite[Theorem~1.7]{Zha} for an extension to the big cone). 

When $\om=c_1(L)$ with $L\in\Pic(X)_\Q$ ample, we have $\|v\|=\esse(v)$ (see Example~\ref{exam:endirac}), and~\eqref{equ:deltaval} then shows that $\d(X,B;L)$ coincides with the usual $\d$-invariant~\cite{FO18,BlJ}. 

In what follows, we write for simplicity $\|\cdot\|=\|\cdot\|_\om$ when $\om$ is fixed. 

\begin{lem}\label{lem:distaff} For any $\mu\in\cM^1$ and $t\in[0,1]$ we have 
$$
\|(1-t)\mu_\triv+t\mu\|\lesssim t^{1+\e}\|\mu\|
$$
for a constant $\e>0$ only depending on $n$. 
\end{lem}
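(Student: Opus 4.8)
The plan is to work with the dual (Legendre) description~\eqref{equ:envee} of the energy and to extract a H\"older improvement over the trivial convexity bound. Write $\mu_t:=(1-t)\mu_\triv+t\mu$; since $\|\cdot\|_\om$ is convex and $\mu,\mu_\triv\in\cM^1$ we have $\mu_t\in\cM^1$, and convexity already yields $\|\mu_t\|\le t\|\mu\|$. This is precisely the bound we must sharpen. Starting from~\eqref{equ:envee} and using that $\f\mapsto\en_\om(\f)-\int\f\,\mu_t$ is invariant under $\f\mapsto\f+c$ (as $\mu_t$ is a probability measure), I would restrict the supremum to those $\f\in\cE^1(\om)$ normalized by $\sup\f=\f(v_\triv)=0$ (recall $\sup\f=\f(v_\triv)$ for $\om$-psh functions). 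For such $\f$ the point mass at $v_\triv$ drops out, so
\begin{equation*}
\|\mu_t\|=\sup_{\f\in\cE^1(\om),\,\sup\f=0}\left\{\en_\om(\f)-t\int\f\,\mu\right\}.
\end{equation*}
Here $\en_\om(\f)\le 0$ while $-\int\f\,\mu=\int(-\f)\,\mu\ge 0$, so the quantity to be maximized pits a negative energy term against a positive, $t$-weighted linear term.

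The second step is the key estimate. I would apply the H\"older bound~\eqref{equ:BBGZ} with $\f'=0$ and $\mu'=\mu_\triv$; since $\int\f(\mu-\mu_\triv)=\int\f\,\mu-\f(v_\triv)=\int\f\,\mu$, this gives
\begin{equation*}
\left|\int\f\,\mu\right|\lesssim\ii_\om(\f)^\a\,\|\mu\|^{\frac12}\max\{\ii_\om(\f),\|\mu\|\}^{\frac12-\a},\qquad\a=2^{-n}.
\end{equation*}
Because $\sup\f=\f(v_\triv)=0$, combining Theorem~\ref{thm:I}~(iv) with the definition~\eqref{equ:Jmu} of $\jj_{\mu_\triv}(\f)=-\en_\om(\f)$ gives the comparability $\ii_\om(\f)\approx-\en_\om(\f)$ (exactly as used in the proof of Lemma~\ref{lem:Lipen}). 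Writing $a:=-\en_\om(\f)\ge0$ and noting $\a\le\tfrac12$, so that $\tfrac12-\a\ge0$ and the right-hand side above is monotone in $\ii_\om(\f)$, I can replace $\ii_\om(\f)$ by $a$ up to a constant and reduce the whole problem to the scalar inequality
\begin{equation*}
\|\mu_t\|\le\sup_{a\ge0}\left\{-a+Ct\,a^\a\,\|\mu\|^{\frac12}\max\{a,\|\mu\|\}^{\frac12-\a}\right\}.
\end{equation*}

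The final step is an elementary one-variable maximization. On the range $a\le\|\mu\|$ the bracket is $-a+Ct\,\|\mu\|^{1-\a}a^\a$, whose maximum is attained at $a_*\approx t^{1/(1-\a)}\|\mu\|$ (which lies in the range once $t\le t_0(n)$) and has value $\approx t^{1/(1-\a)}\|\mu\|$; setting $\e:=\a/(1-\a)>0$ this is exactly of the desired order $t^{1+\e}\|\mu\|$. On the range $a\ge\|\mu\|$ the bracket is $-a+Ct\sqrt{a\|\mu\|}$, which for small $t$ is decreasing and already $\le(Ct-1)\|\mu\|<0$ at $a=\|\mu\|$, hence contributes nothing. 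This proves the estimate for $t\le t_0(n)$, and for $t\in[t_0,1]$ one falls back on the convexity bound $\|\mu_t\|\le t\|\mu\|\le t_0^{-\e}\,t^{1+\e}\|\mu\|$; all constants depend only on $n$.

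The main obstacle I anticipate is not any single computation but the conceptual point that plain convexity is too weak: the gain from $t$ to $t^{1+\e}$ comes entirely from interpolating the linear functional $\int(-\f)\,\mu$ against the energy via~\eqref{equ:BBGZ}, and the delicate part is the bookkeeping that turns the H\"older exponent $\a=2^{-n}$ of that estimate into the final exponent $\e=\a/(1-\a)$ through the correct balancing $a_*\approx t^{1/(1-\a)}\|\mu\|$. One must also check the minor points that $\tfrac12-\a\ge0$ (so the $\max\{\cdot\}$ factor is monotone and controllable by $a$) and that the identification $\ii_\om(\f)\approx-\en_\om(\f)$ is legitimate under the normalization $\sup\f=0$.
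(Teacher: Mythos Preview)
Your argument is correct and yields the same exponent $\e=\a/(1-\a)=(2^n-1)^{-1}$ as the paper. The route, however, is genuinely different in execution. The paper does not optimize over a free parameter $a$: instead it fixes a maximizing sequence $(\f_{t,j})$ for $\mu_t$, writes $\ii(\f_{t,j})=\int\f_{t,j}(\mu_\triv-\mu_{t,j})=t\int\f_{t,j}(\mu_\triv-\mu)+\e_{t,j}$ with $\e_{t,j}\to0$, and applies~\eqref{equ:BBGZ} to obtain, after passing to the limit, the \emph{self-improving} inequality
\[
\|\mu_t\|\lesssim t\,\|\mu_t\|^\a\max\{\|\mu_t\|,\|\mu\|\}^{1-\a}.
\]
Combined with the convexity bound $\|\mu_t\|\le\|\mu\|$ this gives $\|\mu_t\|^{1-\a}\lesssim t\,\|\mu\|^{1-\a}$ directly, with no case split on $t$. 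Your approach trades this bootstrap for an explicit scalar maximization over $a=-\en_\om(\f)$, which is more hands-on but requires separating $a\le\|\mu\|$ from $a\ge\|\mu\|$ and small $t$ from large $t$. In effect your optimal $a_*\approx t^{1/(1-\a)}\|\mu\|$ is exactly the value $\|\mu_t\|$ that the paper solves for implicitly, so the two arguments are dual to one another: yours avoids maximizing sequences and limits, while the paper's avoids the optimization and the small-$t$ restriction.
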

\begin{proof} Pick a maximizing sequence $(\f_{t,j})$ for $\mu_t:=(1-t)\mu_\triv+t\mu$, so that $\mu_{t,j}:=\MA(\f_{t,j})\to\mu_t$ strongly in $\cM^1$. Then
\begin{equation}\label{equ:aff0}
\|\mu_{t,j}\|\approx\ii(\mu_{t,j})=\ii(\f_{t,j})=\int\f_{t,j}(\mu_\triv-\mu_{t,j})=t\int\f_{t,j}(\mu_\triv-\mu)+\e_{t,j}
\end{equation}
with $\e_{t,j}:=\int\f_{t,j}(\mu_t-\mu_{t,j})$. By~\eqref{equ:BBGZ} and the quasi-triangle inequality for $\ii$, we have 
\begin{equation}\label{equ:aff1}
\left|\int\f_{t,j}(\mu_\triv-\mu)\right|\lesssim\|\mu_{t,j}\|^\a\max\{\|\mu_{t,j}\|,\|\mu\|\}^{1-\a}
\end{equation}
and
\begin{equation}\label{equ:aff2}
|\e_{t,j}|\lesssim\ii(\mu_t,\mu_{t,j})^{\frac 12}\max\{\|\mu_t\|,\|\mu_{t,j}\|\}^{\frac 12}, 
\end{equation}
with $\a:=2^{-n}$. Combining~\eqref{equ:aff0}, \eqref{equ:aff1} and~\eqref{equ:aff2}, we infer
$$
\|\mu_{t,j}\|\lesssim t\|\mu_{t,j}\|^\a\max\{\|\mu_{t,j}\|,\|\mu\|\}^{1-\a}+\ii(\mu_t,\mu_{t,j})^{\frac 12}\max\{\|\mu_t\|,\|\mu_{t,j}\|\}^{\frac 12}. 
$$
Letting $j\to\infty$, this yields
$$
\|\mu_t\|\lesssim t\|\mu_t\|^\a\max\{\|\mu_t\|,\|\mu\|\}^{1-\a}.
$$
By convexity of the energy functional $\|\cdot\|$, we further have 
$$
\|\mu_t\|\le t\|\mu\|+(1-t)\|\mu_\triv\|\le\|\mu\|. 
$$
We thus get $\|\mu_t\|\lesssim t\|\mu_t\|^\a\|\mu\|$, and hence $\|\mu_t\|\lesssim t^{(1-\a)^{-1}}\|\mu\|$, which yields the result with $\e>0$ such that $1+\e=(1-\a)^{-1}$ (\ie $\e=(2^n-1)^{-1}$). 
\end{proof}

\begin{proof}[Proof of Theorem~\ref{thm:delta}] Assume $\d:=\d(X,B;\om)>-\infty$. Pick $v\in X^\div$, and set as above
$$
\mu_t:=(1-t)\mu_\triv+t\d_v\in\cM^\div
$$ 
for $t\in [0,1]$. By Lemma~\ref{lem:distaff}, we have 
$$
t\ld_{X,B}(v)=\Ent_{X,B}(\mu_t)\ge \d\|\mu_t\|=O(t^{1+\e})
$$
with $\e>0$, and hence $\ld_{X,B}(v)\ge 0$. This shows that $(X,B)$ is sublc, which in turn trivially implies $\d(X,B;\om)\ge 0$. This proves (i). 

Assume now $(X,B)$ subklt. By Theorem~\ref{thm:ldan} and Example~\ref{exam:endirac}, there exists $\a>0$ such that $\ld_{X,B}(v)\ge\a\te(v)\approx\a\|v\|$ for all $v\in X^\div$, and hence $\d(X,B;\om)>0$. The converse trivially holds, and (ii) follows.  

Next, assume that $(X,B)$ is sublc. Suppose given $c\in\R_{\ge 0}$ such that $\ld_{X,B}(v)\ge c\|v\|$ for all $v\in X^\div$, and pick $\mu\in\cM^\div$. Write $\mu=\sum_i m_i\d_{v_i}$ for a finite set $(v_i)$ in $X^\div$ and $m_i\in\R_{>0}$, such that $\sum_i m_i=1$. Then 
\begin{equation}\label{equ:entmuen}
\Ent_{X,B}(\mu)=\sum_i m_i\ld_{X,B}(v)\ge\sum_i m_i c\|\d_{v_i}\|\ge c\|\mu\|,
\end{equation}
by convexity of $\|\cdot\|$. This proves (iii). Finally, (iv) is a direct consequence of~\eqref{equ:Eveelip}. 
\end{proof}

\begin{rmk}\label{rmk:deltaval} Note that~\eqref{equ:deltaval} fails when $(X,B)$ is not sublc. Indeed, using dual cone complexes as in Appendix~\ref{sec:logdisc}, one can show that the right-hand side of~\eqref{equ:deltaval} is finite for any polarized pair $(X,B;\om)$, whether $(X,B)$ is sublc or not. 
\end{rmk}

%
%
\subsection{Extending the entropy functional}\label{sec:extent}
We assume in this section that the pair $(X,B)$ is \textbf{subklt} (\ie $\ld_{X,B}>0$ on $X^\div\setminus\{v_\triv\}$). The log discrepancy function $\ld_{X,B}\colon X^\div\to\Q_{\ge 0}$ then admits a greatest lsc extension $\ld_{X,B}\colon X^\an\to[0,+\infty]$, which further satisfies 
\begin{equation}\label{equ:XBfld}
\left\{v\in X^\an\mid \ld_{X,B}(v)<+\infty\right\}=X^\fld\subset X^\val,
\end{equation}
see Appendix~\ref{sec:logdisc}. The entropy functional thus admits a natural extension
$$
\Ent_{X,B}\colon\cM\to[0,+\infty], 
$$
defined by
$$
\Ent_{X,B}(\mu):=\int\ld_{X,B}\,\mu
$$
for any $\mu\in\cM$. By~\eqref{equ:XBfld}, we have
\begin{equation}\label{equ:entfld}
\Ent_{X,B}(\mu)<+\infty\Longrightarrow\mu(X^\an\setminus X^\fld)=0
\end{equation}
for any $\mu\in\cM$. 

Note also that $\Ent_{X,B}\colon\cM\to[0,+\infty]$ is lsc in the weak topology, since $\ld_{X,B}$ is lsc on $X^\an$. Its restriction to $\cM^1$ is thus \emph{a fortiori} strongly lsc, but it is not strongly continuous in general. 

\begin{exam}\label{exam:Tcurve} Assume $X=(X,0)$ is a smooth curve, and normalize $\om\in\Amp(X)$ by $V_\om=1$. Pick any  finite subset $\Sigma\in X(k)$ of cardinality $N$, and set
$$
\mu_\Sigma:=\frac{1}{N}\sum_{p\in\Sigma}\d_{\ord_p}\in\cM^\div.
$$
By~\cite[Example~3.19]{trivval}, we have $\mu_\Sigma=\MA(\f_\Sigma)$ where $\f_\Sigma\in\PL\cap\PSH(\om)$ is given on each ray $(t\ord_q)_{t\in [0,+\infty]}$ of $X^\an$ by 
$$
\f_\Sigma(t\ord_q)=\left\{
\begin{array}{ll}
\tfrac 1N\max\{-t,-1\} & \text{ if } q\in \Sigma; \\
0 & \text{ otherwise}. 
\end{array}
\right. 
$$
As $N\to\infty$, we have $\f_\Sigma\to 0$ uniformly on $X^\an$; thus $\f_\Sigma\to 0$ strongly in $\cE^1$, and hence $\mu_\Sigma\to\mu_\triv$ strongly in $\cM^1$, by strong continuity of the Monge--Amp\`ere operator. However, $\Ent_{X,B}(\mu_\Sigma)=\frac 1N\sum_{p\in\Sigma}\ld_X(\ord_p)=1\ne 0=\Ent_{X,B}(\mu_\triv)$. 
\end{exam}

The following approximation result, which can be viewed as a non-Archimedean analogue of~\cite[Lemma~3.1]{BDL17}, plays a key role in what follows. 

\begin{thm}\label{thm:entapprox} Any $\mu\in\cM^1$ can be written as the strong limit of a sequence $(\mu_i)$ in $\cM^\div$ such that $\Ent_{X,B}(\mu_i)\to\Ent_{X,B}(\mu)$. Equivalently, $\Ent_{X,B}\colon\cM^1\to[0,+\infty]$ is the greatest (strongly) lsc extension of the entropy functional. 
\end{thm}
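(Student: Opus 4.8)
The two statements are equivalent by a soft general principle, so it suffices to prove the first. The greatest strongly lsc extension of $\Ent_{X,B}|_{\cM^\div}$ is its relaxation
$$
G(\mu):=\inf\left\{\liminf_i\Ent_{X,B}(\mu_i)\ \middle|\ \mu_i\in\cM^\div,\ \mu_i\to\mu\ \text{strongly}\right\},
$$
which is well defined by strong density of $\cM^\div$ (Lemma~\ref{lem:divdense}) and is strongly lsc; since $\Ent_{X,B}|_{\cM^\div}$ is itself strongly lsc (being a restriction of the weakly lsc functional on $\cM$), one checks $G=\Ent_{X,B}$ on $\cM^\div$, so $G$ is indeed the greatest lsc extension. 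As $\Ent_{X,B}\colon\cM^1\to[0,+\infty]$ is strongly lsc and extends $\Ent_{X,B}$, it is one such extension, whence $\Ent_{X,B}\le G$; the content of the theorem is the reverse inequality $G\le\Ent_{X,B}$, i.e. the existence of a recovery sequence with $\limsup_i\Ent_{X,B}(\mu_i)\le\Ent_{X,B}(\mu)$. When $\Ent_{X,B}(\mu)=+\infty$ this is automatic: any strongly convergent $\mu_i\to\mu$ from Lemma~\ref{lem:divdense} satisfies $\Ent_{X,B}(\mu_i)\to+\infty$ by strong lower semicontinuity. I may thus assume $\Ent_{X,B}(\mu)<+\infty$, so that $\mu$ is supported on $X^\fld$ by~\eqref{equ:entfld}.

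The core is a retraction trick that prevents the entropy overshoot of Example~\ref{exam:Tcurve}. For an SNC model $\cX\to X$ with dual complex $\Delta_\cX\subset X^\an$ and retraction $p_\cX\colon X^\an\to\Delta_\cX$, set $\mu_\cX:=(p_\cX)_\star\mu$. The key fact (Appendix~\ref{sec:logdisc}) is that $\ld_{X,B}\circ p_\cX\le\ld_{X,B}$, with $\ld_{X,B}\circ p_\cX\uparrow\ld_{X,B}$ pointwise as $\cX$ increases. Hence
$$
\Ent_{X,B}(\mu_\cX)=\int\ld_{X,B}\circ p_\cX\,d\mu\le\Ent_{X,B}(\mu),
$$
and by monotone convergence $\Ent_{X,B}(\mu_\cX)\uparrow\Ent_{X,B}(\mu)$, so the entropy of the $\mu_\cX$ never overshoots. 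Since $p_\cX\to\id$ we have $\mu_\cX\to\mu$ weakly, and I would invoke the strong convergence $\mu_\cX\to\mu$ in $\cM^1$ established in~\cite{trivval} (weak convergence together with $\|\mu_\cX\|_\om\to\|\mu\|_\om$).

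It remains to replace each $\mu_\cX$ by a genuine divisorial measure. The decisive simplification is that, being supported on the finite complex $\Delta_\cX$, the measure $\mu_\cX$ sees $\ld_{X,B}$ only where it is continuous (indeed affine on faces in monomial coordinates). Choosing divisorial measures $\mu_{\cX,j}\to\mu_\cX$ strongly with supports among the $\Q$-points of $\Delta_\cX$ — dense, and strongly dense by the explicit description of the energy on skeleta in~\cite{trivval,nakstab1} — weak convergence and continuity of $\ld_{X,B}|_{\Delta_\cX}$ give $\Ent_{X,B}(\mu_{\cX,j})\to\Ent_{X,B}(\mu_\cX)$. Since the strong topology on $\cM^1$ is metrizable (Theorem~\ref{thm:I}), from the net $\mu_\cX\to\mu$ I extract a sequence $\mu_{\cX_i}\to\mu$ with $\Ent_{X,B}(\mu_{\cX_i})\to\Ent_{X,B}(\mu)$, and a diagonal extraction against the $\mu_{\cX_i,j}$ yields the sought sequence in $\cM^\div$.

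The main obstacle is the simultaneous control of the two requirements, which pull in opposite directions: generic strong approximants overshoot the entropy (Example~\ref{exam:Tcurve}), whereas the retraction that tames the entropy through $\ld_{X,B}\circ p_\cX\le\ld_{X,B}$ destroys divisoriality (a retracted divisorial valuation is only quasimonomial). Reconciling them rests on the two technical inputs flagged above: the strong convergence of the retraction pushforwards $\mu_\cX\to\mu$, and the strong density of rational atomic measures on a fixed dual complex, both of which ultimately draw on the fine structure of the energy functional on skeleta from~\cite{trivval}.
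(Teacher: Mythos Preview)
Your strategy matches the paper's: retract to dual complexes to tame the entropy, then approximate on the complex by rational atomic measures. Two genuine gaps, however.

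First, the snc test configuration machinery (dual complexes $\D_\cX$, retractions $p_\cX$, and the inequality $\ld_X\circ p_\cX\le\ld_X$ from Appendix~\ref{sec:logdisc}) is only set up for \emph{smooth} $X$, whereas the theorem is stated for an arbitrary normal $X$. The paper handles this by first passing to a resolution $\pi\colon X'\to X$: one lifts $\mu$ to some $\mu'\in\cM(X')$, retracts on $X'$ to get $\mu'_\cX$, and then pushes forward to $\mu_\cX:=\pi_\star\mu'_\cX$. The strong convergence $\mu_\cX\to\mu$ is then obtained not by citation but by showing $\|\mu_\cX\|_\om\le\|\mu\|_\om$ (since $\pi^\star\f\le(\pi^\star\f)\circ p_\cX$ for $\f\in\PSH(\om)$, using $\pi^\star\om\le\om'$ for some $\om'\in\Amp(X')$) and invoking Lemma~\ref{lem:lsccv}.

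Second, the pointwise inequality $\ld_{X,B}\circ p_\cX\le\ld_{X,B}$ you invoke for general $B$ is not in the appendix: Theorem~\ref{thm:lddual} only gives $\ld_X\circ p_\cX\le\ld_X$, and since $\ld_{X,B}=\ld_X-\p_B$ the correction term $\p_B$ need not satisfy $\p_B\circ p_\cX\ge\p_B$. Your monotone convergence argument for $\Ent_{X,B}(\mu_\cX)\to\Ent_{X,B}(\mu)$ therefore breaks down when $B\ne 0$. The paper's Lemma~\ref{lem:ldlisse} instead splits $B=B_1-B_2$ with $B_i\le 0$, uses that each $\p_{B_i}$ is $\om'$-psh for some ample $\om'$, and applies part~(ii) (convergence of psh integrals under retraction) together with the case $B=0$ to conclude $\Ent_{X,B}(\mu_\cX)\to\Ent_{X,B}(\mu)$ without any monotonicity claim for $\ld_{X,B}$ itself.
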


When $X$ is smooth and $B=0$, the result follows from~\cite[Proposition~6.3]{Li22} and its proof (itself based in part on the authors' preprint~\cite{trivvalold}). 

\begin{cor}\label{cor:sigM1} The $\d$-invariant of any polarized subklt pair $(X,B;\om)$ satisfies 
\begin{equation}\label{equ:deltames}
\d(X,B;\om)=\inf_{\mu\in\cM^1\setminus\{\mu_\triv\}}\frac{\Ent_{X,B}(\mu)}{\|\mu\|}=\inf_{\mu\in\cM^1,\,\|\mu\|=1}\Ent_{X,B}(\mu). 
\end{equation}
\end{cor}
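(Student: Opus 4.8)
The plan is to derive both equalities in~\eqref{equ:deltames} from the approximation result Theorem~\ref{thm:entapprox} together with the homogeneity of the two functionals involved. Write $\d:=\d(X,B;\om)$, which is finite and in fact positive since $(X,B)$ is subklt, by Theorem~\ref{thm:delta}(ii). The heart of the matter is to promote the inequality
$$
\Ent_{X,B}(\mu)\ge\d\,\|\mu\|,
$$
which holds on $\cM^\div$ essentially by definition~\eqref{equ:delta} of $\d$ (and trivially at $\mu_\triv$, where both sides vanish), to all of $\cM^1$. First I would fix $\mu\in\cM^1$ and invoke Theorem~\ref{thm:entapprox} to produce a sequence $(\mu_i)$ in $\cM^\div$ with $\mu_i\to\mu$ strongly and $\Ent_{X,B}(\mu_i)\to\Ent_{X,B}(\mu)$ in $[0,+\infty]$. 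Since the energy $\|\cdot\|$ is strongly continuous, we have $\|\mu_i\|\to\|\mu\|$, so passing to the limit in $\Ent_{X,B}(\mu_i)\ge\d\,\|\mu_i\|$ yields the desired inequality on all of $\cM^1$.

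Granting this, the first equality follows at once. For $\mu\in\cM^1\setminus\{\mu_\triv\}$ we have $\|\mu\|>0$ (see~\eqref{equ:homogenvee}), so dividing gives $\Ent_{X,B}(\mu)/\|\mu\|\ge\d$, whence $\inf_{\mu\in\cM^1\setminus\{\mu_\triv\}}\Ent_{X,B}(\mu)/\|\mu\|\ge\d$; the reverse inequality is immediate from $\cM^\div\subset\cM^1$ (Lemma~\ref{lem:divdense}), the infimum defining $\d$ being taken over a smaller set. For the second equality I would use that both functionals are homogeneous of degree one under the scaling action of $\R_{>0}$: indeed $\|t_\star\mu\|=t\|\mu\|$ by~\eqref{equ:homogenvee}, and $\Ent_{X,B}(t_\star\mu)=t\,\Ent_{X,B}(\mu)$ for all $t\in\R_{>0}$ by homogeneity of the extended log discrepancy $\ld_{X,B}\colon X^\an\to[0,+\infty]$ (Appendix~\ref{sec:logdisc}). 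Hence the ratio $\Ent_{X,B}(\mu)/\|\mu\|$ is scale-invariant, and normalizing any $\mu\ne\mu_\triv$ to $\|\mu\|=1$ via $t=\|\mu\|^{-1}$ identifies the two infima.

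The only substantial input is Theorem~\ref{thm:entapprox}, and the delicate point it resolves deserves emphasis. The entropy $\Ent_{X,B}$ is merely strongly lower semicontinuous on $\cM^1$ (as Example~\ref{exam:Tcurve} shows, it is genuinely discontinuous), so an \emph{arbitrary} strongly convergent approximation $\mu_i\to\mu$ by divisorial measures would only give $\liminf_i\Ent_{X,B}(\mu_i)\ge\Ent_{X,B}(\mu)$ — the wrong direction for passing a lower bound on $\Ent_{X,B}$ to the limit. What makes the argument work is precisely that Theorem~\ref{thm:entapprox} furnishes a sequence along which the entropy converges \emph{to} $\Ent_{X,B}(\mu)$, so that the inequality $\Ent_{X,B}\ge\d\,\|\cdot\|$ survives the limit. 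Thus, once that approximation theorem is in hand, the present corollary is a short and essentially formal consequence, the main obstacle having already been overcome there.
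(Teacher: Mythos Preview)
Your proof is correct and follows essentially the same approach as the paper: extend the inequality $\Ent_{X,B}\ge\d\,\|\cdot\|$ from $\cM^\div$ to $\cM^1$ via Theorem~\ref{thm:entapprox} and the strong continuity of the energy, then invoke homogeneity for the second equality. Your additional remark explaining why mere lower semicontinuity of the entropy would not suffice (and hence why Theorem~\ref{thm:entapprox} is genuinely needed) is accurate and worth keeping.
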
 

\begin{proof} By definition of $\d:=\d(X,B;\om)$, we have $\Ent_{X,B}\ge\d\|\cdot\|$ on $\cM^\div$. By Theorem~\ref{thm:entapprox} and the strong continuity of $\|\cdot\|$, this inequality extends to $\cM^1$; this proves~\eqref{equ:deltames}, the right-hand equality being a consequence of the homogeneity of $\Ent_{X,B}$ and $\|\cdot\|$ with respect to the scaling action of $\R_{>0}$ on $\cM^1$. 
\end{proof}

We will rely several times on the following simple observation:
\begin{lem}\label{lem:lsccv} Let $f:Z\to\R\cup\{+\infty\}$ be an lsc function on a topological space, and pick a convergent net $x_i\to x$ in $Z$ such that $f(x_i)\le f(x)$ for all $i$. Then $f(x_i)\to f(x)$. 
\end{lem}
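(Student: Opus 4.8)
The plan is to derive the conclusion by sandwiching $f(x)$ between the lower and upper limits of the net $(f(x_i))$, using lower semicontinuity for one inequality and the hypothesis for the other. Throughout I work with nets rather than sequences, since $Z$ is an arbitrary (not necessarily first-countable) topological space, and I allow values in $\R\cup\{+\infty\}$.

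First I would recall the net characterization of lower semicontinuity valid in any topological space: $f$ is lsc if and only if all sublevel sets $\{f\le c\}$ are closed, which in turn is equivalent to the statement that for every convergent net $y_j\to y$ one has $\liminf_j f(y_j)\ge f(y)$. Applying this to the given net $x_i\to x$ yields
$$
\liminf_i f(x_i)\ge f(x).
$$
Second, the hypothesis $f(x_i)\le f(x)$ for all $i$ gives immediately
$$
\limsup_i f(x_i)\le f(x).
$$

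Combining the two displays produces the chain
$$
f(x)\le\liminf_i f(x_i)\le\limsup_i f(x_i)\le f(x),
$$
so that all four quantities coincide and $\lim_i f(x_i)=f(x)$, as claimed. In the degenerate case $f(x)=+\infty$ the hypothesis is vacuous, but lower semicontinuity still forces $\liminf_i f(x_i)\ge f(x)=+\infty$, whence $f(x_i)\to+\infty=f(x)$; this is consistent with the convention that convergence is understood in $\R\cup\{+\infty\}$.

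There is no real obstacle here: the only point requiring a little care is to invoke the net (rather than sequential) formulation of lower semicontinuity, so that the argument is valid over a general topological space, and to check that the extended-real-valued case $f(x)=+\infty$ is covered, which it is.
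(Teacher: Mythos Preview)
Your proof is correct and follows essentially the same approach as the paper: both use lower semicontinuity to obtain $f(x)\le\liminf_i f(x_i)$ and the hypothesis to obtain $\limsup_i f(x_i)\le f(x)$, then conclude. Your version adds a bit more detail (the net characterization of lsc and the $f(x)=+\infty$ case), but the argument is the same.
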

\begin{proof} Since $f$ is lsc, we have $f(x)\le\liminf_i f(x_i)$, while the assumption yields $\limsup_i f(x_i)\le f(x)$. The result follows.
\end{proof}

The next two results are the key ingredients in the proof of Theorem~\ref{thm:entapprox}. 

\begin{lem}\label{lem:approx} Assume $X$ is smooth. Let $\cX$ be an snc test configuration for $X$, and denote by $p_\cX\colon X^\an\to\D_\cX$ the retraction onto the associated dual complex $\D_\cX\hto X^\an$ (see~\S\ref{sec:ldtc}). Then: 
\begin{itemize}
\item[(i)] any $\f\in\PSH(\om)$ satisfies $\f|_{\D_\cX}\in\Cz(\D_\cX)$ and $\f\le \f\circ p_\cX$; 
\item[(ii)] $\PSH_{\sup}(\om)|_{\D_{\cX}}$ has compact closure in $\Cz(\D_\cX)$ for the topology of uniform convergence; 
\item[(iii)] for any weakly convergent net $\mu_i\to\mu$ in $\cM(\D_\cX)\hto\cM$, $\int\f\,\mu_i\to\int\f\,\mu$ uniformly with respect to $\f\in\PSH(\om)$. 
\end{itemize}
\end{lem}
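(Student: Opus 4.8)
The plan is to read (i) as supplying the structural input---finiteness, convexity, and domination by the retraction---and then to obtain (ii) by an Arzel\`a--Ascoli-type precompactness argument and (iii) formally from (ii). For (i), every point of $\D_\cX$ is a (quasi-)monomial valuation lying in $X^\lin$, so $\f(v)>-\infty$ there and $\f|_{\D_\cX}$ is finite-valued. The structural fact I would import from \S\ref{sec:ldtc} and~\cite{trivval} is that $\f|_{\D_\cX}$ is \emph{convex} in the integral affine structure of $\D_\cX$ and that the model retractions satisfy $\f\le\f\circ p_{\cX'}$ with $\f\circ p_{\cX'}\downarrow\f$ as $\cX'$ runs over snc refinements; in particular $\f\le\f\circ p_\cX$, which is the asserted inequality. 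Continuity of $\f|_{\D_\cX}$ then follows by combining convexity (which makes $\f$ continuous on the relative interior of each cell and upper semicontinuous up to its faces) with the upper semicontinuity of $\f$ on $X^\an$; alternatively, realizing $\D_\cX$ inside the interior of $\D_{\cX'}$ for a suitable refinement reduces the boundary faces to the interior case.

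For (ii), normalizing by $\sup\f=\f(v_\triv)=0$, the very definition of $\te_\om$ gives $0\ge\f(v)\ge-\te_\om(v)$ on $X^\an$, hence $-C\le\f\le0$ on $\D_\cX$ with $C:=\max_{\D_\cX}\te_\om<\infty$ (as $\te_\om$ is finite on $X^\lin\supset\D_\cX$ and continuous there); this is the uniform bound. For precompactness in the uniform topology I would use that $\PSH_{\sup}(\om)$ is compact for the topology of pointwise convergence on $X^\div$, that restriction to $\D_\cX$ is continuous into the pointwise topology, and that on a uniformly bounded family of convex functions with convex continuous pointwise limit, pointwise convergence upgrades to uniform convergence on the compact complex $\D_\cX$. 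Equivalently, one checks equicontinuity directly: convexity with the two-sided bound yields an equi-Lipschitz estimate on cells at positive distance from the faces, the squeeze $0\ge\f\ge-\te_\om$ forces a uniform modulus of continuity at the apex $v_\triv$ (where both bounds vanish), and the refinement trick controls the remaining faces.

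For (iii), fix a weakly convergent net $\mu_i\to\mu$ in $\cM(\D_\cX)$. Since adding a constant to $\f$ alters $\int\f\,\mu_i$ and $\int\f\,\mu$ by the same amount (both are probability measures), it suffices to prove uniform convergence over the normalized family $\PSH_{\sup}(\om)$; moreover $\int\f\,\mu_i$ depends only on $\f|_{\D_\cX}$, which by (ii) ranges over a relatively compact set $\cF\subset\Cz(\D_\cX)$. Covering $\cF$ by finitely many $\e$-balls centered at $g_1,\dots,g_m$ and using $|\int(g-g_k)(\mu_i-\mu)|\le 2\|g-g_k\|_\infty\le 2\e$ together with $\max_k|\int g_k(\mu_i-\mu)|\to 0$ (weak convergence), one gets $\sup_\f|\int\f\,(\mu_i-\mu)|\le 3\e$ for $i$ large, which is the claimed uniform convergence.

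The main obstacle is the precompactness in (ii): the uniform bound is immediate, but upgrading pointwise control of a bounded family of convex functions to uniform (equicontinuous) control across \emph{all} of $\D_\cX$---in particular across the lower-dimensional faces and at the cone apex $v_\triv$---is the delicate point, and is precisely where convexity and the pinching by $\te_\om$ must be combined.
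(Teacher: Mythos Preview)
Your arguments for (i) and (iii) are essentially the same as the paper's: (i) is imported from~\cite{trivval}, and for (iii) the paper likewise reduces by translation invariance to $\PSH_{\sup}(\om)$, uses the compact closure $K$ from~(ii), and observes that the functionals $F_i(\f)=\int\f\,\mu_i$ are $1$-Lipschitz on $K$ and converge pointwise, so converge uniformly by (the easy direction of) Arzel\`a--Ascoli. Your finite $\e$-net argument is the same idea.

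The gap is in (ii). Uniform boundedness plus convexity on each simplex does \emph{not} yield equicontinuity across the boundary faces of $\D_\cX$: already on $[0,1]$ the convex functions $f_m(x)=\max\{-mx,-1\}$ are bounded in $[-1,0]$ but not equicontinuous at $0$. Your pinching $-\te_\om\le\f\le 0$ only forces equicontinuity at the single point $v_\triv$ (where both bounds vanish); at every other face $\te_\om$ is strictly positive and gives you nothing. The ``refinement trick'' does not rescue this: passing to a higher snc test configuration $\cX'$ enlarges the dual complex, but the faces of $\D_\cX$ remain faces in $\D_{\cX'}$, so you are back to the same boundary problem. The paper does not attempt to prove (ii) by convexity; instead it cites the uniform Lipschitz estimates of~\cite[Theorem~C]{siminag} (via~\cite[Theorem~A.4]{trivval}), which give a Lipschitz constant on each face depending only on $\om$ and the integral affine structure of $\D_\cX$. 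That estimate is proved by intersection-theoretic (positivity) arguments and is genuinely the missing input here.
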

Here $\PSH_{\sup}(\om)=\{\f\in\PSH(\om)\mid\sup\f=0\}$. 

\begin{proof} (i) and (ii) follow from Theorems~A.1 and~A.4 of~\cite{trivval} (the latter being a consequence of the uniform Lipschitz estimates of~\cite[Theorem~C]{siminag}). Now consider a convergent net $\mu_i\to\mu$ as in (iii). Since $\int\f(\mu_i-\mu)$ is invariant under translation of $\f$ by a constant, it is enough to show $\int\f\,\mu_i\to\int\f\,\mu$ uniformly for $\f\in\PSH_{\sup}(\om)$. Denote by $K$ the (compact) closure of $\PSH_{\sup}(\om)|_{\D_{\cX}}$ in $\Cz(\D_\cX)$, equipped with the supnorm metric. By assumption, the functions $F_i\in\Cz(K)$ defined by $F_i(\f):=\int\f\,\mu_i$ converge pointwise to $F(\f):=\int\f\,\mu$. Since $\mu_i$ is a probability measures, $F_i$ is $1$-Lipschitz, and (iii) now follows from the `easy direction' of the Arzel\`a--Ascoli theorem. 
\end{proof}

\begin{lem}\label{lem:ldlisse} Assume $X$ is smooth. Pick $\mu\in\cM$, and set 
$$
\mu_\cX:=(p_\cX)_\star\mu\in\cM(\D_\cX)\hto\cM
$$ 
for each snc test configuration $\cX$. Then:
\begin{itemize}
\item[(i)] $\mu_\cX\to\mu$ weakly in $\cM$; 
\item[(ii)] $\int\f\,\mu_\cX\to\int\f\,\mu$ in $\R\cup\{-\infty\}$ for all $\f\in\PSH(\om)$; 
\item[(iii)] $\Ent_{X,B}(\mu_\cX)\to\Ent_{X,B}(\mu)$ in $[0,+\infty]$. 
\end{itemize}
\end{lem}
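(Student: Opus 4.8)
The three statements share a common mechanism: for the pushforward we have $\int g\,\mu_\cX=\int (g\circ p_\cX)\,\mu$, so everything reduces to understanding the net of functions $g\circ p_\cX$ as $\cX$ ranges over snc test configurations (a directed set under domination). The two structural inputs are the compatibility $p_\cX\circ p_{\cX'}=p_\cX$ for $\cX'\ge\cX$, and the fact that a PL function $h$ factors through some retraction, $h=h_0\circ p_{\cX_0}$, and then stabilizes: $h\circ p_\cX=h$ once $\cX\ge\cX_0$. The one genuine subtlety throughout is that we deal with \emph{nets}, not sequences, so the classical monotone and dominated convergence theorems do not apply verbatim; addressing this is the heart of the plan.

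For (i), I would first show that $g\circ p_\cX\to g$ \emph{uniformly} for every $g\in\Cz(X^\an)$ (ordinary net-dominated convergence fails in general, so uniformity is needed). Given $\e>0$, pick a PL function $h=h_0\circ p_{\cX_0}$ with $\|g-h\|_\infty\le\e$ by density of $\PL(X)$ in $\Cz(X^\an)$; since composition with $p_\cX$ is supnorm-nonincreasing and $h\circ p_\cX=h$ for $\cX\ge\cX_0$, one gets $\|g\circ p_\cX-g\|_\infty\le 2\e$ for all such $\cX$, which is exactly uniform convergence along the net. Then $\bigl|\int g\,\mu_\cX-\int g\,\mu\bigr|\le\|g\circ p_\cX-g\|_\infty\to0$, i.e.\ $\mu_\cX\to\mu$ weakly.

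For (ii) and (iii) I would exploit monotonicity together with \emph{continuity} of the integrands. By Lemma~\ref{lem:approx}(i), $\f|_{\D_\cX}$ is continuous, so $\f\circ p_\cX\in\Cz(X^\an)$; moreover $\f\le\f\circ p_\cX$, and applying this with $p_{\cX'}(v)$ in place of $v$ together with $p_\cX\circ p_{\cX'}=p_\cX$ gives $\f\circ p_{\cX'}\le\f\circ p_\cX$ for $\cX'\ge\cX$, so $(\f\circ p_\cX)_\cX$ is a \emph{decreasing} net of continuous functions. Writing $\f$ as the pointwise limit of a decreasing net of PL functions $\f_j=\f_j\circ p_{\cX_j}$ in $\PL\cap\PSH(\om)$ and using $\f\le\f_j$ with $\f_j\circ p_\cX=\f_j$ for $\cX\ge\cX_j$, one checks $\inf_\cX\f\circ p_\cX=\f$ pointwise. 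Dually, the functions $\ld_{X,B}\circ p_\cX$ are continuous (the restriction of $\ld_{X,B}$ to $\D_\cX$ is affine on each simplex, hence finite and continuous), and from the basic inequality $\ld_{X,B}\circ p_\cX\le\ld_{X,B}$ recalled in Appendix~\ref{sec:logdisc} (see also \S\ref{sec:ldtc}) the same compatibility argument shows $(\ld_{X,B}\circ p_\cX)_\cX$ is an \emph{increasing} net with $\sup_\cX\ld_{X,B}\circ p_\cX=\ld_{X,B}$ pointwise.

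It remains to push these monotone limits of continuous functions through the Radon measure $\mu$, and this is the main obstacle: net-monotone convergence can fail for merely semicontinuous integrands (e.g.\ an increasing net of indicators of finite sets against Lebesgue measure), so continuity is indispensable. I would resolve both cases with a Dini-type argument. For a decreasing net $g_\cX\downarrow g$ of continuous functions bounded above, with $g$ usc, fix a continuous $h\ge g$; then $(g_\cX-h)^+$ is a decreasing net of continuous functions with continuous pointwise limit $0$, so Dini's theorem gives uniform convergence $(g_\cX-h)^+\to0$, whence $\limsup_\cX\int g_\cX\,\mu\le\int h\,\mu$. Taking the infimum over continuous $h\ge g$ and invoking inner regularity of $\mu$ (so that $\int g\,\mu=\inf_{h\ge g}\int h\,\mu$, the value $-\infty$ being allowed) yields $\int g_\cX\,\mu\to\int g\,\mu$; applied to $g_\cX=\f\circ p_\cX$ this proves (ii). The symmetric statement for an increasing net $g_\cX\uparrow g$ of nonnegative continuous functions with $g$ lsc (using $(h-g_\cX)^+\downarrow0$ and $\int g\,\mu=\sup_{h\le g}\int h\,\mu$ in $[0,+\infty]$) applied to $g_\cX=\ld_{X,B}\circ p_\cX$ proves (iii).
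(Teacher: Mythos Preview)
Your arguments for (i) and (ii) are correct. They are somewhat more elaborate than the paper's: once you know $\mu_\cX\to\mu$ weakly, the paper simply observes that $\nu\mapsto-\int\f\,\nu$ is lsc on $\cM$ (as $\f$ is usc) and that $\int\f\,\mu_\cX\ge\int\f\,\mu$, then invokes Lemma~\ref{lem:lsccv}. This avoids the monotonicity of the net $(\f\circ p_\cX)$ and the Dini argument, though your route is fine and arguably self-contained.

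There is, however, a genuine gap in your treatment of~(iii). The two ingredients you invoke---that $\ld_{X,B}|_{\D_\cX}$ is affine on each simplex and that $\ld_{X,B}\circ p_\cX\le\ld_{X,B}$---are established in Appendix~\ref{sec:logdisc} (Theorem~\ref{thm:lddual}) only for $B=0$. For general $B$, since $X$ is smooth one has $\ld_{X,B}=\ld_X-\p_B$ on $X^\val$ with $\p_B(v)=v(B)$; writing $B=B_1-B_2$ with $-B_i$ effective and choosing $L$ ample with $B_i+L$ semiample, each $\p_{B_i}\in\PSH(L)$ satisfies $\p_{B_i}\le\p_{B_i}\circ p_\cX$ by Lemma~\ref{lem:approx}(i). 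But then
\[
\ld_{X,B}-\ld_{X,B}\circ p_\cX=(\ld_X-\ld_X\circ p_\cX)+(\p_{B_1}\circ p_\cX-\p_{B_1})-(\p_{B_2}\circ p_\cX-\p_{B_2})
\]
is a difference of two nonnegative quantities, with no sign in general; likewise $\p_B|_{\D_\cX}$ is only piecewise linear (concave on faces), not affine. So your monotone Dini argument does not apply to $\ld_{X,B}$ directly when $B\ne0$.

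The paper fixes this by treating $B=0$ exactly as you do (monotonicity $+$ Lemma~\ref{lem:lsccv}), and then reducing the general case to that one together with part~(ii): assuming $\Ent_{X,B}(\mu)<\infty$ (the case $=+\infty$ being trivial by lower semicontinuity), one shows each $\p_{B_i}$ is $\mu$- and $\mu_\cX$-integrable (via $0\le-\p_{B_i}\le C\ld_{X,B}$ from Theorem~\ref{thm:ldan}), writes $\Ent_{X,B}=\Ent_X+\int\p_{B_2}-\int\p_{B_1}$, and applies~(ii) to each $\p_{B_i}$. Your approach can be repaired the same way: run the Dini argument for $\ld_X$ and for each $\p_{B_i}$ separately, then combine.
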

\begin{proof} Any $\f\in\PL(X)$ satisfies $\f=\f\circ p_\cX$, and hence $\int\f\,\mu_\cX=\int(\f\circ p_\cX)\,\mu=\int\f\,\mu$, for all $\cX$ high enough. This implies (i), by density of $\PL(X)$ in $\Cz(X)$. Pick now $\f\in\PSH(\om)$. Then $\f$ is usc, and $\f\le\f\circ p_\cX$ (see Lemma~\ref{lem:approx}). Thus $\int\f\,\mu\le\int(\f\circ p_\cX)\,\mu=\int\f\,\mu_\cX$, and (ii) follows, thanks to Lemma~\ref{lem:lsccv} (applies to the lsc function $\nu\mapsto-\int\f\,\nu$). 

Since $\ld_X$ is lsc and $\ld_X\circ p_\cX\le\ld_X$ (see Theorem~\ref{thm:lddual}), Lemma~\ref{lem:lsccv} similarly yields $\Ent_X(\mu_\cX)\to\Ent_X(\mu)$, which proves (iii) when $B=0$. In the general case, observe that (iii) trivially holds if $\Ent_{X,B}(\mu)=+\infty$, again by Lemma~\ref{lem:lsccv}. We therefore assume $\Ent_{X,B}(\mu)<+\infty$, and hence $\mu(X^\an\setminus X^\fld)=0$, by~\eqref{equ:entfld}. By Proposition~\ref{prop:ldvalbir}, we have $\ld_{X,B}=\ld_X-\p_B$ on $X^\val\supset X^\fld$, where $\p_B(v):=v(B)$. Write $B=B_1-B_2$ with $B_i\le 0$, $i=1,2$. Pick an ample line bundle $L$ such that $B_i+L$ is semiample, for $i=1,2$, and hence $\p_{B_i}\in\PSH(L)$ (see~\cite[Lemma~6.7]{trivval}). By Theorem~\ref{thm:ldan}~(ii), each $\p_{B_i}$ satisfies $0\le-\p_{B_i}\le C\ld_{X,B}$ on $X^\an$ for some $C>0$, and hence is integrable with respect to $\mu$ and all $\mu_\cX$. Since $\ld_{X,B}=\ld_X+\p_{B_2}-\p_{B_1}$ on $X^\fld$, $\ld_{X,B}$ is integrable with respect to $\mu$ and $\mu_\cX$ as well, and $\Ent_{X,B}(\mu_\cX)=\Ent_X(\mu_\cX)+\int\p_{B_2}\,\mu_\cX-\int\p_{B_1}\,\mu_\cX$. By (ii) and the case $B=0$ of (iii), this converges to $\Ent_{X,B}(\mu)=\Ent_X(\mu)+\int\p_{B_2}\,\mu-\int\p_{B_1}\,\mu$, and the general case of (iii) follows. 
\end{proof}

\begin{proof}[Proof of Theorem~\ref{thm:entapprox}] Observe first that the result is trivial if $\Ent_{X,B}(\mu)=+\infty$. By Lemma~\ref{lem:divdense}, we can indeed pick a sequence $(\mu_i)$ in $\cM^\div$ converging strongly to $\mu$, and we then have $\Ent_{X,B}(\mu_i)\to\Ent_{X,B}(\mu)$, by Lemma~\ref{lem:lsccv}. 

We therefore assume $\Ent_{X,B}(\mu)<+\infty$, and hence $\mu(X^\an\setminus X^\fld)=0$, by~\eqref{equ:entfld}. Since the strong topology of $\cM^1$ is defined by the quasi-metric $\ii$ (see Theorem~\ref{thm:I}), it is enough to show that, for any $\e>0$, there exists $\nu\in\cM^\div$ such that 
\begin{equation}\label{equ:estimapprox}
\ii(\mu,\nu)\le\e\quad\text{and}\quad|\Ent_{X,B}(\mu)-\Ent_{X,B}(\nu)|\le\e.
\end{equation}
To see this, pick a resolution of singularities $\pi\colon X'\to X$. Since the induced map $X'^\an\to X^\an$ is surjective, it follows from general theory that $\pi_\star\colon\cM(X')\to\cM(X)$ is surjective as well (see for instance~\cite[V.5.4]{SW}), and we can thus write $\mu=\pi_\star\mu'$ for some measure $\mu'\in\cM(X')$. 

For any snc test configuration $\cX$ for $X'$, set 
$$
\mu'_\cX:=(p_\cX)_\star\mu'\in\cM(\D_{\cX})\subset\cM(X'),\quad\mu_\cX:=\pi_\star\mu'_\cX\in\cM(X).
$$
By Lemma~\ref{lem:ldlisse}, we have $\lim_\cX\mu'_\cX=\mu'$ weakly in $\cM(X')$, and hence $\lim_\cX\mu_\cX=\mu$ weakly in $\cM(X)$. Next, pick $\om\in\Amp(X)$, and $\om'\in\Amp(X')$ such that $\pi^\star\om\le\om'$. For any $\f\in\cE^1(\om)$, $\pi^\star\f\in\PSH(\om')$ satisfies $(\pi^\star\f)\le(\pi^\star\f)\circ p_\cX$, and hence $\int\f\,\mu\le\int\f\,\mu_\cX$. Thus 
$$
\|\mu_\cX\|_\om=\sup_{\f\in\cE^1(\om)}\left(\en_\om(\f)-\int\f\,\mu_\cX\right)\le\sup_{\f\in\cE^1(\om)}\left(\en_\om(\f)-\int\f\,\mu\right)=\|\mu\|_\om. 
$$
By Lemma~\ref{lem:lsccv}, we infer $\lim_\cX\|\mu_\cX\|_\om=\|\mu\|_\om$, and hence $\mu_\cX\to\mu$ strongly in $\cM^1(X)$ (the strong topology being the coarsest refinement of the weak one in which $\|\cdot\|_\om$ is continuous). Finally, set $B':=\pi^\star K_{X,B}-K_{X'}$. By Proposition~\ref{prop:ldvalbir}, we have $\pi^\star\ld_{X,B}=\ld_{X',B'}$ on $X'^\val$. In particular, $(X',B')$ is subklt, and hence $\pi^\star\ld_{X,B}=\ld_{X',B'}\equiv+\infty$ outside $X'^\val$. This implies $\Ent_{X,B}(\mu_\cX)=\Ent_{X',B'}(\mu'_\cX)$, which converges to $\Ent_{X,B}(\mu)=\Ent_{X',B'}(\mu')$ by Lemma~\ref{lem:ldlisse}~(iii). 

Replacing $\mu$ with $\mu_\cX$, it is thus enough to show~\eqref{equ:estimapprox} when $\mu=\pi_\star\mu'$ with $\mu'\in\cM(\D_\cX)$. By density of the set of rational points $\D_\cX(\Q)$ in the simplicial complex $\D_\cX$, we can now write $\mu'$ as the weak limit of a sequence of measures $\mu'_i\in\cM(\D_\cX)$ with finite support in $\D_\cX(\Q)=\D_\cX\cap X'^\div$, and hence lying in $\cM^\div(X')$. We claim that $\mu_i:=\pi_\star\mu'_i\in\cM^\div(X)$ satisfies the desired estimate~\eqref{equ:estimapprox} for $i$ large enough. 

On the one hand, by continuity of $\pi^\star\ld_{X,B}=\ld_{X',B'}$ on $\D_\cX$, the weak convergence $\mu'_i\to\mu'$ implies $\Ent_{X,B}(\mu_i)=\int\ld_{X',B'}\,\mu'_i\to\int\ld_{X',B'}\,\mu'=\Ent_{X,B}(\mu)$. On the other hand, since $\pi^\star\f\in\PSH(\om')$ for any $\f\in\PSH(\om)$, Lemma~\ref{lem:approx}~(iii) implies $\int\f\,\mu_i=\int\pi^\star\f\,\mu'_i\to\int\pi^\star\f\,\mu'=\int\f\,\mu$ uniformly with respect to $\f\in\PSH(\om)$. By~\cite[Theorem~10.12]{trivval}, this implies, in particular, $\mu_i\to\mu$ strongly in $\cE^1(\mu)$. 
\end{proof}

%
%
\subsection{The $\d$-invariant and Ding-stability}\label{sec:Ding}
As in the previous section, we assume that $(X,B;\om)$ is a polarized \textbf{subklt} pair. Extending~\cite{Fujval}, we then show that the $\d$-invariant is a threshold for the Ding-stability of $(X,B;\om)$.
See also~\cite[Appendix A.2]{Hat}.

For any function $\f\colon X^\div\to\R$ we set 
$$
\elle_{X,B}(\f):=\inf_{X^\div}(\ld_{X,B}+\f)\in\R\cup\{-\infty\}.
$$
The (non-Archimedean) \emph{Ding functional} $\ding\colon\cE^1(\om)\to\R\cup\{-\infty\}$ of the polarized pair $(X,B;\om)$ is defined for $\f\in\cE^1(\om)$
\begin{equation}\label{equ:ding}
\ding(\f)=\ding_{X,B;\om}(\f):=\elle_{X,B}(\f)-\en_\om(\f),  
\end{equation}
where $\elle_{X,B}(\f):=\elle_{X,B}(\f|_{X^\div})$. It is easy to see that 
\begin{equation}\label{equ:dingtrans}
\ding(\f+c)=\ding(\f)\quad\text{and}\quad\ding(t\cdot\f)=t\ding(\f)
\end{equation}
for $c\in\R$ and $t\in\R_{>0}$. 

\begin{prop}\label{prop:Ding} The Ding functional is finite-valued, and satisfies a H\"older estimate
\begin{equation}\label{equ:dingholder}
|\ding(\f)-\ding(\p)|\le C\ii(\f,\p)^\a\max\{\ii(\f),\ii(\p)\}^{1-\a}
\end{equation}
for all $\f,\p\in\cE^1(\om)$, where $\a=\a_n\in (0,1)$ only depends on $n$ and $C=C(n,\d)>0$ only depends on $n$ and $\d:=\d(X,B;\om)>0$. In particular, $\ding$ is strongly continuous, and $\ding(\f)\ge-C\ii(\f)$. 
\end{prop}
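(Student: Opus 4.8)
The plan is to handle separately the two terms in $\ding=\elle_{X,B}-\en_\om$. The energy $\en_\om$ already satisfies the desired H\"older estimate, namely~\eqref{equ:enholder}, so the whole difficulty is concentrated in the functional $\elle_{X,B}(\f)=\inf_{X^\div}(\ld_{X,B}+\f)$. Since $\ding$ and the quantities $\ii_\om(\f,\p),\ii_\om(\f),\ii_\om(\p)$ are all unchanged when a constant is added to $\f$ or $\p$ (see~\eqref{equ:dingtrans}), I would normalize $\f,\p\in\cE^1(\om)$ by $\sup\f=\sup\p=0$ throughout, so that $\f(v_\triv)=\p(v_\triv)=0$, $\en_\om(\f),\en_\om(\p)\le 0$, and $-\en_\om(\f)\approx\ii_\om(\f)\approx\|\MA_\om(\f)\|_\om$ by Theorem~\ref{thm:I}(iv).

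The single crucial input is a \emph{sublinear} growth bound for $\f$ along the scaling rays, obtained by applying the Hodge-index estimate~\eqref{equ:BBGZ} with $\f'=0$ (so $\MA_\om(0)=\mu_\triv$), $\mu=\d_v$ and $\mu'=\mu_\triv$. Using $\ii_\om(\d_v,\mu_\triv)\approx\|v\|_\om$ and $\f(v_\triv)=0$, this gives, for every $v\in X^\div$,
$$
|\f(v)|\lesssim\ii_\om(\f)^\a\,\|v\|_\om^{1/2}\,\max\{\ii_\om(\f),\|v\|_\om\}^{1/2-\a},
$$
and in particular $|\f(v)|\lesssim\ii_\om(\f)^\a\|v\|_\om^{1-\a}$ once $\|v\|_\om\ge\ii_\om(\f)$. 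Since $(X,B)$ is subklt, $\d:=\d(X,B;\om)>0$ (Theorem~\ref{thm:delta}(ii)) and $\ld_{X,B}(v)\ge\d\|v\|_\om$ (Theorem~\ref{thm:delta}(iii)) grows \emph{linearly}. Combining these, $\ld_{X,B}(v)+\f(v)\ge\d\|v\|_\om-C\ii_\om(\f)^\a\|v\|_\om^{1-\a}$, and minimizing the right-hand side over $\|v\|_\om\ge 0$ by an elementary Young-type inequality (the exponent $1-\a<1$ being beaten by the linear term) produces a lower bound $-C(n,\d)\ii_\om(\f)$ uniform in $v$. As $\elle_{X,B}(\f)\le\ld_{X,B}(v_\triv)+\f(v_\triv)=0$ as well, this shows $\elle_{X,B}$, and hence $\ding$, is finite-valued.

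For the H\"older estimate I would run the standard near-minimizer comparison: given $\e>0$, pick $v\in X^\div$ with $\ld_{X,B}(v)+\p(v)\le\elle_{X,B}(\p)+\e\le\e$, so that $\elle_{X,B}(\f)-\elle_{X,B}(\p)\le\f(v)-\p(v)+\e$. The same coercivity applied to $\p$ \emph{confines} the near-minimizer: for $\e$ small, $\d\|v\|_\om\le\e+C\ii_\om(\p)^\a\|v\|_\om^{1-\a}$ forces $\|v\|_\om\le C(n,\d)\,\ii_\om(\p)$. Writing $\f(v)-\p(v)=\int(\f-\p)(\d_v-\mu_\triv)$ and invoking~\eqref{equ:BBGZ} once more with this bound and $M:=\max\{\ii_\om(\f),\ii_\om(\p)\}$, I obtain $|\f(v)-\p(v)|\le C(n,\d)\,\ii_\om(\f,\p)^\a M^{1-\a}$. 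Letting $\e\to 0$ and symmetrizing in $\f,\p$ gives the H\"older estimate for $\elle_{X,B}$; adding~\eqref{equ:enholder} yields it for $\ding$, with $C=C(n,\d)$. Finally $\ding(0)=\inf_{X^\div}\ld_{X,B}=0$, so taking $\p=0$ in the H\"older estimate gives $|\ding(\f)|\le C\ii_\om(\f)$, which contains the asserted bound $\ding(\f)\ge-C\ii_\om(\f)$; strong continuity is then immediate from the estimate.

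The conceptual heart, and the main obstacle, is the growth dichotomy underlying the second paragraph. The naive pointwise bound $\f(v)\ge-\te_\om(v)\approx-\|v\|_\om$ only matches the linear lower bound $\ld_{X,B}(v)\ge\d\|v\|_\om$ and is worthless when $\d<1$; it is essential to replace it by the strictly sublinear bound furnished by~\eqref{equ:BBGZ}, which alone makes $\ld_{X,B}+\f$ coercive and confines its minimizers to a region of size controlled by $\ii_\om(\f)$.
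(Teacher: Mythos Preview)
Your proposal is correct and follows essentially the same route as the paper: normalize by $\sup\f=\sup\p=0$, use the sublinear estimate~\eqref{equ:BBGZ} together with the linear lower bound $\ld_{X,B}\ge\d\|\cdot\|_\om$ to confine the (near-)minimizers of $\ld_{X,B}+\p$ to the region $\|v\|_\om\le C(n,\d)\max\{\ii_\om(\f),\ii_\om(\p)\}$, and then apply~\eqref{equ:BBGZ} once more on that region. The only cosmetic difference is that the paper restricts the infimum defining $\elle_{X,B}$ to this region for both $\f$ and $\p$ simultaneously, whereas you pick a near-minimizer for $\p$ and symmetrize; the two arguments are equivalent.
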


\begin{proof} By translation invariance of the Ding functional (see~\eqref{equ:dingtrans}), we may assume that $\f,\p$ are normalized by $\sup\f=\sup\p=0$. We first claim that $\elle_{X,B}(\f)\ge-C\ii(\f)$, where $C=C(n,\d)>0$. Indeed, $\ld_{X,B}(v)\ge\d\|v\|$ together with~\eqref{equ:BBGZ} implies
\begin{equation}\label{equ:Aflow}
\ld_{X,B}(v)+\f(v)\ge\d\|v\|-C(n)\ii(\f)^\a \|v\|^{\frac 12}\max\{\|v\|,\ii(\f)\}^{\frac 12-\a}
\end{equation}
for each $v\in X^\div$. The right-hand side is easily seen to be bounded below by $-C(n,\d)\ii(\f)$, and the claim follows. 
By~\eqref{equ:Aflow}, we can further find $B=B(n,\d)>0$ such that 
$$
\|v\|\ge B\ii(\f)\Longrightarrow\ld_{X,B}(v)+\f(v)\ge 0=\ld_{X,B}(v_\triv)+\f(v_\triv). 
$$
Thus 
\begin{equation}\label{equ:Frestr}
\elle_{X,B}(\f)=\inf_{v\in X^\div,\,\|v\|\le B\ii(\f)}\{\ld_{X,B}(v)+\f(v)\},
\end{equation}
and the same holds for $\p$. Now set $M:=\max\{\ii(\f),\ii(\p)\}$. By~\eqref{equ:BBGZ} for any $v\in X^\div$ with $\|v\|\le B M$, we have $|\f(v)-\p(v)|\le B'\ii(\f,\p)^\a M^{1-\a}$ with $\a=\a_n$ and $B'=B'(n,\d)$. In view of~\eqref{equ:Frestr}, this implies
$$
|\elle_{X,B}(\f)-\elle_{X,B}(\p)|\le C\ii(\f,\p)^\a M^{1-\a}
$$ 
with $C=C(n,\d)$. By~\eqref{equ:enholder}, a similar estimate holds for $\en_\om$, and~\eqref{equ:dingholder} follows, by~\eqref{equ:ding}. 
\end{proof}

As we next show, the $\d$-invariant is a threshold for Ding-stability (see also~\cite{YTD,BL22}). 

\begin{thm}\label{thm:deltading} Set $\d:=\d(X,B;\om)$. Then: 
\begin{itemize}
\item[(i)] $\d\ge 1$ iff $(X,B;\om)$ is \emph{Ding-semistable}, \ie $\ding\ge 0$ on $\cE^1(\om)$; 
\item[(ii)] $\d>1$ iff $(X,B;\om)$ is \emph{uniformly Ding-stable}, \ie $\ding\ge\e\jj$ on $\cE^1(\om)$ for some $\e>0$. 
\end{itemize}
\end{thm}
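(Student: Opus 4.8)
The plan is to recast the Ding functional in a purely dual (variational) form over divisorial measures, after which both equivalences fall out of the defining inequality $\Ent_{X,B}\ge\d\|\cdot\|$ combined with the quasi-metric structure of $\ii_\om$ recorded in Theorem~\ref{thm:I}. Throughout I work under the standing subklt hypothesis, which guarantees (Proposition~\ref{prop:Ding}) that $\ding$ is finite-valued and strongly continuous.

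First I would record the elementary identity
$$\elle_{X,B}(\f)=\inf_{\mu\in\cM^\div}\Big(\Ent_{X,B}(\mu)+\int\f\,\mu\Big),$$
valid because, writing $\mu=\sum_i m_i\d_{v_i}$, one has $\Ent_{X,B}(\mu)+\int\f\,\mu=\sum_i m_i\big(\ld_{X,B}(v_i)+\f(v_i)\big)\ge\elle_{X,B}(\f)$, with equality approached along Dirac masses concentrated where $\ld_{X,B}+\f$ is nearly minimal. Combining this with the energy inequality $\|\mu\|\ge\en(\f)-\int\f\,\mu$ from~\eqref{equ:envee}, rewritten via~\eqref{equ:Jmu} as $\en(\f)=\|\mu\|+\int\f\,\mu-\jj_\mu(\f)$, and using that $\en(\f)$ is independent of $\mu$, I obtain the dual formula
$$\ding(\f)=\elle_{X,B}(\f)-\en(\f)=\inf_{\mu\in\cM^\div}\big[\,\Ent_{X,B}(\mu)-\|\mu\|+\jj_\mu(\f)\,\big].$$
Part~(i) is then immediate in both directions. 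If $\d\ge1$, each bracket satisfies $\Ent_{X,B}(\mu)-\|\mu\|+\jj_\mu(\f)\ge(\d-1)\|\mu\|+\jj_\mu(\f)\ge0$, so $\ding\ge0$. Conversely, given $\ding\ge0$, I fix $\mu\in\cM^\div$ and a maximizing sequence $\f_j\in\cE^1(\om)$ for $\mu$ (so $\jj_\mu(\f_j)\to0$); evaluating the infimand at this particular $\mu$ gives $0\le\ding(\f_j)\le\Ent_{X,B}(\mu)-\|\mu\|+\jj_\mu(\f_j)$, and letting $j\to\infty$ yields $\Ent_{X,B}(\mu)\ge\|\mu\|$, hence $\d\ge1$.

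For part~(ii) the new ingredient is to convert the two nonnegative summands $(\d-1)\|\mu\|$ and $\jj_\mu(\f)$ into a uniform multiple of $\jj(\f)=\jj_{\mu_\triv}(\f)$. Here I would invoke Theorem~\ref{thm:I}(iv) to read $\|\mu\|\approx\ii_\om(\mu,\mu_\triv)$, $\jj_\mu(\f)\approx\ii_\om(\mu,\MA(\f))$ and $\jj(\f)\approx\ii_\om(\mu_\triv,\MA(\f))$, so that the quasi-triangle inequality (Theorem~\ref{thm:I}(ii)) gives $\jj(\f)\lesssim\|\mu\|+\jj_\mu(\f)$. If $\d>1$, then $(\d-1)\|\mu\|+\jj_\mu(\f)\ge\min\{\d-1,1\}\big(\|\mu\|+\jj_\mu(\f)\big)\gtrsim\min\{\d-1,1\}\,\jj(\f)$ uniformly in $\mu$, and taking the infimum in the dual formula produces $\ding\ge\e\jj$ with $\e\approx\min\{\d-1,1\}>0$. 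For the converse, supposing $\ding\ge\e\jj$, I again fix $\mu\in\cM^\div$ and a maximizing sequence $\f_j$ with $\mu_j:=\MA(\f_j)\to\mu$; the same evaluation gives $\e\,\jj(\f_j)\le\ding(\f_j)\le\Ent_{X,B}(\mu)-\|\mu\|+\jj_\mu(\f_j)$, and since $\jj(\f_j)\approx\ii_\om(\mu_j,\mu_\triv)\approx\|\mu_j\|\to\|\mu\|$ while $\jj_\mu(\f_j)\to0$, passing to the limit yields $\Ent_{X,B}(\mu)\ge(1+c\,\e)\|\mu\|$ for a constant $c=c_n>0$, whence $\d\ge1+c\e>1$.

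The routine points—finiteness and strong continuity of $\ding$, and the existence of maximizing sequences $\f_j$ with $\MA(\f_j)\to\mu$ strongly—are furnished by Proposition~\ref{prop:Ding} and~\S\ref{sec:enpairing}. The only genuinely delicate step is the uniform estimate in~(ii): a naive bound built from the Dirac-mass Hölder inequality~\eqref{equ:BBGZ} degenerates as $\d\downarrow1$, so it is essential to argue through the quasi-triangle inequality for $\ii_\om$ rather than through pointwise control of $\f$. This is precisely where Theorem~\ref{thm:I} does the real work, and I expect it to be the main obstacle to get right.
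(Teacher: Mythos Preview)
Your proof is correct and, for the converse implications in (i) and (ii), essentially parallels the paper: both choose a maximizing sequence $(\f_j)$ for a given $\mu$ (the paper restricts to Dirac masses $\mu=\d_v$, which suffices by~\eqref{equ:deltaval}) and pass to the limit in $\ding(\f_j)\ge\e\,\jj(\f_j)$.

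The direct implications are handled quite differently, however. Your argument rests on the clean dual identity $\ding(\f)=\inf_{\mu\in\cM^\div}\big[\Ent_{X,B}(\mu)-\|\mu\|+\jj_\mu(\f)\big]$ together with the quasi-triangle inequality $\jj(\f)\lesssim\|\mu\|+\jj_\mu(\f)$ coming from Theorem~\ref{thm:I}. The paper instead uses a direct rescaling trick: normalizing $\sup\f=0$, one has $\d^{-1}\f\in\cE^1_{\sup}(\om)$, so $\|v\|\ge\en_\om(\d^{-1}\f)-\d^{-1}\f(v)$ for every $v$, whence $\ld_{X,B}(v)+\f(v)\ge\d\,\en_\om(\d^{-1}\f)\ge\d^{-1/n}\en_\om(\f)$ by a concavity estimate from~\cite{trivval}. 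This yields the explicit constant $\e=1-\d^{-1/n}$. Your approach is more conceptual and makes the variational structure transparent, while the paper's gives a sharper quantitative relation between $\d$ and $\e$; the implicit constants in your bound depend on the unspecified quasi-triangle constant in Theorem~\ref{thm:I}(ii).
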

By strong continuity of the Ding functional (see Proposition~\ref{prop:Ding}), it is enough to test the conditions in the second halves of (i) and (ii) on the dense subset $\PL\cap\PSH(\om)$.

\begin{proof} Assume first $\ding\ge\e\jj$ with $\e\ge 0$. Pick $v\in X^\div$, and choose a maximizing sequence $(\f_i)$ in $\cE^1(\om)$ for $\d_v$, \ie $\en_\om(\f_i)-\f_i(v)\to\|v\|$. Then $\mu_i:=\MA(\f_i)\to\d_v$ strongly in $\cM^1$. The assumption yields 
\begin{align*}
\ld_{X,B}(v)+\f_i(v)-\en_\om(\f_i) & \ge\elle_{X,B}(\f_i)-\en_\om(\f_i)\\
&  =\ding(\f_i)\ge\e\jj(\f_i)\approx\e\|\mu_i\|,
\end{align*}
where the first term tends to $\ld_{X,B}(v)-\|v\|$, while the last one tends to $\e\|v\|$. We infer $\ld_{X,B}(v)\ge(1+C_n\e)\|v\|$, and hence $\d(X,B;L)\ge 1+C_n\e$, see~\eqref{equ:deltaval}. This shows the `if' parts in (i) and (ii). Conversely, assume $\d:=\d(X,B;\om)\ge 1$, and pick $\f\in\cE^1_{\sup}(\om)$. Then $\d^{-1}\f\in\cE^1_{\sup}(\om)$. For each $v\in X^\div$, we thus have $\|v\|\ge\en_\om(\d^{-1}\f)-\d^{-1}\f(v)$, and hence 
$$
\ld_{X,B}(v)+\f(v)\ge\d\|v\|+\f(v)\ge\d\en_\om(\d^{-1}\f)\ge\d^{-1/n}\en_\om(\f), 
$$
see~\cite[Lemma~7.33]{trivval} for the last inequality. Since $\jj_\om(\f)=-\en_\om(\f)$, this implies
$\ding(\f)\ge\e\jj_\om(\f)$ with $\e:=1-\d^{-1/n}\ge 0$, and $\e>0$ if $\d>1$. This proves the `only if' parts of (i) and (ii). 
\end{proof}

%
%
%
%
  
\section{Divisorial stability}\label{sec:Divstab}
As before, $(X,B;\om)$ denotes an arbitrary polarized pair. We introduce the main concepts of this paper, the $\b$-invariant of a divisorial measure and the associated notion of divisorial stability, and prove Theorem~B and Corollary~C in the introduction. 
%
\subsection{The $\b$-functional}\label{sec:bfunc}
We now introduce the key functional in this paper. 

\begin{defi} The \emph{$\b$-functional} $\b=\b_{X,B;\om}\colon\cM^\div\to\R$ of the polarized pair $(X,B;\om)$ is defined by setting
\begin{equation}\label{equ:bfunc}
\b(\mu):=\Ent_{X,B}(\mu)+\nabla_{K_{X,B}}\|\mu\|_\om
\end{equation}
for $\mu\in\cM^\div$. 
\end{defi}
To simplify notation, we slightly abusively use $K_{X,B}$ to also denote the image of the log canonical class in $\Num(X)$. By Theorem~\ref{thm:diffen}, \eqref{equ:bfunc} can be rewritten
\begin{equation}\label{equ:bexp}
\b(\mu)=\int\ld_{X,B}\,\mu+\frac{d}{dt}\bigg|_{t=0}\|\mu\|_{\om+t K_{X,B}}.
\end{equation}
Note that
\begin{equation}\label{equ:homogb}
\b(t_\star\mu)=t\b(\mu),\quad\b_{X,B;s\om}(\mu)=\b_{X,B;\om}(\mu)
\end{equation}
for all $\mu\in\cM^\div$, $t\in\Q_{>0}$ and $s\in\R_{>0}$, by~\eqref{equ:homogent} and~\eqref{equ:homogeffe}.

\medskip

The $\b$-functional is closed related to the \emph{(non-Archimedean) Mabuchi K-energy functional} of $(X,B;\om)$. In view of~\cite[Proposition~7.22]{BHJ1} (extended to a possibly irrational class) and~\eqref{equ:nablaE}, the latter can indeed be defined as a functional 
$$
\mab=\mab_{X,B;\om}\colon\PL\cap\PSH(\om)\to\R
$$ 
by setting
\begin{equation}\label{equ:mabfunc} 
\mab(\f):=\Ent(\MA_\om(\f))+\nabla_{K_{X,B}}\en_\om(\f)
\end{equation}
for $\f\in\PL\cap\PSH(\om)$ (see~\eqref{equ:twEder} for the notation). The Monge--Amp\`ere operator induces an injection 
$$
\MA\colon\PL\cap\PSH(\om)/\R\hto\cM^\div,
$$
and~\eqref{equ:nablavee}, \eqref{equ:nablaE} yield
\begin{equation}\label{equ:mabbet}
\mab(\f)=\b(\MA(\f)). 
\end{equation}
The main advantage of $\b$ over $\mab$ lies in the fact that both the domain and the entropy term of the former are independent of $\om$. 

\medskip

Consider next $v\in X^\lin$, hence $\d_v\in\cM^1$, and set for simplicity 
$$
\b(v):=\b(\d_v)=\ld_{X,B}(v)+\nabla_{K_{X,B}}\|v\|_\om, 
$$
compare~\eqref{equ:enshort}. Then $\b(tv)=t\b(v)$ for $t\in\R_{>0}$. When $v\in X^\div$, Theorem~\ref{thm:endiv} provides the following explicit description: 

\begin{lem}\label{lem:DL} For any prime divisor $F$ on a smooth birational model $\pi\colon Y\to X$, $v:=\ord_F$ satisfies 
\begin{equation}\label{equ:bDL}
\b(v) = \ld_{X,B}(v)+V_\om^{-1}\int_0^{+\infty}\nabla_{K_{X,B}}\vol(\om-\la F)\,d\la-\tr_\om(K_{X,B})\|v\|.
\end{equation}
\end{lem}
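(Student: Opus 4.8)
The plan is to observe that this lemma is a direct specialization of Theorem~\ref{thm:endiv}. By definition of the $\b$-functional on Dirac masses, for $v\in X^\lin$ we have $\d_v\in\cM^1$ and
$$
\b(v)=\b(\d_v)=\ld_{X,B}(v)+\nabla_{K_{X,B}}\|v\|_\om,
$$
where, as stipulated, $K_{X,B}$ denotes the image of the log canonical class $K_X+B$ in $\Num(X)$. Since $v=\ord_F$ is a divisorial valuation attached to a prime divisor $F$ on the smooth model $\pi\colon Y\to X$, I would simply invoke formula~\eqref{equ:DL} of Theorem~\ref{thm:endiv}, which computes the directional derivative $\nabla_\theta\|v\|_\om$ for an arbitrary class $\theta\in\Num(X)$.

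Concretely, I would apply~\eqref{equ:DL} with $\theta:=K_{X,B}$, giving
$$
\nabla_{K_{X,B}}\|v\|_\om=V_\om^{-1}\int_0^{+\infty}\nabla_{K_{X,B}}\vol(\om-\la F)\,d\la-\tr_\om(K_{X,B})\|v\|,
$$
and then substitute this expression into the formula for $\b(v)$ above to obtain~\eqref{equ:bDL}. The notational conventions in the statement, namely $\vol(\om-\la F)=\vol(\pi^\star\om-\la F)$ and $\nabla_{K_{X,B}}\vol(\om-\la F)=\nabla_{\pi^\star K_{X,B}}\vol_Y(\pi^\star\om-\la F)$, are precisely those fixed immediately after Theorem~\ref{thm:endiv}, so no reinterpretation is needed and the two displays match verbatim.

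There is essentially no obstacle here: the content of the lemma lies entirely in Theorem~\ref{thm:endiv}, and the present statement is its evaluation along the single direction $\theta=K_{X,B}$, combined with the definition of $\b$. The only points worth a line of verification are that $\d_v$ indeed has finite energy (so that $\nabla_{K_{X,B}}\|\d_v\|_\om$ is well-defined), which holds because $v\in X^\div\subset X^\lin$ and hence $\|v\|_\om<\infty$ by~\eqref{equ:envdiv}; and that the abusive identification of $K_{X,B}$ with a numerical class is the same one used to define $\b$ in~\eqref{equ:bfunc}, so that the two occurrences of $\nabla_{K_{X,B}}$ refer to the identical linear functional $\theta\mapsto\nabla_\theta\|v\|_\om$ supplied by Proposition-Definition~\ref{prop:effehold}. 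With these remarks in place, the identity~\eqref{equ:bDL} follows immediately.
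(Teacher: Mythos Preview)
Your proposal is correct and matches the paper's approach exactly: the paper does not give a separate proof of this lemma, simply introducing it with the phrase ``Theorem~\ref{thm:endiv} provides the following explicit description,'' which is precisely your argument of specializing~\eqref{equ:DL} to $\theta=K_{X,B}$ and plugging into the definition $\b(v)=\ld_{X,B}(v)+\nabla_{K_{X,B}}\|v\|_\om$.
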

Since $\|v\| =V_\om^{-1}\int_0^{+\infty}\vol(\om-\la F)\,d\la$, the right-hand side of~\eqref{equ:bDL} coincides (up to the factor $V_{\om}$) with the invariant introduced in~\cite{DL} (generalizing the Fano case of~\cite{Fujval}). 

\medskip

Assume, finally, that $(X,B)$ is subklt. The entropy then admits a natural extension $\Ent_{X,B}\colon\cM^1\to [0,+\infty]$, characterized as the maximal lsc extension (see~\S\ref{sec:extent}). The $\b$-functional thus admits as well an extension 
\begin{equation}\label{equ:bext}
\b=\b_{X,B;\om}\colon\cM^1\to\R\cup\{+\infty\}, 
\end{equation}
defined by~\eqref{equ:bexp} for any $\mu\in\cM^1$. By strong continuity of $\nabla_{K_{X,B}}\|\cdot\|_\om\colon\cM^1\to\R$, \eqref{equ:bext} is characterized as the maximal (strongly) lsc extension of the $\b$-functional. Note that~\eqref{equ:homogb} is then valid for all $\mu\in\cM^1$ and $t\in\R_{>0}$. 

The Mabuchi K-energy also admits an extension 
$$
\mab=\mab_{X,B;\om}\colon\cE^1(\om)\to\R\cup\{+\infty\},
$$
defined by~\eqref{equ:mabfunc} for $\f\in\cE^1(\om)$, that still satisfies~\eqref{equ:mabbet}.  

%
\subsection{Divisorial stability}\label{sec:divstab}
We are now in a position to introduce the main new concept in this paper. 

\begin{defi} For any polarized pair $(X,B;\om)$ with $\b$-functional $\b=\b_{X,B;\om}\colon\cM^\div\to\R$, we say that $(X,B,\om)$ is:
\begin{itemize}
\item[(i)] \emph{divisorially semistable} if $\b\ge 0$ on $\cM^\div$;   
\item[(ii)] \emph{divisorially stable} if $\b\ge\e\|\cdot\|$ on $\cM^\div$ for some $\e>0$. 
\end{itemize}
\end{defi}

\begin{rmk}
  This notion is stronger than the one introduced in~\cite{Fujvol}, see Remark~\ref{rmk:Fujita}.
\end{rmk}  

As noted above, Lemma~\ref{lem:DL} shows that the restriction of $\b$ to $X^\div\hto\cM^\div$ coincides (up to the factor $V_{\om}$) with the invariant introduced in~\cite{DL}. As a consequence, we get: 

\begin{lem} If $(X,B;\om)$ is divisorially semistable (resp.~divisorially stable), then $(X,B;\om)$ is valuatively semistable (resp.~uniformly valuatively stable) in the sense of~\cite{DL,LiuYa}. 
\end{lem}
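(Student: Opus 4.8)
The plan is to obtain both implications by simply evaluating the divisorial (semi)stability inequalities on the Dirac masses $\d_v$, $v\in X^\div$. Recall that the embedding $X^\div\hto\cM^\div$, $v\mapsto\d_v$, realizes $X^\div$ as a subset of $\cM^\div$ (of which $\cM^\div$ is in fact the convex hull), and that, by the definition recalled just above the statement, the extended functional satisfies
$$
\b(\d_v)=\ld_{X,B}(v)+\nabla_{K_{X,B}}\|v\|_\om=\b(v),
$$
with $\b(v)$ the valuative $\b$-invariant of~\eqref{equ:betaval}. Moreover $\|\d_v\|_\om=\|v\|_\om$ by the convention of Example~\ref{exam:endirac}. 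So the only input needed is the tautological compatibility of $\b$ on $\cM^\div$ with the valuative $\b$ on Dirac masses.

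Granting this, the argument is immediate. If $(X,B;\om)$ is divisorially semistable, i.e.\ $\b\ge 0$ on all of $\cM^\div$, then in particular $\b(v)=\b(\d_v)\ge 0$ for every $v\in X^\div$, which is precisely valuative semistability. Similarly, if $(X,B;\om)$ is divisorially stable, i.e.\ $\b\ge\e\|\cdot\|_\om$ on $\cM^\div$ for some $\e>0$, then restricting to Dirac masses gives $\b(v)=\b(\d_v)\ge\e\|\d_v\|_\om=\e\|v\|_\om$ for every $v\in X^\div$, which is (uniform) valuative stability.

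It then remains only to match this with the terminology of~\cite{DL,LiuYa}, for which I would invoke Lemma~\ref{lem:DL}: the restriction $\b|_{X^\div}$ coincides, up to the positive factor $V_\om$, with the Dervan--Legendre invariant, while $\|\cdot\|_\om$ restricted to $X^\div$ is the corresponding norm (again up to $V_\om$, cf.~\eqref{equ:normval}). Since neither the sign condition $\b(v)\ge 0$ nor the uniform bound $\b(v)\ge\e\|v\|_\om$ is affected by multiplication by $V_\om>0$ (after rescaling $\e$ by this same factor), the two conditions agree with valuative semistability and stability as defined in \loccit\ I do not expect any genuine obstacle here: the proof is a one-line restriction to Dirac masses combined with the identity $\b(\d_v)=\b(v)$, and the only point meriting attention is the harmless normalization constant $V_\om$ relating our conventions to those of~\cite{DL,LiuYa}.
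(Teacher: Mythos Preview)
Your proposal is correct and matches the paper's approach exactly: the paper does not even spell out a proof, stating the lemma merely as an immediate consequence of the preceding observation (via Lemma~\ref{lem:DL}) that the restriction of $\b$ to $X^\div\hookrightarrow\cM^\div$ coincides, up to the factor $V_\om$, with the Dervan--Legendre invariant. Your write-up is just a careful unpacking of this one-line restriction to Dirac masses.
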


As we next show, divisorial stability also implies the usual notion of K-stability (see \S\ref{sec:divKstab} for a more detailed discussion). 

\begin{lem}\label{lem:Kbstab} Assume $\om=c_1(L)$ with $L\in\Pic(X)_\Q$ ample. If $(X,B;L)$ is divisorially semistable (resp.~divisorially stable), then $(X,B;L)$ is K-semistable (resp.~uniformly K-stable). 
\end{lem}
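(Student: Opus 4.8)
The plan is to deduce the lemma from the comparison, established in~\cite{BHJ1}, between the $\b$-invariant of the divisorial measure attached to an ample test configuration and its Donaldson--Futaki invariant. Recall that $(X,B;L)$ is \emph{K-semistable} (resp.\ \emph{uniformly K-stable}) if $\DF(\cX,\cL)\ge 0$ (resp.\ $\DF(\cX,\cL)\ge\e\,\|(\cX,\cL)\|_{\min}$ for a fixed $\e>0$) for every ample test configuration $(\cX,\cL)$ for $(X,L)$, where $\|(\cX,\cL)\|_{\min}$ is Dervan's minimum norm~\cite{Der}. I would first reduce to \emph{normal} test configurations: normalization does not increase the Donaldson--Futaki invariant, so $\DF(\cX,\cL)\ge\DF(\cX^\nu,\cL^\nu)$, and it leaves the minimum norm unchanged (both facts from~\cite{BHJ1}); hence it is enough to establish the two inequalities for normal $(\cX,\cL)$.

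Next, to a normal ample test configuration $(\cX,\cL)$ dominating the trivial one I would attach the function $\f=\f_{(\cX,\cL)}\in\PL\cap\PSH(\om)$ cut out by the vertical divisor $D$ with $\cL=\om_\cX+D$ (relative ampleness of $\cL$ making $\f$ genuinely $\om$-psh), together with the divisorial measure $\mu:=\MA(\f)\in\cM^\div$ (Example~\ref{exam:MAPL}), whose support is the set of divisorial valuations of the central fiber components. The two identifications I would invoke are that $\b(\mu)=\mab(\f)$ equals the non-Archimedean Mabuchi K-energy $\mab^{\NA}(\cX,\cL)$, which is exactly~\eqref{equ:mabbet} together with the definition~\eqref{equ:mabfunc} and~\cite[Prop.~7.22]{BHJ1}, and that $\|\mu\|_\om=\|(\cX,\cL)\|_{\min}$, as recalled in the introduction.

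The heart of the matter is then the relation between $\DF$ and $\mab^{\NA}$. For a normal test configuration with canonical compactification $(\bar\cX,\bar\cL)\to\P^1$ one has
\[
\DF(\cX,\cL)=\mab^{\NA}(\cX,\cL)+V_\om^{-1}\bigl((\cX_0-(\cX_0)_{\mathrm{red}})\cdot\bar\cL^n\bigr),
\]
and since $\cX_0-(\cX_0)_{\mathrm{red}}$ is an effective vertical divisor while $\bar\cL$ is nef, the correction term is non-negative (and vanishes exactly when the central fiber is reduced, which can always be arranged by base change). Thus $\DF(\cX,\cL)\ge\b(\mu)$. Granting this, divisorial semistability gives $\b(\mu)\ge 0$, hence $\DF(\cX,\cL)\ge 0$; and divisorial stability with constant $\e>0$ gives $\b(\mu)\ge\e\|\mu\|_\om=\e\,\|(\cX,\cL)\|_{\min}$, hence $\DF(\cX,\cL)\ge\e\,\|(\cX,\cL)\|_{\min}$. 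Together with the reduction to normal test configurations, this yields K-semistability (resp.\ uniform K-stability).

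The one genuinely non-formal ingredient is the sign of the non-reducedness correction in the $\DF$--$\mab^{\NA}$ comparison: passing from the log relative canonical class, which governs the entropy term in $\mab^{\NA}$, to the ordinary relative canonical class governing $\DF$, alters the intersection number against $\bar\cL^n$ by a non-negative amount. This is the point I would import from~\cite{BHJ1} (and expand upon in~\S\ref{sec:divKstab}); everything else---the identity $\b(\mu)=\mab^{\NA}(\cX,\cL)$, the matching of the minimum norm with $\|\mu\|_\om$, and the behaviour under normalization---is bookkeeping with the definitions of~\S\ref{sec:bfunc} and the homogeneity relations~\eqref{equ:homogb} and~\eqref{equ:homogenvee}.
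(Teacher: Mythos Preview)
Your proof is correct and follows essentially the same route as the paper's. The paper's version is more terse: it cites~\cite[Proposition~8.2]{BHJ1}, which already packages the equivalence of K-(semi)stability with the corresponding inequalities for $\mab^{\NA}$ on Fubini--Study functions, whereas you unpack that equivalence by hand via the explicit formula $\DF=\mab^{\NA}+V_\om^{-1}((\cX_0-\cX_{0,\mathrm{red}})\cdot\bar\cL^n)$ and the reduction to normal test configurations. Both arguments rest on the same identifications $\b(\MA(\f))=\mab^{\NA}(\cX,\cL)$ and $\|\MA(\f)\|_\om=\|(\cX,\cL)\|_{\min}$.
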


\begin{proof} By~\cite[Corollary~2.32]{trivval}, the set of normal, ample test configurations $(\cX,\cL)$ for $(X,L)$ is in 1--1 correspondence with the set $\cH(L)\subset\PL\cap\PSH(L)$ of \emph{Fubini--Study functions} $\f$ for $L$, and $\b(\MA(\f))=\mab(\f)$ further coincides with the K-energy functional of~\cite{BHJ1}, see~\eqref{equ:mabbet}. If $(X,B;L)$ is divisorially semistable, we thus get $\mab(\f)\ge 0$ for all $\f\in\cH(L)$, which is equivalent to $(X,B;L)$ being K-semistable, by~\cite[Proposition~8.2]{BHJ1}. If $(X,B;L)$ is divisorially stable, then $\mab(\f)\ge\e\|\MA(\f)\|$ for a uniform constant $\e>0$. Now $\|\MA(\f)\|\approx\jj(\f)$, and we conclude that $(X,B;L)$ is uniformly K-stable, again by~\cite[Proposition~8.2]{BHJ1}.
\end{proof}

When $(X,B)$ is subklt, divisorial (semi)stability turns out to be equivalent to K-(semi)stability with respect to norms/filtrations on the section ring of $(X,L)$, the restriction to norms of finite type corresponding to K-stability (se~\S\ref{sec:divstabnorms}). We conjecture, however, that the converse direction in Lemma~\ref{lem:Kbstab} holds as well, see \S\ref{sec:divKstab}. 

\medskip

According to an important result of Odaka~\cite{Oda} (see also~\cite[Theorem~9.1]{BHJ1}), any pair $(X,B)$ with $B\ge 0$ such that $(X,B;L)$ is K-semistable for some ample $L\in\Pic(X)_\Q$ is necessarily log canonical, \ie $\ld_{X,B}\ge 0$ on $X^\div$. Theorem~\ref{thm:delta} directly yields the following version for divisorial stability. 

\begin{cor}\label{cor:Odaka} If $(X,B;\om)$ is divisorially semistable for some $\om\in\Amp(X)$, then $(X,B)$ is sublc.\end{cor}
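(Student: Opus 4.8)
The plan is to reduce the statement to Theorem~\ref{thm:delta}~(i), which already characterizes the sublc property through the finiteness of the $\d$-invariant. Recall that $(X,B)$ is sublc iff $\d(X,B;\om)>-\infty$, so it suffices to produce a lower bound $\Ent_{X,B}(\mu)\ge-C\,\|\mu\|_\om$ valid for all $\mu\in\cM^\div$, with $C$ independent of $\mu$; taking the infimum over $\mu\neq\mu_\triv$ then gives $\d(X,B;\om)\ge-C>-\infty$ via the defining formula~\eqref{equ:delta}.

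First I would unwind divisorial semistability. By definition $\b\ge 0$ on $\cM^\div$, and by~\eqref{equ:bfunc} this reads
\[
\Ent_{X,B}(\mu)+\nabla_{K_{X,B}}\|\mu\|_\om\ge 0\qquad\text{for all }\mu\in\cM^\div,
\]
whence $\Ent_{X,B}(\mu)\ge-\nabla_{K_{X,B}}\|\mu\|_\om$. The point is that the second, \emph{a priori} uncontrolled term is in fact dominated by the energy: applying the estimate~\eqref{equ:twisteddualhold2} of Proposition--Definition~\ref{prop:effehold} with $\theta=K_{X,B}$ yields
\[
\bigl|\nabla_{K_{X,B}}\|\mu\|_\om\bigr|\lesssim\|\mu\|_\om\,\|K_{X,B}\|_\om,
\]
so that $\Ent_{X,B}(\mu)\ge-C\,\|K_{X,B}\|_\om\,\|\mu\|_\om$ for a constant depending only on $n$.

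To conclude, I would divide by $\|\mu\|_\om$, which is strictly positive for $\mu\neq\mu_\triv$ by~\eqref{equ:homogenvee}, obtaining $\Ent_{X,B}(\mu)/\|\mu\|_\om\ge-C\|K_{X,B}\|_\om$ uniformly in $\mu$. Passing to the infimum over $\cM^\div\setminus\{\mu_\triv\}$ gives $\d(X,B;\om)\ge-C\|K_{X,B}\|_\om>-\infty$, and Theorem~\ref{thm:delta}~(i) then forces $(X,B)$ to be sublc, as desired.

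I do not expect any genuine obstacle in this final assembly, which is essentially a two-line consequence of the bound on the twisted energy derivative. The real content has already been absorbed into the two ingredients invoked: the estimate~\eqref{equ:twisteddualhold2} (ultimately a Hölder bound for mixed Monge--Amp\`ere integrals, hence resting on the Hodge index theorem through the machinery of~\S\ref{sec:diffen}), and Theorem~\ref{thm:delta}~(i) itself, whose proof turns on the delicate energy estimate along affine segments in Lemma~\ref{lem:distaff}. Thus the only thing to be careful about here is that the implicit constant in~\eqref{equ:twisteddualhold2} depends solely on $n$, so that the resulting lower bound on $\d(X,B;\om)$ is a genuine finite number rather than something that could degenerate.
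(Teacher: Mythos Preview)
Your proposal is correct and follows essentially the same argument as the paper: both deduce $\Ent_{X,B}\ge -C\|\cdot\|_\om$ from $\b\ge 0$ together with the bound~\eqref{equ:twisteddualhold2}, hence $\d(X,B;\om)\ge -C>-\infty$, and then conclude via Theorem~\ref{thm:delta}~(i).
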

\begin{proof} The assumption amounts to $\b=\Ent_{X,B}+\nabla_{K_{X,B}}\|\cdot\|_\om\ge 0$ on $\cM^\div$. By~\eqref{equ:twisteddualhold2}, we further have $\nabla_{K_{X,B}}\|\cdot\|_\om\le C\|\cdot\|_\om$ for some $C>0$. Thus $\d(X,B;\om)\ge -C$, and the result follows from Theorem~\ref{thm:delta}~(i).
\end{proof}

%
\subsection{Divisorial stability threshold and openness}

 \begin{defi}\label{defi:stabthresh} We define the \emph{divisorial stability threshold} of a polarized pair $(X,B;\om)$ as 
$$
\sigma_\div(X,B;\om):=\inf_{\mu\in\cM^\div\setminus\{\mu_\triv\}}\frac{\b_{X,B;\om}(\mu)}{\|\mu\|_\om}\in\R\cup\{-\infty\}. 
$$
\end{defi}
By definition, we thus have 
\begin{align*}
(X,B;\om)\ \text{divisorially semistable } & \Longleftrightarrow \sigma_\div(X,B;\om)\ge 0,\\
(X,B;\om)\ \text{divisorially stable } & \Longleftrightarrow \sigma_\div(X,B;\om)>0.
\end{align*}
Note also that 
\begin{equation}\label{equ:homogsig}
\sigma_\div(X,B;s\om)=s^{-1}\sigma_\div(X,B;\om)
\end{equation}
for $s\in\R_{>0}$, by~\eqref{equ:homogb} and~\eqref{equ:homogenvee}.

As in Corollary~\ref{cor:Odaka}, Theorem~\ref{thm:delta}~(i) and~\eqref{equ:twisteddualhold2} directly imply: 

\begin{prop}\label{prop:thresh} For any polarized pair $(X,B;\om)$, we have 
$$
\sigma_\div(X,B;\om)>-\infty\Longleftrightarrow (X,B)\text{ sublc}, 
$$
and we then further have 
\begin{equation}\label{equ:sigbd}
\left|\sigma_\div(X,B;\om)-\d(X,B;\om)\right|\lesssim\|K_{X,B}\|_\om. 
\end{equation}
In particular, if $(X,B;\om)$ is divisorially semistable then $(X,B)$ is sublc. 
\end{prop}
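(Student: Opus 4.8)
The plan is to reduce the statement to the corresponding one for the $\d$-invariant (Theorem~\ref{thm:delta}~(i)) by controlling the difference between the $\b$-functional and the entropy functional. Concretely, for any $\mu\in\cM^\div\setminus\{\mu_\triv\}$ the definition~\eqref{equ:bfunc} of $\b$ gives
$$
\frac{\b(\mu)}{\|\mu\|_\om}=\frac{\Ent_{X,B}(\mu)}{\|\mu\|_\om}+\frac{\nabla_{K_{X,B}}\|\mu\|_\om}{\|\mu\|_\om},
$$
so the whole argument rests on bounding the second term. Applying the estimate~\eqref{equ:twisteddualhold2} of Proposition-Definition~\ref{prop:effehold} with $\theta=K_{X,B}$ (its image in $\Num(X)$, which is well defined since $K_{X,B}$ is $\Q$-Cartier) yields $|\nabla_{K_{X,B}}\|\mu\|_\om|\lesssim\|\mu\|_\om\|K_{X,B}\|_\om$, where the implicit constant depends only on $n=\dim X$. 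Dividing by $\|\mu\|_\om>0$ we obtain the pointwise bound
$$
\left|\frac{\b(\mu)}{\|\mu\|_\om}-\frac{\Ent_{X,B}(\mu)}{\|\mu\|_\om}\right|\lesssim\|K_{X,B}\|_\om
$$
uniformly in $\mu$.

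First I would use this uniform bound to compare the two infima. Since the additive error $C\|K_{X,B}\|_\om$ (for a constant $C>0$ depending only on $n$) is independent of $\mu$, taking the infimum over $\mu\in\cM^\div\setminus\{\mu_\triv\}$ of each side gives
$$
\d(X,B;\om)-C\|K_{X,B}\|_\om\le\sigma_\div(X,B;\om)\le\d(X,B;\om)+C\|K_{X,B}\|_\om,
$$
valid as an inequality in $\R\cup\{-\infty\}$. This immediately proves the quantitative estimate~\eqref{equ:sigbd}, and shows that $\sigma_\div(X,B;\om)$ is finite if and only if $\d(X,B;\om)$ is finite.

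Then I would invoke Theorem~\ref{thm:delta}~(i), according to which finiteness of $\d(X,B;\om)$ is equivalent to $(X,B)$ being sublc. Combined with the previous step this gives that $\sigma_\div(X,B;\om)>-\infty$ holds precisely when $(X,B)$ is sublc, and the `in particular' assertion follows since divisorial semistability means $\sigma_\div(X,B;\om)\ge 0>-\infty$. The argument is entirely formal once~\eqref{equ:twisteddualhold2} is in hand; the only point that requires a little care is the passage from the pointwise bound to the bound on the infima, where one must use that the error term is a genuine constant (independent of $\mu$) so that it survives the infimum, and keep track of the degenerate case $\sigma_\div=\d=-\infty$. There is no substantial analytic obstacle here, as the hard work has already been carried out in the H\"older estimate of Proposition-Definition~\ref{prop:effehold}.
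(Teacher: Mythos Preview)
Your proof is correct and follows exactly the approach indicated in the paper, which states just before the proposition that it follows directly from Theorem~\ref{thm:delta}~(i) and~\eqref{equ:twisteddualhold2}. The paper does not spell out the argument, but your write-up is precisely the intended one (compare also the proof of Corollary~\ref{cor:Odaka}).
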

This proves Theorem~B~(i) and the first part of Corollary~C in the introduction. 

In line with~\cite{OS15} (see also~\cite[\S9.1]{BHJ1}), we next show: 

\begin{prop}\label{prop:twistedKE} Let $(X,B;\om)$ be a polarized sublc pair such that $-K_{X,B}\equiv\la\om$, $\la\in\R$. Then
\begin{equation}\label{equ:sigKE}
\sigma_\div(X,B;\om)=\d(X,B;\om)-\la.
\end{equation}
Further: 
\begin{itemize}
\item[(i)] if $\la<0$  (log canonically polarized case), then $(X,B;\om)$ is divisorially stable; 
\item[(ii)] if $\la=0$ (log Calabi--Yau case), then $(X,B;\om)$ is divisorially semistable, and it is further divisorially stable iff $(X,B)$ is subklt; 
\item[(iii)] if $\la>0$ (log Fano case), then $(X,B;\om)$ is divisorially semistable (resp.~stable) iff $\d(X,B;\om)\ge\la$ (resp.~$\d(X,B;\om)>\la$), and $(X,B)$ is then necessarily subklt. 
\end{itemize}
\end{prop}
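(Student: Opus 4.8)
The plan is to reduce the entire statement to the Euler identity~\eqref{equ:Euler} together with Theorem~\ref{thm:delta}. First I would exploit the hypothesis $-K_{X,B}\equiv\la\om$ to simplify the twisted term in the $\b$-functional. Since $\nabla_\theta\|\mu\|_\om$ is linear in $\theta\in\Num(X)$ (Proposition-Definition~\ref{prop:effehold}) and the numerical class of $K_{X,B}$ equals $-\la\om$, the Euler identity $\nabla_\om\|\mu\|_\om=\|\mu\|_\om$ from~\eqref{equ:Euler} gives
$$
\nabla_{K_{X,B}}\|\mu\|_\om=-\la\,\nabla_\om\|\mu\|_\om=-\la\|\mu\|_\om
$$
for every $\mu\in\cM^\div\subset\cM^1$. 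Feeding this into the definition~\eqref{equ:bfunc} of $\b$ collapses it to the affine expression $\b(\mu)=\Ent_{X,B}(\mu)-\la\|\mu\|_\om$.

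From this the main formula is immediate: dividing by $\|\mu\|_\om$ and taking the infimum over $\mu\in\cM^\div\setminus\{\mu_\triv\}$, the constant $-\la$ pulls out and the remaining infimum is by definition the $\d$-invariant~\eqref{equ:delta}, yielding $\sigma_\div(X,B;\om)=\d(X,B;\om)-\la$, which is~\eqref{equ:sigKE}.

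The three cases then follow by feeding this identity into Theorem~\ref{thm:delta}. In case~(i), sublc-ness gives $\d(X,B;\om)\ge 0$ by Theorem~\ref{thm:delta}~(i), so $\sigma_\div=\d-\la\ge-\la>0$ and the pair is divisorially stable. In case~(ii) we have $\sigma_\div=\d\ge 0$ (divisorial semistability), and $\sigma_\div>0\Leftrightarrow\d>0$, which by Theorem~\ref{thm:delta}~(ii) is equivalent to $(X,B)$ being subklt. In case~(iii) the identity $\sigma_\div=\d-\la$ directly gives $\sigma_\div\ge 0\Leftrightarrow\d\ge\la$ and $\sigma_\div>0\Leftrightarrow\d>\la$; moreover divisorial semistability forces $\d\ge\la>0$, whence $(X,B)$ is subklt, again by Theorem~\ref{thm:delta}~(ii). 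Since every step is a direct appeal to an already-established result, I do not anticipate any genuine obstacle; the only point needing care is that the linearity of $\nabla_\theta\|\cdot\|_\om$ in $\theta$ correctly converts the numerical-class relation $K_{X,B}\equiv-\la\om$ into the scalar identity above—precisely what Proposition-Definition~\ref{prop:effehold} and~\eqref{equ:Euler} supply.
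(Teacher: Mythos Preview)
Your proof is correct and follows exactly the same approach as the paper: use linearity of $\nabla_\theta\|\cdot\|_\om$ in $\theta$ together with the Euler identity~\eqref{equ:Euler} to reduce $\b$ to $\Ent_{X,B}-\la\|\cdot\|_\om$, deduce~\eqref{equ:sigKE}, and then read off (i)--(iii) from Theorem~\ref{thm:delta}. The paper's proof is more terse (it simply says ``the rest follows from Theorem~\ref{thm:delta}''), but your spelled-out case analysis is accurate.
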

 
\begin{proof} By~\eqref{equ:Euler} and linearity of $\nabla_{\theta}\|\cdot\|_\om$ with respect to $\theta$, we have 
$$
\nabla_{K_{X,B}}\|\cdot\|_\om=-\la\nabla_\om\|\mu\|_\om=-\la\|\cdot\|_\om, 
$$
and the $\b$-functional thus takes the simpler form
\begin{equation}\label{equ:btwF}
\b=\Ent_{X,B}-\la\|\cdot\|_\om. 
\end{equation}
This directly implies~\eqref{equ:sigKE}, and the rest follows from Theorem~\ref{thm:delta}. 
\end{proof}

\begin{cor}\label{cor:divding} For any polarized subklt pair $(X,B;\om)$ such that $\om\equiv-K_{X,B}$, we have:
\begin{itemize}
\item[(i)] $(X,B;\om)$ is divisorially semistable iff it is Ding-semistable; 
\item[(ii)] $(X,B;\om)$ is divisorially stable iff it is uniformly Ding-stable.
\end{itemize}
If we further assume that $B$ is effective, \ie $(X,B;\om)$ is log Fano, then (i) and (ii) are also equivalent to $(X,B)$ being K-semistable and uniformly K-stable, respectively. 
\end{cor}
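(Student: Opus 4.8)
The plan is to obtain parts~(i) and~(ii) as a purely formal consequence of Proposition~\ref{prop:twistedKE} and Theorem~\ref{thm:deltading}, and then to treat the log Fano addendum by appealing to the MMP-based coincidence of Ding- and K-stability.

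First I would note that the hypothesis $\om\equiv-K_{X,B}$ places us in the situation of Proposition~\ref{prop:twistedKE} with $\la=1$; since $(X,B)$ is subklt it is in particular sublc, so that proposition applies and~\eqref{equ:sigKE} gives
\[
\sigma_\div(X,B;\om)=\d(X,B;\om)-1.
\]
Reading off the definition of the divisorial stability threshold, this says exactly that $(X,B;\om)$ is divisorially semistable iff $\d(X,B;\om)\ge 1$, and divisorially stable iff $\d(X,B;\om)>1$. I would then feed these two inequalities into Theorem~\ref{thm:deltading}, which asserts precisely that $\d(X,B;\om)\ge 1$ is equivalent to Ding-semistability and $\d(X,B;\om)>1$ to uniform Ding-stability. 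Chaining the two equivalences yields~(i) and~(ii).

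For the last assertion, suppose moreover $B\ge 0$, so that $(X,B;\om)$ is genuinely log Fano; here the non-formal ingredient is the equivalence of Ding- and K-(semi)stability. One implication is cheap: the pointwise inequality $\ding\le\mab$ between the Ding and Mabuchi functionals shows that Ding-(semi)stability forces K-(semi)stability (and the forward direction divisorial stability $\Rightarrow$ K-stability is in any case already recorded in Lemma~\ref{lem:Kbstab}). The reverse implication, that K-(semi)stability implies Ding-(semi)stability, is where the Minimal Model Program enters: it allows K-stability to be tested on special test configurations, on which the Donaldson--Futaki and Ding invariants coincide, so that $\mab\ge 0$ (resp.\ $\mab\ge\e\jj$) propagates to $\ding\ge 0$ (resp.\ $\ding\ge\e\jj$); see~\cite{LX14,Fujval,YTDold}.

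The main obstacle is thus external to the present paper: everything reduces to the deep MMP input comparing Ding- and K-stability in the log Fano case, which I would simply cite rather than reprove. Combining that input with~(i) and~(ii) closes the loop and identifies divisorial semistability (resp.\ stability) with K-semistability (resp.\ uniform K-stability), as claimed.
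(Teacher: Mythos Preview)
Your proof is correct and follows essentially the same route as the paper: parts~(i) and~(ii) are obtained by combining Proposition~\ref{prop:twistedKE} (with $\la=1$) and Theorem~\ref{thm:deltading}, and the log Fano addendum is handled by citing the known MMP-based equivalence of Ding- and K-stability. The paper simply invokes~\cite[Corollary~6.11]{Fujval} for the latter, whereas you spell out the two directions and cite several sources; this is a difference in exposition rather than in strategy.
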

\begin{proof} (i) and (ii) are direct consequences of Theorem~\ref{thm:deltading} and Proposition~\ref{prop:twistedKE}, while  the final assertion follows from~\cite[Corollary~6.11]{Fujval}.

\end{proof}

We may now state one of the main results of this article. 

\begin{thm}\label{thm:stabopen} For any sublc pair $(X,B)$, the divisorial stability threshold $\sigma_\div(X,B;\om)$ is a (locally H\"older) continuous function of $\om\in\Amp(X)$. In particular, the set of $\om\in\Amp(X)$ such that $(X,B;\om)$ is divisorially stable is open.
\end{thm}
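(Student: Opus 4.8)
The plan is to prove that $\om\mapsto\sigma_\div(X,B;\om)$ is locally H\"older continuous in the Thompson metric, from which openness of the divisorially stable locus is immediate, being the preimage $\{\sigma_\div>0\}$ of an open set. Fix $\om_0\in\Amp(X)$; by Proposition~\ref{prop:thresh} the threshold $\sigma_0:=\sigma_\div(X,B;\om_0)$ is finite since $(X,B)$ is sublc, and by Theorem~\ref{thm:delta}~(i) the sublc hypothesis also gives $\Ent_{X,B}\ge 0$ on $\cM^\div$. I would work with $\om$ satisfying $e^{-\d}\om_0\le\om\le e^\d\om_0$ for $\d\in[0,1]$, and analyze the scaling-invariant ratio $R_\om(\mu):=\b_{X,B;\om}(\mu)/\|\mu\|_\om$ for $\mu\in\cM^\div\setminus\{\mu_\triv\}$, decomposing it as $R_\om(\mu)=e_\om(\mu)+g_\om(\mu)$, where $e_\om(\mu):=\Ent_{X,B}(\mu)/\|\mu\|_\om\ge 0$ is the entropy ratio and $g_\om(\mu):=\nabla_{K_{X,B}}\|\mu\|_\om/\|\mu\|_\om$ is the twist-derivative ratio.

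The term $g_\om$ is the easy part: combining the H\"older estimate of Theorem~\ref{thm:twistedhold} with the energy comparison~\eqref{equ:Eveelip} yields a bound $|g_\om(\mu)-g_{\om_0}(\mu)|\lesssim\d^\a\|K_{X,B}\|_{\om_0}$ that is \emph{uniform} in $\mu$, while~\eqref{equ:twisteddualhold2} gives $|g_\om(\mu)|\lesssim\|K_{X,B}\|_{\om_0}$ for $\om$ near $\om_0$. For the entropy term, \eqref{equ:Eveelip} gives $e^{-C_n\d}e_{\om_0}(\mu)\le e_\om(\mu)\le e^{C_n\d}e_{\om_0}(\mu)$, and hence $|e_\om(\mu)-e_{\om_0}(\mu)|\lesssim\d\,e_{\om_0}(\mu)$.

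The main obstacle is precisely that $e_{\om_0}(\mu)$ is unbounded over $\cM^\div$, so this last estimate carries an unbounded factor and cannot be fed naively into $R_\om-R_{\om_0}$. I would resolve this by a dichotomy on the size of $e_{\om_0}(\mu)$, with a threshold $M_0$ depending only on $n$, $\sigma_0$ and $\|K_{X,B}\|_{\om_0}$. If $e_{\om_0}(\mu)\ge M_0$ with $M_0$ chosen large, then $R_\om(\mu)\ge e^{-C_n\d}e_{\om_0}(\mu)-C\|K_{X,B}\|_{\om_0}\ge\sigma_0$ outright (large entropy forces a large ratio), so such $\mu$ never realize the infimum. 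If $e_{\om_0}(\mu)<M_0$, the estimates above combine to give $R_\om(\mu)\ge R_{\om_0}(\mu)-C\d\,e_{\om_0}(\mu)-C\d^\a\|K_{X,B}\|_{\om_0}\ge\sigma_0-C'\d^\a$, using $\d\le\d^\a$.

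Taking the infimum over $\mu$ then yields $\sigma_\div(X,B;\om)\ge\sigma_0-C'\d^\a$. The symmetric argument, exchanging the roles of $\om$ and $\om_0$ and using that $\|K_{X,B}\|_\om\approx\|K_{X,B}\|_{\om_0}$ and that $\sigma_\div(X,B;\om)$ stays locally bounded (by the inequality just obtained), gives the reverse bound. Combining, $|\sigma_\div(X,B;\om)-\sigma_\div(X,B;\om_0)|\lesssim\d^\a$, which is local H\"older continuity in the Thompson metric, hence in any norm metric by~\cite{Tho}. Continuity gives openness of $\{\om\mid\sigma_\div(X,B;\om)>0\}$, completing the proof.
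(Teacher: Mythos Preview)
Your argument is correct, and it uses the same two ingredients as the paper (Theorem~\ref{thm:twistedhold} and~\eqref{equ:Eveelip}), but the paper organizes them more efficiently and thereby avoids your dichotomy entirely. The paper does not pass to the ratio $R_\om(\mu)$; instead it works additively with $\b_\om=\Ent_{X,B}+\nabla_{K_{X,B}}\|\cdot\|_\om$, exploiting the crucial fact that the entropy term is \emph{independent of $\om$}. Thus $\b_{\om'}-\b_\om=\nabla_{K_{X,B}}\|\cdot\|_{\om'}-\nabla_{K_{X,B}}\|\cdot\|_\om$, which Theorem~\ref{thm:twistedhold} bounds directly by $O(\d^\a)\|\cdot\|_\om$; combining with $\b_\om\ge\sigma_\om\|\cdot\|_\om$ gives $\b_{\om'}\ge(\sigma_\om+O(\d^\a))\|\cdot\|_\om$, and then~\eqref{equ:Eveelip} converts $\|\cdot\|_\om$ to $\|\cdot\|_{\om'}$ multiplicatively, yielding $\sigma_{\om'}\ge(1+O(\d))(\sigma_\om+O(\d^\a))$. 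The paper then uses local boundedness of $\sigma_\om$ (from~\eqref{equ:sigbd} and Theorem~\ref{thm:delta}~(iv), established \emph{beforehand}) to absorb the multiplicative factor. Your route, by dividing through by $\|\mu\|_\om$ first, makes the entropy ratio $e_\om(\mu)$ artificially $\om$-dependent through its denominator, creating the unbounded-factor issue you then have to repair with the threshold $M_0$. One small point: the parenthetical ``by the inequality just obtained'' for local boundedness of $\sigma_\div(X,B;\om)$ only gives a \emph{lower} bound; the upper bound you need (to control $M_\om$ in the symmetric step) comes instead from~\eqref{equ:sigbd} together with Theorem~\ref{thm:delta}~(iv), which you should cite there.
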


\begin{proof} Set, for simplicity, 
$$
\sigma_\om:=\sigma_\div(X,B;\om),\quad\b_\om:=\b_{X,B;\om}=\Ent_{X,B}+\nabla_{K_{X,B}}\|\cdot\|_\om
$$
By~\eqref{equ:sigbd} and Theorem~\ref{thm:delta}~(iv), $\sigma_\om$ is a locally bounded function of $\om\in\Amp(X)$. Next, recall that the (Thompson) metric 
$$
\d(\om,\om'):=\sup\{\d\in\R_{\ge 0}\mid e^{-\d}\om\le\om'\le e^\d\om\}
$$
of the open convex cone $\Amp(X)$ is locally equivalent to the metric defined by any norm on $\Num(X)$. Pick $\om,\om'\in\Amp(X)$ such that $\d:=\d(\om,\om')\le 1$. By Theorem~\ref{thm:twistedhold}, we have 
$$
\nabla_{K_{X,B}}\|\cdot\|_{\om'}\ge \nabla_{K_{X,B}}\|\cdot\|_\om+O(\d^\a)\|\cdot\|_\om
$$ 
for a constant $\a>0$ only depending on $n$. Since 
$$
\b_\om=\Ent_{X,B}+\nabla_{K_{X,B}}\|\cdot\|_\om\ge\sigma_\om\|\cdot\|_\om\text{ on }\cM^\div,
$$
we get
$$
\b_{\om'}=\Ent_{X,B}+\nabla_{K_{X,B}}\|\cdot\|_{\om'}\ge(\sigma_\om+O(\d^\a))\|\cdot\|_\om\text{ on }\cM^\div. 
$$
On the other hand, \eqref{equ:Eveelip} yields $\|\cdot\|_{\om'}\ge e^{-C_n\d}\|\cdot\|_\om$. Thus 
$$
\b_{\om'}\ge (1+O(\d))(1+O(\d^\a))\sigma_\om\|\cdot\|_{\om'},
$$
and hence
$$
\sigma_{\om'}\ge (1+O(\d))\left(\sigma_\om+O(\d^\a)\right).
$$
Since $\sigma_\om$ is locally bounded, this implies $\sigma_{\om'}\ge\sigma_\om+O(\d^\a)$ locally uniformly in $\Amp(X)$. By symmetry, we get the desired H\"older estimate $|\sigma_\om-\sigma_{\om'}|=O(\d^\a)$ locally uniformly in $\Amp(X)$. 
\end{proof}

\begin{rmk}\label{rmk:valthropen} As in~\cite{LiuYa}, one can introduce the \emph{valuative stability threshold}
$$
\sigma_\val(X,B;\om):=\inf_{v\in X^\div\setminus\{v_\triv\}}\frac{\b_{X,B;\om}(v)}{\|v\|_\om}\ge\sigma_\div(X,B;\om). 
$$
Restricting the above argument to $X^\div\hto\cM^\div$ shows that $\sigma_\val(X,B;\om)$ is also a (locally H\"older) continuous function of $\om\in\Amp(X)$; this recovers~\cite[Theorem~5.3]{LiuYa}. 
\end{rmk}

\begin{rmk}\label{rmk:sigM1} Assume $(X,B)$ is subklt, and let $\b\colon\cM^1\to\R\cup\{+\infty\}$ be the maximal lsc extension of the $\b$-functional, cf.~\S\ref{sec:bfunc}. The inequality $\b\ge\sigma_\div(X,B;\om)\|\cdot\|$, which holds on $\cM^\div$ by definition, extends to $\cM^1$, as in Corollary~\ref{cor:sigM1}. This yields
$$
\sigma_\div(X,B;\om)=\inf_{\mu\in\cM^1\setminus\{\mu_\triv\}}\frac{\b(\mu)}{\|\mu\|}=\inf_{\mu\in\cM^1,\,\|\mu\|=1}\b(\mu),
$$ 
where the right-hand equality holds by homogeneity of $\b$ and $\|\cdot\|$. 
\end{rmk}

%
%
%
%
\section{Comparison to other stability notions}\label{sec:DivKstab}
In this final section, we consider a polarized pair $(X,B;L)$, where $L$ is ample $\Q$-line bundle, and provide a detailed comparison of divisorial stability with usual K-stability, and its version for norms/filtrations. 
%
%
\subsection{Norms and filtrations}
We use~\cite{nakstab1} as a reference for what follows. For any $d\in\Z_{>0}$ such that $dL$ is an honest line bundle, we set 
$$
R_d:=\Hnot(X,dL),\quad R(X,dL):=\bigoplus_{m\in\N} R_{md}.
$$
We denote by $\cN_\R=\cN_\R(L)$ the set of (superadditive, $k^\times$-invariant, linearly bounded) \emph{norms} $\n\colon R(X,dL)\to\R\cup\{+\infty\}$ for some sufficiently divisible $d\in\Z_{>0}$; these are in 1--1 correspondence with the more commonly used (multiplicative, graded, linearly bounded) \emph{filtrations}, via the inverse maps
$$
F^\la R_m:=\{s\in R_m\mid\n(s)\ge\la\},\quad\n(s):=\max\{\la\in\R\mid v\in F^\la R_m\},\quad\la\in\R. 
$$
The space $\cN_\R$ comes with a natural scaling action $(t,\n)\mapsto t\n$ of $\R_{>0}$, which fixes the \emph{trivial norm} $\n_\triv\in\cN_\R$, such that $\n_\triv(s)=0$ for $s\in R_m\setminus\{0\}$; there is also a translation action $(c,\n)\mapsto \n+c$ of $\R$, where $(\n+c)(s):=\n(s)+mc$ for $s\in R_m$. 

The space $\cN_\R$ is equipped with a natural pseudometric $\dd_1$, and we endow it with the corresponding (non-Hausdorff) topology (see~\cite[\S 3]{nakstab1}). It contains as a dense subset the space $\cT_\Z\subset\cN_\R$ of $\Z$-valued norms of finite, which can be identified with the set of ample test configurations $(\cX,\cL)$ for $(X,L)$, thanks to the Rees construction. 

The \emph{Monge--Amp\`ere operator}
$$
\MA\colon\cN_\R\to\cM^1
$$
is the unique continuous map (with respect to the $\dd_1$-topology on $\cN_\R$ and the strong topology on $\cM^1$) that takes the norm $\n\in\cT_\Z$ corresponding to an ample test configuration $(\cX,\cL)$ to the divisorial measure $\MA(\n)=\sum_E m_E\,\d_{v_E}$, where $E$ ranges over the irreducible components of the central fiber $\tcX_0$ of the normalization $(\tcX,\tcL)$ of $(\cX,\cL)$, $v_E\in X^\div$ is the associated divisorial valuation, and $m_E:=\ord_E(\tcX_0)(E\cdot\tcL^n)$ (see~\cite[\S 7]{nakstab1}). 

Extending~\cite{Der}, we define the \emph{minimum norm} of $\n\in\cN_\R$ as the energy
\begin{equation}\label{equ:minnorm}
\|\n\|:=\|\MA(\n)\|_L
\end{equation}
of the Monge--Amp\`ere measure $\MA(\n)$. It depends continuously on $\n$, and vanishes iff $\dd_1(\n,\n_\triv+c)=0$ for some $c\in\R$. 

Any $v\in X^\div$ defines a norm $\n_v\in\cN_\R$, such that $\n_v(s):=v(s)$ for $s\in R_m$. More generally, a \emph{divisorial norm} is defined as a norm of the form 
\begin{equation}\label{equ:signorm}
\n=\min_{v\in\Sigma}\{\n_v+c_v\} 
\end{equation}
for a finite subset $\Sigma\subset X^\div$ and $c_v\in\R$. The notation means $\n(s)=\min_{v\in\Sigma}\{v(s)+mc_v\}$ for $s\in R_m$, the corresponding filtration thus being
$$
F^\la R_m=\{s\in R_m\mid v(s)+m c_v\ge \la\text{ for all }v\in\Sigma\},\quad\la\in\R.
$$
The subset $\cN^\div_\R\subset\cN_\R$ of divisorial norms is preserved by the scaling action of $\Q_{>0}$ and the translation action of $\R$. A divisorial norm $\n\in\cN^\div_\R$ is \emph{rational} if it is $\Q$-valued, which equivalently means that the $c_i$ in~\eqref{equ:signorm} can be chosen in $\Q$. They form a subset $\cN^\div_\Q\subset\cN^\div_\R$, which is dense in $\cN_\R$. By~\cite[Corollary~7.16]{nakstab1}, the Monge--Amp\`ere operator induces a homeomorphism 
$$
\MA\colon\cN^\div_\R/\R\simto\cM^\div,
$$
where the left-hand side is equipped with the quotient topology and the right-hand side with the strong topology. In particular, a divisorial norm $\n\in\cN^\div_\R$ satisfies $\|\n\|=0$ iff $\n=\n_\triv+c$, $c\in \R$. Further, if $\n\in\cN^\div_\R$ is written as in~\eqref{equ:signorm} then $\MA(\n)$ is supported in $\Sigma$. 

Recall from~\cite[\S2.2]{nakstab1} that a norm $\n\in\cN_\R$ is \emph{homogeneous} if $\n(s^d)=d\n(s)$ for any $s\in R_m$ and $d\ge 1$. To any $\n\in\cN_\R$ we can associate a \emph{homogenization} $\n^{\hom}$ defined by $\n^{\hom}(s):=\lim_dd^{-1}\n(s^d)$. This is the smallest homogeneous norm such that $\n\le\n^{\hom}$. We have $\dd_1(\n,\n^{\hom})=0$ and $\MA(\n^{\hom})=\MA(\n)$ for any $\n\in\cN_\R$, see Corollary~5.2 and Proposition~7.4 in~\cite{nakstab1}.

To any (not necessarily ample, nor normal) test configuration $(\cX,\cL)$ for $(X,L)$ we can associate a norm $\n=\n_{\cX,\cL}\in\cN_\R$, see~\cite{WN12,BHJ1}.
\begin{lem}\label{lem:tcdivnorm}
  We have $\cN_\Q^{\div}=\{\n_{\cX,\cL}^{\hom}\mid (\cX,\cL)\ \text{test configuration for $(X,L)$}\}$. In particular, $\MA(\n_{\cX,\cL})=\MA(\n_{\cX,\cL}^{\hom})$ is a divisorial measure for any test configuration $(\cX,\cL)$.
\end{lem}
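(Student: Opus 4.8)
The \emph{in particular} clause is immediate once the set equality is established. Indeed, for any test configuration $(\cX,\cL)$ we have $\MA(\n_{\cX,\cL})=\MA(\n_{\cX,\cL}^{\hom})$ by Proposition~7.4 of~\cite{nakstab1}; and if $\n_{\cX,\cL}^{\hom}\in\cN^\div_\Q$ then it is a divisorial norm, so its Monge--Amp\`ere measure is a divisorial measure, being the image of a divisorial norm under the homeomorphism $\MA\colon\cN^\div_\R/\R\simto\cM^\div$ of~\cite[Corollary~7.16]{nakstab1}. Thus the plan is to prove the two inclusions of the displayed set equality.

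For the inclusion $\{\n_{\cX,\cL}^{\hom}\}\subseteq\cN^\div_\Q$, I would first reduce to the case where $\cX$ is normal: replacing $(\cX,\cL)$ by the pullback $(\tcX,\tcL)$ to the normalization leaves $\n_{\cX,\cL}$ unchanged up to $\dd_1$-equivalence, hence does not affect its homogenization (cf.~\cite{BHJ1,nakstab1}). Writing the central fiber as $\cX_0=\sum_E b_E E$ with $b_E=\ord_E(\cX_0)$, each component $E$ determines a divisorial valuation $v_E\in X^\div$, namely the restriction of $b_E^{-1}\ord_E$ to $k(X)$, while the vertical $\Q$-divisor $\cL-p^\star L$ determines rational coefficients $c_E$. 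The filtration attached to $(\cX,\cL)$ is defined using integral roundings, so $\n_{\cX,\cL}$ itself need not be multiplicative; passing to the homogenization $\n_{\cX,\cL}^{\hom}(s)=\lim_d d^{-1}\n_{\cX,\cL}(s^d)$ removes these roundings and yields the clean valuative expression
$$
\n_{\cX,\cL}^{\hom}=\min_E\{\n_{v_E}+c_E\},
$$
which is precisely the computation underlying the central-fiber description of $\MA$ recalled in~\S\ref{sec:DivKstab} (see~\cite[\S7]{nakstab1}). Since the $v_E$ are divisorial and the $c_E$ rational, this exhibits $\n_{\cX,\cL}^{\hom}$ as an element of $\cN^\div_\Q$.

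For the reverse inclusion, let $\n=\min_{v\in\Sigma}\{\n_v+c_v\}$ be a rational divisorial norm. Clearing denominators, I choose a smooth birational model $\pi\colon Y\to X$ and an integer $N>0$ such that $Nv=\ord_{F_v}$ for a prime divisor $F_v\subset Y$ and $Nc_v\in\Z$ for every $v\in\Sigma$. The goal is then to build a test configuration $(\cX,\cL)$ whose normalized central fiber realizes exactly the valuations $v\in\Sigma$ as component valuations $v_E=v$, carrying the prescribed constants $c_v$. I would obtain $\cX$ by a blow-up of $Y\times\A^1$ along an ideal cosupported on the central fiber, chosen---after normalization and a log resolution---so that the divisors $F_v$ appear among the components of $\cX_0$ with $v_E=v$; this is the standard procedure attaching a possibly non-ample test configuration, i.e.\ a \emph{model} in the sense of~\cite{Li22}, to a finite family of divisorial valuations (cf.~\cite{BHJ1,nakstab1}). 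The vertical part of $\cL$ is then adjusted to have the coefficient prescribed by $c_v$ along the component realizing $v$, with the overall additive constant fixed by tensoring $\cL$ with the pullback of $\cO_{\A^1}(\{0\})$ (which translates $\n_{\cX,\cL}$ by a constant), and chosen along every remaining component $E'$ so that $E'$ does not contribute to the minimum. By the valuative formula of the previous paragraph, $\n_{\cX,\cL}^{\hom}=\min_{v\in\Sigma}\{\n_v+c_v\}=\n$.

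The main obstacle is this reverse inclusion, and specifically the requirement of reproducing a prescribed rational divisorial norm \emph{on the nose}: one must simultaneously realize the finite family $\Sigma$ as central-fiber component valuations, match the translation constants $c_v$ through the vertical part of $\cL$, and ensure that no spurious component of the central fiber (including the main component, corresponding to $v_\triv$) lowers the minimum. Once these points are arranged, the forward-direction formula for $\n_{\cX,\cL}^{\hom}$ closes the argument. All of this is essentially contained in the norm/test-configuration dictionary of~\cite{nakstab1}, on which I would rely throughout; the forward direction requires the (nontrivial) verification that the homogenization of the norm of an arbitrary, not necessarily ample or normal, test configuration is computed by the central-fiber components, while the reverse direction requires the explicit geometric construction just outlined.
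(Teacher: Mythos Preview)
Your approach is essentially correct, but the paper takes a cleaner route for the reverse inclusion that avoids the explicit geometric construction you outline. Rather than building a test configuration by hand from the data $(\Sigma,(c_v))$ and then checking that no spurious central-fiber component spoils the minimum, the paper passes through the dictionary with PL functions: by~\cite[Theorem~6.12]{nakstab1}, any $\n\in\cN^\div_\Q$ is of the form $\IN(\f)$ for some $\f\in\PL(X)$; by~\cite[Theorem~2.31]{trivval}, every PL function is $\f_{\cX,\cL}$ for some normal test configuration; and then~\cite[Proposition~A.3]{nakstab1} gives $\IN(\f_{\cX,\cL})=\n_{\cX,\cL}^{\hom}$. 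This packages all the delicate matching of constants and suppression of extraneous components into results already established in the cited references. Your hands-on construction would also work, and has the merit of being self-contained, but the verification you flag as ``the main obstacle'' is genuinely nontrivial to carry out from scratch. For the forward inclusion, your argument is precisely the content of~\cite[Corollary~A.8]{nakstab1}, which the paper simply cites.
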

\begin{proof}
  For any test configuration $(\cX,\cL)$ for $(X,L)$, the homogenization $\n^{\hom}_{\cX,\cL}$ of $\n_{\cX,\cL}$ lies in $\cN_\Q^\div$, see~\cite[Corollary~A.8]{nakstab1}. Conversely, consider any norm $\n\in\cN_\Q^{\div}$. By~\cite[Theorem~6.12]{nakstab1}, $\n=\IN(\f)$ for some PL function $\f\in\PL(X)$. Moreover, such a PL function $\f$ is of the form $\f_{\cX,\cL}$ for a normal test configuration $(\cX,\cL)$ for $(X,L)$, see~\cite[Theorem~2.31]{trivval}. By~\cite[Proposition~A.3]{nakstab1} $\n=\n^{\hom}_{\cX,\cL}$.
\end{proof}
%
\subsection{Divisorial stability in terms of divisorial norms}\label{sec:divstabnorms}
Consider the functional  $\b=\b_{X,B;L}\colon\cM^\div\to\R$ of the polarized pair $(X,B;L)$; thus
$$
\b(\mu)=\Ent_{X,B}(\mu)+\nabla_{K_{X,B}}\|\mu\|_L
$$
for $\mu\in\cM^\div$. As we just saw, the Monge--Amp\`ere operator induces a homeomorphism $\MA\colon\cN^\div_\R/\R\simto\cM^\div$; in line with~\eqref{equ:mabbet}, we define the \emph{Mabuchi K-energy functional} 
\begin{equation}\label{equ:MNdiv}
\mab\colon\cN^\div_\R\to\R
\end{equation}
by setting
\begin{equation}\label{equ:mabnorm}
\mab(\chi):=\mab_{X,B;L}(\n):=\b(\MA(\n))\in\R. 
\end{equation}
Then
\begin{equation}\label{equ:mabaction}
\mab(\n+c)=\mab(\n),\quad\mab(t\n)=t\mab(\n)
\end{equation}
for all $c\in\R$ and $t\in\Q_{>0}$. 

\begin{thm}\label{thm:threshnorm} The divisorial stability threshold of any polarized pair $(X,B;L)$ satisfies 
$$
\sigma_\div(X,B;L)=\inf_{\n\in\cN^\div_\R,\,\|\n\| >0}\frac{\mab(\n)}{\|\n\| }=\inf_{\n\in\cN^\div_\Q,\,\|\n\| >0}\frac{\mab(\n)}{\|\n\| }.
$$
In particular, $(X,B;L)$ is divisorially semistable iff $\mab(\n)\ge 0$ for all rational divisorial norms $\n\in\cN^\div_\Q$, and it is divisorially stable iff there exists $\e>0$ such that $\mab(\n)\ge\e\|\n\| $ for all $\n\in\cN^\div_\Q$.
\end{thm}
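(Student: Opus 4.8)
The plan is to transport the variational description of $\sigma_\div$ from divisorial measures to divisorial norms through the Monge--Amp\`ere homeomorphism, and then to rationalise the coefficients without changing the infimum.

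First I would obtain the first equality as a change of variables. Recall that $\MA\colon\cN^\div_\R/\R\simto\cM^\div$ is a homeomorphism, that $\mab(\n)=\b(\MA(\n))$ descends to the quotient by~\eqref{equ:mabaction}, and that $\|\n\|=\|\MA(\n)\|_L$ likewise depends only on the class of $\n$. Since $\|\mu\|_L=0\Leftrightarrow\mu=\mu_\triv$ by~\eqref{equ:homogenvee}, the map $\MA$ restricts to a bijection between $\{\n\in\cN^\div_\R\mid\|\n\|>0\}/\R$ and $\cM^\div\setminus\{\mu_\triv\}$, under which $\frac{\mab(\n)}{\|\n\|}=\frac{\b(\mu)}{\|\mu\|_L}$ with $\mu=\MA(\n)$. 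Taking infima yields $\sigma_\div(X,B;L)=\inf_{\n\in\cN^\div_\R,\,\|\n\|>0}\frac{\mab(\n)}{\|\n\|}$.

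For the second equality, the inequality $\inf_{\cN^\div_\R}\le\inf_{\cN^\div_\Q}$ is clear from $\cN^\div_\Q\subset\cN^\div_\R$. For the reverse, fix $\n\in\cN^\div_\R$ with $\|\n\|>0$ and write $\n=\min_{v\in\Sigma}\{\n_v+c_v\}$ for a finite set $\Sigma\subset X^\div$ and $c_v\in\R$. Picking $c_v^{(j)}\in\Q$ with $c_v^{(j)}\to c_v$ produces $\n_j:=\min_{v\in\Sigma}\{\n_v+c_v^{(j)}\}\in\cN^\div_\Q$; crucially the support $\Sigma$ is held fixed, so $\mu_j:=\MA(\n_j)$ and $\mu:=\MA(\n)$ are all carried by the \emph{same} finite set $\Sigma$. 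Since $\n\mapsto\MA(\n)$ is $\dd_1$-to-strongly continuous and $\dd_1(\n_j,\n)\to0$, we get $\mu_j\to\mu$ strongly in $\cM^1$; hence $\|\n_j\|=\|\mu_j\|_L\to\|\mu\|_L=\|\n\|$ and $\nabla_{K_{X,B}}\|\mu_j\|_L\to\nabla_{K_{X,B}}\|\mu\|_L$ by Proposition~\ref{prop:effehold}. For the entropy term I would use that strong (hence weak) convergence of measures supported on the fixed finite set $\Sigma$ forces $\mu_j(\{v\})\to\mu(\{v\})$ for each $v\in\Sigma$, so that
$$
\Ent_{X,B}(\mu_j)=\sum_{v\in\Sigma}\mu_j(\{v\})\,\ld_{X,B}(v)\longrightarrow\sum_{v\in\Sigma}\mu(\{v\})\,\ld_{X,B}(v)=\Ent_{X,B}(\mu).
$$
Combining, $\mab(\n_j)=\b(\mu_j)\to\b(\mu)=\mab(\n)$ while $\|\n_j\|\to\|\n\|>0$, so $\frac{\mab(\n_j)}{\|\n_j\|}\to\frac{\mab(\n)}{\|\n\|}$; taking infima gives $\inf_{\cN^\div_\Q}\le\inf_{\cN^\div_\R}$, as needed. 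The one delicate point here, and the main obstacle to watch, is that $\Ent_{X,B}$ is only strongly lsc and genuinely fails to be strongly continuous (Example~\ref{exam:Tcurve}); the argument works precisely because freezing the support $\Sigma$ turns the entropy into a continuous linear function of finitely many masses, so that weak convergence suffices.

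Finally, for the \emph{In particular} clause I would first note that $\b(\mu_\triv)=\ld_{X,B}(v_\triv)+\nabla_{K_{X,B}}\|\mu_\triv\|_L=0$, using $\ld_{X,B}(v_\triv)=0$ and~\eqref{equ:twisteddualhold2}. Hence every $\n\in\cN^\div_\Q$ with $\|\n\|=0$ (equivalently $\n=\n_\triv+c$) satisfies $\mab(\n)=0$, so the requirement $\mab\ge0$ on all of $\cN^\div_\Q$ is equivalent to $\mab\ge0$ on those $\n$ with $\|\n\|>0$, i.e.\ to $\sigma_\div(X,B;L)\ge0$, which is divisorial semistability. Similarly, $\mab\ge\e\|\cdot\|$ on $\cN^\div_\Q$ for some $\e>0$ is equivalent to $\inf_{\|\n\|>0}\frac{\mab(\n)}{\|\n\|}>0$, i.e.\ to $\sigma_\div(X,B;L)>0$, which is divisorial stability.
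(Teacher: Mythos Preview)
Your proof is correct and follows essentially the same route as the paper. The paper isolates your rational-approximation step as a separate lemma (Lemma~\ref{lem:ratdivdense}) with the same fixed-support argument for the entropy, and then invokes it; you inline it and add a more explicit treatment of the ``In particular'' clause via $\b(\mu_\triv)=0$, which the paper leaves implicit.
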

Recall that a divisorial norm $\n\in\cN^\div_\R$ satisfies $\|\n\|=0\Leftrightarrow\n=\n_\triv+c$ for $c\in\R$.

\begin{lem}\label{lem:ratdivdense} For any $\n\in\cN^\div_\R$, there exists a sequence $(\n_i)$ in $\cN^\div_\Q$ such that $\dd_1(\n_i,\n)\to 0$ and $\mab(\n_i)\to\mab(\n)$. 
\end{lem}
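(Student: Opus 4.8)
The plan is to fix the finite support of the divisorial norm and merely rationalize the real translation constants. First I would write $\n=\min_{v\in\Sigma}\{\n_v+c_v\}$ as in~\eqref{equ:signorm}, with $\Sigma\subset X^\div$ finite and $c=(c_v)_{v\in\Sigma}\in\R^\Sigma$, and pick $c^{(i)}=(c_v^{(i)})\in\Q^\Sigma$ with $c_v^{(i)}\to c_v$ for each $v\in\Sigma$. Setting $\n_i:=\min_{v\in\Sigma}\{\n_v+c_v^{(i)}\}$ then produces a sequence in $\cN^\div_\Q$, and the point of keeping $\Sigma$ fixed will become clear when handling the entropy.

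To see $\dd_1(\n_i,\n)\to 0$, I would compare the two norms directly: for $s\in R_m$,
\[
|\n_i(s)-\n(s)|=\Bigl|\min_{v\in\Sigma}\bigl(v(s)+mc_v^{(i)}\bigr)-\min_{v\in\Sigma}\bigl(v(s)+mc_v\bigr)\Bigr|\le m\max_{v\in\Sigma}|c_v^{(i)}-c_v|,
\]
so the normalized uniform distance $\sup_m m^{-1}\sup_{0\ne s\in R_m}|\n_i(s)-\n(s)|$ is bounded by $\max_{v}|c_v^{(i)}-c_v|$. Since $\dd_1$ is dominated by this uniform distance (a standard feature of the metrics on $\cN_\R$; see~\cite{nakstab1}), we obtain $\dd_1(\n_i,\n)\le\max_{v}|c_v^{(i)}-c_v|\to 0$.

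For the convergence $\mab(\n_i)\to\mab(\n)$, recall that $\mab(\n)=\b(\MA(\n))=\Ent_{X,B}(\MA(\n))+\nabla_{K_{X,B}}\|\MA(\n)\|_L$. By continuity of $\MA\colon\cN_\R\to\cM^1$ (for the $\dd_1$-topology and the strong topology), $\dd_1(\n_i,\n)\to 0$ gives $\MA(\n_i)\to\MA(\n)$ strongly in $\cM^1$; in particular $\nabla_{K_{X,B}}\|\MA(\n_i)\|_L\to\nabla_{K_{X,B}}\|\MA(\n)\|_L$ by strong continuity of $\nabla_{K_{X,B}}\|\cdot\|_L$ (Proposition~\ref{prop:effehold}). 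Both measures lie in $\cM^\div$ and are supported in $\Sigma$, as recalled above; writing $\MA(\n_i)=\sum_{v\in\Sigma}m_v^{(i)}\d_v$ and $\MA(\n)=\sum_{v\in\Sigma}m_v\d_v$, strong (hence weak) convergence together with the finite support forces $m_v^{(i)}\to m_v$ for every $v$, by testing against continuous functions on the compact Hausdorff space $X^\an$ separating the points of $\Sigma$. Consequently $\Ent_{X,B}(\MA(\n_i))=\sum_{v\in\Sigma}m_v^{(i)}\ld_{X,B}(v)\to\sum_{v\in\Sigma}m_v\ld_{X,B}(v)=\Ent_{X,B}(\MA(\n))$, and adding the two contributions yields $\mab(\n_i)\to\mab(\n)$.

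The one delicate point — and the reason for holding the support fixed — is precisely the entropy: $\Ent_{X,B}$ is merely lower semicontinuous on $\cM^1$, so a generic strongly convergent sequence would only give $\liminf_i\Ent_{X,B}(\MA(\n_i))\ge\Ent_{X,B}(\MA(\n))$. By confining all the $\MA(\n_i)$ to the single finite set $\Sigma$, the entropy becomes an honest finite linear combination of the fixed finite numbers $\ld_{X,B}(v)$, and convergence of the masses (which is all that weak convergence delivers on a fixed finite support) already upgrades this inequality to an equality.
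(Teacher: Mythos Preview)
Your proof is correct and follows essentially the same approach as the paper: fix the finite support $\Sigma$, rationalize the constants $c_v$, bound $\dd_1$ by the uniform distance $\max_v|c_v^{(i)}-c_v|$, and then exploit the fixed finite support to upgrade weak convergence of the Monge--Amp\`ere measures to convergence of the individual masses, hence of the entropy. Your closing paragraph explaining \emph{why} the fixed support is essential (the mere lower semicontinuity of $\Ent_{X,B}$) is a helpful gloss that the paper leaves implicit.
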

\begin{proof} Pick a finite subset $\Sigma\subset X^\div$ such that $\n=\min_{v\in\Sigma}\{\n_v+c_v\}$ with $c\in\R^\Sigma$. As recalled above, we then have $\supp\MA(\n)\subset\Sigma$. Writing $c$ as the limit of a sequence $(c_i)$ in $\Q^\Sigma$ defines a sequence $\n_i:=\max_{v\in\Sigma}\{\n_v+c_{i,v}\}\in\cN^\div_\Q$, such that 
$$
\dd_1(\n_i,\n)\le\dd_\infty(\n_i,\n)\le\max_{v\in\Sigma}|c_{i,v}-c_v|\to 0, 
$$
see~\cite[(6.2)]{nakstab1}. By continuity of $\MA\colon\cN_\R\to\cM^1$, this implies $\mu_i=\MA(\n_i)\to\mu=\MA(\n)$ strongly in $\cM^1$, and hence $\nabla_{K_{X,B}}\|\mu_i\|_L\to\nabla_{K_{X,B}}\|\mu\|_L$. Further, $\mu_i=\sum_{v\in\Sigma} m_{i,v}\d_v$, $\mu=\sum_{v\in\Sigma} m_v\d_v$. Thus $\mu_i\to\mu$ implies $m_{i,v}\to m_v$ for all $v\in\Sigma$, and hence 
$$
\Ent_{X,B}(\mu_i)=\sum_{v\in\Sigma} m_{i,v}\ld_{X,B}(v)\to\Ent_{X,B}(\mu)=\sum_{v\in\Sigma} m_v\d_v.
$$
Thus $\mab(\n_i)=\b(\mu_i)=\Ent_{X,B}(\mu_i)+\nabla_{K_{X,B}}\|\mu_i\|_L\to\mab(\n)=\Ent_{X,B}(\mu)+\nabla_{K_{X,B}}\|\mu\|_L$, and we are done. 
\end{proof}

\begin{proof}[Proof of Theorem~\ref{thm:threshnorm}] For any $\n\in\cN^\div_\R$ we have 
$$
\mab(\n)=\b(\MA(\n)),\quad\|\n\| =\|\MA(\n)\|,
$$
Since $\MA\colon\cN^\div_\R\to\cM^\div$ is onto, this directly yields 
$$
\sigma_\div(X,B;L)=\inf_{\mu\in\cM^\div,\,\|\mu\|>0}\frac{\b(\mu)}{\|\mu\|}=\inf_{\n\in\cN^\div_\R,\,\|\n\|>0}\frac{\mab(\n)}{\|\n\| }. 
$$
Using Lemma~\ref{lem:ratdivdense}, it is further easy to see that the latter infimum can be computed using norms in $\cN^\div_\Q$, and the result follows.  
\end{proof}
\begin{rmk}\label{rmk:Fujita}
  In~\cite{Fujvol}, K.~Fujita defined a notion of divisorial (semi)stability for a $\Q$-Fano variety $X$ by requiring that $\eta(D):=1-V^{-1}\int_0^\infty\vol(L-\la D)d\la$ be (semi)positive for each effective Weil divisor $D$ on $X$, where $L=-K_X$ (see also~\cite{Gri}). By monotonicity of the volume, it suffices to check this when $D=\sum_i D_i$ is reduced. In this case, one can show that
  $\eta(D)=\mab(\n_D)$ with $\n_D:=\min_i\n_{\ord_{D_i}}$. This shows that the present notion of divisorial stability implies that of~\cite{Fujvol}.
 \end{rmk}
%
%
\subsection{K-stability for filtrations}\label{sec:Kstabfilt}
In this section, we assume that $(X,B)$ is \textbf{subklt}. Recall from~\S\ref{sec:extent} that $\b\colon\cM^\div\to\R$ admits a natural extension $\b\colon\cM^1\to\R\cup\{+\infty\}$, characterized as its maximal (strongly) lsc extension. We may thus introduce: 

\begin{defi}\label{defi:MN} We define the \emph{Mabuchi K-energy functional} 
\begin{equation}\label{equ:MN}
\mab\colon\cN_\R\to\R\cup\{+\infty\}
\end{equation}
of the polarized subklt pair by setting
$$
\mab(\n):=\mab_{X,B;L}(\n):=\b(\MA(\n))\in\R\cup\{+\infty\}
$$
for any $\n\in\cN_\R$. 
\end{defi}
When $\n\in\cT_\Z$ corresponds to an ample test configuration $(\cX,\cL)$, this definition is compatible with~\cite[\S7.3]{BHJ1}. Indeed, \eqref{equ:mabbet} yields $\mab(\n)=\b(\MA(\cX,\cL))=\mab(\cX,\cL)$. 

By continuity of $\MA\colon\cN_\R\to\cM^1$, \eqref{equ:MN} is an lsc extension of~\eqref{equ:MNdiv}; it is in fact characterized as the maximal lsc extension, as follows from the next result. 

\begin{lem} For any $\n\in\cN_\R$, there exists a sequence $(\n_i)$ in $\cN^\div_\Q$ such that $\dd_1(\n_i,\n)\to 0$ and $\mab(\n_i)\to\mab(\n)$. 
\end{lem}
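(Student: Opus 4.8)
The plan is to show that the lower semicontinuous functional $\mab=\b\circ\MA$ on $\cN_\R$ admits, for each $\n$, a recovery sequence in $\cN^\div_\Q$, which is exactly the assertion (and, together with lower semicontinuity of $\mab$, pins it down as the maximal lsc extension). The first move is to isolate where the difficulty lies: write $\mab=\Ent_{X,B}\circ\MA+\Lambda$ with $\Lambda(\n):=\nabla_{K_{X,B}}\|\MA(\n)\|_L$. Since $\MA\colon\cN_\R\to\cM^1$ is $\dd_1$-to-strongly continuous and $\nabla_{K_{X,B}}\|\cdot\|_L$ is strongly continuous (Proposition-Definition~\ref{prop:effehold}), the term $\Lambda$ is $\dd_1$-continuous, hence automatically respects any $\dd_1$-convergent sequence. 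The whole problem therefore concentrates in the entropy term $\Ent_{X,B}\circ\MA$, which is only lower semicontinuous.

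The case $\mab(\n)=+\infty$, equivalently $\Ent_{X,B}(\MA(\n))=+\infty$, is immediate: by density of $\cN^\div_\Q$ in $(\cN_\R,\dd_1)$ pick any $\n_i\in\cN^\div_\Q$ with $\dd_1(\n_i,\n)\to0$; then $\MA(\n_i)\to\MA(\n)$ strongly, lower semicontinuity of the entropy forces $\Ent_{X,B}(\MA(\n_i))\to+\infty$, and so $\mab(\n_i)\to+\infty=\mab(\n)$.

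The substantial case is $\mab(\n)<+\infty$. Set $\mu:=\MA(\n)\in\cM^1$. Theorem~\ref{thm:entapprox} supplies $\mu_i\in\cM^\div$ with $\mu_i\to\mu$ strongly and $\Ent_{X,B}(\mu_i)\to\Ent_{X,B}(\mu)$. Via the homeomorphism $\MA\colon\cN^\div_\R/\R\simto\cM^\div$ I lift each $\mu_i$ to a real divisorial norm $\n_i'\in\cN^\div_\R$ with $\MA(\n_i')=\mu_i$; since the entropy term sees only $\MA(\n_i')$ and $\Lambda$ is continuous, this already yields $\mab(\n_i')\to\mab(\n)$. The crux is to fix the translation normalizations so that $\dd_1(\n_i',\n)\to0$, i.e.\ to upgrade the strong convergence $\MA(\n_i')\to\MA(\n)$ of measures into $\dd_1$-convergence of the norms. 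For this I invoke the Fubini–Study correspondence of~\cite{nakstab1}: each norm $\n$ carries a potential $\FS(\n)\in\cE^1(L)$ with $\MA(\n)=\MA(\FS(\n))$, and $\dd_1(\n,\n')$ is the (Darvas/strong) distance of the potentials $\FS(\n),\FS(\n')$. As $\MA\colon\cE^1(L)/\R\hookrightarrow\cM^1$ is a topological embedding for the strong topologies (Theorem~A of~\cite{trivval}, recalled in~\S\ref{sec:prelim}) and $\mu,\mu_i$ all lie in its image, the strong convergence $\mu_i\to\mu$ gives $[\FS(\n_i')]\to[\FS(\n)]$ in $\cE^1(L)/\R$; translating the $\n_i'$ accordingly produces $\dd_1(\n_i',\n)\to0$ without changing $\MA(\n_i')=\mu_i$, hence preserving $\mab(\n_i')\to\mab(\n)$.

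It then remains to pass from real to rational divisorial norms. Each $\n_i'\in\cN^\div_\R$ has finite $\mab$-value (a finite weighted sum of log discrepancies), so Lemma~\ref{lem:ratdivdense} gives $\n_{i,j}\in\cN^\div_\Q$ with $\dd_1(\n_{i,j},\n_i')\to0$ and $\mab(\n_{i,j})\to\mab(\n_i')$ as $j\to\infty$; a diagonal choice $\n_i:=\n_{i,j(i)}$ then satisfies $\dd_1(\n_i,\n)\to0$ and $\mab(\n_i)\to\mab(\n)$. The main obstacle is squarely the crux step of the previous paragraph: converting strong convergence of Monge–Ampère measures into $\dd_1$-convergence of norms. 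This is precisely where the relationship between the pseudometric $\dd_1$ on $\cN_\R$ and the strong topology of $\cM^1$—encoded by the Fubini–Study map and the embedding $\MA\colon\cE^1/\R\hookrightarrow\cM^1$—must be used, and since $\MA$ is injective only modulo translation (and $\dd_1$-nullity), the translation normalizations of the lifts have to be tracked carefully throughout.
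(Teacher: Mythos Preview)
Your proof is correct and follows essentially the same route as the paper: reduce to $\cN^\div_\R$ via Lemma~\ref{lem:ratdivdense}, invoke Theorem~\ref{thm:entapprox} to produce $\mu_i\in\cM^\div$ with $\mu_i\to\MA(\n)$ strongly and $\b(\mu_i)\to\b(\MA(\n))$, and lift via the Monge--Amp\`ere bijection. Your treatment of the ``crux'' step---upgrading strong convergence of measures to $\dd_1$-convergence of norms via $\FS$ and the embedding $\MA\colon\cE^1/\R\hookrightarrow\cM^1$---is more explicit than the paper's, which compresses this into a single appeal to the homeomorphism $\MA\colon\cN^\div_\R/\R\simto\cM^\div$ (implicitly using additional input from~\cite{nakstab1}).
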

\begin{proof} By Lemma~\ref{lem:ratdivdense}, it is enough to produce such a sequence in $\cN^\div_\R$. By Theorem~\ref{thm:entapprox}, we can find a sequence $(\mu_i)$ in $\cM^\div$ such that $\mu_i\to\MA(\n)$ strongly and $\b(\mu_i)\to\b(\MA(\n))=\mab(\n)$. Since $\MA\colon\cN^\div_\R/\R\simto\cM^\div$ is a homeomorphism, we can find $\n_i\in\cN^\div_\R$ such that $\MA(\n_i)=\mu_i$ and $\dd_1(\n_i,\n)\to 0$. Then $\mab(\n_i)=\b(\mu_i)\to\mab(\n)$, and the result follows.  
\end{proof}

\begin{defi}\label{defi:Kstabfilt} A polarized subklt pair $(X,B;L)$ is \emph{K-semistable for filtrations} (resp.~\emph{uniformly K-stable for filtrations}) if $\mab\ge 0$ on $\cN_\R$ (resp.~$\mab\ge\e\|\cdot\|$ on $\cN_\R$ for some $\e>0$). 
\end{defi}

\begin{thm}\label{thm:threshnorm2} The divisorial stability threshold of any polarized subklt pair $(X,B;L)$ satisfies 
$$
\sigma_\div(X,B;L)=\inf_{\n\in\cN_\R,\,\|\n\| >0}\frac{\mab(\n)}{\|\n\| }=\inf_{\n\in\cN_\R,\,\|\n\|=1}\mab(\n).
$$
In particular, $(X,B;L)$ is divisorially semistable iff it is K-semistable for filtrations, and it is divisorially stable iff it is uniformly K-stable for filtrations. 
\end{thm}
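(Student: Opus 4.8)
The plan is to establish the identity by a two-sided estimate, importing two results that do all the real work. Set $A:=\inf_{\n\in\cN_\R,\,\|\n\|>0}\mab(\n)/\|\n\|$. The upper bound $A\le\sigma_\div(X,B;L)$ is immediate: since divisorial norms form a subset $\cN^\div_\R\subseteq\cN_\R$, the infimum defining $A$ is taken over a larger class than the one appearing in Theorem~\ref{thm:threshnorm}, so $A\le\inf_{\n\in\cN^\div_\R,\,\|\n\|>0}\mab(\n)/\|\n\|=\sigma_\div(X,B;L)$.

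For the matching lower bound I would use the subklt hypothesis through Remark~\ref{rmk:sigM1}, which extends the defining inequality $\b\ge\sigma_\div(X,B;L)\,\|\cdot\|$ from $\cM^\div$ to the whole space $\cM^1$ of finite-energy measures. Given $\n\in\cN_\R$ with $\|\n\|>0$, I set $\mu:=\MA(\n)\in\cM^1$; by Definition~\ref{defi:MN} and~\eqref{equ:minnorm} we have $\mab(\n)=\b(\mu)$ and $\|\n\|=\|\mu\|_L>0$, so $\mu\neq\mu_\triv$ by~\eqref{equ:homogenvee}. Remark~\ref{rmk:sigM1} then gives $\mab(\n)=\b(\mu)\ge\sigma_\div(X,B;L)\|\mu\|_L=\sigma_\div(X,B;L)\|\n\|$, and taking the infimum yields $A\ge\sigma_\div(X,B;L)$. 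This is the crux of the matter: an arbitrary norm only produces a \emph{finite-energy} measure $\MA(\n)$, not a divisorial one, so the passage from $\cM^\div$ to $\cM^1$ provided by the entropy-approximation Theorem~\ref{thm:entapprox} (which underlies Remark~\ref{rmk:sigM1}) is exactly what is needed to close the lower bound.

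It then remains to record the normalized form and the two stability equivalences. The equality $A=\inf_{\|\n\|=1}\mab(\n)$ is formal, following from the homogeneity of $\mab$ and $\|\cdot\|$ under the scaling action, exactly as in the second equality of Remark~\ref{rmk:sigM1}: rescaling sends any $\n$ with $\|\n\|>0$ to one with $\|\n\|=1$ without altering the ratio. For the final sentence I would simply unwind the threshold: $(X,B;L)$ is divisorially semistable iff $\sigma_\div(X,B;L)\ge0$ iff $\mab(\n)\ge0$ for every $\n$ with $\|\n\|>0$, and the remaining norms $\n=\n_\triv+c$ satisfy $\mab(\n)=0$ by the translation invariance~\eqref{equ:mabaction}, so this is precisely $\mab\ge0$ on $\cN_\R$, i.e.\ K-semistability for filtrations; likewise $\sigma_\div(X,B;L)>0$ unwinds to the existence of $\e>0$ with $\mab\ge\e\|\cdot\|$, i.e.\ uniform K-stability for filtrations. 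The one genuine subtlety sits entirely upstream, in the lsc extension of $\b$ and in Theorem~\ref{thm:entapprox}; the present argument itself is a clean sandwich, and I expect no further obstacle within it.
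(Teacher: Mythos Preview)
Your argument is correct and mirrors the paper's proof almost exactly: the paper also derives the lower bound from Remark~\ref{rmk:sigM1} applied to $\MA(\n)$, the upper bound from Theorem~\ref{thm:threshnorm} via $\cN^\div_\R\subset\cN_\R$, and the unit-sphere form from homogeneity. One cosmetic point: when handling the case $\|\n\|=0$ you write ``the remaining norms $\n=\n_\triv+c$'', but strictly speaking $\|\n\|=0$ only says $\dd_1(\n,\n_\triv+c)=0$ for some $c$; this still forces $\MA(\n)=\mu_\triv$ and hence $\mab(\n)=0$, so your conclusion stands.
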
 
Note that Theorem~\ref{thm:threshnorm2} and Theorem~\ref{thm:stabopen} together imply the Main Theorem stated in the beginning of the introduction.
\begin{proof} Set $\sigma:=\sigma_\div(X,B;L)$. For any $\n\in\cN_\R$, we then have 
$$
\mab(\n)=\b(\MA(\n))\ge\sigma\|\MA(\n)\|=\sigma\|\n\| ,
$$ 
see Remark~\ref{rmk:sigM1}. Thus $\sigma\ge\inf_{\n\in\cN_\R,\,\|\n\| >0}\mab(\n)/\|\n\| $, while the other inequality follows from Theorem~\ref{thm:threshnorm}. Finally, the last equality holds by homogeneity with respect to the scaling action of $\R_{>0}$. 
\end{proof}

\begin{rmk} The analogue of the last expression involving the `unit sphere' $\|\n\|=1$ does not feature in Theorem~\ref{thm:threshnorm}. The reason for this is that it would involve rescaling by $\|\n\|\in\R_{>0}$, which is possibly irrational even when $\n=\n_v$ with $v\in X^\div$, whereas divisorial norms are only preserved by the scaling action of $\Q_{>0}$.
\end{rmk}

In view of Corollary~\ref{cor:divding}, we also get the following result in the log Fano case.
\begin{cor}\label{cor:normding} For any polarized subklt pair $(X,B;L)$ with $L=-K_{X,B}$, we have: 
\begin{itemize}
\item[(i)] $(X,B;L)$ is K-semistable for filtrations iff it is Ding-semistable; 
\item[(ii)] $(X,B;L)$ is uniformly K-stable for filtrations iff it is uniformly Ding-stable.
\end{itemize}
\end{cor}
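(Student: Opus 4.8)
The plan is to obtain both equivalences by chaining Theorem~\ref{thm:threshnorm2} with Corollary~\ref{cor:divding}, the only preliminary point being to check that the hypotheses of the latter are met. Writing $\om=c_1(L)$, the assumption $L=-K_{X,B}$ means precisely that $\om\equiv-K_{X,B}$ in $\Num(X)$, so the log Fano case of Corollary~\ref{cor:divding} applies with the normalization $\la=1$. (The pair $(X,B)$ is assumed subklt throughout this section, which is exactly what is needed for the extended functional $\mab\colon\cN_\R\to\R\cup\{+\infty\}$ of Definition~\ref{defi:MN} to make sense and for Corollary~\ref{cor:divding} to be available.)

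For part~(i), I would argue as follows. By Theorem~\ref{thm:threshnorm2}, $(X,B;L)$ is K-semistable for filtrations if and only if it is divisorially semistable, i.e.\ $\sigma_\div(X,B;L)\ge 0$. By Corollary~\ref{cor:divding}~(i), divisorial semistability is in turn equivalent to Ding-semistability. Composing the two equivalences gives~(i). Part~(ii) is entirely analogous: Theorem~\ref{thm:threshnorm2} identifies uniform K-stability for filtrations with divisorial stability (i.e.\ $\sigma_\div(X,B;L)>0$), and Corollary~\ref{cor:divding}~(ii) identifies the latter with uniform Ding-stability.

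There is no genuine obstacle remaining at this stage, since all the substance has been absorbed into the two results invoked. The delicate half is Theorem~\ref{thm:threshnorm2}, which rests on the surjectivity of the Monge--Amp\`ere operator $\MA\colon\cN_\R\to\cM^1$ onto $\cM^\div$ together with the entropy approximation of Theorem~\ref{thm:entapprox}; the other half, Corollary~\ref{cor:divding}, uses the Euler identity $\nabla_\om\|\cdot\|_\om=\|\cdot\|_\om$ (see~\eqref{equ:Euler}) to collapse $\b$ to $\Ent_{X,B}-\|\cdot\|_\om$ when $-K_{X,B}\equiv\om$, reducing divisorial (semi)stability to a statement about the $\d$-invariant and hence, via Theorem~\ref{thm:deltading}, to Ding-(semi)stability. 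The one point worth double-checking is the bookkeeping of the normalization: with $-K_{X,B}\equiv\om$ one has $\sigma_\div(X,B;\om)=\d(X,B;\om)-1$ by Proposition~\ref{prop:twistedKE}, so that divisorial semistability (resp.\ stability) corresponds to $\d(X,B;\om)\ge 1$ (resp.\ $>1$), which is exactly the threshold for Ding-semistability (resp.\ uniform Ding-stability) in Theorem~\ref{thm:deltading}; the thresholds therefore line up precisely as claimed.
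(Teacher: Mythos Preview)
Your proof is correct and follows exactly the approach the paper intends: the paper simply states ``In view of Corollary~\ref{cor:divding}, we also get the following result in the log Fano case,'' meaning the result is obtained by combining Theorem~\ref{thm:threshnorm2} (divisorial (semi)stability $\Leftrightarrow$ K-(semi)stability for filtrations) with Corollary~\ref{cor:divding} (divisorial (semi)stability $\Leftrightarrow$ (uniform) Ding-(semi)stability when $\om\equiv-K_{X,B}$). Your additional commentary on the underlying ingredients is accurate but goes beyond what the paper spells out.
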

%
%
\subsection{K-stability for models}\label{sec:uKstabmod}
Let $(X,B;L)$ be a polarized subklt pair.
As a direct consequence of Lemma~\ref{lem:tcdivnorm} and Theorem~\ref{thm:threshnorm}, we have 
\begin{prop}\label{prop:divstabmod}
  The divisorial stability threshold satisfies
  \begin{equation}\label{equ:divstabmod}
    \sigma_{\div}(X,B;L)=
    \sup\{\sigma\in\R\mid \mab(\n_{\cX,\cL})\ge\sigma\|\n_{\cX,\cL}\|\ \text{for all test
      configurations $(\cX,\cL)$}\}.
  \end{equation}    
  In particular, $(X,B;L)$ is divisorially semistable (resp.\ divisorially stable) iff $\mab(\n_{\cX,\cL})\ge0$ (resp.\ $\mab(\n_{\cX,\cL})\ge\e\|\n_{\cX,\cL}\|$ for some $\e>0$) for all test configurations $(\cX,\cL)$ for $(X,L)$.
\end{prop}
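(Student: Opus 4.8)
The plan is to derive Proposition~\ref{prop:divstabmod} as an essentially immediate corollary of Theorem~\ref{thm:threshnorm} combined with Lemma~\ref{lem:tcdivnorm}, so the real content lies in correctly matching up the two families of norms being optimized over. First I would recall that Theorem~\ref{thm:threshnorm} already identifies
$$
\sigma_\div(X,B;L)=\inf_{\n\in\cN^\div_\Q,\,\|\n\|>0}\frac{\mab(\n)}{\|\n\|},
$$
so the task reduces to showing that the right-hand side of~\eqref{equ:divstabmod} equals this same infimum over rational divisorial norms. The bridge is Lemma~\ref{lem:tcdivnorm}, which asserts the exact equality $\cN^\div_\Q=\{\n^{\hom}_{\cX,\cL}\mid(\cX,\cL)\text{ test configuration}\}$, together with the facts (from~\S\ref{sec:divstabnorms}) that homogenization preserves both the Monge--Amp\`ere measure, hence $\mab(\n_{\cX,\cL})=\mab(\n^{\hom}_{\cX,\cL})$, and the minimum norm, hence $\|\n_{\cX,\cL}\|=\|\n^{\hom}_{\cX,\cL}\|$.

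The key steps, in order, are as follows. I would first rewrite the supremum in~\eqref{equ:divstabmod}: the condition ``$\mab(\n_{\cX,\cL})\ge\sigma\|\n_{\cX,\cL}\|$ for all test configurations'' holds for a given $\sigma$ precisely when $\sigma\le\mab(\n_{\cX,\cL})/\|\n_{\cX,\cL}\|$ for every test configuration with $\|\n_{\cX,\cL}\|>0$ (the norms with $\|\n_{\cX,\cL}\|=0$ impose no constraint, since then $\mab(\n_{\cX,\cL})=\mab(\n_\triv+c)=0$ by translation invariance~\eqref{equ:mabaction}, and the inequality reads $0\ge0$). Therefore the supremum over admissible $\sigma$ equals the infimum of $\mab(\n_{\cX,\cL})/\|\n_{\cX,\cL}\|$ over all test configurations with positive minimum norm. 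Next I would invoke the homogenization invariances to replace each $\n_{\cX,\cL}$ by $\n^{\hom}_{\cX,\cL}$ without changing either numerator or denominator, and then apply Lemma~\ref{lem:tcdivnorm} to identify the set of such homogenizations with all of $\cN^\div_\Q$. This yields
$$
\text{RHS of~\eqref{equ:divstabmod}}=\inf_{\n\in\cN^\div_\Q,\,\|\n\|>0}\frac{\mab(\n)}{\|\n\|}=\sigma_\div(X,B;L),
$$
where the last equality is Theorem~\ref{thm:threshnorm}. The final ``in particular'' clause then follows by unwinding the definitions: $\sigma_\div\ge0$ is equivalent to divisorial semistability and to $\mab(\n_{\cX,\cL})\ge0$ for all test configurations, while $\sigma_\div>0$ is equivalent to divisorial stability and, since the infimum defining $\sigma_\div$ is actually attained as a positive bound, to the uniform inequality $\mab(\n_{\cX,\cL})\ge\e\|\n_{\cX,\cL}\|$.

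I expect the main obstacle to be purely bookkeeping rather than conceptual: one must verify carefully that passing from the ``for all $\sigma$'' sup-formulation to the inf-formulation is valid, in particular handling the degenerate norms with vanishing minimum norm, and that the inf over $\cN^\div_\Q$ defining $\sigma_\div$ does coincide with the inf over the test-configuration family after homogenization. The one point requiring a little care is the passage from the threshold inequality $\sigma_\div>0$ to the genuinely uniform statement $\mab(\n_{\cX,\cL})\ge\e\|\n_{\cX,\cL}\|$; this is immediate once one notes that $\sigma_\div$ is defined as an infimum, so any $\e<\sigma_\div$ witnesses uniform stability, but it should be stated explicitly to avoid conflating ``strictly positive quotients'' with ``uniformly bounded-below quotients.'' Since all the analytic work (continuity of $\MA$, the identification $\cN^\div_\Q=\{\n^{\hom}_{\cX,\cL}\}$, and the evaluation of $\sigma_\div$ on rational divisorial norms) has already been done in the preceding results, the proposition is genuinely a short deduction.
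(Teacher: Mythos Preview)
Your proposal is correct and matches the paper's approach exactly: the paper introduces Proposition~\ref{prop:divstabmod} with the sentence ``As a direct consequence of Lemma~\ref{lem:tcdivnorm} and Theorem~\ref{thm:threshnorm}, we have'' and gives no further proof. Your unpacking of the sup--inf conversion, the handling of norms with $\|\n_{\cX,\cL}\|=0$, and the homogenization invariance of $\mab$ and $\|\cdot\|$ (via $\MA(\n)=\MA(\n^{\hom})$) is precisely the intended deduction.
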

This means that divisorial stability is equivalent to Li's notion of \emph{uniform K-stability for models} introduced in~\cite{Li22}.\footnote{Li also incorporates the action of a reductive group; in our case, this group is trivial.} Indeed, a (normal) test configuration is called a \emph{model} in \emph{loc.cit.}  Li assumes $X$ is smooth as his definition relies on continuity of envelopes as in~\cite{nakstabold}, but we can bypass this using~\cite{nakstab1}. We note that Proposition~6.3 in~\cite{Li22} can be used to show that uniform K-stability for models is equivalent to uniform K-stability for filtrations, in the setting considered there.
%
%
\subsection{Divisorial stability vs.~K-stability}\label{sec:divKstab}
We return to the setting of an arbitrary polarized pair $(X,B;L)$. 

Let $\cN_\R^\hom\subset\cN_\R$ denote the set of homogeneous norms.
We have $\cN^\div_\R\subset\cN^\hom_\R$, and 
$$
\cT^\hom_\Q\subset\cN^\div_\Q\subset\cN^\hom_\Q,
$$
where $\cT^\hom_\Q$ denotes the set of rational, homogeneous norms of finite type. 

Homogenization $\n\mapsto\n^{\hom}$ maps $\cT_\Q$ onto $\cT_\Q^\hom\subset\cN^\div_\Q$ (see~\cite[Corollary~2.18]{nakstab1}), and we may thus define the Mabuchi K-energy of $\n\in\cT_\Q$ by $\mab(\n):=\mab(\n^{\hom})$. This is also equal to $\b(\MA(\n^{\hom}))=\b(\MA(\n))$, as $\dd_1(\n,\n^{\hom})=0$ implies $\MA(\n)=\MA(\n^{\hom})$, showing compatibility with Definition~\ref{defi:MN} in the subklt case.

Recall that the Rees construction yields an identification of the set $\cT_\Z$ of $\Z$-valued norms of finite type with that of ample test configurations for $(X,L)$. We denote by $\cT^\inte_\Z\subset\cT_\Z$ the subset corresponding to normal (\ie integrally closed) test configurations; it contains the set $\cT^\hom_\Z$ of homogeneous $\Z$-valued norms of finite type, which correspond to test configurations with reduced central fiber. Homogenization $\n\mapsto\n^{\hom}$ induces a 1--1 map $\cT^\inte_\Z\simto\cT^\hom_\Q$ (see~\cite[Theorem~A.11]{nakstab1}). 

To each $\n\in\cT^\inte_\Z$ is associated its \emph{Donaldson--Futaki invariant} 
$$
\DF(\n)=\DF_{X,B;L}(\n)\in\Q, 
$$
(see for instance~\cite[Definition~3.17]{BHJ1}), and $(X,B;L)$ is said to be 
\begin{itemize}
\item \emph{K-semistable} if $\DF(\n)\ge 0$ for all $\n\in\cT^\inte_\Z$;
\item \emph{uniformly K-stable} if there exists $\e>0$ such that $\DF(\n)\ge\e\|\n\| $ for all $\n\in\cT^\inte_\Z$. 
\end{itemize}
In analogy with Definition~\ref{defi:stabthresh} (see also~\cite[Theorem~1.1]{BLZ}), we introduce: 

\begin{defi} The \emph{K-stability threshold} of $(X,B;L)$ is defined as 
\begin{equation}\label{equ:Kthresh}
\sigma_\Ka(X,B;L):=\inf_{\n\in\cT^\inte_\Z,\,\|\n\| >0}\frac{\DF(\n)}{\|\n\| }.
\end{equation}
\end{defi}
Thus $(X,B;L)$ is K-semistable iff $\sigma_\Ka(X,B;L)\ge 0$, and it is uniformly K-stable iff $\sigma_\Ka(X,B;L)>0$.

\begin{thm}\label{thm:Kthresh} The K-stability threshold of any polarized pair $(X,B;L)$ satisfies
$$
\sigma_\Ka(X,B;L)=\inf_{\n\in\cT^\hom_\Z,\,\|\n\| >0}\frac{\mab(\n)}{\|\n\| }=\inf_{\n\in\cT_\Q,\,\|\n\| >0}\frac{\mab(\n)}{\|\n\| }=\inf_{\n\in\cT_\Q,\,\|\n\|=1}\mab(\n). 
$$
In particular, $\sigma_\Ka(X,B;L)\ge\sigma_\div(X,B;L)$. 
\end{thm}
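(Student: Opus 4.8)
The statement is essentially formal once one has in hand the comparison between the Donaldson--Futaki invariant and the Mabuchi functional, so the plan is to prove the three equalities from left to right and then read off the inequality $\sigma_\Ka\ge\sigma_\div$. The key input I would record first, from \cite{BHJ1}, is that for $\n\in\cT^\inte_\Z$ corresponding to a normal ample test configuration $(\cX,\cL)$ with central fiber $\cX_0$ one has $\DF(\n)=\mab(\n)+V^{-1}((\cX_0-\cX_{0,\mathrm{red}})\cdot\bar\cL^n)$, where $\mab(\n)=\b(\MA(\n))$ as in~\eqref{equ:mabbet} and the correction term is $\ge0$, vanishing exactly when $\cX_0$ is reduced, i.e.\ when $\n\in\cT^\hom_\Z$. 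Thus $\DF=\mab$ on $\cT^\hom_\Z$ and $\DF\ge\mab$ on all of $\cT^\inte_\Z$. Alongside this I would recall the homogenization facts used throughout: $\dd_1(\n,\n^{\hom})=0$ and $\MA(\n^{\hom})=\MA(\n)$, whence $\mab(\n^{\hom})=\mab(\n)$ and $\|\n^{\hom}\|=\|\n\|$, together with the homogeneity $\mab(t\n)=t\mab(\n)$ and $\|t\n\|=t\|\n\|$ for $t\in\Q_{>0}$ (see~\eqref{equ:mabaction} and~\eqref{equ:homogenvee}).

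For the first equality, the bound $\inf_{\n\in\cT^\hom_\Z}\mab(\n)/\|\n\|\ge\sigma_\Ka$ follows by restricting the infimum defining $\sigma_\Ka$ to the subset $\cT^\hom_\Z\subset\cT^\inte_\Z$, on which $\DF=\mab$. For the reverse, given any $\n\in\cT^\inte_\Z$ with $\|\n\|>0$, I clear denominators in $\n^{\hom}\in\cT^\hom_\Q$ to obtain $N\n^{\hom}\in\cT^\hom_\Z$; the invariances above give $\mab(N\n^{\hom})/\|N\n^{\hom}\|=\mab(\n)/\|\n\|\le\DF(\n)/\|\n\|$, the final step by the comparison. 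Taking the infimum over $\n\in\cT^\inte_\Z$ yields $\inf_{\n\in\cT^\hom_\Z}\mab(\n)/\|\n\|\le\sigma_\Ka$, hence equality. The second equality is handled symmetrically: the inclusion $\cT^\hom_\Z\subset\cT_\Q$ gives one inequality, and for any $\n\in\cT_\Q$ the norm $N\n^{\hom}\in\cT^\hom_\Z$ again realizes the same ratio $\mab/\|\cdot\|$, giving the other.

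The third equality rests on the rationality of the minimum norm for finite-type norms: for $\n\in\cT_\Q$ the quantity $\|\n\|=\|\MA(\n)\|_L$ is a linear combination of the intersection numbers $(\bar\cL^{n+1})$ and $(L_{\bar\cX}\cdot\bar\cL^n)$, hence lies in $\Q$. When $\|\n\|>0$ I may therefore rescale by $1/\|\n\|\in\Q_{>0}$, an operation preserving $\cT_\Q$, to reduce to $\|\n\|=1$, and homogeneity of $\mab$ then produces $\inf_{\n\in\cT_\Q,\,\|\n\|=1}\mab(\n)$; this rationality is precisely what is unavailable for general divisorial norms, cf.\ the remark following Theorem~\ref{thm:threshnorm2}. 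Finally, since $\cT^\hom_\Z\subset\cN^\div_\Q$ and $\mab,\|\cdot\|$ are the same functionals on both sides, the infimum computing $\sigma_\Ka=\inf_{\n\in\cT^\hom_\Z}\mab(\n)/\|\n\|$ is taken over a subset of the one computing $\sigma_\div$ in Theorem~\ref{thm:threshnorm}, so $\sigma_\Ka\ge\sigma_\div$.

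The one genuinely non-formal ingredient is the comparison $\DF\ge\mab$---equivalently, that a non-reduced central fiber can only raise the Donaldson--Futaki invariant---together with its equality case; everything else is bookkeeping with homogenization and the $\Q_{>0}$-scaling action. I expect the sign of the correction term and the identification of its vanishing locus with $\cT^\hom_\Z$ to be the points requiring the most care, and I would import both carefully from \cite{BHJ1}.
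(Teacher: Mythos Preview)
Your proof is correct and follows essentially the same route as the paper: the key input is Lemma~\ref{lem:DFmab} (the comparison $\DF\ge\mab$ with equality precisely on $\cT^\hom_\Z$), combined with invariance of $\mab$ and $\|\cdot\|$ under homogenization and their homogeneity under the $\Q_{>0}$-scaling action. The paper organizes the argument as a cycle $\sigma_\Ka\ge\sigma_\Q$, $\sigma_\Z\ge\sigma_\Ka$, $\sigma_\Z=\sigma_\Q$, whereas you prove each equality in both directions, but the ingredients and level of difficulty are identical.
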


\begin{lem}\label{lem:DFmab} Any $\n\in\cT^\inte_\Z$ satisfies $\DF(\n)\ge\mab(\n)$, and equality holds iff $\n\in\cT^\hom_\Z$. 
\end{lem}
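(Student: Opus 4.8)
The plan is to compute both invariants as intersection numbers on the canonical compactification $(\bcX,\bcL)\to\P^1$ of the normal, ample test configuration $(\cX,\cL)$ corresponding to $\n\in\cT^\inte_\Z$, and to trace their difference to the relative log canonical class. Write $\cX_0=\sum_E b_E\,E$ for the central fiber, with $b_E=\ord_E(\cX_0)\ge 1$; then $\MA(\n)=\sum_E m_E\,\d_{v_E}$, where $m_E=V^{-1}b_E\,(E\cdot\cL^n)$ (here $V:=(L^n)$, so that $\sum_E m_E=1$) and $v_E\in X^\div$ is the normalized divisorial valuation attached to $E$. Combining $\mab(\n)=\b(\MA(\n))=\Ent_{X,B}(\MA(\n))+\nabla_{K_{X,B}}\|\MA(\n)\|_L$ with the intersection-theoretic expression for $\nabla_{K_{X,B}}\|\cdot\|_L$ in terms of $(\bcL^{n+1})$ and $((K_{X,B})_{\bcX}\cdot\bcL^n)$ (see~\eqref{equ:nablaE}--\eqref{equ:nablavee}), I would check, exactly as in~\cite[\S7]{BHJ1}, that $\mab(\n)$ is computed by the same formula as the Donaldson--Futaki invariant $\DF(\n)$ of~\cite[Definition~3.17]{BHJ1}, the only difference being that the relative canonical class $K_{\bcX/\P^1}$ entering $\DF$ is replaced in $\mab$ by the relative log canonical class $\Klog_{\bcX/\P^1}=K_{\bcX/\P^1}+\bcX_{0,\redu}-\bcX_0$.

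Granting this, the energy contributions cancel in the difference, leaving only the canonical terms. Since the fiber of $\bcX$ over $\infty$ is reduced (the test configuration being trivial there), one has $K_{\bcX/\P^1}-\Klog_{\bcX/\P^1}=\bcX_0-\bcX_{0,\redu}=\sum_E(b_E-1)\,E$, and hence
\[
\DF(\n)-\mab(\n)=V^{-1}\big((\bcX_0-\bcX_{0,\redu})\cdot\bcL^n\big)=V^{-1}\sum_E(b_E-1)\,(E\cdot\cL^n).
\]
Each summand is $\ge 0$: indeed $b_E-1\ge 0$, while $(E\cdot\cL^n)=((\cL|_E)^n)>0$ since $\cL$ is ample and $E$ is an $n$-dimensional component of $\cX_0$. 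This yields $\DF(\n)\ge\mab(\n)$. The difference vanishes iff $(b_E-1)(E\cdot\cL^n)=0$ for every $E$, hence---using $(E\cdot\cL^n)>0$---iff $b_E=1$ for all $E$, i.e.\ iff the central fiber $\cX_0$ is reduced. By the correspondence recalled before the statement between reduced central fibers and homogeneous finite-type norms, this happens precisely when $\n\in\cT^\hom_\Z$, which settles the equality case.

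The one genuinely delicate point is the first step: identifying $\mab(\n)=\b(\MA(\n))$---defined here analytically through the energy pairing and the derivative $\nabla_{K_{X,B}}\|\cdot\|_L$---with the log-canonical version of the classical $\DF$. This requires matching the normalized valuations $v_E$ and the weights $b_E$ entering $\MA(\n)$ against the entropy term of $\DF$, via the standard relation between $\ld_{(X,B)\times\A^1}(\ord_E)$ and $b_E,\,\ld_{X,B}(v_E)$ from~\cite[\S4]{BHJ1}, together with~\eqref{equ:mabbet} and~\eqref{equ:mabfunc}. Once both invariants are written on the single model $\bcX$, the comparison collapses to the term $\bcX_0-\bcX_{0,\redu}$, and the remaining positivity argument above is routine.
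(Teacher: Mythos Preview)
Your proposal is correct and follows essentially the same approach as the paper. The paper's proof simply cites the identity $\DF(\n)=\mab(\n)+V^{-1}((\cX_0-\cX_{0,\redu})\cdot\cL^n)$ directly from~\cite[(7.7)]{BHJ1} and then invokes ampleness of $\cL$, whereas you unpack both how this identity arises (via the comparison $K_{\bcX/\P^1}$ versus $\Klog_{\bcX/\P^1}$) and why ampleness forces each term $(E\cdot\cL^n)$ to be strictly positive.
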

\begin{proof} Let $(\cX,\cL)$ be the normal, ample test configuration that corresponds to $\n$. By~\cite[(7.7)]{BHJ1} we have $\DF(\n)=\mab(\n)+V^{-1}(\cX_0-\cX_{0,\redu})\cdot\cL^n$. Thus $\DF(\n)\ge\mab(\n)$, and equality holds iff $(\cX_0-\cX_{0,\redu})\cdot\cL^n=0\Leftrightarrow\cX_0=\cX_{0,\redu}\Leftrightarrow\n\in\cT^\hom_\Z$, since $\cL$ is ample. 
\end{proof}

\begin{proof}[Proof of Theorem~\ref{thm:Kthresh}] Set 
$$
\sigma_\Ka:=\sigma_\Ka(X,B;L),\quad\sigma_\Q:=\inf_{\n\in\cT_\Q,\,\|\n\| >0}\frac{\mab(\n)}{\|\n\| },\quad\sigma_\Z:=\inf_{\n\in\cT^\hom_\Z,\,\|\n\| >0}\frac{\mab(\n)}{\|\n\| }.
$$
For any $\n\in\cT^\inte_\Z\subset\cT_\Q$, Lemma~\ref{lem:DFmab} yields $\DF(\n)\ge\mab(\n)\ge\sigma_\Q\|\n\| $, and hence $\sigma_\Ka\ge\sigma_\Q$. For $\n\in\cT^\hom_\Z\subset\cT^\inte_\Z$, it yields, on the other hand, $\mab(\n)=\DF(\n)\ge\sigma_\Ka\|\n\| $, and hence $\sigma_\Z\ge\sigma_\Ka$.  For any $\n\in\cT^\hom_\Q$, $d\n$ lies in $\cT^\hom_\Z$ for $d\in\Z_{>0}$ sufficiently divisible. By homogeneity of $\mab$ and $\|\cdot\|$, we infer $\sigma_\Z=\inf_{\n\in\cT^\hom_\Q,\,\|\n\| >0}\frac{\mab(\n)}{\|\n\| }$. Since $\mab$ and $\|\cdot\|$ are both invariant under homogenization, this is also equal to $\sigma_\Q$. By homogeneity, $\sigma_\Q$ is also equal to $\inf_{\n\in\cT_\Q,\,\|\n\|=1}\mab(\n)$, since $\cT_\Q$ is preserved by the scaling action of $\Q_{>0}$ and the minimum norm $\|\n\|$ of any $\n\in\cT_\Q$ is rational. The proof is complete. 
\end{proof}

We conjecture that equality holds in Theorem~\ref{thm:Kthresh}: 

\begin{conj}\label{conj:sigma} For any polarized pair $(X,B;L)$  we have  
$\sigma_\div(X,B;L)=\sigma_\Ka(X,B;L)$. In particular, $(X,B;L)$ is divisorially semistable (resp.~divisorially stable) iff it is K-semistable (resp.~uniformly K-stable). 
\end{conj}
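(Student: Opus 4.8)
The plan is to establish the reverse of the inequality $\sigma_\Ka(X,B;L)\ge\sigma_\div(X,B;L)$ already recorded in Theorem~\ref{thm:Kthresh}, since the equality of these two thresholds is exactly the content of the conjecture. By Theorem~\ref{thm:Kthresh} the K-stability threshold is computed as an infimum of $\mab(\n)/\|\n\|$ over norms $\n\in\cT_\Q$ of finite type, whereas by Theorem~\ref{thm:threshnorm} the divisorial threshold is computed over the strictly larger class $\cN^\div_\Q$ of rational divisorial norms. Since $\cT^\hom_\Q\subset\cN^\div_\Q$, the inequality $\sigma_\Ka\ge\sigma_\div$ is automatic, and the substance of the conjecture is that enlarging the test family from finite-type norms to all divisorial norms does not lower the infimum.

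First I would fix a rational divisorial norm $\n\in\cN^\div_\Q$ with $\|\n\|>0$ and introduce its canonical degree-one approximants $\n_d$, each of which lies in $\cT_\Q$ and satisfies $\dd_1(\n_d,\n)\to 0$. The strategy is then to show that the ratio $\mab(\n_d)/\|\n_d\|$ converges to $\mab(\n)/\|\n\|$; since each ratio is bounded below by $\sigma_\Ka$, this yields $\mab(\n)/\|\n\|\ge\sigma_\Ka$, and taking the infimum over $\n\in\cN^\div_\Q$ gives $\sigma_\div\ge\sigma_\Ka$ as desired. By continuity of the Monge--Amp\`ere operator $\MA\colon\cN_\R\to\cM^1$ (for the $\dd_1$-topology on the source and the strong topology on $\cM^1$), the convergence $\dd_1(\n_d,\n)\to 0$ forces $\MA(\n_d)\to\MA(\n)$ strongly; the strong continuity of the minimum norm $\|\cdot\|=\|\MA(\cdot)\|_L$ and of the derivative functional $\nabla_{K_{X,B}}\|\cdot\|_L$ (Proposition-Definition~\ref{prop:effehold}) then immediately gives $\|\n_d\|\to\|\n\|$ and $\nabla_{K_{X,B}}\|\MA(\n_d)\|_L\to\nabla_{K_{X,B}}\|\MA(\n)\|_L$.

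It therefore remains to control the entropy term $\Ent_{X,B}(\MA(\n_d))$. The entropy is only lower semicontinuous for the strong topology, so the bound $\liminf_d\Ent_{X,B}(\MA(\n_d))\ge\Ent_{X,B}(\MA(\n))$ comes for free, but the reverse bound $\limsup_d\Ent_{X,B}(\MA(\n_d))\le\Ent_{X,B}(\MA(\n))$ is precisely the entropy regularization conjecture stated in the introduction. Granting it, $\mab(\n_d)=\Ent_{X,B}(\MA(\n_d))+\nabla_{K_{X,B}}\|\MA(\n_d)\|_L$ converges to $\mab(\n)$, which closes the argument.

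The hard part---indeed the only genuinely open part---is this entropy regularization statement, and I do not expect a soft argument to suffice. In the log Fano case $L\equiv-K_{X,B}$ one can bypass it entirely: by Corollary~\ref{cor:divding} and Corollary~\ref{cor:normding} both thresholds reduce to the $\d$-invariant, so the conjecture holds unconditionally there, a reduction that ultimately rests on the MMP input allowing arbitrary test configurations to be replaced by special ones. For general polarizations no such MMP machinery is available, and the expected route is to prove directly that passing from a divisorial norm to its finite-type approximants creates no extra log-discrepancy mass in the limit---equivalently, that the Monge--Amp\`ere measures $\MA(\n_d)$ do not concentrate on valuations of larger discrepancy than those occurring in $\MA(\n)$. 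Establishing this comparison of entropies along canonical approximants is where the real difficulty lies.
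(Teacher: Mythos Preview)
The statement in question is a \emph{conjecture}, not a theorem: the paper does not prove it, and explicitly leaves it open. Your proposal is therefore not a proof but a reduction, and as such it matches the paper's own discussion exactly. The paper states (immediately after Conjecture~\ref{conj:sigma}) that it would follow from Conjecture~\ref{conj:entapp}, the entropy regularization conjecture asserting $\Ent_{X,B}(\MA(\n_d))\to\Ent_{X,B}(\MA(\n))$ for the canonical approximants of a rational divisorial norm; this is precisely the step you isolate as ``the only genuinely open part''. Your argument that $\|\n_d\|\to\|\n\|$ and $\nabla_{K_{X,B}}\|\MA(\n_d)\|_L\to\nabla_{K_{X,B}}\|\MA(\n)\|_L$ follow from $\dd_1(\n_d,\n)\to 0$ via strong continuity is also exactly what the paper records in the paragraph following Conjecture~\ref{conj:entapp}. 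You have correctly identified the obstruction, but you have not resolved it, and neither does the paper.
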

Note that the last part holds in the log Fano case, see Corollary~\ref{cor:divding}. 

\smallskip

Assuming $k=\C$, $X$ smooth and $B=0$, the last point of Conjecture~\ref{conj:sigma} would complete the proof of the `uniform version' of the Yau--Tian--Donaldson conjecture, to wit
$$
(X,L)\text{ uniformly K-stable }\Longleftrightarrow (X,L)\text{ uniquely cscK},
$$
where uniquely cscK means that $c_1(L)$ contains a unique K\"ahler form of constant scalar curvature---uniqueness being equivalent to the triviality of the so-called reduced automorphism group $\Aut^0(X,L)/\C^\times$. Indeed, the main result of~\cite{Li22} (combined with~\S\ref{sec:uKstabmod}) and~\cite{BDL20,BHJ2} respectively yield
$$
(X,L)\text{ divisorially stable }\Longrightarrow (X,L)\text{ uniquely cscK}\Longrightarrow (X,L)\text{ uniformly K-stable}. 
$$
Conjecture~\ref{conj:sigma} would in turn follow from the following: 

\begin{conj}\label{conj:entapp} Pick $\n\in\cN^\div_\Q$, and denote by $(\n_d)$ its sequence of \emph{canonical approximants}, where $\n_d$ is the norm on $R(X,dL)$ generated in degree $1$ by $\n|_{R_d}$, for $d$ sufficiently divisible. Then $\mab(\n_d)\to\mab(\n)$. 
\end{conj}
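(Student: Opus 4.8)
The plan is to isolate the genuinely hard part, which is the behaviour of the \emph{entropy} term. Writing $\mab(\n)=\Ent_{X,B}(\MA(\n))+\nabla_{K_{X,B}}\|\MA(\n)\|_L$ and likewise for each $\n_d$, I would first record the standard fact (from~\cite{BC,nakstab1}) that the canonical approximants converge to $\n$ in the pseudometric $\dd_1$; this is the defining feature of the approximation, equivalent to convergence of the associated Okounkov-body/Duistermaat--Heckman measures, and in particular of volumes. By the $\dd_1$-continuity of $\MA\colon\cN_\R\to\cM^1$ this yields $\MA(\n_d)\to\MA(\n)$ strongly in $\cM^1$, and the strong continuity of $\nabla_{K_{X,B}}\|\cdot\|_L$ (Proposition-Definition~\ref{prop:effehold}) then forces the energy terms to converge, $\nabla_{K_{X,B}}\|\MA(\n_d)\|_L\to\nabla_{K_{X,B}}\|\MA(\n)\|_L$. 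Hence $\mab(\n_d)\to\mab(\n)$ is \emph{equivalent} to the entropy statement $\Ent_{X,B}(\MA(\n_d))\to\Ent_{X,B}(\MA(\n))$ (the entropy regularization conjecture stated in the introduction), and everything reduces to this.

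Since the entropy functional is the maximal strongly lsc extension (see~\S\ref{sec:extent}) and $\MA(\n_d)\to\MA(\n)$ strongly, one inequality is free:
\begin{equation*}
\liminf_d\Ent_{X,B}(\MA(\n_d))\ge\Ent_{X,B}(\MA(\n)).
\end{equation*}
The whole difficulty is therefore the upper bound $\limsup_d\Ent_{X,B}(\MA(\n_d))\le\Ent_{X,B}(\MA(\n))$, i.e.\ \emph{upper} semicontinuity of the entropy along this particular sequence. As a first reduction I would pass to a log resolution $\pi\colon X'\to X$ and set $B':=\pi^\star K_{X,B}-K_{X'}$, so that $(X',B')$ is again subklt and $\pi^\star\ld_{X,B}=\ld_{X',B'}$ on $X'^{\val}$ (Proposition~\ref{prop:ldvalbir}); the identity $\Ent_{X,B}(\pi_\star\mu')=\Ent_{X',B'}(\mu')$ used in the proof of Theorem~\ref{thm:entapprox} then lets me assume $X$ smooth, where the log discrepancy is controlled by the dual-complex machinery of Lemma~\ref{lem:approx} and Lemma~\ref{lem:ldlisse}.

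On a smooth model the natural attack is to compare $\MA(\n_d)$ with the retracted measures $\mu_\cX=(p_\cX)_\star\mu$ appearing in the proof of Theorem~\ref{thm:entapprox}, for which $\ld_X\circ p_\cX\le\ld_X$ produces exactly the sort of entropy upper bound one wants. The content of Theorem~\ref{thm:entapprox} is that \emph{some} divisorial sequence approximates $\mu=\MA(\n)$ in energy and entropy simultaneously; the present conjecture asserts that the \emph{canonical} approximants $\MA(\n_d)$ already do so. Concretely, $\n_d$ is of finite type, hence corresponds to an ample test configuration $(\cX_d,\cL_d)$, and $\MA(\n_d)=\sum_E m_E\,\d_{v_E}$ with $E$ running over the central-fibre components of the normalization; I would try to bound $\sum_E m_E\ld_{X,B}(v_E)$ by relating the divisors $E$, and their log discrepancies, to the fixed finite set $\Sigma$ underlying $\n=\min_{v\in\Sigma}\{\n_v+c_v\}$, exploiting that $\n_d\le\n$ with equality on $R_d$ to constrain the geometry of $\cX_d$ (for instance through an Izumi-type estimate tying $\ld_{X,B}(v_E)$ to $\|v_E\|$, together with the fact that mass escaping to valuations far from $\Sigma$ becomes negligible).

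The main obstacle is precisely this last step. Strong convergence in $\cM^1$ controls only the \emph{energy} $\|\cdot\|_L$ of $\MA(\n_d)$, and Example~\ref{exam:Tcurve} shows that mass can migrate onto valuations of large log discrepancy while the energy stays bounded and the measures converge weakly; there is no a priori reason that the \emph{specific} valuations $v_E$ produced by the test configurations $\cX_d$ avoid this pathology. Making the comparison with the retractions $\mu_\cX$ rigorous thus requires a genuinely new estimate controlling $\sum_E m_E\ld_{X,B}(v_E)$ for canonical approximants — equivalently, ruling out that the degree-$1$ generation process creates, in the limit, extra entropy concentrated on high-discrepancy divisors. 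This is where I expect the real work to lie, and why the statement remains conjectural; absent such an estimate, only the lsc inequality above is unconditional.
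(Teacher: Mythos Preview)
The statement is a \emph{conjecture}, and the paper does not prove it. What the paper does supply is precisely the reduction you carry out in your first paragraph: using $\dd_1(\n_d,\n)\to 0$ (from~\cite[Theorem~3.18]{nakstab1}) together with the $\dd_1$-continuity of $\MA$ and the strong continuity of $\nabla_{K_{X,B}}\|\cdot\|_L$, the paper observes that $\mab(\n_d)\to\mab(\n)$ is \emph{equivalent} to $\Ent_{X,B}(\MA(\n_d))\to\Ent_{X,B}(\MA(\n))$, and stops there. Your reduction is identical.

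Everything past your first paragraph --- the lsc lower bound, the passage to a log resolution, the attempt to compare $\MA(\n_d)$ with the retracted measures $(p_\cX)_\star\mu$ from Theorem~\ref{thm:entapprox}, and the honest assessment that Example~\ref{exam:Tcurve} obstructs any soft argument --- goes beyond what the paper attempts. You correctly identify that the missing ingredient is an upper bound on $\sum_E m_E\ld_{X,B}(v_E)$ for the central-fibre components of the test configurations associated to $\n_d$, and you are right that no such estimate is currently available. So your proposal is not a proof, but you say so yourself; as an analysis of where the difficulty lies it is accurate and in line with the paper's own position.
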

Conjecture~\ref{conj:entapp} can be viewed as a more precise version of~\cite[Conjecture~2.5]{nakstabold} (see also~\cite[Conjecture~4.4]{Li23}). Indeed, it is equivalent to requiring 
$$
\Ent_{X,B}(\mu_d)\to\Ent_{X,B}(\mu)
$$
with $\mu_d:=\MA(\n_d)$ and $\mu:=\MA(\n)$. Indeed, $\mab(\n_d)=\Ent_{X,B}(\mu_d)+\nabla_{K_{X,B}}\|\mu_d\|_L$, $\mab(\n)=\Ent_{X,B}(\mu)+\nabla_{K_{X,B}}\|\mu\|_L$, see~\eqref{equ:mabnorm}, where $\dd_1(\n_d,\n)\to 0$ (see~\cite[Theorem~3.18]{nakstab1}), which implies $\mu_d\to\mu$ strongly, and hence $\nabla_{K_{X,B}}\|\mu_d\|_L\to\nabla_{K_{X,B}}\|\mu\|_L$. 

\medskip

Conjecture~\ref{conj:entapp} is trivially true when $X$ is a curve, or in the toric case, if we restrict to toric norms. Indeed, in both cases $\n$ will automatically be of finite type, and hence equal to $\n_d$ for all $d$ sufficiently divisible.

\begin{rmk} In his pioneering work on K-stability for filtrations, G.~Sz\'ekelyhidi defined the Donaldson--Futaki invariant of a norm/filtration as $\DF(\n):=\liminf_{d\to\infty}\DF(\n_d)$, see~\cite[Definition~4]{Sze}. Following~\cite{BHJ1}, it is perhaps more natural to replace $\DF$ with its homogeneous counterpart $\mab$, and hence to consider instead $\liminf_{d\to\infty}\mab(\n_d)$; this is greater than $\mab(\n)$, by lower semicontinuity of~\eqref{equ:MN}, and Conjecture~\ref{conj:entapp} predicts that equality holds. 
\end{rmk}
%
%
%
\appendix
%
\section{Log discrepancy and dual complexes}\label{sec:logdisc}
The goal of this appendix is to review the extension of the log discrepancy function on a projective klt pair from the set of divisorial valuations to the whole Berkovich space. We also describe this extension using dual complexes of test configurations.
%
%
\subsection{The log discrepancy function on valuations}\label{sec:ldval}
First let $(X,B)$ be an arbitrary pair, that is, $X$ is a normal projective variety, and $B$ is a $\Q$-Weil divisor such that $K_{X,B}:=K_X+B$ is $\Q$-Cartier. We wish to extend the classical log discrepancy function
\begin{equation}\label{equ:lddiv2}
  \ld_{X,B}\colon X^\div\to\Q
\end{equation}
to $X^\val$ in a canonical way. To this end, we will prove
\begin{lem}\label{lem:ldlsc}
  The log discrepancy function $\ld_{X,B}\colon X^\div\to\Q$ is lsc.
\end{lem}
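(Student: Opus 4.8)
The plan is to realize $\ld_{X,B}$ as a supremum of continuous functions on $X^\div$, which is automatically lower semicontinuous. First I would reduce to the log smooth case. Choosing a log resolution $\mu\colon X'\to X$ of $(X,B)$ and setting $B':=K_{X'}-\mu^\star K_{X,B}$ (which makes sense since $K_{X,B}$ is $\Q$-Cartier), the pair $(X',B')$ is log smooth and crepant over $(X,B)$, so that $\ld_{X,B}=\ld_{X',B'}$ as functions on the common set of divisorial valuations $X^\div=X'^{\div}$ of $k(X)=k(X')$. Since the topology on $X^\div$ is that of pointwise convergence on $k(X)^\times$, it is intrinsic to the function field; hence it suffices to prove the statement for the log smooth pair $(X',B')$, which I rename $(X,B)$.

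Next, for each snc test configuration $\cX$ for $X$ I would use the associated monomial retraction $p_\cX\colon X^\an\to\D_\cX$ onto the dual complex $\D_\cX\hto X^\an$, as recalled in~\S\ref{sec:prelim}, together with the snc/monomial calculus of~\cite{JM,BdFFU}. Writing $(E_i)_i$ for the relevant divisors of $\cX$, a valuation monomial with respect to $(E_i)$ with weights $v(E_i)=t_i\ge 0$ has log discrepancy given by the affine formula $\ld_{X,B}(v)=\sum_i t_i\,\ld_{X,B}(\ord_{E_i})$. Consequently $\ld_{X,B}\circ p_\cX$ is continuous on $X^\an$ (it is piecewise affine on the polyhedral complex $\D_\cX$, and $p_\cX$ is continuous), and one has the monotonicity $\ld_{X,B}\circ p_\cX\le\ld_{X,B}$, reflecting that log discrepancy does not increase under monomialization. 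I would then observe that, up to the scaling action, every $v\in X^\div$ arises as a vertex valuation $\ord_{E_i}$ for a suitable snc test configuration $\cX$, for which $p_\cX(v)=v$ and hence $\ld_{X,B}\circ p_\cX(v)=\ld_{X,B}(v)$. Combining this with the monotonicity over all such $\cX$ yields $\ld_{X,B}=\sup_\cX(\ld_{X,B}\circ p_\cX)$ on $X^\div$, a supremum of continuous functions, and therefore lower semicontinuous.

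The main obstacle is the pair of structural inputs on a fixed snc test configuration: the continuity of the retraction $p_\cX$ on $X^\div$ with its subspace topology, and the monotonicity $\ld_{X,B}\circ p_\cX\le\ld_{X,B}$ together with the affine formula for monomial valuations. Both follow from the snc/monomial calculus of~\cite{JM,BdFFU}, but they are precisely where the geometry enters; the remainder of the argument is formal. A secondary point to handle carefully is that $K_X$ need not itself be $\Q$-Cartier, so $\ld_{X,B}$ cannot be split as $\ld_X-v(B)$; this is why I keep $K_{X,B}$ together throughout and carry out the crepant reduction using the $\Q$-Cartier class $K_{X,B}$ rather than separating off $B$.
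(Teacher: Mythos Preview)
Your strategy---reduce to a log smooth situation and then exhibit the log discrepancy as a supremum of continuous functions via dual-complex retractions---is sound in spirit and is essentially how lower semicontinuity is established in~\cite{JM}. There is, however, a genuine gap in the step where you insist on keeping the boundary $B$ after the crepant reduction.

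Once you have passed to the log smooth pair (renamed $(X,B)$, with $X$ smooth), the monotonicity $\ld_{X,B}\circ p_\cX\le\ld_{X,B}$ need \emph{not} hold for an arbitrary snc test configuration $\cX$ when $B\ne 0$. For a concrete failure, take $X=\P^1$, $B=2[0]$, and the trivial test configuration; then $\D_\cX=\{v_\triv\}$ and $p_\cX\equiv v_\triv$, so $\ld_{X,B}\circ p_\cX\equiv 0$, while $\ld_{X,B}(\ord_0)=1-2=-1<0$. The inequality you invoke (``log discrepancy does not increase under monomialization'') is a property of $\ld_X$, not of $\ld_{X,B}$; for the pair it would require that $\cX$ be adapted to $B$ (the strict transform of $B\times\A^1$ entering the snc data), which you do not impose. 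Incidentally, your formula for $B'$ has a sign slip: one needs $B'=\mu^\star K_{X,B}-K_{X'}$, so that $K_{X'}+B'=\mu^\star K_{X,B}$.

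The fix is exactly what the paper does, and it is shorter than your route. After passing to a smooth model $Y$, the canonical divisor $K_Y$ \emph{is} Cartier, so one may split: the paper writes $\ld_{X,B}=\ld_Y+\tau_Y$ on $X^\div=Y^\div$, where $\tau_Y(v)=v(K_{Y/(X,B)})$ is continuous, and then simply cites~\cite[Lemma~5.7]{JM} for the lower semicontinuity of $\ld_Y$. Your reluctance to separate off $B$ is only warranted on the original, possibly singular, $X$; once on a smooth model there is no obstruction, and doing so removes the need to drag $B$ through the retraction calculus. If you prefer to keep your test-configuration presentation, apply it to $\ld_X$ (with $B=0$) after the reduction---then the affine formula on faces and the monotonicity are precisely those of~\cite{JM}, recalled in the appendix as Theorem~\ref{thm:lddual}---and add back the continuous correction at the end.
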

Granted this result, we can define the extension as follows.
\begin{defi}\label{defi:ldval}
  We define the log discrepancy $\ld_{X,B}\colon X^\val\to\R\cup\{\pm\infty\}$ as the greatest lsc extension of $\ld_{X,B}\colon X^\div\to\R$. In other words, we set
  \begin{equation}\label{equ:ldval}
    \ld_{X,B}(v):=\liminf_{w\in X^\div,\,w\to v} \ld_{X,B}(w)
  \end{equation}
  for all $v\in X^\val$.
\end{defi}
In fact, $\ld_{X,B}$ never takes the value $-\infty$, see Proposition~\ref{prop:ldvalbir} below.

To prove Lemma~\ref{lem:ldlsc}, and to get a more precise understanding of the log discrepancy function, we rely on~\cite{JM}, which gave a different way of extending the log discrepancy in the case when $X$ is smooth and $B=0$; see also~\cite{hiro,BdFFU}.

By a \emph{log smooth} pair $(Y,D)$ over $(X,B)$, we mean the data of 
a birational morphism $\pi\colon Y\to X$ with $Y$ smooth and projective, and 
$D$ a reduced snc divisor on $Y$ such that $\pi$ is an isomorphism outside $D$, and the support of $B$ is contained in the image of $D$.

Let $\fM=\fM_{X,B}$ be the set of isomorphism classes of log smooth pairs $(Y,D)$ over $(X,B)$.
Given $(Y,D)\in\fM$, the $\Q$-divisor $K_{Y/(X,B)}:=K_Y-\pi^\star K_{X,B}$ on $Y$
has support contained in $D$.
Any divisorial valuation $v\in X^\div$ is of the form $v=t\ord_E$, where $t\in\Q_{\ge0}$ and $E$ is an irreducible component of $D$, for some $(Y,D)\in\fM$, and we then have \begin{equation*}
  \ld_{X,B}(v)=t(1+\ord_E(K_{Y/(X,B)})). 
 \end{equation*}
Given $(Y,D)\in\fM$, let $\tau_Y$ be the continuous function on $X^\val=Y^\val$ given by
 $\tau_Y(v)=v(K_{Y/(X,B)})$.
 It follows that
\begin{equation}\label{equ:lds}
  \ld_{X,B}=\ld_Y+\tau_Y\ \text{on $X^\div=Y^\div$}.
\end{equation}
\begin{proof}[Proof of Lemma~\ref{lem:ldlsc}]
  By~\cite[Lemma~5.7]{JM}, the log discrepancy function $\ld_Y$ is lsc on $Y^\div$. As $\tau_Y$ is continuous,~\eqref{equ:lds} implies that $\ld_{X,B}$ is lsc on $X^\div=Y^\div$.
\end{proof}
\begin{prop}\label{prop:ldval}
  When $X$ is smooth, the log discrepancy function $\ld_X\colon X^\val\to\R$ in Definition~\ref{defi:ldval} coincides with the one defined in~\cite{JM}.
\end{prop}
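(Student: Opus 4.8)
The plan is to show that the log discrepancy of~\cite{JM}, which I will temporarily denote $\ld_X^{\mathrm{JM}}\colon X^\val\to[0,+\infty]$, coincides with the greatest lsc extension $\ld_X$ produced by Definition~\ref{defi:ldval}. I would first record the three properties of $\ld_X^{\mathrm{JM}}$ established in~\cite{JM} that the argument needs: (a) $\ld_X^{\mathrm{JM}}$ is lsc and restricts to the classical log discrepancy~\eqref{equ:lddiv2} on $X^\div$; (b) $\ld_X^{\mathrm{JM}}(v)=\sup_{(Y,D)}\ld_X^{\mathrm{JM}}(r_Y(v))$, where $(Y,D)$ ranges over the directed set $\fM_X$ of log smooth models over $X$, $r_Y\colon X^\val\to\Delta_Y$ denotes the retraction onto the associated dual complex, the net $(r_Y(v))$ converges to $v$, and the supremum is attained as an increasing limit; and (c) the restriction of $\ld_X^{\mathrm{JM}}$ to each dual complex $\Delta_Y$ is given by the monomial formula, hence is continuous and agrees with the classical log discrepancy at the (dense) set of divisorial points of $\Delta_Y$.

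The easy half is the inequality $\ld_X^{\mathrm{JM}}\le\ld_X$. By~(a), $\ld_X^{\mathrm{JM}}$ is an lsc extension of the classical log discrepancy $\ld_X\colon X^\div\to\Q$, which is itself lsc by Lemma~\ref{lem:ldlsc}. Since the function of Definition~\ref{defi:ldval} is, by construction, the greatest lsc extension, namely $\ld_X(v)=\liminf_{w\to v,\,w\in X^\div}\ld_X(w)$, the standard fact that every lsc extension $g$ satisfies $g(v)\le\liminf_{w\to v,\,w\in X^\div}g(w)$ forces $\ld_X^{\mathrm{JM}}\le\ld_X$ pointwise on $X^\val$.

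For the reverse inequality I would argue directly. Fix $v\in X^\val$ and $c\in\R$ with $c<\ld_X(v)=\liminf_{w\to v,\,w\in X^\div}\ld_X(w)$; there is then an open neighborhood $U\ni v$ with $\ld_X(w)\ge c$ for every divisorial $w\in U$. By~(b) the retractions satisfy $r_Y(v)\to v$, so $r_Y(v)\in U$ for $(Y,D)\in\fM_X$ large enough. As $U$ is open and the divisorial points of $\Delta_Y$ are dense, I can choose divisorial valuations $w_k\in\Delta_Y\cap U$ with $w_k\to r_Y(v)$; then $\ld_X(w_k)\ge c$, and by~(c) the continuity of $\ld_X^{\mathrm{JM}}|_{\Delta_Y}$ gives $\ld_X^{\mathrm{JM}}(r_Y(v))=\lim_k\ld_X(w_k)\ge c$. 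Since $\ld_X^{\mathrm{JM}}(v)\ge\ld_X^{\mathrm{JM}}(r_Y(v))$ by~(b), we obtain $\ld_X^{\mathrm{JM}}(v)\ge c$, and letting $c\uparrow\ld_X(v)$ (taking $c\to+\infty$ when $\ld_X(v)=+\infty$) yields $\ld_X^{\mathrm{JM}}(v)\ge\ld_X(v)$. Combining the two inequalities proves the proposition.

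The main obstacle is not conceptual but one of faithfully extracting properties (b) and (c) from~\cite{JM} in exactly the form used here: the convergence $r_Y(v)\to v$ of the retractions, the monotonicity of the net $(\ld_X^{\mathrm{JM}}(r_Y(v)))$, and the continuity of the log discrepancy along each dual complex together with the density of its divisorial points. One must also check the minor points that the approximants $w_k$ can be kept inside $U$ (automatic once $r_Y(v)\in U$ and $U$ is open) and that the value $\ld_X(v)=+\infty$ is correctly handled by letting $c\to+\infty$.
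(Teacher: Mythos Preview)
Your proof is correct and follows essentially the same approach as the paper: both use that $\ld_X^{\mathrm{JM}}$ is lsc to get $\ld_X^{\mathrm{JM}}\le\ld_X$, and for the reverse inequality both exploit the retractions onto dual complexes, the density of divisorial points therein, and the continuity of $\ld_X^{\mathrm{JM}}$ along each complex. The paper packages the second step a bit more compactly by observing globally that $\ld_X^{\mathrm{JM}}\circ p_{Y,D}$ is continuous while $\ld_X\circ p_{Y,D}$ is lsc, with the two agreeing on the dense set $X^\div$, hence $\ld_X^{\mathrm{JM}}\circ p_{Y,D}\ge\ld_X\circ p_{Y,D}$; your pointwise $\varepsilon$-argument is just an unpacking of this same comparison.
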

To prove this, we need to recall the construction in~\cite{JM}. 
Given $(Y,D)\in\fM$, the dual cone complex $\hD(Y,D)$ embeds in an $\R_{>0}$-equivariant way into $Y^\val=X^\val$.
The subset $X^\div\subset X^\val$ is the union of the rational points of $\hD(Y,D)$ over all log smooth pairs.
For any log smooth pair $(Y,D)$ of $X$, there is also a continuous retraction 
$$
p_{Y,D}\colon X^\val\to\hD_{Y,D} 
$$
satisfying 
$p_{Y,D}(X^\div)\subset X^\div$.

We say that a log smooth pair $(Y',D')$ over $X$ dominates another, $(Y,D)$, if the canonical birational map $Y'\dashrightarrow Y$ is a morphism and $D'$ contains the support of the pullback of $D$. In this case, $\hD_{Y,D}\subset\hD_{Y',D'}$ and $p_{Y,D}\circ p_{Y',D'}=p_{Y,D}$. 
This turns the set of (isomorphism classes of) log smooth pairs into a directed set, and the retractions $p_{Y,D}$ induce a homeomorphism 
\begin{equation}\label{equ:berkhomeo1}
X^\val\simto\varprojlim_{Y,D}\hD_{Y,D},
\end{equation}
see~\cite[Theorem 4.9]{JM}. In particular, $\lim_{Y,D}p_{Y,D}=\id$ pointwise on $X^\val$.

Let us temporarily write $\ld'_X\colon X^\val\to\R_{\ge0}\cup\{+\infty\}$ for the log discrepancy function in~\cite[\S5]{JM}. It is characterized by the following properties:
\begin{itemize}
\item[(a)]
$\ld'_X=\ld_X$ on $X^\div$;
\item[(b)]
  for any $(Y,D)\in\fM$, the restriction of $\ld'_X$ to $\hD_{Y,D}$ is continuous, and integral linear on each cone; furthermore, 
\item[(c)]
  for any $(Y,D)\in\fM$, $\ld'_X\ge\ld'_X\circ p_{Y,D}$ on $X^\val$, with equality precisely on $\hD_{Y,D}$;
\item[(d)]
  $\ld'_X=\lim_{Y,D}\ld'_X\circ p_{Y,D}$.
\end{itemize}
\begin{proof}[Proof of Proposition~\ref{prop:ldval}]
  We must prove that $\ld_X=\ld'_X$ on $X^\val$. It follows from~(b) and~(d) that $\ld'_X$ is lsc, so $\ld'_X\le\ld_X$ by the definition of $\ld_X$. On the other hand, for each $(Y,D)\in\fM$, the function $\ld'_X\circ p_{Y,D}$ is continuous, whereas $\ld_X\circ p_{Y,D}$ is lsc. As the two functions agree on the dense subset $X^\div$, we have $\ld'_X\circ p_{Y,D}\ge\ld_X\circ p_{Y,D}$. This implies
  \begin{equation*}
    \ld'_X
    =\lim_{Y,D}\ld'_X\circ p_{Y,D}
    \ge\varliminf_{Y,D}\ld_X\circ p_{Y,D}
    \ge\ld_X,
  \end{equation*}
  by~(c) and the lower semicontinuity of $\ld_X$. The proof is complete.
\end{proof}

The log discrepancy functions satisfy the following expected properties.

\begin{prop}\label{prop:ldvalbir}
  Let $(X,B)$ be any pair. Then:
  \begin{itemize}
  \item[(a)]
    $\ld_{X,B}>-\infty$ on $X^\val$;
  \item[(b)]
    for any  $\Q$-Cartier divisor $D$ on $X$, we have $\ld_{X,B+D}=\ld_{X,B}-\p_D$ on $X^\val$, where $\p_D(v):=v(D)$;
  \item[(c)]
    for any projective birational morphism $\pi\colon X'\to X$, we have $\ld_{X,B}=\ld_{X',B'}$ on $X^\val=X^{\prime\val}$, where $B'=\pi^\star K_{X,B}-K_{X'}$.
  \end{itemize}
\end{prop}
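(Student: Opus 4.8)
The plan is to derive all three statements from a single soft principle, namely that the passage to the greatest lsc extension (Definition~\ref{defi:ldval}) commutes with the addition of a continuous function. Concretely, if $\phi\colon X^\div\to\R$ is any function with greatest lsc extension $\bar\phi$, and $g\colon X^\val\to\R$ is continuous, then $\liminf_{w\to v}(\phi(w)+g(w))=\bar\phi(v)+g(v)$ for every $v\in X^\val$ by continuity of $g$; hence the greatest lsc extension of $(\phi+g)|_{X^\div}$ is $\bar\phi+g$. Each of (a), (b), (c) will then follow by combining this elementary lemma with an identity that is already known on the dense subset $X^\div$.

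For (a), I would fix a log smooth pair $(Y,D)$ over $(X,B)$ and invoke~\eqref{equ:lds}, which gives $\ld_{X,B}=\ld_Y+\tau_Y$ on $X^\div=Y^\div$, with $\tau_Y$ continuous on $X^\val=Y^\val$. By the principle above, this identity persists on all of $X^\val$, where $\ld_Y$ now denotes its own greatest lsc extension. By Proposition~\ref{prop:ldval} this extension coincides with the log discrepancy of the smooth variety $Y$ constructed in~\cite{JM}, which takes values in $\R_{\ge 0}\cup\{+\infty\}$; since $\tau_Y$ is finite everywhere, we conclude $\ld_{X,B}\ge\tau_Y>-\infty$ on $X^\val$ (and this identifies the locus $\{\ld_{X,B}=+\infty\}$ with that of $\ld_Y$).

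For (b), the starting point is the classical additivity of log discrepancies on divisorial valuations: since $K_{X,B+D}=K_{X,B}+D$, one has $\ld_{X,B+D}(v)=\ld_{X,B}(v)-v(D)$ for $v\in X^\div$, \ie $\ld_{X,B+D}=\ld_{X,B}-\p_D$ there. As $\p_D\colon v\mapsto v(D)$ is continuous on $X^\val$ (being evaluation at a $\Q$-Cartier divisor), the principle above upgrades this to an identity on all of $X^\val$, giving (b). For (c), I would first note that $K_{X',B'}=\pi^\star K_{X,B}$ is $\Q$-Cartier, so $(X',B')$ is a pair; choosing a smooth model $Y$ dominating both $X$ and $X'$ and comparing relative canonical divisors yields $K_{Y/(X,B)}=K_{Y/(X',B')}$, whence $\ld_{X,B}=\ld_{X',B'}$ on $X^\div=X^{\prime\div}$ by birational invariance of the log discrepancy. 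Finally, since $\pi$ is birational we have $k(X)=k(X')$, so $X^\val=X^{\prime\val}$ as topological spaces sharing the same dense subset $X^\div=X^{\prime\div}$; the greatest lsc extension is intrinsic to these data, so the two extensions agree and (c) follows.

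The genuinely substantive inputs here are external: the non-negativity of the smooth log discrepancy from~\cite{JM} (used in (a)) and the classical additivity and birational invariance of log discrepancies on $X^\div$ (used in (b) and (c)). The only points requiring a little care are the elementary liminf lemma and the verification that $\p_D$ and $\tau_Y$ are genuinely continuous, rather than merely finite, on all of $X^\val$; granting these, the arguments are purely formal, and I do not anticipate a serious obstacle.
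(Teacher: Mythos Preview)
Your proposal is correct and follows essentially the same approach as the paper: both argue by first establishing the relevant identities on $X^\div$ and then invoking Definition~\ref{defi:ldval} together with the continuity of the correction terms ($\p_D$, resp.\ $\tau_Y$) to propagate to $X^\val$. Your explicit ``soft principle'' is exactly what the paper means by ``by the nature of Definition~\ref{defi:ldval}''; the only cosmetic difference is that the paper derives (a) last via (b) and (c) applied to a smooth $X'$, whereas you go directly through~\eqref{equ:lds}, which amounts to the same computation.
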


\begin{proof}
  The equality in~(b) holds on $X^\div$, and the function $\p_D$ is continuous. By the nature of Definition~\ref{defi:ldval}, equality must hold on $X^\val$. A similar argument proves~(c), and~(a) now follows since $\ld_X\ge0$ when $X$ is smooth.
\end{proof}
%
%
\subsection{Extension to the whole Berkovich space}\label{sec:ldan}
Given a pair $(X,B)$, we seek to extend the function $\ld_{X,B}$ on $X^\val$ defined above to an lsc function $\ld_{X,B}\colon X^\an\to\R\cup\{+\infty\}$ on the whole Berkovich space. This is only possible when $(X,B)$ is \emph{sublc}, that is, $\ld_{X,B}\ge0$ on $X^\div$. Indeed, such an extension is necessarily bounded below, by compactness of $X^\an$, and hence nonnegative on $X^\div$, by homogeneity of $\ld_{X,B}$ on $X^\div$.

\begin{defi} For any sublc pair $(X,B)$, we denote by 
\begin{equation}\label{equ:ldan}
\ld_{X,B}\colon X^\an\to [0,+\infty]
\end{equation}
the greatest lsc extension of $\ld_{X,B}$ on $X^\div$, given by~\eqref{equ:ldval}. 
\end{defi}
Note that a sublc pair $(X,B)$ is \emph{subklt} if $\ld_{X,B}>0$ on $X^\div\setminus\{v_\triv\}$.
Such pairs can now be characterized as follows: 

\begin{thm}\label{thm:ldan} For any sublc pair $(X,B)$, the following are equivalent:
\begin{itemize}
\item[(i)] $(X,B)$ is subklt;
\item[(ii)] for any $\om\in\Amp(X)$, there exists $\a>0$ such that $\ld_{X,B}\ge\a\te_\om$ on $X^\an$; 
\item[(iii)] $\ld_{X,B}\equiv+\infty$ on $X^\an\setminus X^\val$.
\end{itemize}
\end{thm}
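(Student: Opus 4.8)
The plan is to prove the implications (ii)$\Rightarrow$(i), (ii)$\Rightarrow$(iii), (iii)$\Rightarrow$(i), and (i)$\Rightarrow$(ii); these close up to give the three equivalences. Two of them are immediate from the elementary properties of $\te_\om$: it is finite exactly on $X^\lin\subset X^\val$ (see~\eqref{equ:Xlin}) and vanishes only at $v_\triv$. Thus for (ii)$\Rightarrow$(i): on $X^\div\setminus\{v_\triv\}$ one has $\te_\om>0$, whence $\ld_{X,B}\ge\a\te_\om>0$ and $(X,B)$ is subklt. And for (ii)$\Rightarrow$(iii): any $v\in X^\an\setminus X^\val$ lies off $X^\lin$, so $\te_\om(v)=+\infty$ and $\ld_{X,B}(v)\ge\a\te_\om(v)=+\infty$.

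For (iii)$\Rightarrow$(i) I would argue by contradiction using the scaling action. If $(X,B)$ is sublc but not subklt, there is $v_0\in X^\div\setminus\{v_\triv\}$ with $\ld_{X,B}(v_0)=0$, and hence $\ld_{X,B}(tv_0)=0$ for every $t>0$ by homogeneity. As $t\to+\infty$, the valuations $tv_0$ converge in the compact space $X^\an$ (after passing to a subnet if necessary) to the trivial valuation $v_{Z,\triv}$ on the residue field of the center $Z\subsetneq X$ of $v_0$, which is a point of $X^\an\setminus X^\val$. Since the extension~\eqref{equ:ldan} is lower semicontinuous, we get $\ld_{X,B}(v_{Z,\triv})\le\liminf_{t\to+\infty}\ld_{X,B}(tv_0)=0$, contradicting (iii).

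The substance is (i)$\Rightarrow$(ii), and the first step is a reduction: it suffices to prove the inequality $\ld_{X,B}\ge\a\te_\om$ on the dense set $X^\div$ of divisorial valuations. Indeed, $\te_\om$ is lsc on $X^\an$, so $\a\te_\om$ is an lsc function with $\a\te_\om\le\ld_{X,B}$ on $X^\div$; and~\eqref{equ:ldan} is the greatest lsc extension of $\ld_{X,B}|_{X^\div}$, which is the same as the greatest lsc function on $X^\an$ that is $\le\ld_{X,B}$ on $X^\div$. (For the latter: if $g$ is lsc with $g\le\ld_{X,B}$ on $X^\div$, pick for $v\in X^\an$ a net $w_i\to v$ in $X^\div$ computing $\liminf_{w\to v,\,w\in X^\div}\ld_{X,B}(w)$; then $g(v)\le\liminf_i g(w_i)\le\liminf_i\ld_{X,B}(w_i)=\ld_{X,B}(v)$.) Applying this to $g=\a\te_\om$ yields $\a\te_\om\le\ld_{X,B}$ on all of $X^\an$, i.e.\ (ii).

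It remains to establish the Izumi-type inequality $\ld_{X,B}\ge\a\te_\om$ on $X^\div$ in the subklt case, which I expect to be the main obstacle: as both sides are homogeneous of degree one, this is a genuinely uniform comparison over all divisorial valuations. I would first reduce to the smooth case: taking a resolution $\pi\colon X'\to X$ and setting $B':=\pi^\star K_{X,B}-K_{X'}$, Proposition~\ref{prop:ldvalbir}(c) gives $\ld_{X,B}=\ld_{X',B'}$ on $X^\val=\Xpval$, while $\te_\om$ agrees with $\te_{\pi^\star\om}$ on divisorial valuations (the models of $X'$ being cofinal among those of $X$), and $(X',B')$ is again subklt. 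On the smooth pair $(X',B')$ the bound $\te_{\pi^\star\om}\lesssim\ld_{X',B'}$ is exactly the Izumi inequality, which I would import from~\cite{JM,BdFFU,hiro}: for $B'=0$ it is the comparison between the log discrepancy and the linear-growth invariant established there, and the klt boundary is absorbed by the standard argument; by monotonicity of $\te_\bullet$ in the class one may even fix a convenient very ample class. Via Example~\ref{exam:endirac} the inequality can equivalently be phrased as $\esse_\om\lesssim\ld_{X,B}$, using $\te_\om\approx\|\cdot\|_\om$; here one must take care to argue independently of Theorem~\ref{thm:delta}(ii), whose proof already invokes the present theorem.
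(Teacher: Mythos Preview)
Your argument is correct, and the implications (ii)$\Rightarrow$(i), (ii)$\Rightarrow$(iii), (iii)$\Rightarrow$(i), together with the reduction of (i)$\Rightarrow$(ii) to the divisorial locus via the greatest-lsc-extension property, all match the paper essentially verbatim. (The paper runs the cycle (i)$\Rightarrow$(ii)$\Rightarrow$(iii)$\Rightarrow$(i), so your extra (ii)$\Rightarrow$(i) is redundant but harmless; and no subnet is needed in (iii)$\Rightarrow$(i), since $tv_0\to v_{Z,\triv}$ directly.)

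The one genuine difference is the final step of (i)$\Rightarrow$(ii). The paper avoids resolutions and Izumi entirely: it reduces to $\om=c_1(L)$ with $L\in\Pic(X)_\Q$ ample, and then uses that the subklt hypothesis gives a uniform $\a>0$ (the global log canonical threshold of $|L|_\Q$, cf.~\cite[Theorem~9.14]{BHJ1}) such that $(X,B+\a D)$ is sublc for every $D\in|L|_\Q$. This immediately yields $\ld_{X,B}(v)\ge\a\,v(D)$ for every such $D$ and every $v\in X^\div$, and taking the supremum over $D$ gives $\ld_{X,B}(v)\ge\a\,\te_L(v)$ directly from the description~\eqref{equ:Tsec}. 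Your route via a resolution and the Izumi inequality should also go through (after dominating $\pi^\star\om$ by an ample class on $X'$, as you note), but it is more roundabout and, as you yourself flag, requires care to globalize the local Izumi bound and to absorb the boundary $B'$. The paper's argument is shorter and needs nothing beyond the positivity of the global lc threshold, which is precisely how the subklt hypothesis wants to be used here.
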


\begin{proof} Assume that $(X,B)$ is subklt. To prove (ii), we may assume $\om=c_1(L)$ with $L\in\Pic(X)_\Q$ ample. Denote by $|L|_\Q$ the set of effective $\Q$-Cartier divisors $D$ such that $D\sim_\Q L$. As is well-known (see for instance~\cite[Theorem~9.14]{BHJ1}), there exists $\a>0$ such that $(X,B+\a D)$ is subklt for all $D\in |L|_\Q$, and hence 
$$
0\le \ld_{X,B+\a D}(v)=\ld_{X,B}(v)-\a v(D)
$$ 
for all $v\in X^\div$ and $D\in |L|_\Q$. Since the maximal vanishing order satisfies 
\begin{align*}
\te_L(v) & =\sup\{m^{-1}v(s)\mid s\in\Hnot(X,mL)\setminus\{0\},\,m\ \text{sufficiently divisible}\} \\
& =\sup\{ v(D)\mid D\in |L|_\Q\}, 
\end{align*}
we get $\ld_{X,B}\ge\a\te_L$ on $X^\div$, and hence also on $X^\an$, since $\te_L$ is lsc. This proves (i)$\Rightarrow$(ii), and (ii)$\Rightarrow$(iii) is trivial, since $X^\lin=\{\te_L<+\infty\}$ is contained in $X^\val$. Finally, assume (iii). Pick $v\in X^\div\setminus\{v_\triv\}$, and denote by $Z\subset X$ the closure of the center of $v$, which is a strict subvariety since $v\ne v_\triv$. We obtain a trivial semivaluation $v_{Z,\triv}\in X^\an\setminus X^\val$, such that $\lim_{t\to+\infty} t v=v_{Z,\triv}$ in $X^\an$. As $\ld_{X,B}$ is lsc on $X^\an$, we infer
$$
\liminf_{t\to+\infty} t\ld_{X,B}(v)\ge\ld_{X,B}(v_{Z,\triv})=+\infty. 
$$
Thus $\ld_{X,B}(v)>0$, which proves (iii)$\Rightarrow$(i). 
\end{proof}
%
%
\subsection{Valuations of finite log discrepancy}\label{sec:Xfld}
If $(X,B)$ is a pair, Proposition~\ref{prop:ldvalbir} shows that the locus $\{\ld_{X,B}<\infty\}\subset X^\val$ does not depend on $B$, and it is a birational invariant. This naturally leads to the following notion.
\begin{defi}\label{defi:Xfld}
  For any (not necessarily normal) projective variety $X$, we define the set $X^\fld\subset X^\val$ of  \emph{valuations of finite log discrepancy} as the subset $\{\ld_Y<+\infty\}$ of $Y^\val=X^\val$ for some (or any) projective birational morphism $Y\to X$ with $Y$ smooth.
\end{defi}
Clearly $X^\fld$ is a birational invariant of $X$. We also note
\begin{lem}\label{lem:fldBorel}
  The set $X^\fld$ is a Borel subset of $X^\an$ for any projective variety $X$.
\end{lem}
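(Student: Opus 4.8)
The plan is to realize $\Xfld$ as the locus where a single lower semicontinuous function on the compact space $\Xan$ is finite. Fix once and for all a resolution of singularities $\pi\colon Y\to X$ with $Y$ smooth and projective; by Definition~\ref{defi:Xfld} we have $\Xfld=\{\ld_Y<+\infty\}\subset Y^{\val}=X^{\val}$. Since $Y$ is smooth, $\ld_Y(\ord_E)\ge 1$ for every prime divisor $E$ over $Y$, so by homogeneity $\ld_Y>0$ on $Y^\div\setminus\{v_\triv\}$ and the pair $(Y,0)$ is subklt. Theorem~\ref{thm:ldan} then provides a lower semicontinuous extension $\ld_Y\colon\Yan\to[0,+\infty]$ which, by part~(iii), satisfies $\ld_Y\equiv+\infty$ on $\Yan\setminus Y^{\val}$. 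Thus $\{\ld_Y<+\infty\}$ is already an $F_\sigma$, hence Borel, subset of $\Yan$; the only real task is to transport this statement across $\pian$.

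Recall that $\Xan$ and $\Yan$ are compact Hausdorff and that $\pian\colon\Yan\to\Xan$ is a continuous surjection (as already used in the proof of Theorem~\ref{thm:entapprox}); being a continuous map between compact Hausdorff spaces, it is closed and has compact fibres. I would therefore push $\ld_Y$ forward by defining $h\colon\Xan\to[0,+\infty]$ by
$$
h(x):=\min_{w\in(\pian)^{-1}(x)}\ld_Y(w),
$$
the minimum being attained since the fibre is compact and $\ld_Y$ is lsc. For each $c\in\R$ one has $\{h\le c\}=\pian(\{\ld_Y\le c\})$: indeed $h(x)\le c$ holds iff some $w$ in the fibre satisfies $\ld_Y(w)\le c$. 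As $\{\ld_Y\le c\}$ is closed in the compact space $\Yan$, hence compact, its image under the continuous map $\pian$ is compact, hence closed in $\Xan$. Therefore $h$ is lower semicontinuous on $\Xan$.

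It remains to identify $\{h<+\infty\}$ with $\Xfld$, which follows from a fibrewise analysis of $\pian$ using that $\pi$ is birational. If $x\in X^{\val}$, any $w$ with $\pian(w)=x$ has support dominating $X$; since $\dim\pi(W)\le\dim W\le\dim Y=\dim X$, this forces the support $W$ of $w$ to be all of $Y$, i.e.\ $w\in Y^{\val}$, and under the identification $Y^{\val}=X^{\val}$ we get $w=x$. Hence the fibre is the single point $x$ and $h(x)=\ld_Y(x)$. If instead $x\in\Xan\setminus X^{\val}$, the support of $x$ is a proper subvariety of $X$, so every $w$ in the fibre has support a proper subvariety of $Y$, i.e.\ $w\in\Yan\setminus Y^{\val}$, whence $\ld_Y(w)=+\infty$ and $h(x)=+\infty$. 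Consequently $\{h<+\infty\}=\{x\in X^{\val}\mid\ld_Y(x)<+\infty\}=\Xfld$, and
$$
\Xfld=\bigcup_{n\ge 1}\{h\le n\}
$$
is a countable union of closed sets, hence Borel in $\Xan$.

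The only genuinely delicate point is the fibrewise bookkeeping for $\pian$: one must check that the support of $\pian(w)$ in $X$ is the closure of $\pi$ applied to the support of $w$ in $Y$, and combine this with properness of $\pi$ to conclude that the fibre over a point of $X^{\val}$ reduces to a single valuation while fibres over $\Xan\setminus X^{\val}$ meet only the non-valuation locus of $\Yan$. Everything else is soft topology (lower semicontinuity of a proper pushforward of an lsc function), together with the already-established lsc extension $\ld_Y$ and its infinitude off $Y^{\val}$ from Theorem~\ref{thm:ldan}.
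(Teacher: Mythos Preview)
Your argument is correct and follows the same underlying idea as the paper's proof, which reads in full: ``We may assume $X$ is smooth. Applying Theorem~\ref{thm:ldan} to the klt pair $(X,0)$ shows that $X^\fld=\{\ld_X<+\infty\}\subset X^\an$, which is a Borel set since $\ld_X$ is lsc.'' In the smooth case you and the paper do exactly the same thing: exhibit $X^\fld$ as the finiteness locus of the lsc function $\ld_X$ on $X^\an$.

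The difference is that the paper compresses the passage from general $X$ to smooth $Y$ into the phrase ``we may assume $X$ is smooth,'' whereas you spell it out via the pushforward $h(x)=\min_{w\in(\pian)^{-1}(x)}\ld_Y(w)$. Your version is arguably the more careful of the two: a naive reduction (``$X^\fld$ is Borel in $X^\val=Y^\val$, hence in $X^\an$'') would require knowing that $X^\val$ is Borel in $X^\an$, which the paper itself flags as unknown for uncountable $k$ in the remark immediately following the lemma. Your proper pushforward sidesteps this entirely by producing an lsc function directly on $X^\an$, and your fibrewise analysis of $\pian$ (singletons over $X^\val$, entirely in $Y^\an\setminus Y^\val$ over the complement) is exactly what is needed to identify $\{h<+\infty\}$ with $X^\fld$. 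So this is not a genuinely different route, but a fully justified version of the step the paper leaves implicit.
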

\begin{proof}
  We may assume $X$ is smooth. Applying Theorem~\ref{thm:ldan} to the klt pair $(X,0)$ shows that $X^\fld=\{\ld_X<+\infty\}\subset X^\an$, which is a Borel set since $\ld_X$ is lsc.
\end{proof}
\begin{rmk}
  Similarly $X^\lin\subset X^\an$ is a Borel set as $X^\lin=\{\te_\om<+\infty\}$ for any $\om\in\Amp(X)$. We do not know whether $X^\val\subset X^\an$ is a Borel set when $k$ is uncountable.
\end{rmk}
%
%
\subsection{Log discrepancy via snc test configurations}\label{sec:ldtc}
We refer to~\cite[Appendix~A]{trivval} for details on the following discussion. For any projective variety $X$, Gauss extension provides an embedding 
$$
\sigma\colon X^\an\hto (X\times\P^1)^\an
$$
onto the set of $k^\times$-invariant semivaluations $w\in (X\times\P^1)^\an$ such that $w(\varpi)=1$, where $\varpi$ denotes the coordinate on $\A^1\subset\P^1$. If $v\in X^\an$, then $\sigma(v)$ is a valuation iff $v$ is.

Now assume that $X$ is smooth, and consider an \emph{snc test configuration} 
$$
\cX\to\A^1=\Spec k[\unipar]
$$ 
for $X$, \ie a test configuration dominating the trivial test configuration, such that $\cX$ is nonsingular and $\cX_{0,\redu}$ is snc. The canonical compactification $\bar\cX\to\P^1$ provides a log smooth pair $(\bar\cX,\cX_{0,\redu})$ over $X\times\P^1$. As in~\S\ref{sec:ldval}, the dual cone complex 
$$
\hD_\cX:=\hD(\cX,\cX_{0,\redu})
$$ 
embeds in $(X\times\P^1)^\val$. The preimage $\D_\cX:=\sigma^{-1}(\hD_\cX)\subset X^\val$ is compact, and $\sigma_\div(\D_\cX)\subset\hD_\cX$ is a compact simplicial complex cut out by the equation $w(\unipar)=1$. We view $\D_\cX$ as a simplicial complex itself, and each simplex has an integral affine structure with respect to which the rational points are the divisorial points; hence $\D_\cX\cap X^\div$ is dense in $\D_\cX$.

Let us write $\hat p_\cX\colon(X\times\P^1)^\val\to\hD_\cX$ for the retraction above. Then we have a continuous retraction $p_\cX:=\sigma^{-1}\circ\hat p_\cX\circ\sigma\colon X^\val\to\D_\cX$. This extends continuously to $X^\an$, and we obtain a retraction $p_\cX\colon X^\an\to\D_\cX$; this in turn induces a homeomorphism 
\begin{equation}\label{equ:berkhomeo2}
X^\an\simto\varprojlim_\cX\D_\cX,
\end{equation}
see~\cite[Theorem~A.1]{trivval}. Thus $\lim_\cX p_\cX=\id$ pointwise on $X^\an$. 

\begin{thm}\label{thm:lddual} If $X$ is a smooth projective variety, then
  \begin{equation}\label{equ:ldtest} 
    \ld_X=\ld_{X\times\P^1}\circ\sigma-1
  \end{equation}
  on $X^\an$. Moreover, for any snc test configuration $\cX$, we have: 
  \begin{itemize}
  \item[(i)] $\ld_X$ is integral affine on each face of $\D_\cX$; 
  \item[(ii)] $\ld_X\ge\ld_X\circ p_\cX$ on $X^\an$, with equality precisely on $\D_\cX$;
  \end{itemize}
  and $\ld_X=\lim_\cX\ld_X\circ p_\cX$ pointwise on $X^\an$. 
\end{thm}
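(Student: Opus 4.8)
The plan is to deduce every assertion from the corresponding structure on the smooth variety $X\times\P^1$, which by Proposition~\ref{prop:ldval} carries the log discrepancy function $\ld_{X\times\P^1}$ of~\cite{JM} with all the characterizing properties (a)--(d) recalled in~\S\ref{sec:ldval}, and to transport this structure through the Gauss extension $\sigma$. Set $f:=\ld_{X\times\P^1}\circ\sigma-1$ on $X^\an$. Since $\sigma$ is a continuous embedding and $\ld_{X\times\P^1}$ is lsc, $f$ is lsc on $X^\an$, so proving~\eqref{equ:ldtest} amounts to showing $f=\ld_X$.

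First I would verify $f=\ld_X$ on the dense set $X^\div$ by a direct discrepancy computation. Given $v=t\ord_F$ with $F$ a prime divisor on a smooth model $\pi\colon Y\to X$ and $t\in\Q_{\ge0}$, the valuation $\sigma(v)$ is, by the uniqueness characterization of the Gauss extension ($k^\times$-invariant, $\sigma(v)(\varpi)=1$, restricting to $v$), the monomial valuation $\mu$ on $Y\times\A^1$ assigning weight $t$ to a local equation of $F$ and weight $1$ to $\varpi$, relative to the snc pair consisting of $F\times\A^1$ and $Y\times\{0\}$. By linearity of $\ld_{X\times\P^1}$ along the corresponding cone of monomial valuations (property (b)), together with $\ld_{X\times\A^1}(\ord_{F\times\A^1})=\ld_X(\ord_F)$ and $\ld_{X\times\A^1}(\ord_{Y\times\{0\}})=1$, one gets $\ld_{X\times\P^1}(\sigma(v))=t\,\ld_X(\ord_F)+1=\ld_X(v)+1$, i.e. $f=\ld_X$ on $X^\div$. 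Consequently $f$ is an lsc extension of $\ld_X|_{X^\div}$, so $f\le\ld_X$ by Definition~\ref{defi:ldval}.

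For the reverse inequality I would transport properties (b)--(c) through $\sigma$, using that each snc test configuration $\cX$ yields the log smooth pair $(\bar\cX,\cX_{0,\redu})$ over $X\times\P^1$, with $\sigma\circ p_\cX=\hat p_\cX\circ\sigma$ (the definition of $p_\cX$) and $\sigma(\D_\cX)=\hD_\cX\cap\{w(\varpi)=1\}$; the slice is preserved by $\hat p_\cX$ because $\hat p_\cX(\sigma(v))(\varpi)=\sigma(v)(\varpi)=1$, since $\varpi$ pulls back to $\cX_0=\sum_i b_iE_i$. Property (c) for $X\times\P^1$ gives $\ld_{X\times\P^1}\ge\ld_{X\times\P^1}\circ\hat p_\cX$ with equality exactly on $\hD_\cX$; composing with $\sigma$ and subtracting $1$ yields $f\ge f\circ p_\cX$ with equality precisely on $\D_\cX$. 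Since $f$ is lsc and $p_\cX\to\id$ pointwise by~\eqref{equ:berkhomeo2}, Lemma~\ref{lem:lsccv} gives $f=\lim_\cX f\circ p_\cX$. Now, exactly as in the proof of Proposition~\ref{prop:ldval}, $f\circ p_\cX$ is continuous (by integral affineness of $\ld_{X\times\P^1}$ on the cones of $\hD_\cX$, property (b)) while $\ld_X\circ p_\cX$ is lsc, and they agree on the dense set $X^\div$; hence $f\circ p_\cX\ge\ld_X\circ p_\cX$, and therefore $f=\lim_\cX f\circ p_\cX\ge\varliminf_\cX\ld_X\circ p_\cX\ge\ld_X$ by lower semicontinuity of $\ld_X$ and $p_\cX\to\id$. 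Combined with $f\le\ld_X$, this proves $f=\ld_X$, i.e.~\eqref{equ:ldtest}.

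With the identity $\ld_X=f$ in hand, the remaining assertions follow immediately: assertion (i) is the integral affineness of $\ld_{X\times\P^1}$ on the faces of $\hD_\cX$ (property (b)), read off through the integral-affine identification $\sigma(\D_\cX)=\hD_\cX\cap\{w(\varpi)=1\}$ and the harmless shift by $-1$; assertion (ii) is precisely the inequality $f\ge f\circ p_\cX$ with equality on $\D_\cX$ established above, now reading as $\ld_X\ge\ld_X\circ p_\cX$; and the concluding identity $\ld_X=\lim_\cX\ld_X\circ p_\cX$ is just $f=\lim_\cX f\circ p_\cX$. The main obstacle is the very first step—pinning down $\sigma(t\ord_F)$ as an explicit monomial (hence divisorial) valuation on $X\times\P^1$ and carrying out the discrepancy bookkeeping that produces the universal shift by $-1$; once this is secured, everything else is a formal transport of the Berkovich-space structure of $X\times\P^1$ through $\sigma$, invoking Lemma~\ref{lem:lsccv} and the inverse limit~\eqref{equ:berkhomeo2}.
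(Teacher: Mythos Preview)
Your proposal is correct and follows essentially the same route as the paper. Both set $f:=\ld_{X\times\P^1}\circ\sigma-1$, check $f=\ld_X$ on $X^\div$ (you by direct computation, the paper by citing~\cite[Proposition~4.11]{BHJ1}), use lower semicontinuity to get $f\le\ld_X$, transport property~(c) of~\cite{JM} through $\sigma$ to get $f\ge f\circ p_\cX$ with equality on $\D_\cX$, and then close the argument via Lemma~\ref{lem:lsccv} and~\eqref{equ:berkhomeo2}. The only organizational difference is that the paper first shows $f=\ld_X$ on $\D_\cX$ (comparing the continuous $f|_{\D_\cX}$ with the lsc $\ld_X|_{\D_\cX}$ on the dense set $X^\div\cap\D_\cX$), deduces~(ii) for $\ld_X$ from this and $\ld_X\ge f$, and only then concludes $f=\ld_X$ globally; you instead first prove $f=\ld_X$ globally and read off~(i),~(ii), and the limit afterward. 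One small point: your inequality $f\circ p_\cX\ge\ld_X\circ p_\cX$ relies on $p_\cX(X^\div)\subset X^\div$ (so that $f=\ld_X$ at $p_\cX(v)$), which is true but not stated; the paper sidesteps this by comparing $f$ and $\ld_X$ directly on $\D_\cX$. Also, the passage from $X^\val$ to $X^\an$ in~(ii) uses $\ld_X=+\infty$ on $X^\an\setminus X^\val$ (Theorem~\ref{thm:ldan}), which the paper invokes explicitly and which your argument uses implicitly through $f=+\infty$ there.
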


\begin{proof}
  Set $\ld'_X:=\ld_{X\times\P^1}\circ\sigma-1$.
  By~\cite[Proposition~4.11]{BHJ1}, we have $\ld'_X=\ld_X$ on $X^\div$.
  Note that $\ld'_X$ is lsc since $\ld_{X\times\P^1}$ is lsc and $\sigma$ continuous. As $\ld_X$ is the maximal lsc extension of its restriction to $X^\div$, see~Definition~\ref{defi:ldval}, we get $\ld'_X\le\ld_X$ on $X^\an$.

  Now $\sigma\colon\D_\cX\to\hD_\cX$ is continuous and $\ld_{X\times\P^1}$ is continuous on $\hD_\cX$, so $\ld'_X$ is continuous on $\D_\cX$. As $\ld_X$ is lsc, and $\ld_X=\ld'_X$ on the dense subset $X^\div\cap\D_\cX$, we get $\ld_X\le\ld'_X$, and therefore $\ld'_X=\ld_X$ on $\D_\cX$. As $\ld_{X\times\P^1}$ is integral affine on each face of $\hD_\cX$, we obtain~(i).

  By~\cite{JM} we have $\ld_{X\times\P^1}\ge\ld_{X\times\P^1}\circ\hat p_\cX$ on $(X\times\A^1)^\val$, with equality exactly on $\hD_\cX$. This implies
  $\ld'_X\ge\ld'_X\circ p_\cX$ on $X^\val$, with equality exactly on $\D_\cX$. As $\ld_X\ge\ld'_X$ with equality on $\D_\cX$, we obtain $\ld_X\ge\ld_X\circ p_\cX$ on $X^\val$, with equality exactly on $\D_\cX$. This implies~(ii), since $\ld_X=+\infty$ on $X^\an\setminus X^\val$, see Theorem~\ref{thm:ldan}.

  The equality $\ld_X=\lim_\cX\ld_X\circ p_\cX$ pointwise on $X^\an$ is now a consequence of~(ii) and~\eqref{equ:berkhomeo2} since $\ld_X$ is lsc, see Lemma~\ref{lem:lsccv}. Similarly, we saw above that $\ld'_X\ge\ld'_X\circ p_\cX$ on $X^\val$, and hence on $X^\an$ since $A'_X=+\infty$ on $X^\an\setminus X^\val$. The same argument as above now gives $\ld'_X=\lim_\cX\ld'_X\circ p_\cX$ pointwise on $X^\an$. 
As $\ld'_X=\ld_X$ on $\D_\cX$, this shows that $\ld'_X=\ld_X$ on $X^\an$, and we are done.
\end{proof}

%
%
%
%
%

\end{document}